\numberwithin{equation}{section}
\titleformat{\section}[block]{\bfseries\filcenter}
{{\upshape\thesection\enspace}}{.5em}{}
\titleformat{\subsection}[block]{\filcenter}
{{\upshape\thesubsection\enspace}}{.5em}{} 
\setlist{nosep}  
\newcommand{\N}{\mathbb{N}}     
\newcommand{\R}{\mathbb{R}}     
\newcommand{\Prob}{\mathbb{P}}  
\newcommand{\Exp}{\mathbb{E}}   
\newcommand*{\backin}{\rotatebox[origin=c]{-180}{ $\in$ }}%
\newcommand{\st}{\,:\,}         
\newcommand{\goth}[1]{\mathfrak{#1}} 
\newcommand{\ind}[2]{\mathbbm{1}_{#1}\left( #2 \right)}          
\newcommand{\inner}[2]{\left\langle #1 \, , \, #2 \right\rangle} 
\newcommand{\norm}[1]{\left|\left|#1\right|\right|}              
\newcommand{\triplet}[3]{\left( #1, #2, #3 \right) }             
\newcommand{\ProbSpace}{\triplet{\Omega}{\mathscr{F}}{\Prob}}    
\newcommand{\abs}[1]{\left| #1 \right|}                          
\renewcommand{\qedsymbol}{$\square$}                       
\newcommand{\defeq}{\mathrel{\mathop:}=}                         
\newcommand\restr[2]{{
  \left.\kern-\nulldelimiterspace 
  #1 
  \vphantom{\big|} 
  \right|_{#2} 
  }}
\theoremstyle{plain} 
\newtheorem{theo}{Theorem}[section]    
\newtheorem{prop}[theo]{Proposition} 
\newtheorem{coro}[theo]{Corollary}
\newtheorem{lemm}[theo]{Lemma}
\newtheorem{assu}[theo]{Assumption}
\newtheorem{rema}[theo]{Remark}
\theoremstyle{definition} 
\newtheorem{defi}[theo]{Definition}
\newtheorem{exam}[theo]{Example}
\newtheorem{nota}[theo]{Notation}
\declaretheoremstyle[%
  spaceabove=-5pt,%
  spacebelow=6pt,%
  headfont=\normalfont\itshape,%
  postheadspace=1em,%
  qed=\qedsymbol%
]{mystyle} 
\declaretheorem[name={Proof},style=mystyle,unnumbered,
]{prf}
 \title{Stochastic Integration and Stochastic PDEs Driven by Jumps on the Dual of a Nuclear Space.}
\author{C. A. Fonseca-Mora}
\affil{  Escuela de Matem\'{a}tica, Universidad de Costa Rica, San Jos\'{e}, 11501-2060, Costa Rica. \\

\noindent E-mail:  christianandres.fonseca@ucr.ac.cr }
\date{}
\begin{document}

 \maketitle

\abstract{We develop a novel theory of weak and strong stochastic integration for cylindrical martingale-valued measures taking values in the dual of a nuclear space. This is applied to develop a theory of SPDEs with rather general coefficients. In particular, we can then study SPDEs driven by general L\'{e}vy processes in this context. }

\smallskip

\emph{2010 Mathematics Subject Classification:} 60H05, 60H15,  60B11, 60G20, 60G51 

\emph{Key words and phrases:} cylindrical martingale-valued measures, dual of a nuclear space, stochastic integrals, stochastic evolution equations, L\'{e}vy processes.

\section{Introduction}

The aim of this paper is to introduce new foundations for stochastic analysis in duals of nuclear spaces. Our main objective is to study the following abstract stochastic Cauchy problem
\begin{equation} \label{generalSEEIntro}
\begin{cases}
d X_{t}= (A'X_{t}+ B (t,X_{t})) dt+\int_{U} F(t,u,X_{t}) M (dt,du), \quad \mbox{for }t \geq 0, \\
X_{0}=Z_{0}.
\end{cases}
\end{equation}
Here, $M$ is a cylindrical martingale-valued measure taking values in the dual $\Phi'$ of a locally convex space $\Phi$, $A'$ is the dual operator of the generator $A$ of a suitable semigroup on a nuclear space $\Psi$, and the coefficients $B$ and $F$ satisfies appropriate conditions to be described in greater detail below.  

Motivated by the study of the solutions of the equation \eqref{generalSEEIntro}, in this article we introduce a novel theory of stochastic integration for operator-valued processes with respect to cylindrical martingale-valued measures which is suitable for use in the study of stochastic differential equations and in particular to stochastic evolution equations of the form  \eqref{generalSEEIntro}.  

Roughly speaking, a cylindrical martingale-valued measure is a family $M=(M(t,A): t \geq 0, A \in \mathcal{R})$ such that $(M(t,A): t \geq 0)$ is a cylindrical martingale in $\Phi'$ for each $A \in \R$ and $M(t,\cdot)$ is finitely additive on $\mathcal{R}$ for each $t \geq 0$. This concept generalizes to locally convex spaces the martingale-valued measures introduced by Walsh \cite{Walsh:1986} for the finite dimensional setting and then extended to infinite dimensional settings such as Hilbert spaces \cite{Applebaum:2006} and duals of nuclear Fr\'{e}chet spaces \cite{Xie:2001}. We will investigate some classes of cylindrical martingale-valued measure whose second moments are determined by a family of continuous Hilbertian semi-norms $\{ q_{r,u}: r \in \R_{+}, u \in U\}$ on $\Phi$. When $\Phi$ is a nuclear space, examples of $\Phi'_{\beta}$-valued processes that define cylindrical martingale-values measures having such a second moment structure are the generalized Wiener process introduced in \cite{BojdeckiGorostiza:1986, BojdeckiJakubowski:1990} and the martingale part of the L\'{e}vy-It\^{o} decomposition of a L\'{e}vy process taking values in the strong dual $\Phi'_{\beta}$ of a nuclear space. Such decompositions and other properties of  $\Phi'_{\beta}$-valued L\'{e}vy processes were studied by the author in \cite{FonsecaMora:2017}.  

Our next task is to develop our theory of stochastic integration with respect to a cylindrical martingale-valued measure $M$ as described in the last paragraph. We first introduce a theory of integration for vector-valued maps $X=\{X(r,\omega,u): r \in \R_{+}, \omega \in \Omega, u \in U \}$. In particular $\Phi$-valued integrands are included in this class. The resulting stochastic integral $\int_{0}^{t} \int_{U} \, X(r,u)\, M(dr,du)$, called the weak stochastic integral, is a real-valued c\`{a}dl\`{a}g martingale. Our construction of the weak integral is simple and is very general in the sense that $\Phi$ is only required to be locally convex. This in particular offers an alternative simpler approach to the It\^{o} stochastic integral in quasi-complete locally convex spaces introduced in \cite{MikuleviciusRozovskii:1998}.  We will show some of the basic properties of the weak integral and in particular we prove a stochastic Fubini's theorem  that will be of importance for the study of the solutions to equation \eqref{generalSEEIntro}.   

For the second step we will introduce a theory of stochastic integration with respect to $M$ for operator-valued families $R=\{R(r,\omega,u): r \in [0,T], \omega \in \Omega, u \in U \}$ taking values in the strong dual $\Psi'_{\beta}$ of a quasi-complete, bornological, nuclear space $\Psi$. In particular these integrands includes families of linear operators from $\Phi'_{\beta}$ into $\Psi'_{\beta}$. Then, the stochastic integral for these families, called the strong integral, is constructed using a new approach that uses the weak stochastic integral and the regularization theorems for cylindrical processes developed by the author in \cite{FonsecaMora:2016} as building blocks. The constructed process $\int_{0}^{t} \int_{U} \, R(r,u)\, M(dr,du)$ is a $\Phi'_{\beta}$-valued c\'{a}dl\'{a}g martingale.   

Our theory improves on previous studies of stochastic integration for operator-valued processes in the dual of a nuclear space \cite{BojdeckiJakubowski:1989,BojdeckiJakubowski:1990,Ding:1999,Ito,KorezliogluMartias:1988}, in the following two directions: 
\begin{enumerate}
\item We consider a more general integrator, in particular, we can define stochastic integrals with respect to completely general L\'{e}vy processes and to the extent of our knowledge this is the first work that considered these processes as integrators. Also, in contrast to previous works we do not assume our integrator has a version in some Hilbert space contained in $\Phi'_{\beta}$.  
\item We have considered what seems to be the largest class of integrands in the existing literature on the subject. In particular, contrary to all the previous works we do not require our integrands to be families of Hilbert-Schmidt maps from some Hilbert spaces continuously included in $\Phi'_{\beta}$ into a fixed Hilbert space continuously included in $\Psi'_{\beta}$. Moreover, we only require very weak moment conditions for our integrands and these are implied by the stronger conditions satisfied by the integrands in all the works cited above.  
\end{enumerate}
After introducing our theory of stochastic integration, we proceed to study the solutions to \eqref{generalSEEIntro}. Stochastic evolution equations in the dual of a nuclear space have been studied by many authors, for example see \cite{BojdeckiGorostiza:1986, BojdeckiJakubowski:1999, DawsonGorostiza:1990, Ding:1999, Ito, KallianpurMitoma:1992, KallianpurPerezAbreu:1988, PerezAbreuTudor:1994}. However, with the exception of \cite{Ding:1999}, only equations with additive noise have been considered and to the extent of our knowledge no equations with such a general multiplicative noise have been discussed. 
In particular, we are not aware of any work that deals with the general L\'{e}vy noise case. 

Previous studies of stochastic evolution equations in the dual of a nuclear space were strongly motivated by specific applications, such as modelling of the dynamics of nerve signals  \cite{KallianpurWolpert:1984}, environmental pollution \cite{KallianpurXiong},  statistical filtering \cite{KorezliogluMartias:1984}, infinite particles systems \cite{BojdeckiGorostiza:1986}. It is our hope that the more general theory developed here will provide the tools to enable many more applications to be developed. 

We will study the equation \eqref{generalSEEIntro} by assuming that $A'$ is the dual operator to the generator $A$ of a $(C_{0},1)$-semigroup $\{S(t)\}_{t \geq 0}$ on $\Psi$. This class of semigroups, that contains the equicontinuous semigroups, were introduced by Babalola in \cite{Babalola:1974} and has been used previously on the study of stochastic evolution equations in duals of nuclear spaces  \cite{Ding:1999, KallianpurPerezAbreu:1988}.  

In this article we will focus on showing the existence and uniqueness of the so called ``weak" and ``mild" solutions to \eqref{generalSEEIntro} (see Definitions \ref{defiWeakSolution} and \ref{defiMildSolution}). We start our investigation by providing sufficient conditions for the equivalence between weak and mild solutions. Later, as the main result in this article we show the existence and uniqueness of weak and mild solutions to \eqref{generalSEEIntro}. To do so, we will assume that the coefficients $B$ and $F$ satisfy the following 
(cylindrical) growth conditions   
\begin{align*}
\abs{B(r,g)[\psi]} & \leq  a(\psi,r)(1+\abs{g[\psi]}), \\
\int_{U} q_{r,u}(F(r,u,g)'\psi)^{2} \mu(du) & \leq b(\psi,r)^{2}(1+\abs{g[\psi]})^{2}, 
\end{align*} 
for $r \in \R_{+}$, $g \in \Psi'$, and the following (cylindrical) Lipschitz conditions  
\begin{align*}
\abs{B(r,g_{1})[\psi]-B(r,g_{2})[\psi]} & \leq  a(\psi,r) \abs{g_{1}[\psi]-g_{2}[\psi]}, \\
\int_{U} q_{r,u}(F(r,u,g_{1})'\psi-F(r,u,g_{2})'\psi)^{2} \mu(du)  & \leq b(\psi,r)\abs{g_{1}[\psi]-g_{2}[\psi]}^{2}, 
\end{align*}
for $r \in \R_{+}$, $g_{1}, g_{2} \in \Psi'$,  with $a, b: \Psi \times \R_{+} \rightarrow \R_{+}$ satisfying  $
 \int_{0}^{T} \sup_{\psi \in K} ( a(\psi,r)^{2} +  b(\psi,r)^{2}) dr < \infty$ for each $T>0$ and $K \subseteq \Psi$ bounded. Under the above conditions, we will demonstrate by means of a fixed point argument in locally convex spaces that there exists a unique  $\Psi'_{\beta}$-valued predictable process $X=\{X_{t}\}_{t \geq 0}$  that is a weak and a mild solution to \eqref{generalSEEIntro} (see Theorem \ref{theoExistenceAndUniquenessMildSolutions}). Moreover, for every $T>0$, there exists a Hilbert space contained in $\Psi'_{\beta}$ such that $\{X_{t}\}_{t \in [0,T]}$ takes values in this space and has uniformly bounded second moments.  

To the extent of our knowledge the above growth and Lipschitz type conditions have not been considered previously in the literature of stochastic differential equations in duals of nuclear spaces. Indeed, these works  (e.g. see \cite{Ding:1999, Ito, Jakubowski:1995, KallianpurXiong}) always introduce an assumption that the coefficients satisfy growth and Lipschitz type conditions on a prior selected Hilbert space contained in the dual of the nuclear space and hence the existence and uniqueness of solutions reduces to the standard fixed point argument in Hilbert spaces (e.g. as in  \cite{DaPratoZabczyk}). This is not the case in our work because our growth and Lipschitz type conditions   allows the consideration of a larger class of integrands and hence we will need to use a different argument.  

Finally, we apply the above theory to prove the existence and uniqueness for stochastic evolution equations driven by general L\'{e}vy processes taking values in the dual of a nuclear space and whose coefficients satisfying the above growth and Lipschitz type conditions. 

The organization of the paper is the following. In Section \ref{sectionPrelim} we list some important notions on nuclear spaces and their duals, and also properties of cylindrical and stochastic processes, martingales and L\'{e}vy processes in duals of nuclear spaces. In Section \ref{SectionNMVM} we introduce the classes of cylindrical martingale-valued measures that we will use as integrators. Section \ref{SubsectionWSI} is devoted to the construction of the weak stochastic integral. The strong stochastic  integral is developed in Section \ref{SubsectionSSI}. In Section \ref{sectionSEEDNS} we study the existence and uniqueness of the solutions of the equation \eqref{generalSEEIntro}. Finally, in Section \ref{subSectionALNC} we apply our theory to the study of stochastic evolution equations driven by L\'{e}vy processes.

\section{Preliminaries} \label{sectionPrelim}

\subsection{Nuclear Spaces And Their Strong Duals} \label{subsectionNuclSpace}

In this section we introduce our notation and review some of the key concepts on nuclear spaces and its dual space that we will need throughout this paper. For more information see \cite{Schaefer, Treves}. All vector spaces in this paper are real. 

Let $\Phi$ be a locally convex space. If each bounded and closed subset of $\Phi$ is complete, then $\Phi$ is said to be \emph{quasi-complete}. The space $\Phi$ called a \emph{barrelled} space if every  convex, balanced, absorbing and closed subset of $\Phi$ (i.e. a barrel) is a neighborhood of zero. 

If $p$ is a continuous semi-norm on $\Phi$ and $r>0$, the closed ball of radius $r$ of $p$ given by $B_{p}(r) = \left\{ \phi \in \Phi: p(\phi) \leq r \right\}$ is a closed, convex, balanced neighborhood of zero in $\Phi$. A continuous semi-norm (respectively a norm) $p$ on $\Phi$ is called \emph{Hilbertian} if $p(\phi)^{2}=Q(\phi,\phi)$, for all $\phi \in \Phi$, where $Q$ is a symmetric, non-negative bilinear form (respectively inner product) on $\Phi \times \Phi$. Let $\Phi_{p}$ be the Hilbert space that corresponds to the completion of the pre-Hilbert space $(\Phi / \mbox{ker}(p), \tilde{p})$, where $\tilde{p}(\phi+\mbox{ker}(p))=p(\phi)$ for each $\phi \in \Phi$. The quotient map $\Phi \rightarrow \Phi / \mbox{ker}(p)$ has an unique continuous linear extension $i_{p}:\Phi \rightarrow \Phi_{p}$.   

Let $q$ be another continuous Hilbertian semi-norm on $\Phi$ for which $p \leq q$. In this case, $\mbox{ker}(q) \subseteq \mbox{ker}(p)$. Moreover, the inclusion map from $\Phi / \mbox{ker}(q)$ into $\Phi / \mbox{ker}(p)$ is linear and continuous, and therefore it has a unique continuous extension $i_{p,q}:\Phi_{q} \rightarrow \Phi_{p}$. Furthermore, we have the following relation: $i_{p}=i_{p,q} \circ i_{q}$. 

We denote by $\Phi'$ the topological dual of $\Phi$ and by $f[\phi]$ the canonical pairing of elements $f \in \Phi'$, $\phi \in \Phi$. We denote by $\Phi'_{\beta}$ the dual space $\Phi'$ equipped with its \emph{strong topology} $\beta$, i.e. $\beta$ is the topology on $\Phi'$ generated by the family of semi-norms $\{ \eta_{B} \}$, where for each bounded $B \subseteq \Phi'$ we have $\eta_{B}(f)=\sup \{ \abs{f[\phi]}: \phi \in B \}$ for all $f \in \Phi'$.  If $p$ is a continuous Hilbertian semi-norm on $\Phi$, then we denote by $\Phi'_{p}$ the Hilbert space dual to $\Phi_{p}$. The dual norm $p'$ on $\Phi'_{p}$ is given by $p'(f)=\sup \{ \abs{f[\phi]}:  \phi \in B_{p}(1) \}$ for all $ f \in \Phi'_{p}$. Moreover, the dual operator $i_{p}'$ corresponds to the canonical inclusion from $\Phi'_{p}$ into $\Phi'_{\beta}$ and it is linear and continuous. 

Let $p$ and $q$ be continuous Hilbertian semi-norms on $\Phi$ such that $p \leq q$.
The space of continuous linear operators (respectively Hilbert-Schmidt operators) from $\Phi_{q}$ into $\Phi_{p}$ is denoted by $\mathcal{L}(\Phi_{q},\Phi_{p})$ (respectively $\mathcal{L}_{2}(\Phi_{q},\Phi_{p})$) and the operator norm (respectively Hilbert-Schmidt norm) is denote by $\norm{\cdot}_{\mathcal{L}(\Phi_{q},\Phi_{p})}$ (respectively $\norm{\cdot}_{\mathcal{L}_{2}(\Phi_{q},\Phi_{p})}$). We employ an analogous notation for operators between the dual spaces $\Phi'_{p}$ and $\Phi'_{q}$. 
  
Let us recall that a (Hausdorff) locally convex space $(\Phi,\mathcal{T})$ is called \emph{nuclear} if its topology $\mathcal{T}$ is generated by a family $\Pi$ of Hilbertian semi-norms such that for each $p \in \Pi$ there exists $q \in \Pi$, satisfying $p \leq q$ and the canonical inclusion $i_{p,q}: \Phi_{q} \rightarrow \Phi_{p}$ is Hilbert-Schmidt. Other equivalent definitions of nuclear spaces can be found in \cite{Pietsch, Treves}. 

Let $\Phi$ be a nuclear space. If $p$ is a continuous Hilbertian semi-norm  on $\Phi$, then the Hilbert space $\Phi_{p}$ is separable (see \cite{Pietsch}, Proposition 4.4.9 and Theorem 4.4.10, p.82). Now, let $\{ p_{n} \}_{n \in \N}$ be an increasing sequence of continuous Hilbertian semi-norms on $(\Phi,\mathcal{T})$. We denote by $\theta$ the locally convex topology on $\Phi$ generated by the family $\{ p_{n} \}_{n \in \N}$. The topology $\theta$ is weaker than $\mathcal{T}$. We denote by $\Phi_{\theta}$ the space $(\Phi,\theta)$. The space $\Phi_{\theta}$ is a separable pseudo-metrizable (not necessarily Hausdorff) locally convex space and its dual space satisfies $\Phi'_{\theta}=\bigcup_{n \in \N} \Phi'_{p_{n}}$ (see \cite{FonsecaMora:2016}, Proposition 2.4). We denote the completion of $\Phi_{\theta}$ by  $\widetilde{\Phi_{\theta}}$ and its strong dual by $(\widetilde{\Phi_{\theta}})'_{\beta}$.

\subsection{Cylindrical and Stochastic Processes} \label{subSectionCylAndStocProcess}

Unless otherwise specified, in this section $\Phi$ will always denote a nuclear space. 
 
Let $\ProbSpace$  be a complete probability space and consider a filtration $\left\{ \mathcal{F}_{t} \right\}_{t \geq 0}$ on $\ProbSpace$ that satisfies the \emph{usual conditions}, i.e. it is right continuous and $\mathcal{F}_{0}$ contains all sets of $\mathcal{F}$ of $\Prob$-measure zero. We denote by $L^{0} \ProbSpace$ the space of equivalence classes of real-valued random variables defined on $\ProbSpace$. We always consider the space $L^{0} \ProbSpace$ equipped with the topology of convergence in probability and in this case it is a complete, metrizable, topological vector space. We denote by 
$\mathcal{P}_{\infty}$ the predictable $\sigma$-algebra on $[0, \infty) \times \Omega$ and for any $T>0$, we denote by $\mathcal{P}_{T}$ the restriction of $\mathcal{P}_{\infty}$ to $[0,T] \times \Omega$.  


A \emph{cylindrical random variable}\index{cylindrical random variable} in $\Phi'$ is a linear map $X: \Phi \rightarrow L^{0} \ProbSpace$ (see \cite{Ito}). If $X$ is a cylindrical random variable in $\Phi'$, we say that $X$ is \emph{$n$-integrable} if $ \Exp \left( \abs{X(\phi)}^{n} \right)< \infty$, $\forall \, \phi \in \Phi$, and has \emph{zero mean} if $ \Exp \left( X(\phi) \right)=0$, $\forall \phi \in \Phi$. 

Let $X$ be a $\Phi'_{\beta}$-valued random variable, i.e. $X:\Omega \rightarrow \Phi'_{\beta}$ is a $\mathscr{F}/\mathcal{B}(\Phi'_{\beta})$-measurable map. We denote by $\mu_{X}$ the distribution of $X$, i.e. $\mu_{X}(\Gamma)=\Prob \left( X \in  \Gamma \right)$, $\forall \, \Gamma \in \mathcal{B}(\Phi'_{\beta})$, and it is a Borel probability measure on $\Phi'_{\beta}$. For each $\phi \in \Phi$ we denote by $X[\phi]$ the real-valued random variable defined by $X[\phi](\omega) \defeq X(\omega)[\phi]$, for all $\omega \in \Omega$. Then, the mapping $\phi \mapsto X[\phi]$ defines a cylindrical random variable. We will say that a $\Phi'_{\beta}$-valued random variable $X$ is \emph{$n$-integrable} if the cylindrical random variable defined by $X$ is \emph{$n$-integrable}. 
 
Let $J=\R_{+} \defeq [0,\infty)$ or $J=[0,T]$ for  $T>0$. We say that $X=\{ X_{t} \}_{t \in J}$ is a \emph{cylindrical process} in $\Phi'$ if $X_{t}$ is a cylindrical random variable for each $t \in J$. Clearly, any $\Phi'_{\beta}$-valued stochastic processes $X=\{ X_{t} \}_{t \in J}$ defines a cylindrical process under the prescription: $X[\phi]=\{ X_{t}[\phi] \}_{t \in J}$, for each $\phi \in \Phi$. We will say that it is the \emph{cylindrical process associated} to $X$.

If $X$ is a cylindrical random variable in $\Phi'$, a $\Phi'_{\beta}$-valued random variable $Y$ is a called a \emph{version} of $X$ if for every $\phi \in \Phi$, $X(\phi)=Y[\phi]$ $\Prob$-a.e. A $\Phi'_{\beta}$-valued processes $Y=\{Y_{t}\}_{t \in J}$ is said to be a $\Phi'_{\beta}$-valued \emph{version} of the cylindrical process $X=\{X_{t}\}_{t \in J}$ on $\Phi'$ if for each $t \in J$, $Y_{t}$ is a $\Phi'_{\beta}$-valued version of $X_{t}$.  

For a $\Phi'_{\beta}$-valued process $X=\{ X_{t} \}_{t \in J}$ terms like continuous, c\`{a}dl\`{a}g, $\{ \mathcal{F}_{t}\}$-adapted, predictable, etc. have the usual (obvious) meaning.

Throughout this work we will make the following convention: whenever we prove that a process has a predictable/adapted/c\'{a}dl\'{a}g version, we always replace the original process with these version, without further comment. 

A $\Phi'_{\beta}$-valued random variable $X$ is called \emph{regular} if there exists a weaker countably Hilbertian topology $\theta$ on $\Phi$ such that $\Prob( \omega: X(\omega) \in \Phi'_{\theta})=1$. Furthermore, a $\Phi'_{\beta}$-valued process $Y=\{Y_{t}\}_{t \in J}$ is said to be \emph{regular} if $Y_{t}$ is a regular random variable for each $t \in J$. 

The following result contains some useful properties of $\Phi'_{\beta}$- valued regular processes. The proof is standard so we omit it. 

\begin{prop}\label{propCondiIndistingProcess} Let $X=\left\{ X_{t} \right\}_{t \in J}$ and $Y=\left\{ Y_{t} \right\}_{t \in J}$ be $\Phi'_{\beta}$- valued regular stochastic processes such that for each $\phi \in \Phi$, $X[\phi]=\left\{ X_{t}[\phi] \right\}_{t \in J}$ is a version of $Y=\left\{ Y_{t}[\phi] \right\}_{t \in J}$. Then $X$ is a version of $Y$. Furthermore, if $X$ and $Y$ are right-continuous then they are indistinguishable processes. 
\end{prop}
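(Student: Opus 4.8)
The plan is to prove, for each fixed $t \in J$, that $X_{t} = Y_{t}$ $\Prob$-a.e. as $\Phi'_{\beta}$-valued random variables by squeezing both $X_{t}$ and $Y_{t}$ into one common separable space of continuous linear functionals and then testing equality against a countable dense set of test elements. The indistinguishability statement will then follow from the classical right-continuity argument.

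First I would fix $t \in J$ and exploit regularity of $X$ and $Y$: there are weaker countably Hilbertian topologies $\theta_{1}, \theta_{2}$ on $\Phi$ with $\Prob(X_{t} \in \Phi'_{\theta_{1}}) = \Prob(Y_{t} \in \Phi'_{\theta_{2}}) = 1$. Let $\theta$ be the locally convex topology generated by the union of the two (countable) families of continuous Hilbertian semi-norms defining $\theta_{1}$ and $\theta_{2}$; passing to the semi-norms $\phi \mapsto ( \sum_{k \leq n} p_{k}(\phi)^{2} )^{1/2}$ one may assume $\theta$ is generated by an \emph{increasing} sequence $\{ p_{n} \}_{n \in \N}$ of continuous Hilbertian semi-norms, so $\theta$ is again a weaker countably Hilbertian topology with $\Phi'_{\theta} \supseteq \Phi'_{\theta_{1}} \cup \Phi'_{\theta_{2}}$, whence $\Prob( X_{t} \in \Phi'_{\theta} \text{ and } Y_{t} \in \Phi'_{\theta} ) = 1$. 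By the material recalled in Section \ref{subsectionNuclSpace}, $\Phi_{\theta} = (\Phi,\theta)$ is separable and pseudo-metrizable, so I can fix a countable $\theta$-dense subset $D \subseteq \Phi$; by $\theta$-continuity and density, any two elements of $\Phi'_{\theta}$ that agree on $D$ agree on all of $\Phi$.

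Next I would bring in the hypothesis: for each $\phi \in D \subseteq \Phi$ the real-valued process $X[\phi]$ is a version of $Y[\phi]$, hence $\Prob( X_{t}[\phi] = Y_{t}[\phi] ) = 1$. Since $D$ is countable, the event
\[
\Omega_{t} \defeq \{ X_{t} \in \Phi'_{\theta} \} \cap \{ Y_{t} \in \Phi'_{\theta} \} \cap \bigcap_{\phi \in D} \{ X_{t}[\phi] = Y_{t}[\phi] \}
\]
has probability one, and for $\omega \in \Omega_{t}$ the functionals $X_{t}(\omega), Y_{t}(\omega) \in \Phi'_{\theta}$ agree on the $\theta$-dense set $D$, so $X_{t}(\omega) = Y_{t}(\omega)$ as elements of $\Phi'$, i.e. in $\Phi'_{\beta}$. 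Completeness of the probability space then gives $\Prob(X_{t}=Y_{t})=1$, and since $t$ was arbitrary, $X$ is a version of $Y$. For the last assertion, assume $X$ and $Y$ are right-continuous and choose a countable dense $Q \subseteq J$ such that every $t \in J$ is the limit of a decreasing sequence in $Q$ (for $J=[0,T]$ include $T$ in $Q$). The event $\Omega_{0} \defeq \bigcap_{s \in Q} \{ X_{s} = Y_{s} \}$ has probability one by the first part; for $\omega \in \Omega_{0}$ and $t \in J$, pick $s_{n} \in Q$ with $s_{n} \downarrow t$, so right-continuity in $\Phi'_{\beta}$ gives $X_{s_{n}}(\omega) \to X_{t}(\omega)$ and $Y_{s_{n}}(\omega) \to Y_{t}(\omega)$, and since $X_{s_{n}}(\omega) = Y_{s_{n}}(\omega)$ and $\Phi'_{\beta}$ is Hausdorff, limits are unique and $X_{t}(\omega) = Y_{t}(\omega)$; hence $X$ and $Y$ are indistinguishable.

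The only step that is not completely routine is the merging of topologies in the second paragraph: one must observe that regularity of $X_{t}$ and $Y_{t}$ allows the two a priori different countably Hilbertian topologies to be combined into a single weaker countably Hilbertian topology carrying both random variables, so that the \emph{countable} dense set supplied by the separability of $\Phi_{\theta}$ is enough to pin down equality in the (generally non-metrizable) strong dual $\Phi'_{\beta}$. Everything after that is the standard countable-dense-set and right-continuity bookkeeping, which is why the author can simply call the proof standard.
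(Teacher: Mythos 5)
Your proof is correct and is exactly the standard argument the paper alludes to when it omits the proof: merge the two countably Hilbertian topologies furnished by regularity into one, use separability of $\Phi_{\theta}$ to get a countable $\theta$-dense test set on which the two $\theta$-continuous functionals agree a.s., and then run the usual countable-dense-in-time plus right-continuity argument in the Hausdorff space $\Phi'_{\beta}$. No gaps.
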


We will need the following important result several times in this article. 

\begin{theo}[\cite{FonsecaMora:2016}, Theorem 4.3] \label{theoExistenceCadlagContVersionHilbertSpaceUniformBoundedMoments}
Let $X=\{X_{t} \}_{t \geq 0}$ be a cylindrical process in $\Phi'$ satisfying:
\begin{enumerate}
\item For each $\phi \in \Phi$, the real-valued process $X(\phi)=\{ X_{t}(\phi) \}_{t \geq 0}$ has a continuous (respectively c\`{a}dl\`{a}g) version.
\item There exists $n \in \N$ and a continuous Hilbertian semi-norm $\varrho$ on $\Phi$ such that for all $T>0$ there exists $C(T)>0$ such that 
\begin{equation} \label{uniformBoundMomentsByHilbertSeminorm}
\Exp \left( \sup_{t \in [0,T]} \abs{X_{t}(\phi)}^{n} \right) \leq C(T) \varrho(\phi)^{n}, \quad \forall \, \phi \in \Phi.
\end{equation} 
\end{enumerate}
Then, there exists a continuous Hilbertian semi-norm $q$ on $\Phi$, $\varrho \leq q$, such that $i_{\varrho,q}$ is Hilbert-Schmidt and there exists a $\Phi'_{q}$-valued continuous (respectively c\`{a}dl\`{a}g) process $Y=\{ Y_{t} \}_{t \geq 0}$, satisfying:
\begin{enumerate}[label=(\alph*)]
\item For every $\phi \in \Phi$, $Y[\phi]= \{ Y_{t}[\phi] \}_{t \geq 0}$ is a version of $X(\phi)= \{ X_{t}(\phi) \}_{t \geq 0}$, 
\item For every $T>0$, $\Exp \left( \sup_{t \in [0,T]} q'(Y_{t})^{n} \right) < \infty$.   
\end{enumerate} 
Furthermore, $Y$ is a $\Phi'_{\beta}$-valued continuous (respectively c\`{a}dl\`{a}g)regular version of $X$ that is unique up to indistinguishable versions.
\end{theo}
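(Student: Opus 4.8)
The plan is to reduce the statement to the finite-dimensional (real-valued) regularisation theory together with a countable exhaustion argument, and then to assemble the pieces into a single Hilbert-space-valued process. First I would fix an increasing sequence of continuous Hilbertian semi-norms dominating $\varrho$; since $\Phi$ is nuclear each of them can be further dominated by one whose canonical inclusion into the previous is Hilbert--Schmidt, so one obtains a countably Hilbertian topology $\theta$ weaker than $\mathcal{T}$ with $\varrho$ continuous for $\theta$. The separability of $\Phi_\theta$ (recorded in Section \ref{subsectionNuclSpace}) is what makes the whole construction possible: choose a countable $\theta$-dense subset $D \subseteq \Phi$, and use hypothesis (1) to pick, for each $\phi \in D$, a continuous (resp. \cadlag) real-valued version of $X(\phi)$. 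On a set of full probability, simultaneous for all $\phi \in D$, these versions are defined and have the stated path regularity.

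Next I would use hypothesis (2) to promote the family $\{X(\phi): \phi \in D\}$ to a genuine $\Phi'_\varrho$-valued (or rather $\widetilde{\Phi_\theta}'$-valued) object. The key estimate \eqref{uniformBoundMomentsByHilbertSeminorm} says $\phi \mapsto \sup_{t\le T} |X_t(\phi)|$ is controlled in $L^n$ by $\varrho(\phi)$; hence for each rational $t$ (and then, by path regularity, for all $t$) the map $\phi \mapsto X_t(\phi)$ extends from $D$ to a bounded linear functional on $\Phi_\varrho$, i.e.\ defines a $\Phi'_\varrho$-valued random variable $\widetilde{X}_t$, and the resulting process inherits continuity (resp.\ right-continuity with left limits) in the norm $\varrho'$ from the scalar paths plus a standard Cauchy-in-$L^n$/Borel--Cantelli argument. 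This is the step I expect to be the main obstacle: one must upgrade ``nice paths for each fixed $\phi$'' to ``nice paths in the strong dual norm'' uniformly, and this genuinely uses the Hilbert--Schmidt factorisation — the inclusion $i_{\varrho,q}': \Phi'_\varrho \hookrightarrow \Phi'_q$ being Hilbert--Schmidt is what converts weak regularity into strong (norm) regularity of the paths, via the a.s.\ finiteness of $\sum_k (\varrho'\text{-components})^2$ along an orthonormal basis. Choosing $q \geq \varrho$ with $i_{\varrho,q}$ Hilbert--Schmidt (possible by nuclearity), one lands in $\Phi'_q$ with a bona fide \cadlag (resp.\ continuous) path and obtains assertion (b) by combining \eqref{uniformBoundMomentsByHilbertSeminorm} with $\Exp(\sum_k q'(\cdot)\text{-terms})<\infty$.

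Then I would verify assertion (a): for $\phi \in D$ it holds by construction, and for general $\phi \in \Phi$ one approximates in $\theta$ by a sequence in $D$; continuity of $\widetilde{X}_t$ on $\Phi_\varrho$ gives convergence of $\widetilde{X}_t[\phi_k] \to Y_t[\phi]$, while hypothesis (1)–(2) gives convergence of $X_t(\phi_k)\to X_t(\phi)$ in probability, so the limits agree a.s.\ for each fixed $t$; this yields that $Y[\phi]$ is a version of $X(\phi)$. Composing with $i_{\varrho,q}'$ produces the $\Phi'_q$-valued process $Y$, which is then a $\Phi'_\beta$-valued regular process because its paths lie in the fixed Hilbert space $\Phi'_q \subseteq \Phi'_\theta \subseteq \Phi'_\beta$.

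Finally, for uniqueness up to indistinguishability I would invoke Proposition \ref{propCondiIndistingProcess}: any two processes $Y$, $Y'$ as in the conclusion are $\Phi'_\beta$-valued regular, and for each $\phi$ both $Y[\phi]$ and $Y'[\phi]$ are versions of $X(\phi)$, hence versions of one another; since both are right-continuous the Proposition gives that they are indistinguishable. The passage from the \cadlag case to the continuous case requires no new idea — one simply carries ``continuous'' instead of ``\cadlag'' through each step, the scalar version in hypothesis (1) being continuous forcing the $\Phi'_q$-valued path to be norm-continuous by the same Borel--Cantelli estimate. I would remark that the only genuinely quantitative input beyond soft functional analysis is the moment bound \eqref{uniformBoundMomentsByHilbertSeminorm}, which is used twice: once to extend the functionals $\phi \mapsto X_t(\phi)$ continuously to $\Phi_\varrho$, and once, together with the Hilbert--Schmidt embedding, to obtain the uniform-in-$[0,T]$ $L^n$ bound on $q'(Y_t)$.
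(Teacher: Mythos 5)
A preliminary remark: the paper does not prove this statement — it is imported wholesale as Theorem 4.3 of \cite{FonsecaMora:2016} — so your proposal has to be measured against the standard regularization argument rather than against anything in this text. Your overall architecture is broadly right: countable dense set, simultaneous good scalar versions, a Hilbert--Schmidt embedding, an orthonormal expansion with a Borel--Cantelli argument for uniform convergence of partial sums, and Proposition \ref{propCondiIndistingProcess} for uniqueness. The density argument for assertion (a) and the uniqueness step are fine.

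The genuine gap is your claim that \eqref{uniformBoundMomentsByHilbertSeminorm} lets you extend $\phi\mapsto X_{t}(\phi)$ ``from $D$ to a bounded linear functional on $\Phi_{\varrho}$, i.e.\ defines a $\Phi'_{\varrho}$-valued random variable $\widetilde{X}_{t}$.'' This is false in general. The moment bound only says that $\phi\mapsto X_{t}(\phi)$ is continuous from $\Phi_{\varrho}$ into $L^{n}\ProbSpace$; it gives no almost sure control of $\sup\{\abs{X_{t}(\phi)(\omega)}: \varrho(\phi)\leq 1\}$, which is what membership in $\Phi'_{\varrho}$ requires. Cylindrical Brownian motion over a Hilbert space $H$ satisfies the hypothesis with $\varrho=\norm{\cdot}_{H}$ and $n=2$, yet has no $H$-valued version; it radonifies only through a Hilbert--Schmidt embedding. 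Indeed, if your claim were correct the theorem would hold with $q=\varrho$ and the Hilbert--Schmidt condition would be superfluous; the enlargement to $\Phi'_{q}$ is needed precisely because $\widetilde{X}_{t}\notin\Phi'_{\varrho}$. The correct order of operations is to choose $q\geq\varrho$ with $i_{\varrho,q}$ Hilbert--Schmidt \emph{first}, take a complete orthonormal system $\{\phi_{j}\}\subseteq\Phi$ of $\Phi_{q}$, and define $Y_{t}$ as the element of $\Phi'_{q}$ whose coordinates are $X_{t}(\phi_{j})$. For $n\geq 2$ the triangle inequality in $L^{n/2}$ bounds $\Exp\bigl[\bigl(\sup_{t\leq T}\sum_{j}\abs{X_{t}(\phi_{j})}^{2}\bigr)^{n/2}\bigr]^{2/n}$ by $\sum_{j}\bigl(\Exp\sup_{t\leq T}\abs{X_{t}(\phi_{j})}^{n}\bigr)^{2/n}\leq C(T)^{2/n}\norm{i_{\varrho,q}}^{2}_{\mathcal{L}_{2}(\Phi_{q},\Phi_{\varrho})}<\infty$, which simultaneously yields that $Y_{t}\in\Phi'_{q}$, conclusion (b), and — via Chebyshev and Borel--Cantelli applied to the tails of the partial sums — the a.s.\ uniform-on-$[0,T]$ convergence that transfers the continuity or c\`{a}dl\`{a}g property of the coordinate processes to the $q'$-norm paths. (For $n=1$ even this fails, and one needs $q'(Y_{t})\leq\sum_{j}\abs{X_{t}(\phi_{j})}$ together with a nuclear, not merely Hilbert--Schmidt, inclusion $i_{\varrho,q}$, obtainable by composing two Hilbert--Schmidt steps; your sketch silently assumes $n\geq 2$.) In short, the Hilbert--Schmidt embedding is needed already for radonification at each fixed $t$, not merely, as you present it, to upgrade weak path regularity to norm path regularity.
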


\subsection{Martingales in the Strong Dual of a Nuclear Space} \label{subSectionMDNS} 

Let $T>0$ and $n\in \N$. We denote by $\mathcal{M}_{T}^{n}(\R)$  the space of real-valued 
$\{ \mathcal{F}_{t}\}$-adapted  zero-mean $n$-th integrable c\`{a}dl\`{a}g martingales defined on $[0,T]$ and by $\mathcal{M}^{n,loc}_{T}(\R)$ the space of all the real-valued processes defined on $[0,T]$ that are locally in  $\mathcal{M}_{T}^{n}(\R)$. Recall that $\mathcal{M}_{T}^{n}(\R)$ is a Banach space when equipped with the norm $\norm{\cdot}_{\mathcal{M}_{T}^{n}(E)}$ defined by (see \cite{DaPratoZabczyk}, Proposition 3.9, p.79):
\begin{equation}\label{defNormSpaceOfSquareIntegrableCadlagMartingales}
\norm{M}^{n}_{\mathcal{M}_{T}^{n}(\R)}= \Exp \left( \sup_{t\in [0,T]} \abs{M_{t}}^{n} \right), \quad \forall M \in \mathcal{M}_{T}^{n}(\R).
\end{equation}
On the other hand, we equip the space $\mathcal{M}^{n,loc}_{T}(\R)$ with the vector topology $\mathcal{T}_{n,loc}$ generated by the local base of neighbourhoods of zero $\{ O_{\epsilon, \delta}: \epsilon > 0, \delta >0 \}$, where $O_{\epsilon, \delta}$ is given by 
\begin{equation} \label{defiLocalBaseNeigZeroSpaceLocalSquareIntegRealMarting}
 O_{\epsilon, \delta} = \left\{ M \in \mathcal{M}^{n,loc}_{T}(\R): \Prob \left(  \omega \in \Omega: \sup_{t \in [0,T]} \abs{M_{t}(\omega)}^{n} > \epsilon  \right) < \delta  \right\}.
\end{equation}
Then, equipped with the topology $\mathcal{T}_{n,loc}$ the space $\mathcal{M}^{n,loc}_{T}(\R)$ is a complete, metrizable topological vector space. 

We will say that a cylindrical process $M=\{ M_{t} \}_{t \geq 0}$ in $\Phi'$ is a  \emph{cylindrical zero mean $n$-th integrable c\`{a}dl\`{a}g martingale}  in $\Phi'$ (respectively a  \emph{cylindrical locally zero mean $n$-th integrable c\`{a}dl\`{a}g martingale}  in $\Phi'$) if for each $\phi \in \Phi$,  $M(\phi)= \{ M_{t}(\phi) \}_{t \geq 0}$ belongs to $\mathcal{M}_{T}^{n}(\R)$ (respectively to $\mathcal{M}_{T}^{n, loc}(\R)$).  


We denote by $\mathcal{M}^{n}_{T}(\Phi'_{\beta})$ (respectively by $\mathcal{M}^{n,loc}_{T}(\Phi'_{\beta})$) the linear space of all  $\Phi'_{\beta}$-valued regular c\`{a}dl\`{a}g processes such that the associated cylindrical process is a cylindrical zero-mean $n$-th integrable c\`{a}dl\`{a}g martingale (respectively a cylindrical locally zero mean $n$-th integrable c\`{a}dl\`{a}g martingale) in $\Phi'$. 

The following result is based on Theorems 3.1 and 5.2 of \cite{FonsecaMora:2016}. It will play a fundamental role in our construction of the (strong) stochastic integral. 

\begin{theo}  \label{propertiesMartingales}
Let $M=\left\{ M_{t} \right\}_{t \in [0,T]}$ be a cylindrical zero-mean $n$-th integrable c\`{a}dl\`{a}g martingale (respective a cylindrical locally zero-mean $n$-th integrable c\`{a}dl\`{a}g martingale) in $\Phi'$ such that for each $t \in [0,T]$ the map $M_{t}: \Phi \rightarrow L^{0} \ProbSpace$ is continuous. Then, there exists $\widetilde{M}=\{ \widetilde{M}_{t} \}_{t\in [0,T]}$ in $\mathcal{M}^{n}_{T}(\Phi'_{\beta})$ (respectively in $\mathcal{M}^{n,loc}_{T}(\Phi'_{\beta})$) that is a version of $M$. 

Moreover, if $\widetilde{M} \in \mathcal{M}^{n}_{T}(\Phi'_{\beta})$ for $n \geq 2$, then for each $T>0$ there exists a continuous Hilbertian semi-norm $q$ on $\Phi$ such that $\{ \widetilde{M}_{t} \}_{t \in [0, T] }$ is a $\Phi'_{q}$-valued zero-mean c\`{a}dl\`{a}g martingale satisfying $\Exp \left( \sup_{t \in [0, T] } q' ( \widetilde{M}_{t} )^{n} \right) < \infty$.

Furthermore, if for each $ \phi \in \Phi$ the real-valued process $\{ M_{t}(\phi) \}_{t \in [0,T]}$ has a continuous version, then $\widetilde{M}$ can be chosen to be continuous. 
\end{theo}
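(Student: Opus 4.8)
The plan is to derive $\widetilde{M}$ together with all its stated properties from the regularisation theory for cylindrical processes in duals of nuclear spaces: Theorems~3.1 and~5.2 of \cite{FonsecaMora:2016}, together with Theorem~\ref{theoExistenceCadlagContVersionHilbertSpaceUniformBoundedMoments} and Proposition~\ref{propCondiIndistingProcess}. For the first assertion, recall that by assumption each real-valued process $M(\phi)=\{M_{t}(\phi)\}_{t \in [0,T]}$ lies in $\mathcal{M}^{n}_{T}(\R)$ (respectively in $\mathcal{M}^{n,loc}_{T}(\R)$) and hence admits a c\`{a}dl\`{a}g modification; this together with the continuity of each $M_{t} \colon \Phi \to L^{0} \ProbSpace$ is precisely the input of Theorem~3.1 of \cite{FonsecaMora:2016}, which yields a regular $\Phi'_{\beta}$-valued c\`{a}dl\`{a}g process $\widetilde{M}=\{\widetilde{M}_{t}\}_{t \in [0,T]}$ with $\widetilde{M}[\phi]$ a version of $M(\phi)$ for every $\phi \in \Phi$. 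Consequently the cylindrical process associated with $\widetilde{M}$ coincides, for each $\phi$, with $M(\phi)$ up to modification, so it is again a cylindrical zero-mean $n$-th integrable c\`{a}dl\`{a}g martingale (respectively a cylindrical locally zero-mean $n$-th integrable c\`{a}dl\`{a}g martingale); that is, $\widetilde{M} \in \mathcal{M}^{n}_{T}(\Phi'_{\beta})$ (respectively $\widetilde{M} \in \mathcal{M}^{n,loc}_{T}(\Phi'_{\beta})$). Uniqueness up to indistinguishability among right-continuous regular versions follows from Proposition~\ref{propCondiIndistingProcess}.

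For the second assertion, assume $n \geq 2$ and (extending $M$ by $M_{t} \defeq M_{T}$ for $t > T$, which preserves the martingale property) let us apply Theorem~\ref{theoExistenceCadlagContVersionHilbertSpaceUniformBoundedMoments} to the cylindrical martingale $M$. Its condition~(1) holds because each $M(\phi)$ has a c\`{a}dl\`{a}g version, and condition~(2) is obtained as follows: the linear map $\phi \mapsto M(\phi)$ from $\Phi$ into the Banach space $\mathcal{M}^{n}_{T}(\R)$ has closed graph, since if $\phi_{k} \to \phi$ in $\Phi$ and $M(\phi_{k}) \to N$ in $\mathcal{M}^{n}_{T}(\R)$, then $M_{t}(\phi_{k}) \to N_{t}$ in $L^{0} \ProbSpace$ for every $t$, while continuity of $M_{t}$ forces $M_{t}(\phi_{k}) \to M_{t}(\phi)$ in $L^{0} \ProbSpace$, whence $N=M(\phi)$; by the closed graph theorem this map is continuous, so some continuous semi-norm on $\Phi$ dominates $\phi \mapsto \Exp \left( \sup_{t \in [0,T]} \abs{M_{t}(\phi)}^{n} \right)^{1/n}$, and, $\Phi$ being nuclear, this semi-norm is dominated by a continuous Hilbertian semi-norm $\varrho$, which gives \eqref{uniformBoundMomentsByHilbertSeminorm}. (Equivalently, one may first pass to the terminal time by Doob's $L^{n}$ maximal inequality, which is where $n \geq 2$ is used, and then invoke a Radonification-type moment estimate for Radon measures on the dual of a nuclear space.) Theorem~\ref{theoExistenceCadlagContVersionHilbertSpaceUniformBoundedMoments} then provides a continuous Hilbertian semi-norm $q \geq \varrho$ with $i_{\varrho,q}$ Hilbert--Schmidt and a $\Phi'_{q}$-valued c\`{a}dl\`{a}g process $Y$ that is a regular $\Phi'_{\beta}$-valued version of $M$ with $\Exp \left( \sup_{t \in [0,T]} q'(Y_{t})^{n} \right) < \infty$; by the uniqueness clause of that theorem and Proposition~\ref{propCondiIndistingProcess}, $Y$ is indistinguishable from $\widetilde{M}$, so $\{\widetilde{M}_{t}\}_{t \in [0,T]}$ takes values in $\Phi'_{q}$ with the asserted moment bound. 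Finally, $\{\widetilde{M}_{t}\}_{t \in [0,T]}$ is a $\Phi'_{q}$-valued zero-mean c\`{a}dl\`{a}g martingale: each $\widetilde{M}_{t}$ is $\Phi'_{q}$-Bochner integrable because $q'(\widetilde{M}_{t}) \in L^{1} \ProbSpace$, and since $\langle \widetilde{M}_{t}, i_{q}(\phi) \rangle = \widetilde{M}_{t}[\phi] = M_{t}(\phi)$ for $\phi \in \Phi$ is a real-valued zero-mean martingale, the density of $i_{q}(\Phi)$ in the separable space $\Phi_{q}$ together with continuity yields $\Exp(\widetilde{M}_{t})=0$ and $\Exp(\widetilde{M}_{t} \mid \mathcal{F}_{s}) = \widetilde{M}_{s}$ in $\Phi'_{q}$ for $s \leq t$.

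For the last assertion, if $\{M_{t}(\phi)\}_{t \in [0,T]}$ additionally has a continuous version for each $\phi \in \Phi$, one repeats the whole argument with the ``continuous'' alternatives of Theorem~3.1 of \cite{FonsecaMora:2016} and of Theorem~\ref{theoExistenceCadlagContVersionHilbertSpaceUniformBoundedMoments} in place of the c\`{a}dl\`{a}g ones, producing $\widetilde{M}$ (and $Y$) with continuous paths.

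The step I expect to be the crux is the passage, within the second assertion, from the purely qualitative hypothesis that each $M_{t}$ is continuous into $L^{0} \ProbSpace$ to the quantitative Hilbertian estimate \eqref{uniformBoundMomentsByHilbertSeminorm}: this forces one either through a closed graph argument into a Banach space of martingales (legitimate here thanks to the barrelledness available for the nuclear spaces in play) or through Doob's inequality combined with a moment-Radonification property on the dual of a nuclear space, and in both cases one must then exploit nuclearity to replace a continuous semi-norm by a Hilbertian one; this is the content imported from Theorem~5.2 of \cite{FonsecaMora:2016}. By comparison the remaining points are routine: upgrading to the Hilbert-space-valued martingale property is a standard pairing argument, and in the ``locally'' case no common reducing sequence is needed because membership in $\mathcal{M}^{n,loc}_{T}(\Phi'_{\beta})$ demands only the fibre-by-fibre local-martingale property, which is automatic once the regular c\`{a}dl\`{a}g version has been constructed.
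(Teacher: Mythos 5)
The paper offers no argument here: the theorem is imported wholesale from Theorems 3.1 and 5.2 of \cite{FonsecaMora:2016}, so there is nothing in the text to compare your proof against line by line. Your reconstruction is sound and uses exactly the machinery those citations encode: the bare regularization theorem for the first (and the ``locally'') assertion, and Theorem \ref{theoExistenceCadlagContVersionHilbertSpaceUniformBoundedMoments} together with Proposition \ref{propCondiIndistingProcess} for the quantitative second assertion; the pairing/density upgrade to a genuine $\Phi'_{q}$-valued martingale and the continuous variant are routine, as you say. You have also correctly isolated the crux, namely converting the qualitative continuity of each $M_{t}$ into $L^{0}\ProbSpace$ into the Hilbertian estimate \eqref{uniformBoundMomentsByHilbertSeminorm}.

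The one point you should make fully explicit is that this crux step genuinely needs more than nuclearity of $\Phi$. Your closed-graph argument into the Banach space $\mathcal{M}^{n}_{T}(\R)$ requires $\Phi$ to be ultrabornological (a bare nuclear space need not admit the closed graph theorem into Banach spaces); alternatively, one can avoid the closed graph theorem by observing that $\phi \mapsto \Exp \left( \sup_{t \in [0,T]} \abs{M_{t}(\phi)}^{n} \right)^{1/n}$ is a seminorm that is lower semicontinuous (Fatou along an a.s.\ convergent subsequence, using continuity of $M_{t}$ into $L^{0}\ProbSpace$), and a lower semicontinuous seminorm on a barrelled space is continuous. Either way an extra hypothesis (barrelled or ultrabornological) is being used that is not visible in the statement as reproduced in this paper; it is carried by the hypotheses of the cited Theorem 5.2 and is satisfied wherever the theorem is invoked here, since $\Psi$ is quasi-complete, bornological and nuclear, hence ultrabornological and barrelled. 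With that caveat recorded, your proof is correct and is essentially the intended one.
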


In what follows we will introduce some topologies on the space $\mathcal{M}^{n}_{T}(\Phi'_{\beta})$. We will need the following result:

\begin{prop} \label{dualNuclearIntegrableMartingalesAsOperatorsToRealIntegrableMartingales}
Let $n \in \N$. The mapping from $\mathcal{M}^{n}_{T}(\Phi'_{\beta})$ into $\mathcal{L}(\Phi,\mathcal{M}^{n}_{T}(\R))$ given by 
\begin{equation} \label{isomorphismDualNuclearMartgAndContLineOperators}
M \mapsto \left(\phi \mapsto M[\phi]=\{ M_{t}[\phi]\}_{t \in [0,T]} \right),
\end{equation}
is a linear isomorphism. 
\end{prop}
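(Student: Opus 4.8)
The plan is to verify that the map $\Lambda: M \mapsto (\phi \mapsto M[\phi])$ is well defined, linear, injective, surjective, with both $\Lambda$ and $\Lambda^{-1}$ making sense as stated; linearity is immediate from $(M+N)[\phi] = M[\phi] + N[\phi]$ and $(\lambda M)[\phi] = \lambda M[\phi]$, so the substance is the other three properties.

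First I would check that $\Lambda$ takes values where claimed. Fix $M \in \mathcal{M}^{n}_{T}(\Phi'_{\beta})$. By definition of this space, $M$ is a $\Phi'_{\beta}$-valued regular c\`{a}dl\`{a}g process whose associated cylindrical process is a cylindrical zero-mean $n$-th integrable c\`{a}dl\`{a}g martingale; hence for each $\phi \in \Phi$ the process $M[\phi] = \{ M_{t}[\phi] \}_{t \in [0,T]}$ lies in $\mathcal{M}^{n}_{T}(\R)$. It remains to see that $\phi \mapsto M[\phi]$ is \emph{continuous} from $\Phi$ into $\mathcal{M}^{n}_{T}(\R)$. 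This is where I expect the main work to be, and it is the step I would flag as the principal obstacle. The idea is to invoke Theorem \ref{propertiesMartingales}: since $M$ is regular and in $\mathcal{M}^{n}_{T}(\Phi'_{\beta})$ one obtains (at least after passing through $n' = \min(n,2)$, or directly when $n \geq 2$) a continuous Hilbertian semi-norm $q$ on $\Phi$ such that $\{M_{t}\}_{t\in[0,T]}$ is a $\Phi'_{q}$-valued c\`{a}dl\`{a}g martingale with $\Exp ( \sup_{t \in [0,T]} q'(M_{t})^{n}) < \infty$. Then for $\phi \in \Phi$,
\begin{equation*}
\norm{M[\phi]}^{n}_{\mathcal{M}^{n}_{T}(\R)} = \Exp \left( \sup_{t \in [0,T]} \abs{M_{t}[\phi]}^{n} \right) \leq \Exp \left( \sup_{t \in [0,T]} q'(M_{t})^{n} \right) q(\phi)^{n},
\end{equation*}
which gives continuity of $\Lambda(M)$ at once. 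For general $n \in \N$ (including $n=1$) one argues the same way after first regularizing via Theorem \ref{theoExistenceCadlagContVersionHilbertSpaceUniformBoundedMoments} or the $n=1$ case of the regularity statements, using that a regular c\`{a}dl\`{a}g martingale associated to a continuous cylindrical map still admits a controlling Hilbertian semi-norm; alternatively, one notes that the hypothesis ``$M$ regular c\`{a}dl\`{a}g'' together with the closed graph / uniform boundedness principle on the Banach space $\mathcal{M}^{n}_{T}(\R)$ forces continuity of the evidently linear map $\phi \mapsto M[\phi]$, since pointwise in $\omega$ and $t$ the evaluation is continuous and the supremum bounds are finite.

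Next, injectivity: if $\Lambda(M) = \Lambda(N)$ then $M[\phi]$ and $N[\phi]$ are indistinguishable for every $\phi \in \Phi$, so in particular $M_{t}[\phi] = N_{t}[\phi]$ $\Prob$-a.s.\ for each $t$ and each $\phi$; since $M$ and $N$ are regular $\Phi'_{\beta}$-valued c\`{a}dl\`{a}g processes, Proposition \ref{propCondiIndistingProcess} yields that $M$ and $N$ are indistinguishable, hence equal in $\mathcal{M}^{n}_{T}(\Phi'_{\beta})$. For surjectivity, take $T_{0} \in \mathcal{L}(\Phi, \mathcal{M}^{n}_{T}(\R))$. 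Define a cylindrical process $M$ in $\Phi'$ by $M_{t}(\phi) = (T_{0}\phi)_{t}$; for each $\phi$, $T_{0}\phi \in \mathcal{M}^{n}_{T}(\R)$, so $M$ is a cylindrical zero-mean $n$-th integrable c\`{a}dl\`{a}g martingale, and for each fixed $t$ the map $\phi \mapsto M_{t}(\phi)$ is continuous from $\Phi$ into $L^{0}\ProbSpace$ because it is the composition of the continuous $T_{0}$ with the continuous evaluation $\mathcal{M}^{n}_{T}(\R) \to L^{0}\ProbSpace$ (convergence in $\mathcal{M}^{n}_{T}(\R)$-norm implies $\sup_{t}\abs{\cdot}^{n} \to 0$ in $L^{1}$, hence in probability). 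By Theorem \ref{propertiesMartingales} there is $\widetilde{M} \in \mathcal{M}^{n}_{T}(\Phi'_{\beta})$ that is a version of $M$, i.e.\ $\widetilde{M}[\phi] = M(\phi) = T_{0}\phi$ up to a null set, for every $\phi$; moreover for each $\phi$ both $\widetilde{M}[\phi]$ and $T_0\phi$ are c\`{a}dl\`{a}g, so they are indistinguishable and thus equal as elements of $\mathcal{M}^{n}_{T}(\R)$. Hence $\Lambda(\widetilde{M}) = T_{0}$, proving surjectivity and completing the proof that $\Lambda$ is a linear isomorphism.

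One small point I would double-check is that $\Lambda^{-1}$ is understood merely as a set-theoretic (linear) inverse, so no continuity of the inverse needs to be argued; if the paper intends a topological isomorphism then the bound displayed above, combined with the fact that the family of semi-norms $M \mapsto \norm{M[\phi]}_{\mathcal{M}^{n}_{T}(\R)}$ generates the topology of $\mathcal{L}(\Phi,\mathcal{M}^{n}_{T}(\R))$ under the topology of pointwise convergence (or bounded convergence), gives bicontinuity once one also equips $\mathcal{M}^{n}_{T}(\Phi'_{\beta})$ with the corresponding locally convex topology, which appears to be exactly the agenda of the paragraph following the statement.
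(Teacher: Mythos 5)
Your proposal is correct and follows essentially the same route as the paper: linearity and injectivity are handled directly (the paper notes the kernel is trivial, which is your indistinguishability argument via Proposition \ref{propCondiIndistingProcess}), and surjectivity is obtained by feeding the cylindrical process $\phi \mapsto A\phi$ into Theorem \ref{propertiesMartingales}. The continuity of $\phi \mapsto M[\phi]$, which you rightly flag as the only substantive point in well-definedness, is dismissed in the paper as ``easy to check,'' so your more explicit treatment via the Hilbertian semi-norm bound is if anything a refinement rather than a divergence.
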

\begin{prf} It is easy to check that the map  \eqref{isomorphismDualNuclearMartgAndContLineOperators} is well-defined and that it is linear. Moreover, it is also injective because its kernel only contains the zero vector of $\mathcal{M}^{n}_{T}(\Phi'_{\beta})$. 

Now, let $A\in \mathcal{L}(\Phi,\mathcal{M}^{n}_{T}(\R))$. Then, $A$ defines a cylindrical process in $\Phi'$ such that for each $\phi \in \Phi$, $A\phi=\{(A\phi)_{t} \}_{t \in [0,T]} \in \mathcal{M}^{n}_{T}(\R)$ and such that for each $t \in [0,T]$ the map $\phi \mapsto (A\phi)_{t}$ from $\Phi$ into $L^{0}\ProbSpace$ is continuous. Therefore, from Theorem \ref{propertiesMartingales} there exists $M \in \mathcal{M}^{n}_{T}(\Phi'_{\beta})$ that is a version of the cylindrical process defined by $A$, i.e. for each $t \in [0,T]$, $\Prob$-a.e. we have $M_{t}[\phi]=(A\phi)_{t}$, for all $\phi \in \Phi$. Hence, the map \eqref{isomorphismDualNuclearMartgAndContLineOperators} is  surjective. 
\end{prf}

Now we proceed to introduce vector topologies on the space $\mathcal{M}^{n}_{T}(\Phi'_{\beta})$. First,  we identify each $M$ in $\mathcal{M}^{n}_{T}(\Phi'_{\beta})$ with the corresponding element $\phi \mapsto M[\phi]$ in $\mathcal{L}(\Phi,\mathcal{M}^{n}_{T}(\R))$ given by \eqref{isomorphismDualNuclearMartgAndContLineOperators} (Proposition \ref{dualNuclearIntegrableMartingalesAsOperatorsToRealIntegrableMartingales}). Then, on $\mathcal{M}^{n}_{T}(\Phi'_{\beta})$ we define the topology of bounded (respectively simple) convergence as the locally convex topology generated by the following family of semi-norms:
\begin{equation} \label{semiNormsBoundedConvTopoDualNuclearMartingales}
 M \rightarrow \sup_{ \phi \in B} \norm{ M[\phi]}_{\mathcal{M}_{T}^{n}(\R)}= \sup_{\phi \in B} \Exp \left( \sup_{t\in [0,T]} \abs{M_{t}[\phi]}^{n} \right)^{1/n},
\end{equation}
where $B$ runs over the bounded (respectively finite) subsets of $\Phi$. Hence, the topology of bounded (respectively simple) convergence on $\mathcal{M}^{n}_{T}(\Phi'_{\beta})$ is the topology of bounded (respectively simple) convergence on $\mathcal{L}(\Phi,\mathcal{M}^{n}_{T}(\R))$ defined on $\mathcal{M}^{n}_{T}(\Phi'_{\beta})$ via the isomorphism \eqref{isomorphismDualNuclearMartgAndContLineOperators}.

The next result follows from the corresponding properties of the topologies of bounded and simple convergence of the space $\mathcal{L}(\Phi,\mathcal{M}^{n}_{T}(\R))$ (See \cite{KotheII}, Chapter 39).

\begin{prop}\label{propertiesTopologiesSpaceDualNuclearMartingales} Let $\Phi$ be a barrelled nuclear space. Then, the space $\mathcal{M}^{n}_{T}(\Phi'_{\beta})$ is quasi-complete equipped with either the topology of bounded convergence or the topology of simple convergence. If additionally $\Phi$ is bornological, then $\mathcal{M}^{n}_{T}(\Phi'_{\beta})$ is complete when equipped with the topology of bounded convergence. 
\end{prop}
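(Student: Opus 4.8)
The plan is to transfer the completeness/quasi-completeness properties from the well-studied space $\mathcal{L}(\Phi,\mathcal{M}^{n}_{T}(\R))$ to $\mathcal{M}^{n}_{T}(\Phi'_{\beta})$ via the linear isomorphism \eqref{isomorphismDualNuclearMartgAndContLineOperators} of Proposition \ref{dualNuclearIntegrableMartingalesAsOperatorsToRealIntegrableMartingales}. By construction, the topology of bounded (respectively simple) convergence on $\mathcal{M}^{n}_{T}(\Phi'_{\beta})$ is \emph{defined} as the pullback under \eqref{isomorphismDualNuclearMartgAndContLineOperators} of the topology of bounded (respectively simple) convergence on $\mathcal{L}(\Phi,\mathcal{M}^{n}_{T}(\R))$, so the isomorphism is automatically a topological isomorphism for these paired topologies. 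Hence it suffices to establish the stated completeness properties for $\mathcal{L}(\Phi,\mathcal{M}^{n}_{T}(\R))$ and then pull them back. I would first recall that $\mathcal{M}^{n}_{T}(\R)$ is a Banach space (noted after \eqref{defNormSpaceOfSquareIntegrableCadlagMartingales}), hence in particular complete, metrizable, and locally convex; this is the target space to which the K\"{o}the results apply.

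Next I would invoke the general theory of spaces of continuous linear maps (\cite{KotheII}, Chapter 39). The relevant facts are: (i) if $\Phi$ is barrelled and $F$ is a quasi-complete locally convex space, then $\mathcal{L}(\Phi,F)$ equipped with the topology of bounded convergence is quasi-complete, and likewise $\mathcal{L}_{s}(\Phi,F)$ (simple convergence) is quasi-complete; and (ii) if moreover $\Phi$ is bornological and $F$ is complete, then $\mathcal{L}_{b}(\Phi,F)$ is complete. The barrelledness hypothesis is exactly what makes pointwise-bounded families of operators equicontinuous (Banach--Steinhaus), which is what forces a Cauchy net of operators to converge to a \emph{continuous} limit; the bornological hypothesis upgrades this to full completeness by ensuring that a linear map which is bounded on bounded sets is continuous. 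Since $\mathcal{M}^{n}_{T}(\R)$ is a Banach space it is both quasi-complete and complete, so both parts apply with $F=\mathcal{M}^{n}_{T}(\R)$.

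Combining: under the hypothesis that $\Phi$ is a barrelled nuclear space, $\mathcal{L}(\Phi,\mathcal{M}^{n}_{T}(\R))$ is quasi-complete for both the bounded and the simple convergence topologies, and therefore so is $\mathcal{M}^{n}_{T}(\Phi'_{\beta})$ via the topological isomorphism \eqref{isomorphismDualNuclearMartgAndContLineOperators}. If in addition $\Phi$ is bornological, then $\mathcal{L}_{b}(\Phi,\mathcal{M}^{n}_{T}(\R))$ is complete, hence so is $\mathcal{M}^{n}_{T}(\Phi'_{\beta})$ with the topology of bounded convergence. Strictly speaking, to apply the K\"{o}the results one should note that $\Phi$ nuclear implies the pointwise-bounded subsets of $\mathcal{L}(\Phi,\mathcal{M}^{n}_{T}(\R))$ behave well, but in fact only barrelledness of $\Phi$ is needed for the equicontinuity step, so nuclearity plays no essential role here beyond being the ambient standing assumption; I would state this explicitly.

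The main obstacle — really the only point requiring care — is the verification that a Cauchy net in $\mathcal{M}^{n}_{T}(\Phi'_{\beta})$ (equivalently, a Cauchy net of operators into $\mathcal{M}^{n}_{T}(\R)$) has a limit that again lies in $\mathcal{M}^{n}_{T}(\Phi'_{\beta})$, i.e.\ that the limiting operator $A$ corresponds to an honest $\Phi'_{\beta}$-valued regular c\`{a}dl\`{a}g martingale and not merely to an abstract element of $\mathcal{L}(\Phi,\mathcal{M}^{n}_{T}(\R))$. But this is precisely the content of surjectivity in Proposition \ref{dualNuclearIntegrableMartingalesAsOperatorsToRealIntegrableMartingales}, which was obtained from the regularization Theorem \ref{propertiesMartingales}: every $A \in \mathcal{L}(\Phi,\mathcal{M}^{n}_{T}(\R))$ admits such a version. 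So no new analytic work beyond citing \cite{KotheII} and the earlier propositions is required, and the proof is essentially a bookkeeping argument about transporting topological-vector-space properties through the isomorphism.
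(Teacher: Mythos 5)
Your proposal is correct and follows essentially the same route as the paper, which likewise derives the result from the properties of the topologies of bounded and simple convergence on $\mathcal{L}(\Phi,\mathcal{M}^{n}_{T}(\R))$ (K\"{o}the, Chapter 39) transported through the isomorphism of Proposition \ref{dualNuclearIntegrableMartingalesAsOperatorsToRealIntegrableMartingales}. Your additional observation that the surjectivity of that isomorphism (itself resting on Theorem \ref{propertiesMartingales}) is what guarantees the limit of a Cauchy net is again an honest element of $\mathcal{M}^{n}_{T}(\Phi'_{\beta})$ is exactly the point the paper leaves implicit.
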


\subsection{L\'{e}vy Processes in the Dual of a Nuclear Space} \label{subsectionLPDNS}

In this section we review basic properties of L\'{e}vy processes taking values in the dual of a nuclear space. For further details see \cite{FonsecaMora:2017}.  

Let $\Phi$ be a barrelled nuclear space. A $\Phi'_{\beta}$-valued process $L=\left\{ L_{t} \right\}_{t\geq 0}$ is called a \emph{L\'{e}vy process} if \begin{inparaenum}[(i)] \item  $L_{0}=0$ a.s., 
\item $L$ has \emph{independent increments}, i.e. for any $n \in \N$, $0 \leq t_{1}< t_{2} < \dots < t_{n} < \infty$ the $\Phi'_{\beta}$-valued random variables $L_{t_{1}},L_{t_{2}}-L_{t_{1}}, \dots, L_{t_{n}}-L_{t_{n-1}}$ are independent,  
\item L has \emph{stationary increments}, i.e. for any $0 \leq s \leq t$, $L_{t}-L_{s}$ and $L_{t-s}$ are identically distributed, and  
\item For every $t \geq 0$ the distribution $\mu_{t}$ of $L_{t}$ is a Radon measure and the mapping $t \mapsto \mu_{t}$ from $\R_{+}$ into the space $\goth{M}_{R}^{1}(\Phi'_{\beta})$ of Radon probability measures on $\Phi'_{\beta}$ is continuous at $0$ when $\goth{M}_{R}^{1}(\Phi'_{\beta})$  is equipped with the weak topology. \end{inparaenum}

Recall that a \emph{Wiener process} is a $\Phi'_{\beta}$-valued continuous L\'{e}vy process $W=\left\{ W_{t} \right\}_{t\geq 0}$. Every Wiener process $W$ is a Gaussian process and for such a process $W$ there exists $\goth{m} \in \Phi'_{\beta}$ and a continuous Hilbertian semi-norm $\mathcal{Q}$ on $\Phi$, called respectively the \emph{mean} and the \emph{covariance functional} of $W$, such that 
\begin{equation}
\Exp \left( W_{t} [\phi] \right) = t \goth{m} [\phi], \quad  \forall \, \phi \in \Phi, \, t \geq 0.  \label{meanWienerProcess}
\end{equation}  
\begin{equation}
\Exp \left( \left(W_{t}-t \goth{m} \right)[\phi] \left(W_{s}-s \goth{m} \right)[\varphi] \right) = ( t \wedge s ) \mathcal{Q} (\phi, \varphi), \quad \forall \, \phi, \varphi \in \Phi, \, s, t \geq 0.  \label{covarianceFunctWienerProcess}
\end{equation}
where in \eqref{covarianceFunctWienerProcess} $\mathcal{Q}(\cdot,\cdot)$ corresponds to the continuous, symmetric, non-negative bilinear form on $\Phi \times \Phi$ associated to $\mathcal{Q}$ (see \cite{Ito}, Theorem 2.7.1).

\begin{theo}[\cite{FonsecaMora:2017}, Corollary 3.11] \label{theoExistenceCadlagVersionLevyProc}
Let $L=\{ L_{t} \}_{t \geq 0}$ be a $\Phi'_{\beta}$-valued L\'{e}vy process. Then, $L$ has a $\Phi'_{\beta}$-valued, regular, c\`{a}dl\`{a}g version $\tilde{L}=\{ \tilde{L}_{t} \}_{t \geq 0}$ that is also a L\'{e}vy process. Moreover, there exists a weaker countably Hilbertian topology $\vartheta_{L}$ on $\Phi$ such that $\tilde{L}$ is a $(\widetilde{\Phi_{\vartheta_{L}}})'_{\beta}$-valued c\`{a}dl\`{a}g process. 
\end{theo}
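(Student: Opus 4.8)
The strategy is to reduce everything to the one--dimensional processes $L[\phi]$, $\phi\in\Phi$, extract from them the two ingredients needed to run a regularization theorem for cylindrical processes, and then glue the resulting pieces over the intervals $[0,T]$ as $T\uparrow\infty$. First I would observe that for each $\phi\in\Phi$ the real process $L[\phi]=\{L_{t}[\phi]\}_{t\geq 0}$ is a classical one--dimensional L\'{e}vy process: independence and stationarity of its increments come from the same properties of $L$ and the linearity of $f\mapsto f[\phi]$, while stochastic continuity comes from axiom (iv), since weak continuity of $t\mapsto\mu_{t}$ at $0$ forces $L_{t}[\phi]\to 0$ in probability as $t\downarrow 0$, which stationarity propagates to all of $\R_{+}$. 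Hence $L[\phi]$ admits a c\`{a}dl\`{a}g modification. Moreover, each characteristic functional $\phi\mapsto\Exp(e^{iL_{t}[\phi]})$ is continuous on $\Phi$ (because $\mu_{t}$ is Radon and $\Phi$ is nuclear), and since near $\phi=0$ this quantity is the $t$-th power of $\Exp(e^{iL_{1}[\phi]})$, these functionals are equicontinuous at $0$ as $t$ ranges over any bounded interval; unwinding this gives that for each $T>0$ the family $\{L_{t}:t\in[0,T]\}$ of linear maps $\Phi\to L^{0}\ProbSpace$ is equicontinuous.

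Next, for a fixed $T>0$ I would feed the cylindrical process $\{L_{t}\}_{t\in[0,T]}$ into the regularization results of \cite{FonsecaMora:2016}. Note that Theorem \ref{theoExistenceCadlagContVersionHilbertSpaceUniformBoundedMoments} is \emph{not} available here, since a general L\'{e}vy process need not possess any finite moment; the relevant tool is the moment--free version of that circle of results, whose hypotheses are precisely the two facts just assembled --- a c\`{a}dl\`{a}g real version of each $L(\phi)$ and equicontinuity of $\{L_{t}:t\in[0,T]\}$ --- and which yields a weaker countably Hilbertian topology $\vartheta_{T}$ on $\Phi$ together with a $(\widetilde{\Phi_{\vartheta_{T}}})'_{\beta}$-valued c\`{a}dl\`{a}g process $\widehat{L}^{T}$ on $[0,T]$ that is a regular $\Phi'_{\beta}$-valued version of $\{L_{t}\}_{t\in[0,T]}$. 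Carrying this out for $T=1,2,\dots$, the processes $\restr{\widehat{L}^{m+1}}{[0,m]}$ and $\widehat{L}^{m}$ are both regular and right--continuous and agree coordinatewise $\Prob$-a.e., hence are indistinguishable by Proposition \ref{propCondiIndistingProcess}; discarding a single null set I glue them into one $\Phi'_{\beta}$-valued regular c\`{a}dl\`{a}g process $\tilde{L}=\{\tilde{L}_{t}\}_{t\geq 0}$ that is a version of $L$. Taking $\vartheta_{L}$ to be the supremum of the $\vartheta_{m}$, $m\in\N$ (it is generated by a countable family of Hilbertian semi-norms and is still weaker than the topology of $\Phi$), one has $\vartheta_{m}\leq\vartheta_{L}$, hence $\tilde{L}_{t}\in\Phi'_{\vartheta_{m}}\subseteq(\widetilde{\Phi_{\vartheta_{L}}})'$ for every $t$, and a routine argument promotes this to the assertion that $\tilde{L}$ is a $(\widetilde{\Phi_{\vartheta_{L}}})'_{\beta}$-valued c\`{a}dl\`{a}g process.

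It remains to check that $\tilde{L}$ is again a L\'{e}vy process. Since each $\mu_{t}$ is Radon and $\Phi$ is nuclear, $L_{t}$ is itself a regular $\Phi'_{\beta}$-valued random variable, so Proposition \ref{propCondiIndistingProcess} upgrades ``$\tilde{L}$ is a version of $L$'' to ``$\tilde{L}_{t}=L_{t}$ $\Prob$-a.e.\ for each $t$''; consequently $\tilde{L}$ and $L$ have the same finite--dimensional distributions, whence $\tilde{L}_{0}=0$ a.s., $\tilde{L}$ has independent and stationary increments, and $t\mapsto\mu_{\tilde{L}_{t}}=\mu_{t}$ is weakly continuous at $0$, so all four axioms hold. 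I expect essentially all the difficulty to sit in the regularization step: manufacturing a single $\Phi'_{\beta}$-valued c\`{a}dl\`{a}g trajectory out of the one--dimensional c\`{a}dl\`{a}g data and the equicontinuity estimate, with no moment control whatsoever, is exactly the substance of the hard theorems of \cite{FonsecaMora:2016}, whereas the one--dimensional L\'{e}vy property, the continuity of the characteristic functionals, and the transfer of the distributional axioms to the version are all routine.
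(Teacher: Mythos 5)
This theorem is quoted from \cite{FonsecaMora:2017} without proof in the present paper, so there is no in-paper argument to compare against; your reconstruction is, however, essentially the argument of that reference and I see no gap in it. The three ingredients you isolate --- the one-dimensional L\'{e}vy property of each $L[\phi]$ giving real c\`{a}dl\`{a}g versions, equicontinuity of $\{L_{t}:t\in[0,T]\}$ as maps into $L^{0}\ProbSpace$ deduced from equicontinuity at $0$ of the characteristic functionals $\hat{\mu}_{t}=\hat{\mu}_{1}^{\,t}$ (note that continuity of each $\hat{\mu}_{t}$ rests on $\Phi$ being \emph{barrelled}, the standing hypothesis of Section \ref{subsectionLPDNS}, rather than on nuclearity alone), and the moment-free regularization theorem of \cite{FonsecaMora:2016} --- are exactly the ones used there, and the gluing over the intervals $[0,m]$, the passage to the countably Hilbertian topology $\vartheta_{L}$ generated by the union of the $\vartheta_{m}$, and the transfer of the four L\'{e}vy axioms to the version are routine as you say.
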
 

From now on we will always identify a L\'{e}vy process $L$ with its c\`{a}dl\`{a}g version $\tilde{L}$ given in Theorem \ref{theoExistenceCadlagVersionLevyProc}. We also assume that $L$ is $\{ \mathcal{F}_{t} \}$-adapted and we strengthen the property of independent increments of $L$ by assuming that $L_{t}-L_{s}$ is independent of $\mathcal{F}_{s}$ for all $0\leq s <t$. 

For the L\'{e}vy process $L=\left\{ L_{t} \right\}_{t\geq 0}$, we define by $\Delta L_{t} \defeq L_{t}-L_{t-}$ the \emph{jump} of the process $L$ at the time $t\geq 0$. The fact that $L$ is a $(\widetilde{\Phi_{\vartheta_{L}}})'_{\beta}$-valued c\`{a}dl\`{a}g process shows that $\Delta L= \{ \Delta L_{t} \}_{t \geq 0}$ is a stationary Poisson point processes on $\left( \Phi'_{\beta} \setminus \{ 0 \}, \mathcal{B}(\Phi'_{\beta} \setminus \{ 0\}) \right)$. Then $N=\{N(t,A): \, t \geq 0, A \in \mathcal{B}(\Phi'_{\beta} \setminus \{ 0\})\}$, defined by the prescription: 
$$ N(t,A)=\# \left\{ 0 \leq s \leq t \st \Delta L_{s} \in A \right\} = \sum_{0 \leq s \leq t} \ind{A}{\Delta L_{s}}, \quad \forall \, t \geq 0, \, A \in \mathcal{B}( \Phi'_{\beta} \setminus \{ 0\}),$$
is the \emph{Poisson random measure} associated to $\Delta L$ with respect to the ring $\mathcal{A}$ of all the subsets of $\Phi'_{\beta} \setminus \{0\}$ that are \emph{bounded below} (i.e. $A \in \mathcal{A}$ if $0 \notin \overline{A}$, where $\overline{A}$ is the closure of $A$).  
 
Let $\nu$ be the \emph{characteristic measure} of $\Delta L$, i.e. the Borel measure on $\Phi'_{\beta}$ with $\nu( \{ 0 \})=0$ and that satisfies:
$$ \Exp \left( N(t,\Gamma)\right) = t \nu(\Gamma), \quad \forall \, t \geq 0, \, \Gamma \in \mathcal{B}\left(\Phi'_{\beta} \setminus \{0\}\right). $$
Clearly, $\nu(A)< \infty$ for every  $A \in \mathcal{A}$. Moreover, $\nu$  is a \emph{L\'{e}vy measure} on $\Phi'_{\beta}$ in the following sense (see \cite{FonsecaMora:2017}, Theorem 4.12):
\begin{enumerate}
\item $\nu (\{ 0 \})=0$, 
\item for each neighborhood of zero $U \subseteq \Phi'_{\beta}$, the  restriction $\restr{\nu}{U^{c}}$ of $\nu$ on the set $U^{c}$ belongs to the space $\goth{M}^{b}_{R}(\Phi'_{\beta})$ of bounded Radon measures on $\Phi'_{\beta}$,    
\item there exists a continuous Hilbertian semi-norm $\rho$ on $\Phi$ such that 
\begin{equation} \label{integrabilityPropertyLevyMeasure}
\int_{B_{\rho'}(1)} \rho'(f)^{2} \nu (df) < \infty,  \quad \mbox{and} \quad  \restr{\nu}{B_{\rho'}(1)^{c}} \in \goth{M}^{b}_{R}(\Phi'_{\beta}), 
\end{equation}
where we recall that $B_{\rho'}(1) \defeq  \{f \in \Phi': \rho'(f) \leq 1\} = B_{\rho}(1)^{0}$. 
\end{enumerate}
Furthermore, because $\nu$  is a L\'{e}vy measure on $\Phi'_{\beta}$ it follows that $\nu$ is 
 a $\sigma$-finite Radon measure (see \cite{FonsecaMora:2017}, Proposition 4.9).
We will refer to $\nu$ as  \emph{the L\'{e}vy measure of the L\'{e}vy process} $L$. 

Let $A \in \mathcal{B}(\Phi'_{\beta})$ with $\nu(A)< \infty$. For each $t \geq 0$ the \emph{Poisson integral with respect to $N$} is defined by 
\begin{equation} \label{poissonIntegralForIdenity}
\int_{A} \, f \, N(t,df) =  \sum_{0 \leq s \leq t} \Delta L_{s} \ind{A}{\Delta L_{s}}.    
\end{equation}
The Poisson integral defined above is a $\{ \mathcal{F}_{t} \}$-adapted $\Phi'_{\beta}$-valued c\`{a}dl\`{a}g L\'{e}vy process (see \cite{FonsecaMora:2017}, Proposition 4.2). If $\int_{A} \abs{f[\phi]} \nu(df) < \infty$ for each $\phi \in \Phi$, then for each $t \geq 0$ we define the \emph{compensated Poisson integral with respect to} $N$ by 
\begin{equation} \label{compensatedPoissonIntegralForIdenity}
\int_{A} \, f \, \widetilde{N}(t,df)[\phi] = \int_{A} f N(t,df)[\phi] - t \int_{A} f[\phi] \nu(df), \quad \forall  \, \phi \in \Phi.      
\end{equation}
The process  $\left\{ \int_{A} \, f \, \widetilde{N}(t,df): t \geq 0 \right\}$ is a $\Phi'_{\beta}$-valued, zero-mean, square integrable, $\{ \mathcal{F}_{t} \}$-adapted, c\`{a}dl\`{a}g L\'{e}vy process (in particular it belongs to $\mathcal{M}^{2}_{T}(\Phi'_{\beta})$). Moreover, for each $t \geq 0$, if $\int_{A} \abs{f[\phi]}^{2} \nu(df) < \infty$, for each $\phi \in \Phi$, then
\begin{equation} \label{secondMomentCompensatedPoissonIntegral}
\Exp \left( \abs{ \int_{A} \, f \, \widetilde{N}(t,df)[\phi] }^{2} \right) = t \int_{A} \abs{f[\phi]}^{2} \nu(df),  \quad \forall \, \phi \in \Phi. 
\end{equation}

\begin{theo}[L\'{e}vy-It\^{o} decomposition; \cite{FonsecaMora:2017}, Theorem 4.18] \label{levyItoDecompositionTheorem}
Let $L=\left\{ L_{t} \right\}_{t\geq 0}$ be a $\Phi'_{\beta}$-valued L\'{e}vy process. Then, for each $t \geq 0$ it has the following representation
\begin{equation} \label{levyItoDecomposition}
L_{t}=t\goth{m}+W_{t}+\int_{B_{\rho'}(1)} f \widetilde{N} (t,df)+\int_{B_{\rho'}(1)^{c}} f N (t,df)
\end{equation}
where 
\begin{enumerate}
\item $\goth{m} \in \Phi'_{\beta}$, 
\item $\rho$ is a continuous Hilbertian semi-norm on $\Phi$ such that the L\'{e}vy measure $\nu$ of $L$ satisfies \eqref{integrabilityPropertyLevyMeasure} and $B_{\rho'}(1) \defeq \{f \in \Phi'_{\beta}: \rho'(f) \leq 1\} \subseteq \Phi'_{\beta}$ is bounded, closed, convex and balanced, 
\item $\{ W_{t} \}_{t \geq 0}$ is a $\Phi'_{\beta}$-valued Wiener process with mean-zero and covariance functional $\mathcal{Q}$,
\item $\left\{ \int_{B_{\rho'}(1)} f \widetilde{N} (t,df): t\geq 0 \right\}$ is a $\Phi'_{\beta}$-valued mean-zero, square integrable, c\`{a}dl\`{a}g L\'{e}vy process with second moments given by 
\begin{equation*} \label{secondMomentSmallJumpsLID}
\Exp \left( \abs{ \int_{B_{\rho'}(1)} f \widetilde{N} (t,df)[\phi] }^{2}\right) = t \int_{B_{\rho'}(1)} \abs{f[\phi]}^{2} \nu (df), \quad \forall \, t \geq 0, \, \phi \in \Phi, 
\end{equation*} 
\item $\left\{ \int_{B_{\rho'}(1)^{c}} f N (t,df): t\geq 0 \right\}$ is a $\Phi'_{\beta}$-valued c\`{a}dl\`{a}g L\'{e}vy process defined  by means of a Poisson integral with respect to the Poisson random measure $N$ of $L$ on the set $B_{\rho'}(1)^{c}$. 
\end{enumerate}
All the random components of the decomposition \eqref{levyItoDecomposition} are independent.     
\end{theo}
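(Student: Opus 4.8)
The plan is to peel off the large jumps, then the compensated small jumps, and to recognise the residual as a continuous \Levy{} process, which by the structure theory recalled in \eqref{meanWienerProcess}--\eqref{covarianceFunctWienerProcess} is a mean-zero Wiener process plus a linear drift. By Theorem \ref{theoExistenceCadlagVersionLevyProc} we may work with the c\`{a}dl\`{a}g, $(\widetilde{\Phi_{\vartheta_{L}}})'_{\beta}$-valued version of $L$, so that its jumps $\Delta L = \{ \Delta L_{t} \}_{t \geq 0}$ form a stationary Poisson point process with Poisson random measure $N$ and \Levy{} measure $\nu$; fix a continuous Hilbertian semi-norm $\rho$ on $\Phi$ for which $\nu$ satisfies \eqref{integrabilityPropertyLevyMeasure}. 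Since $\nu(B_{\rho'}(1)^{c}) < \infty$, the Poisson integral \eqref{poissonIntegralForIdenity} over $B_{\rho'}(1)^{c}$, i.e. $\int_{B_{\rho'}(1)^{c}} f\, N(t,df) = \sum_{0 \leq s \leq t} \Delta L_{s} \ind{B_{\rho'}(1)^{c}}{\Delta L_{s}}$, is a locally finite sum, hence a $\Phi'_{\beta}$-valued c\`{a}dl\`{a}g \Levy{} process; this is the term of item~(5).

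I would next build the compensated small-jump term as an $L^{2}$-limit. For $0 < \epsilon < 1$ set $A_{\epsilon} = \{ f \in \Phi' \st \epsilon < \rho'(f) \leq 1 \}$; then $\nu(A_{\epsilon}) < \infty$ and, since $\rho'(f) \leq 1$ on $A_{\epsilon}$, $\int_{A_{\epsilon}} \abs{f[\phi]} \, \nu(df) \leq \rho(\phi) \, \nu(A_{\epsilon}) < \infty$ for every $\phi \in \Phi$, so the compensated Poisson integral $\int_{A_{\epsilon}} f\, \widetilde{N}(t,df)$ of \eqref{compensatedPoissonIntegralForIdenity} is well defined, lies in $\mathcal{M}^{2}_{T}(\Phi'_{\beta})$, and has the cylindrical second moments \eqref{secondMomentCompensatedPoissonIntegral}. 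For $0 < \epsilon_{1} < \epsilon_{2} < 1$ one has $\int_{A_{\epsilon_{1}}} f\, \widetilde{N}(\cdot,df)[\phi] - \int_{A_{\epsilon_{2}}} f\, \widetilde{N}(\cdot,df)[\phi] = \int_{\{\epsilon_{1} < \rho'(f) \leq \epsilon_{2}\}} f\, \widetilde{N}(\cdot,df)[\phi]$, so Doob's maximal inequality and \eqref{secondMomentCompensatedPoissonIntegral} bound the square of its $\mathcal{M}^{2}_{T}(\R)$-norm by $4T \int_{\{0 < \rho'(f) \leq \epsilon_{2}\}} \abs{f[\phi]}^{2} \, \nu(df) \leq 4T \, \rho(\phi)^{2} \int_{\{0 < \rho'(f) \leq \epsilon_{2}\}} \rho'(f)^{2} \, \nu(df)$, which tends to $0$ as $\epsilon_{2} \downarrow 0$ by \eqref{integrabilityPropertyLevyMeasure} and dominated convergence. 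Hence $\int_{A_{\epsilon}} f\, \widetilde{N}(\cdot,df)[\phi]$ is Cauchy in the Banach space $\mathcal{M}^{2}_{T}(\R)$ as $\epsilon \downarrow 0$; writing $X(\phi) \defeq \{ \int_{B_{\rho'}(1)} f\, \widetilde{N}(t,df)[\phi] \}_{t \geq 0}$ for its limit we obtain $\Exp(\abs{X_{t}(\phi)}^{2}) = t \int_{B_{\rho'}(1)} \abs{f[\phi]}^{2} \, \nu(df)$ and $\Exp(\sup_{t \in [0,T]} \abs{X_{t}(\phi)}^{2}) \leq 4T \, \rho(\phi)^{2} \int_{B_{\rho'}(1)} \rho'(f)^{2} \, \nu(df)$. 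Therefore the cylindrical process $\phi \mapsto X(\phi)$ satisfies the hypotheses of Theorem \ref{theoExistenceCadlagContVersionHilbertSpaceUniformBoundedMoments} with $n = 2$ and $\varrho = \rho$, so it has a $\Phi'_{\beta}$-valued regular c\`{a}dl\`{a}g version in $\mathcal{M}^{2}_{T}(\Phi'_{\beta})$, which is the term of item~(4); and since $A_{1/k} \uparrow B_{\rho'}(1) \setminus \{0\}$ and $\Delta L_{s} \neq 0$ at every jump time, the real-valued process $X(\phi)$ has jumps $\Delta X_{s}(\phi) = \Delta L_{s}[\phi] \ind{B_{\rho'}(1)}{\Delta L_{s}}$.

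Now set $Y_{t} \defeq L_{t} - \int_{B_{\rho'}(1)^{c}} f\, N(t,df) - \int_{B_{\rho'}(1)} f\, \widetilde{N}(t,df)$, a $\Phi'_{\beta}$-valued regular \Levy{} process. For each $\phi \in \Phi$ the real-valued \Levy{} process $Y[\phi]$ has vanishing jumps, because $\Delta L_{s}[\phi] = \Delta L_{s}[\phi]\big(\ind{B_{\rho'}(1)^{c}}{\Delta L_{s}} + \ind{B_{\rho'}(1)}{\Delta L_{s}}\big)$, hence $Y[\phi]$ is continuous; the regularity of $Y$ then upgrades this to path-continuity of $Y$ in $\Phi'_{\beta}$. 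Thus $Y$ is a Wiener process in the sense recalled before Theorem \ref{theoExistenceCadlagVersionLevyProc}, so there exist $\goth{m} \in \Phi'_{\beta}$ and a continuous Hilbertian semi-norm $\mathcal{Q}$ on $\Phi$ with $\Exp(Y_{t}[\phi]) = t \goth{m}[\phi]$ and covariance as in \eqref{covarianceFunctWienerProcess}; then $W \defeq Y - t \goth{m}$ is the mean-zero Wiener process of item~(3), and \eqref{levyItoDecomposition} holds by construction. For the independence statement, the large- and small-jump integrals are functionals of the Poisson point process restricted to the pairwise disjoint sets $B_{\rho'}(1)^{c}$ and $\{ 1/k < \rho'(f) \leq 1/(k-1) \}$, $k \geq 2$, hence are independent of one another; and the independence of $W$ from the jump part is the usual splitting of a \Levy{} process into its continuous and purely discontinuous parts, which one reduces to the finite-dimensional \Levy-\Ito{} theorem by testing against finitely many $\phi \in \Phi$.

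The step I expect to be the main obstacle is the one in the second paragraph: upgrading the cylindrical $L^{2}$-limit defining $\int_{B_{\rho'}(1)} f\, \widetilde{N}(t,df)$ to a bona fide $\Phi'_{\beta}$-valued regular c\`{a}dl\`{a}g process through Theorem \ref{theoExistenceCadlagContVersionHilbertSpaceUniformBoundedMoments} while keeping control of its jumps, together with the companion point of deducing path-continuity of the residual $Y$ in $\Phi'_{\beta}$ from its $\phi$-by-$\phi$ continuity via regularity. Once these are settled, recognising $Y$ as a Wiener process and proving the mutual independence of the four components of \eqref{levyItoDecomposition} follow along standard lines.
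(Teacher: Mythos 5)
This theorem is not proved in the present paper: it is imported verbatim as Theorem 4.18 of \cite{FonsecaMora:2017}, so there is no in-paper proof to compare against. Your reconstruction does follow the strategy of the cited source and of the classical finite-dimensional argument: large jumps as a locally finite Poisson sum, compensated small jumps as an $L^{2}$-limit over the annuli $A_{\epsilon}$ controlled by \eqref{integrabilityPropertyLevyMeasure} and upgraded to a $\Phi'_{\beta}$-valued c\`{a}dl\`{a}g martingale via the regularization theorem (Theorem \ref{theoExistenceCadlagContVersionHilbertSpaceUniformBoundedMoments}), and identification of the residual as drift plus Wiener process. The moment bound $\Exp \sup_{t \leq T} \abs{X_{t}(\phi)}^{2} \leq 4T\rho(\phi)^{2}\int_{B_{\rho'}(1)}\rho'(f)^{2}\nu(df)$ you derive is exactly the hypothesis \eqref{uniformBoundMomentsByHilbertSeminorm} needed there, so that part is in order.

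The one step that is wrong as written is the sentence ``the regularity of $Y$ then upgrades this to path-continuity of $Y$ in $\Phi'_{\beta}$.'' Regularity only says each $Y_{t}$ takes values in some $\Phi'_{\theta}$; it does not convert continuity of every real-valued projection $Y[\phi]$ into continuity of the $\Phi'_{\beta}$-valued paths. To get a continuous $\Phi'_{\beta}$-valued (indeed $\Phi'_{q}$-valued) version you must invoke Theorem \ref{theoExistenceCadlagContVersionHilbertSpaceUniformBoundedMoments} in its continuous form a second time, and for that you need (i) that $\mathfrak{m}[\phi] \defeq \Exp(Y_{1}[\phi])$ defines an element of $\Phi'$ and (ii) that $\mathcal{Q}(\phi)^{2} \defeq \mathrm{Var}(Y_{1}[\phi])$ is a \emph{continuous} Hilbertian semi-norm, so that Doob's inequality yields $\Exp\sup_{t\leq T}\abs{Y_{t}[\phi]-t\mathfrak{m}[\phi]}^{2}\leq 4T\mathcal{Q}(\phi)^{2}$ with a continuous majorant. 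Both (i) and (ii) require an argument (continuity of $\phi\mapsto Y_{1}[\phi]$ into $L^{0}$ or $L^{2}$ plus a closed-graph or regularization step); they are not free. Once that is supplied, the rest of your outline, including the independence argument by disjointness of the jump supports and finite-dimensional projection, is the standard and correct route.
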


\section{Cylindrical Martingale-Valued Measures} \label{SectionNMVM}

\begin{assu}
Throughout this article $\Phi$ is a locally convex space and $\Psi$ is a quasi-complete, bornological, nuclear space.
\end{assu}  

\begin{defi} \label{martingValuedMeasureOnDualSpace} 
Let $U$ be a topological space and consider a ring $\mathcal{R}\subseteq \mathcal{B}(U)$ that generates $\mathcal{B}(U)$.  A \emph{cylindrical martingale-valued measure} on $\R_{+} \times \mathcal{R}$ is a collection $M=(M(t,A): t \geq 0, A \in \mathcal{R})$ of cylindrical random variables in $\Phi'$ such that:
\begin{enumerate}
\item $\forall \, A \in \mathcal{R}$, $M(0,A)(\phi)= 0$ $\Prob$-a.s., $\forall \phi \in \Phi$.
\item $\forall t \geq 0$, $M(t,\emptyset)(\phi)= 0$ $\Prob$-a.s. $\forall \phi \in \Phi$ and if $A, B \in \mathcal{R}$ are disjoint then 
$$M(t,A \cup B)(\phi)= M(t,A)(\phi) + M(t,B)(\phi) \, \Prob \mbox{-a.s.}, \quad \forall \phi \in \Phi.$$
\item $\forall \, A \in \mathcal{R}$, $(M(t,A): t \geq 0)$ is a cylindrical zero-mean square integrable c\`{a}dl\`{a}g martingale. 
\item For disjoint $A, B \in \mathcal{R}$, $\Exp \left( M(t,A)(\phi) M(s,B)(\varphi) \right)=0$, for each $t,s \geq 0$, $\phi, \varphi \in \Phi$. 
\end{enumerate}
Moreover, we say that $M$ has \emph{independent increments} if whenever $0\leq s < t$, $M ( (s, t], A)(\phi) \defeq (M(t,A)- M(s,A))(\phi)$ is independent of $\mathcal{F}_{s}$, for all $A \in \mathcal{R}$, $\phi \in \Phi$. 
\end{defi}

We will construct stochastic integrals with respect to the following class of cylindrical martingale-valued measures: 

\begin{defi}\label{nuclearMartingaleValMeasDualSpace} A cylindrical martingale-valued measure $M$ on $\R_{+} \times \mathcal{R}$ with independent increments is said to be \emph{nuclear} if for each $A \in \mathcal{R}$ and $0 \leq s < t$, 
\begin{equation} \label{covarianceFunctionalNuclearMartValuedMeasure}
\Exp \left( \abs{ M((s,t],A)(\phi)}^{2} \right) = \int_{s}^{t} \int_{A} q_{r,u}(\phi)^{2} \mu(du) \lambda (dr) , \quad \forall \, \phi \in \Phi.
\end{equation} 
where 
\begin{enumerate}
	\item $\mu$ is a $\sigma$-finite measure on $(U, \mathcal{B}(U))$ satisfying $\mu(A)< \infty$, $\forall \, A \in \mathcal{R}$,
	\item $\lambda$ is a $\sigma$-finite measure on $(\R_{+},\mathcal{B}(\R_{+}))$, finite on bounded intervals,
	\item $\{q_{r,u}: r \in \R_{+}, \, u \in U \}$ is a family of  continuous Hilbertian semi-norms on $\Phi$, such that for each $\phi$, $\varphi$ in $\Phi$, the map $(r,u) \mapsto q_{r,u}(\phi,\varphi)$ is $\mathcal{B}(\R_{+}) \otimes \mathcal{B}(U)/ \mathcal{B}(\R_{+})$-measurable and  bounded on $[0,T] \times U$ for all $T>0$. Here, $q_{r,u}(\cdot,\cdot)$ denotes the positive, symmetric, bilinear form associated to the Hilbertian semi-norm $q_{r,u}$.  
\end{enumerate}
\end{defi}


\begin{exam} \label{examGeneralizedWienerProcesses} 
Let $\Phi$ be a nuclear space.  A $\Phi'_{\beta}$-valued continuous zero-mean Gaussian process $W=\{ W_{t} \}_{t \geq 0}$ is called a \emph{generalized Wiener process} if (see \cite{BojdeckiGorostiza:1986, BojdeckiJakubowski:1990}): 
\begin{enumerate}
\item $W$ is $\{ \mathcal{F}_{t} \}$-adapted, 
\item $W_{t}-W_{s}$ is independent of $\mathcal{F}_{s}$, for $0 \leq s < t$, 
\item 
\begin{equation} \label{covarianceFunctionalGeneralizedWiener}
\Exp \left( W_{t}[\phi] W_{s}[\varphi] \right) = \int_{0}^{t \wedge s} q_{r}(\phi,\varphi)dr , \quad \forall \, t, s \in R_{+}, \, \phi \in \Phi. 
\end{equation} 
\end{enumerate} 
where $\{q_{r}: r \in \R_{+}\}$ is a family of continuous Hilbertian semi-norms on $\Phi$, such that the map $r \mapsto q_{r}(\phi,\varphi)$ is Borel measurable and bounded on finite intervals, for each $\phi$, $\varphi$ in $\Phi$. As in Definition \ref{nuclearMartingaleValMeasDualSpace}, $q_{r}(\cdot,\cdot)$ denotes the positive, symmetric, bilinear form associated to the Hilbertian semi-norm $q_{r}$.  It is clear that every $\Phi'_{\beta}$-valued Wiener process $W$ is a generalized Wiener process and that if $\mathcal{Q}$ is the covariance functional of $W$, one has \eqref{covarianceFunctionalGeneralizedWiener} with $q_{r}=\mathcal{Q}$, for all $r \in \R_{+}$. 

Is easy to see from the definition of $W$ and from Definition \ref{nuclearMartingaleValMeasDualSpace} that if we take $M$ given by 
\begin{equation} \label{generalizedWienerMartVM}
M(t,A)=W_{t} \delta_{0}(A), \quad \forall \, t \in \R_{+}, \, A \in \mathcal{B}(\{ 0 \}), 
\end{equation}
then $M$ defines a cylindrical martingale-valued measure with independent increments. Moreover, for each $0 \leq s < t$, we have:
\begin{equation} \label{covarianceGeneralizedWienerMVM}
\Exp \left( \abs{ M((s,t],\{ 0 \})[\phi]}^{2} \right) = \int_{s}^{t} q_{r}(\phi)^{2} dr , \quad \forall \, \phi \in \Phi, 
\end{equation} 
and hence $M$ is nuclear, where with respect to the notation in Definition \ref{nuclearMartingaleValMeasDualSpace} we have: \begin{inparaenum}[(i)] \item $U=\{ 0 \}$,  $\mathcal{R}=\mathcal{B}(\{ 0 \})$ and $\mu = \delta_{0}$. 
\item $\lambda$ is the Lebesgue measure on $(\R_{+}, \mathcal{B}(\R_{+}))$, and \item $q_{r,0}=q_{r}$, where $\{q_{r}: r \in \R_{+}\}$ are as given above. \end{inparaenum} 
\end{exam}

\begin{exam} \label{examLevyMartValuedMeasure}
Let $\Phi$ be a barrelled nuclear space and let $L$ be a $\Phi'_{\beta}$-valued L\'{e}vy process with L\'{e}vy-It\^{o} decomposition \eqref{levyItoDecomposition}. Let $U \in \mathcal{B}(\Phi'_{\beta})$ be such that $0 \in U$ and $\int_{U} \, \abs{u[\phi]}^{2} \nu(du)< \infty$ for every $\phi \in \Phi$. Take  $\mathcal{R}= \{  U \cap \Gamma: \Gamma \in \mathcal{A} \} \cup \{0\}$, and $M=(M(t,A): r \geq 0, A \in \mathcal{R})$ be given by
\begin{equation} \label{levyMartValuedMeasExam} 
M(t,A) = W_{t} \delta_{0}(A) + \int_{A \backslash \{0 \}} u \widetilde{N}(t,du), \quad \mbox{ for } \, t \geq 0, \, A \in \mathcal{R}. 
\end{equation}
For each $t \geq 0$, $A \in \mathcal{R}$, because $\int_{A} \, \abs{u[\phi]}^{2} \nu(du)< \infty$ for every $\phi \in \Phi$, it follows that $M(t,A)$ is well-defined. Then, it is easy to check that $M$ is a nuclear cylindrical martingale-valued measure with independent increments. Moreover, for each $0 \leq s < t$, $A \in \mathcal{R}$ we have:
\begin{equation} \label{secondMomentLevyNuclearMartValuedMeasu}
\Exp \left( \abs{M((s,t],A)[\phi]}^{2} \right)=(t-s) \left[ \mathcal{Q}(\phi)^{2} + \int_{A \backslash \{0 \}} \abs{u[\phi]}^{2}  \nu(du) \right], \quad \forall \, \phi \in \Phi,
\end{equation}
and hence $M$ is also nuclear. With respect to the notation in Definition \ref{nuclearMartingaleValMeasDualSpace} we have: \begin{inparaenum}[(i)]
\item $\mu = \delta_{0}+ \restr{\nu}{U}$,  
\item $\lambda$ is the Lebesgue measure on $(\R_{+}, \mathcal{B}(\R_{+}))$, 
\item $\{q_{r,f}: r \in \R_{+}, u \in U \}$ given by 
\begin{equation}\label{defiSemiNormsLevyMartValuedMeas}
q_{r,u}(\phi)= \begin{cases} \mathcal{Q}(\phi), & \mbox{if } u=0, \\  \abs{u[\phi]}, & \mbox{if } u \in U \setminus \{0\}. \end{cases}
\end{equation}
\end{inparaenum}
We will call $M$ defined in \eqref{levyMartValuedMeasExam} a \emph{L\'{e}vy martingale-valued measure}.  
\end{exam}

Additionally to the properties of the family of semi-norms $\{q_{r,u}: r \in \R_{+}, \, u \in U \}$ given in Definition \ref{nuclearMartingaleValMeasDualSpace}, we will assume they satisfy the following:
 
\begin{assu} \label{assumpDenseSubsetSemiNormsMartValuedMeasures}
For each $T>0$ there exists a countable subset $D$ of $\Phi$ that is dense in $\Phi_{q_{r,u}}$ for each $r \in [0,T]$, $u \in U$. 
\end{assu}

\begin{prop}\label{conditionsAssumpDenseSubsetSemiNormsMVM} The Assumption \ref{assumpDenseSubsetSemiNormsMartValuedMeasures} is satisfied if either $\Phi$ is separable or if $\Phi$ is barrelled.  
\end{prop}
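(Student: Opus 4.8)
The plan is to treat the two sufficient conditions separately, in both cases producing, for a fixed $T>0$, a single countable set $D\subseteq\Phi$ that is simultaneously dense in every $\Phi_{q_{r,u}}$ with $r\in[0,T]$, $u\in U$. The key observation is that density of a subset of $\Phi$ in $\Phi_{q_{r,u}}$ is inherited from density in $\Phi$ for a \emph{stronger} topology: since the canonical map $i_{q_{r,u}}:\Phi\to\Phi_{q_{r,u}}$ is continuous with dense range (its range contains the image of the quotient map $\Phi\to\Phi/\ker(q_{r,u})$, which is dense by construction), the image under $i_{q_{r,u}}$ of any $\mathcal{T}$-dense subset of $\Phi$ is dense in $\Phi_{q_{r,u}}$. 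Thus it suffices to find one countable $\mathcal{T}$-dense subset of $\Phi$, and the same $D$ works for all the seminorms at once, with no dependence on $T$ needed.

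If $\Phi$ is separable this is immediate: take $D$ to be a countable $\mathcal{T}$-dense subset of $\Phi$, and by the remark above $i_{q_{r,u}}(D)$ is dense in $\Phi_{q_{r,u}}$ for every $r,u$; identifying $D$ with its image (or noting density of the algebraic set $D$ itself suffices, since $\Phi/\ker(q_{r,u})$ carries the quotient of $\mathcal{T}$, which is coarser) gives the conclusion. For the barrelled case the argument is slightly less direct, because a barrelled nuclear space need not be separable as it stands; here I would instead use the countable-neighbourhood structure more carefully. The natural route is: for fixed $r,u$, the Hilbert space $\Phi_{q_{r,u}}$ is separable — this uses that $\Phi$ is nuclear (every Hilbertian seminorm on a nuclear space gives a separable local Hilbert space, cf.\ the cited Pietsch Proposition 4.4.9 / Theorem 4.4.10 already invoked in Section \ref{subsectionNuclSpace}). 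So each $\Phi_{q_{r,u}}$ individually has a countable dense subset, and since $i_{q_{r,u}}(\Phi)$ is dense in it we may choose that countable dense subset inside $i_{q_{r,u}}(\Phi)$, i.e.\ pull it back to a countable $D_{r,u}\subseteq\Phi$.

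The remaining — and genuinely substantive — step is to collapse the possibly uncountable family $\{D_{r,u}: r\in[0,T],\,u\in U\}$ into a single countable $D$. This is where barrelledness enters: the measurability and local boundedness of $(r,u)\mapsto q_{r,u}(\phi,\varphi)$ on $[0,T]\times U$ from Definition \ref{nuclearMartingaleValMeasDualSpace}, together with the Banach–Steinhaus theorem available on barrelled spaces, should let me control the family $\{q_{r,u}\}$ by a single continuous Hilbertian seminorm $q_{*}$ on $\Phi$ that dominates all of them on $[0,T]\times U$ (one defines $q_{*}(\phi)^2 = \sup_{r\in[0,T],u\in U} q_{r,u}(\phi)^2$, checks it is finite and lower semicontinuous by the boundedness hypothesis, hence continuous by barrelledness). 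Then every map $i_{q_{r,u}}$ factors continuously through $i_{q_{*}}$ via the bridging map $i_{q_{r,u},q_{*}}:\Phi_{q_{*}}\to\Phi_{q_{r,u}}$, so a countable set dense in $\Phi_{q_{*}}$ — which exists since $\Phi_{q_{*}}$ is again separable by nuclearity — has image dense in each $\Phi_{q_{r,u}}$. Taking $D$ to be (a pullback to $\Phi$ of) such a set finishes the proof. The main obstacle I anticipate is precisely verifying that $q_{*}$ is a \emph{continuous} seminorm: finiteness of the pointwise supremum must be upgraded to equicontinuity of the family, and this is exactly the point at which the barrelledness hypothesis is indispensable; in the separable case one sidesteps this entirely. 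I would also double-check that "dense in $\Phi_{q_{r,u}}$" in Assumption \ref{assumpDenseSubsetSemiNormsMartValuedMeasures} is meant in the sense of the image under $i_{q_{r,u}}$ being dense, which is the only reading consistent with $D\subseteq\Phi$.
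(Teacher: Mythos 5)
Your proof is correct and takes essentially the same route as the paper: the separable case by pushing a countable $\mathcal{T}$-dense subset of $\Phi$ through the maps $i_{q_{r,u}}$ (which have dense range), and the barrelled case by the Banach--Steinhaus/barrel argument that the paper delegates to the proof of Theorem 4.2 of Bojdecki--Jakubowski (1989): the set $\bigcap_{r\in[0,T],\,u\in U}B_{q_{r,u}}(1)$ is a barrel thanks to the pointwise boundedness in Definition \ref{nuclearMartingaleValMeasDualSpace}(3), so the supremum seminorm $q_{*}$ is continuous and $\Phi_{q_{*}}$ maps densely into every $\Phi_{q_{r,u}}$. The two caveats you raise are real but are shared with the paper rather than gaps in your argument: $q_{*}$ is a continuous seminorm but not Hilbertian (harmless, since a countable dense subset of the Banach space $\Phi_{q_{*}}$, chosen inside $i_{q_{*}}(\Phi)$ and pulled back, still does the job), and separability of $\Phi_{q_{*}}$ requires $\Phi$ to be nuclear and not merely barrelled --- which is the standing hypothesis of the cited Bojdecki--Jakubowski result and of the examples in which the proposition is actually used.
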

\begin{prf}
If $\Phi$ is separable then Assumption \ref{assumpDenseSubsetSemiNormsMartValuedMeasures} is an immediate consequence of the fact that $\Phi$ is dense in each $\Phi_{q_{r,u}}$. If $\Phi$ is barrelled, similar arguments to those used in the proof of Theorem 4.2 in \cite{BojdeckiJakubowski:1989} show that Assumption \ref{assumpDenseSubsetSemiNormsMartValuedMeasures} is satisfied. 
\end{prf}

An important consequence of Assumption \ref{assumpDenseSubsetSemiNormsMartValuedMeasures} is given in the following proposition. The proof follows from the same arguments as those used in the proof of Proposition 1.8 in  \cite{BojdeckiJakubowski:1990}.

\begin{prop} \label{compatibilityPredictableMapsAndSeminorms}
Let $\{ q_{r,u} \}$ satisfy Assumption \ref{assumpDenseSubsetSemiNormsMartValuedMeasures}. Let the functions $(r,\omega,u) \mapsto f(r,\omega,u) \in \Phi_{q_{r,u}}$ and $(r,\omega,u) \mapsto g(r,\omega,u) \in \Phi_{q_{r,u}}$ be such that for each $\phi \in \Phi$, the functions $(r,u,\omega) \mapsto q_{r,u}(f(r,\omega,u),\phi)$ and $(r,\omega,u) \mapsto q_{r,u}(g(r,\omega,u),\phi)$ are $\mathcal{P}_{T} \otimes \mathcal{B}(U)/ \mathcal{B}(\R_{+})$-measurable. Then, the function $(r,\omega,u) \mapsto q_{r,u}(f(r,\omega,u),g(r,\omega,u))$ is $\mathcal{P}_{T} \otimes \mathcal{B}(U)/ \mathcal{B}(\R_{+})$-measurable.
\end{prop}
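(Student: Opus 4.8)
The plan is to reduce the joint measurability of $(r,\omega,u)\mapsto q_{r,u}(f(r,\omega,u),g(r,\omega,u))$ to the one-argument hypotheses by approximating $f$ and $g$ pointwise in the Hilbert spaces $\Phi_{q_{r,u}}$ using the \emph{fixed} countable dense set $D$ provided by Assumption \ref{assumpDenseSubsetSemiNormsMartValuedMeasures}. Fix $T>0$ and let $D=\{\phi_{k}:k\in\N\}\subseteq\Phi$ be dense in $\Phi_{q_{r,u}}$ for every $r\in[0,T]$, $u\in U$. The key point is that $D$ does not depend on $(r,u)$, so a single Gram--Schmidt-type procedure, carried out measurably in $(r,u)$, will produce an orthonormal system adapted to each $q_{r,u}$ simultaneously.

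The main steps, in order, are the following. First, I would construct measurable ``coefficient'' functions: for each $k$, set $a_{k}(r,\omega,u)=q_{r,u}(f(r,\omega,u),\phi_{k})$ and $b_{k}(r,\omega,u)=q_{r,u}(g(r,\omega,u),\phi_{k})$; these are $\mathcal{P}_{T}\otimes\mathcal{B}(U)/\mathcal{B}(\R_{+})$-measurable by hypothesis, and the maps $(r,\omega,u)\mapsto q_{r,u}(\phi_{j},\phi_{k})$ are $\mathcal{P}_{T}\otimes\mathcal{B}(U)/\mathcal{B}(\R_{+})$-measurable by the measurability assumption on $(r,u)\mapsto q_{r,u}(\phi,\varphi)$ in Definition \ref{nuclearMartingaleValMeasDualSpace}. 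Second, I would run the Gram--Schmidt orthonormalization on the sequence $(i_{q_{r,u}}\phi_{k})_{k}$ inside $\Phi_{q_{r,u}}$: since all inner products involved are measurable functions of $(r,u)$ (and the procedure only involves finitely many arithmetic operations and a square root at each stage, with measurable handling of the degenerate cases where a vector is in the span of the previous ones), one obtains a sequence $(e_{m}(r,u))_{m}$ in $\Phi_{q_{r,u}}$, each $e_{m}(r,u)$ a measurable linear combination $\sum_{k}c_{m,k}(r,u)\,i_{q_{r,u}}\phi_{k}$ with $(r,u)\mapsto c_{m,k}(r,u)$ measurable, forming an orthonormal basis of the closure of $\mathrm{span}\,D=\overline{i_{q_{r,u}}(\Phi)}=\Phi_{q_{r,u}}$. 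Third, by Parseval applied in $\Phi_{q_{r,u}}$,
\begin{equation*}
q_{r,u}(f(r,\omega,u),g(r,\omega,u))=\sum_{m}q_{r,u}(f(r,\omega,u),e_{m}(r,u))\,q_{r,u}(g(r,\omega,u),e_{m}(r,u)),
\end{equation*}
and each summand equals $\big(\sum_{k}c_{m,k}(r,u)a_{k}(r,\omega,u)\big)\big(\sum_{k}c_{m,k}(r,u)b_{k}(r,\omega,u)\big)$, hence is $\mathcal{P}_{T}\otimes\mathcal{B}(U)/\mathcal{B}(\R_{+})$-measurable; a countable sum of measurable functions is measurable, which gives the claim.

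The main obstacle is making the Gram--Schmidt step genuinely measurable in the parameter $(r,u)$ while correctly handling the points where the semi-norm $q_{r,u}$ degenerates on parts of $D$ (so that some $\phi_{k}$ become linearly dependent in $\Phi_{q_{r,u}}$, or $q_{r,u}(\phi_{k})=0$): one must make a measurable selection of which indices contribute a new basis vector at each stage, e.g. by partitioning the parameter space according to the (measurable) rank conditions on the Gram matrices $\big(q_{r,u}(\phi_{j},\phi_{k})\big)_{j,k\le N}$ and defining the orthonormal vectors piecewise. This is a standard but slightly delicate bookkeeping argument; since the statement explicitly attributes it to the proof of Proposition 1.8 in \cite{BojdeckiJakubowski:1990}, I would invoke that construction for the details rather than reproduce it, and simply verify that the hypotheses there are met thanks to Assumption \ref{assumpDenseSubsetSemiNormsMartValuedMeasures} and the measurability of $(r,u)\mapsto q_{r,u}(\phi,\varphi)$.
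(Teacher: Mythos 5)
Your argument is correct in substance, but it is heavier machinery than is needed, and (as far as one can tell, since the paper itself only cites Proposition 1.8 of Bojdecki--Jakubowski rather than writing the proof out) it is not the route that reference takes. The standard argument avoids Gram--Schmidt entirely: writing $D=\{\phi_{k}\}$, one first observes that for any $h$ with $(r,\omega,u)\mapsto q_{r,u}(h(r,\omega,u),\phi)$ measurable for each $\phi$, the norm
\begin{equation*}
q_{r,u}(h(r,\omega,u))=\sup_{k}\,\frac{\abs{q_{r,u}(h(r,\omega,u),\phi_{k})}}{q_{r,u}(\phi_{k})}\,\ind{\{q_{r,u}(\phi_{k})>0\}}{(r,u)}
\end{equation*}
is a countable supremum of measurable functions (the denominators $(r,u)\mapsto q_{r,u}(\phi_{k})$ being measurable by Definition \ref{nuclearMartingaleValMeasDualSpace}(3), and the density of $D$ in each $\Phi_{q_{r,u}}$ being exactly Assumption \ref{assumpDenseSubsetSemiNormsMartValuedMeasures}); applying this to $f+g$ and $f-g$, whose pairings with each $\phi_{k}$ are sums and differences of measurable functions, and then polarizing, $q_{r,u}(f,g)=\tfrac{1}{4}\bigl(q_{r,u}(f+g)^{2}-q_{r,u}(f-g)^{2}\bigr)$, finishes the proof in three lines. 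Your Parseval route does work --- the sum converges absolutely by Cauchy--Schwarz, partial sums are measurable, and padding degenerate Gram--Schmidt steps with the zero vector handles the varying dimension of $\Phi_{q_{r,u}}$ --- but the measurable selection in the parametrized Gram--Schmidt is precisely the bookkeeping that the polarization trick lets you skip, and you rightly flag it as the delicate point; I would recommend the shorter argument unless you actually need the measurable orthonormal frame for some other purpose.
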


\begin{nota}
Throughout this article and unless otherwise stated, $M$ will denote a nuclear cylindrical martingale valued measure on $\R_{+} \times \mathcal{R}$ and satisfying \eqref{covarianceFunctionalNuclearMartValuedMeasure} for $\mu$, $\lambda$ and $\{ q_{r,u} \}$ as in Definition \ref{nuclearMartingaleValMeasDualSpace}. Also, the family of semi-norms $\{ q_{r,u} \}$ will satisfy Assumption \ref{assumpDenseSubsetSemiNormsMartValuedMeasures}. Furthermore we fix an arbitrary $T>0$.
\end{nota}

\section{The Weak Stochastic Integral} \label{SubsectionWSI}

\subsection{The Weak Stochastic Integral for Integrands with Square Moments}

We start by introducing the space of integrands. 

\begin{defi} \label{integrandsWeakIntegSquareMoments}
Let $\Lambda^{2}_{w}(M;T)$ denote the collection  of families $X=\{X(r,\omega,u): r \in [0,T], \omega \in \Omega, u \in U \}$ of Hilbert space-valued maps satisfying the following: 
\begin{enumerate}
\item $X(r,\omega,u) \in \Phi_{q_{r,u}}$, for all $r \in [0, T]$, $\omega \in \Omega$, $u \in U$, 
\item $X$ is \emph{$q_{r,u}$-predictable}, i.e. for each $\phi \in \Phi$, the mapping $[0,T] \times \Omega \times U \rightarrow \R_{+}$ given by $(r,\omega,u) \mapsto q_{r,u}(X(r,\omega,u), \phi)$ is $\mathcal{P}_{T} \otimes \mathcal{B}(U)/\mathcal{B}(\R_{+})$-measurable.
\item 
\begin{equation} \label{finiteSecondMomentIntegrandsWeakIntg}
\norm{X}_{w,M;T}^{2} \defeq \Exp \int_{0}^{T} \int_{U} q_{r,u}(X(r,u))^{2} \mu(du) \lambda(dr) < \infty.
\end{equation} 
\end{enumerate}
\end{defi}

\begin{rema} \label{normOfIntegrandsIsPredictable}
Note that Proposition \ref{compatibilityPredictableMapsAndSeminorms} guaranties that the map $(r,\omega,u) \mapsto q_{r,u}(X(r,\omega,u))^{2}$ is $\mathcal{P}_{T} \otimes \mathcal{B}(U)$-measurable and hence the integral in \eqref{finiteSecondMomentIntegrandsWeakIntg} is well defined. 
\end{rema}

When there is no necessity to give emphasis to the dependence of the space $\Lambda^{2}_{w}(M;T)$ with respect to $M$, we will denote $\Lambda^{2}_{w}(M;T)$ and $\norm{\cdot}_{w,M;T}$ by $\Lambda^{2}_{w}(T)$ and $\norm{\cdot}_{w,T}$ respectively. We will keep using the shorter notation for the remainder of this section. 

With some minor changes, the proof of the following proposition can be carried out following similar arguments to those in the proof of Proposition 2.4 in \cite{BojdeckiJakubowski:1990}. 

\begin{prop} \label{spaceIntegrandsWeakIntegralIsHilbertSpace} $ \Lambda^{2}_{w}(T)$ is a Hilbert space when equipped with the inner product $\inner{\cdot}{\cdot}_{w,T}$ corresponding to the Hilbertian norm $\norm{X}_{w,M;T}$.
\end{prop}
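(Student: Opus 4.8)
The plan is to show that $\Lambda^2_w(T)$ is complete, since it is manifestly an inner product space: the bilinear form
\[
\inner{X}{Y}_{w,T} = \Exp \int_0^T \int_U q_{r,u}(X(r,u),Y(r,u)) \, \mu(du) \, \lambda(dr)
\]
is well-defined by Proposition \ref{compatibilityPredictableMapsAndSeminorms} (applied as in Remark \ref{normOfIntegrandsIsPredictable}), symmetric and positive, and $\inner{X}{X}_{w,T} = \norm{X}_{w,M;T}^2$. The only subtlety in bilinearity/definiteness is the usual one: we work with equivalence classes of families agreeing $\lambda \otimes \mu \otimes \Prob$-almost everywhere, so $\norm{X}_{w,M;T}=0$ forces $X=0$ in the quotient. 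Thus the whole content is completeness.

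The approach I would take is the standard one for $L^2$-type spaces, identifying $\Lambda^2_w(T)$ with a genuine $L^2$-space and invoking Riesz–Fischer. Concretely, consider the measure space $([0,T]\times\Omega\times U, \mathcal{P}_T \otimes \mathcal{B}(U), \lambda\otimes\Prob\otimes\mu)$. An element $X \in \Lambda^2_w(T)$ assigns to each $(r,\omega,u)$ a vector $X(r,\omega,u) \in \Phi_{q_{r,u}}$; although the target Hilbert space varies with $(r,u)$, the norm $q_{r,u}(X(r,\omega,u))$ and the inner products $q_{r,u}(X(r,\omega,u),\phi)$ for $\phi$ ranging over the countable dense set $D$ from Assumption \ref{assumpDenseSubsetSemiNormsMartValuedMeasures} are scalar measurable functions, and $q_{r,u}(X(r,\omega,u)) = \sup$ over finite rational combinations of elements of $D$ of $|q_{r,u}(X(r,\omega,u),\cdot)| / q_{r,u}(\cdot)$, so the norm is recovered measurably. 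First I would take a Cauchy sequence $(X^{(k)})_{k\in\N}$ in $\Lambda^2_w(T)$, pass to a subsequence with $\sum_k \norm{X^{(k+1)}-X^{(k)}}_{w,T} < \infty$, and show via the scalar functions $(r,\omega,u)\mapsto q_{r,u}(X^{(k)}(r,\omega,u),\phi)$, $\phi \in D$, together with monotone convergence, that $\sum_k q_{r,u}(X^{(k+1)}(r,\omega,u)-X^{(k)}(r,\omega,u)) < \infty$ for $\lambda\otimes\Prob\otimes\mu$-a.e.\ $(r,\omega,u)$. On that full-measure set, $(X^{(k)}(r,\omega,u))_k$ is Cauchy in the (complete) Hilbert space $\Phi_{q_{r,u}}$, hence converges to a limit $X(r,\omega,u) \in \Phi_{q_{r,u}}$; set $X=0$ off the set.

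Next I would verify that the limit $X$ lies in $\Lambda^2_w(T)$ and that $X^{(k)} \to X$ in norm. The predictability/measurability (condition (2) of Definition \ref{integrandsWeakIntegSquareMoments}) passes to the limit because, for each $\phi \in \Phi$, $q_{r,u}(X(r,\omega,u),\phi) = \lim_k q_{r,u}(X^{(k)}(r,\omega,u),\phi)$ pointwise a.e., a pointwise limit of $\mathcal{P}_T\otimes\mathcal{B}(U)$-measurable functions; a modification on the exceptional null set leaves this intact. The finite-second-moment condition (3) and the norm convergence $\norm{X^{(k)}-X}_{w,T}\to 0$ then follow from Fatou's lemma applied to $q_{r,u}(X^{(k)}(r,\omega,u)-X^{(j)}(r,\omega,u))^2$ (letting $j\to\infty$ inside, using the subsequence estimate) exactly as in the classical Riesz–Fischer argument, and finally a standard Cauchy-sequence lemma upgrades convergence of the subsequence to convergence of the original sequence.

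The main obstacle — and the only place any real care is needed beyond bookkeeping — is the fact that the fibre Hilbert spaces $\Phi_{q_{r,u}}$ depend measurably on $(r,u)$, so one cannot literally cite ``$L^2$ of functions valued in a fixed Hilbert space is complete.'' Handling this cleanly requires Assumption \ref{assumpDenseSubsetSemiNormsMartValuedMeasures}: the single countable set $D$ is dense in every $\Phi_{q_{r,u}}$ simultaneously, which lets all the relevant norms and inner products be expressed as countable suprema/limits of jointly measurable scalar functions, and lets Cauchyness in the fibre be detected measurably. This is precisely the device used in Proposition 2.4 of \cite{BojdeckiJakubowski:1990}, and with the measurability groundwork already laid in Proposition \ref{compatibilityPredictableMapsAndSeminorms} the remaining steps are routine.
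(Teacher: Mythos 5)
Your proposal is correct and follows essentially the same route as the paper, which simply defers to Proposition 2.4 of \cite{BojdeckiJakubowski:1990}: a Riesz--Fischer-type completeness argument for an $L^{2}$-space with fibres $\Phi_{q_{r,u}}$ varying measurably, made rigorous via the countable dense set of Assumption \ref{assumpDenseSubsetSemiNormsMartValuedMeasures} and the measurability supplied by Proposition \ref{compatibilityPredictableMapsAndSeminorms}. No gaps.
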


Now, we define a class of simple families of random variables contained in $ \Lambda^{2}_{w}(T)$. 

\begin{defi} \label{classSimpleHilbertValuedRV}
Let $\mathcal{S}_{w}(T)$ be the collection of all the families $X=\{ X(r,\omega,u): r \in [0,T],  \omega \in \Omega, u \in U\}$ of Hilbert space valued maps of the form:
\begin{equation} \label{equationSimpleIntegrandsWeakIntegral}
X(r,\omega,u)= \sum_{i=1}^{n} \sum_{j=1}^{m}  \ind{]s_{j}, t_{j}]}{r} \ind{F_{j}}{\omega} \ind{A_{i}}{u} i_{q_{r,u}} \phi_{i,j}, 
\end{equation}
for all $r \in [0,T]$, $\omega \in \Omega$, $u \in U$, where $m$, $n \in \N$, and for $i=1, \dots, n$, $j=1, \dots, m$, $0\leq s_{j}<t_{j} \leq T$, $F_{j} \in \mathcal{F}_{s_{j}}$, $A_{i} \in \mathcal{R}$ and $\phi_{i,j} \in \Phi$.
\end{defi}

It is easy to check that $\mathcal{S}_{w}(T)$ is a subspace of $\Lambda^{2}_{w}(T)$. Moreover, we have the following: 

\begin{prop} \label{simpleIntegrandsDenseInSquareIntegIntegrandsWeakIntegral}
$S_{w}(T)$ is dense in $\Lambda_{w}^{2}(T)$. 
\end{prop}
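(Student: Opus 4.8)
The plan is to show that $\mathcal{S}_w(T)$ is dense in $\Lambda^2_w(T)$ by a standard approximation argument in three layers, working with the Hilbert space structure from Proposition \ref{spaceIntegrandsWeakIntegralIsHilbertSpace}. First I would reduce to approximating a general $X \in \Lambda^2_w(T)$ by families whose values lie in $i_{q_{r,u}}(\Phi)$ rather than in the full Hilbert space $\Phi_{q_{r,u}}$: since $\Phi$ is dense in each $\Phi_{q_{r,u}}$ and, by Assumption \ref{assumpDenseSubsetSemiNormsMartValuedMeasures}, there is a fixed countable set $D \subseteq \Phi$ dense in every $\Phi_{q_{r,u}}$ for $r \in [0,T]$, one can use a measurable-selection / nearest-point argument over the countable set $D$ to produce a $q_{r,u}$-predictable family taking values in $i_{q_{r,u}}(D)$ that approximates $X$ pointwise in the $q_{r,u}$-norm, and then invoke dominated convergence (the dominating function being essentially $q_{r,u}(X(r,\omega,u))^2$, integrable by \eqref{finiteSecondMomentIntegrandsWeakIntg}) to get convergence in $\norm{\cdot}_{w,T}$.

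Second, once $X$ is (approximately) of the form $X(r,\omega,u) = i_{q_{r,u}}\phi(r,\omega,u)$ with $\phi$ taking countably many values $\phi_k \in D$, the family decomposes as a countable sum $\sum_k \ind{G_k}{(r,\omega,u)}\, i_{q_{r,u}}\phi_k$ over a measurable partition $\{G_k\} \subseteq \mathcal{P}_T \otimes \mathcal{B}(U)$; truncating this sum to finitely many terms gives convergence in $\Lambda^2_w(T)$, again by dominated convergence. Third, for a single term $\ind{G}{(r,\omega,u)}\, i_{q_{r,u}}\phi$ with $G \in \mathcal{P}_T \otimes \mathcal{B}(U)$ and $\phi \in \Phi$, I would approximate $G$ by finite unions of "rectangles" of the form $]s,t] \times F \times A$ with $F \in \mathcal{F}_s$ and $A \in \mathcal{R}$. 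This is where the bulk of the work lies: one needs that the ring generated by such rectangles generates $\mathcal{P}_T \otimes \mathcal{B}(U)$ and that it is dense with respect to the finite measure $d\Prob \otimes \lambda \otimes \mu$ restricted to the support — here one uses that $\lambda$ is finite on $[0,T]$, that $\mu$ is $\sigma$-finite with $\mu(A) < \infty$ for $A \in \mathcal{R}$, and a monotone-class (or $\pi$-$\lambda$) argument, combined with the fact that predictable sets are generated by the stochastic intervals $]s,t] \times F$, $F \in \mathcal{F}_s$. Approximating $\ind{G}{}$ in $L^2(d\Prob \otimes \lambda \otimes \mu)$ by indicators of such finite unions, and multiplying by the fixed bounded factor coming from $q_{r,u}(\phi)^2$ (bounded on $[0,T] \times U$ by Definition \ref{nuclearMartingaleValMeasDualSpace}(3)), yields the desired approximation by elements of $\mathcal{S}_w(T)$.

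The main obstacle I anticipate is the first step — producing a genuinely $q_{r,u}$-predictable approximation taking values in the countable set $i_{q_{r,u}}(D)$, since the semi-norms $q_{r,u}$ vary measurably in $(r,u)$ and the approximation must respect the joint measurability in $\mathcal{P}_T \otimes \mathcal{B}(U)$. The trick is to define, for $\phi_k$ enumerating $D$, the predictable sets $G_k^{(N)} = \{(r,\omega,u) : q_{r,u}(X(r,\omega,u) - i_{q_{r,u}}\phi_k) < 1/N,\ k = \min\{j \leq J_N : q_{r,u}(X - i_{q_{r,u}}\phi_j) < 1/N\}\}$ for suitable finite $J_N$; Proposition \ref{compatibilityPredictableMapsAndSeminorms} guarantees that the maps $(r,\omega,u) \mapsto q_{r,u}(X(r,\omega,u) - i_{q_{r,u}}\phi_k)$ are jointly predictable-measurable, so these sets are legitimate and the resulting simple-valued family is $q_{r,u}$-predictable. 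Assembling these pieces, the error in each layer tends to zero in $\norm{\cdot}_{w,T}$, and a standard $\varepsilon/3$ argument completes the proof. Throughout, the analogy with Proposition 2.4 of \cite{BojdeckiJakubowski:1990} keeps the bookkeeping under control.
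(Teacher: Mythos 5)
Your approach is correct in outline but is genuinely different from the one in the paper, and considerably longer. The paper exploits the fact that $\Lambda^{2}_{w}(T)$ is a Hilbert space (Proposition \ref{spaceIntegrandsWeakIntegralIsHilbertSpace}): it suffices to show that the only element orthogonal to the one-term simple families $\ind{]s,t]}{r}\ind{F}{\omega}\ind{A}{u}\,i_{q_{r,u}}\phi$ is zero. Writing out the inner product, orthogonality says that the signed measure with density $q_{r,u}(X(r,\omega,u),i_{q_{r,u}}\phi)$ vanishes on all rectangles $]s,t]\times F\times A$; since these generate $\mathcal{P}_{T}\otimes\mathcal{B}(U)$, Fubini gives $q_{r,u}(X,i_{q_{r,u}}\phi)=0$ a.e.\ for every $\phi$, and density of $i_{q_{r,u}}(\Phi)$ in $\Phi_{q_{r,u}}$ forces $X=0$ a.e. That argument is a few lines and needs neither measurable selection nor a monotone-class approximation of predictable sets; on the other hand it is non-constructive and tied to the Hilbert structure, whereas your three-layer scheme actually produces approximating simple families and would survive in an $L^{p}$ setting. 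Note that your layers two and three are doing by hand exactly the work that the duality argument delegates to ``rectangles generate the product $\sigma$-algebra''.

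Two points in your sketch need to be made explicit. In the first layer, the error on the part of $[0,T]\times\Omega\times U$ covered by the sets $G_{k}^{(N)}$ is bounded by $1/N^{2}$, but this is \emph{not} integrable against $\lambda\otimes\Prob\otimes\mu$ when $\mu(U)=\infty$; you must either adjoin $0$ to the candidate set $\{\phi_{1},\dots,\phi_{J_{N}}\}$ so that the pointwise error is always dominated by $q_{r,u}(X(r,\omega,u))^{2}$, or first truncate in the $u$-variable to a set of finite $\mu$-measure. Similarly, in the third layer the reference measure $q_{r,u}(\phi)^{2}\,d(\lambda\otimes\Prob\otimes\mu)$ is only $\sigma$-finite, so the $L^{2}$-approximation of $\ind{G}{}$ by indicators of finite unions of rectangles requires an exhausting sequence in $\mathcal{R}$ for $U$; this is implicitly assumed in the paper's proof as well, but your route leans on it more heavily, so you should state it.
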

\begin{prf}
Let $C_{w}(T)$ be the collection of all families of Hilbert space valued maps $Y=\{ Y(r,\omega,u): r \in [0,T],  \omega \in \Omega, u \in U\}$ taking the simple form
\begin{equation} \label{simpleFamiliesProofDenseSupspaceIntegrands}
Y(r,\omega,u)= \ind{]s,t]}{r} \ind{F}{\omega} \ind{A}{u} i_{q_{r,u}} \phi, \quad \forall \, t \in [0,T], \, \omega \in \Omega, \, u \in U,
\end{equation}  
where $0 \leq s < t \leq T$, $F \in \mathcal{F}_{s}$, $A \in \mathcal{R}$ and $\phi \in \Phi$. 

Is clear from \eqref{equationSimpleIntegrandsWeakIntegral} and \eqref{simpleFamiliesProofDenseSupspaceIntegrands} that $C_{w}(T)$ spans $S_{w}(T)$. Our objective is then to prove that the only element of $\Lambda^{2}_{w}(T)$ that is orthogonal to $C_{w}(T)$ is the zero family (to be precise, its equivalence class). This will imply that $S_{w}(T)$ is dense in $\Lambda^{2}_{w}(T)$. 

To do this, let $X \in \Lambda^{2}_{w}(T)$. If $Y \in S_{w}(T)$ is of the form \eqref{simpleFamiliesProofDenseSupspaceIntegrands}, then we have that 
\begin{equation} \label{innerProductProofDenseSubpaceIntegrands}
\inner{X}{Y}_{w,T}= \int_{F} \int_{s}^{t} \int_{A} q_{r,u}(X(r,\omega,u),i_{q_{r,u}} \phi) \mu(du) \lambda(dr) \Prob(d \omega). 
\end{equation}
 Assume that $X \in C_{w}(T)^{\perp}$, where $C_{w}(T)^{\perp}$ denotes the orthogonal complement of $C_{w}(T)$ in $\Lambda^{2}_{w}(T)$. Hence, it follows from \eqref{innerProductProofDenseSubpaceIntegrands} that $X$ satisfies:
\begin{equation} \label{orthogonalityConditionProofDenseSubspaceIntegrands}
\int_{F} \int_{s}^{t} \int_{A} q_{r,u}(X(r,\omega,u),i_{q_{r,u}} \phi) \mu(du) \lambda(dr) \Prob(d \omega)=0, 
\end{equation}
 for all $0 \leq s < t \leq T$, $F \in \mathcal{F}_{s}$, $A \in \mathcal{R}$ and $\phi \in \Phi$. 
 
 Moreover, as $\mathcal{P}_{T} \otimes \mathcal{B}(U)$ is generated by the family of all subsets of $[0,T] \times \Omega \times U$ of the form $G=]s,t] \times F \times A$, where $0 \leq s < t \leq T$, $F \in \mathcal{F}_{s}$, $A \in \mathcal{R}$; then \eqref{orthogonalityConditionProofDenseSubspaceIntegrands} and the Fubini theorem implies that $q_{r,u}(X(r,\omega,u),i_{q_{r,u}} \phi)=0$ $\lambda \otimes \Prob \otimes \mu$-a.e., for all $\phi \in \Phi$. Furthermore, as for each $(r,u) \in [0,T] \times U$, $i_{q_{r,u}}(\Phi)$ is dense in $\Phi_{q_{r,u}}$, then it follows that $X(r,\omega,u)=0$ $\lambda \otimes \Prob \otimes \mu$-a.e. Thus,   
$C_{w}(T)^{\perp}=\{0\}$ and hence $S_{w}(T)$ is dense in $\Lambda^{2}_{w}(T)$.   
\end{prf}

Now we define the weak stochastic integral for the elements of $\mathcal{S}_{w}(T)$. Let $X \in \mathcal{S}_{w}(T)$ be of the form \eqref{equationSimpleIntegrandsWeakIntegral}. We can always assume (taking a smaller partition if needed) that: 
\begin{equation} \label{separationPartitionIntervalsSimpleForm}
\mbox{for } k \neq j, \quad  ]s_{k},t_{k}]  \, \cap  \,  ]s_{j},t_{j}] \neq \emptyset \quad \Rightarrow  \quad ]s_{k},t_{k}]= \, ]s_{j},t_{j}] \mbox{ and } F_{k} \cap F_{j}= \emptyset.
\end{equation}
Then, for such an $X$ we define   
\begin{equation} \label{weakStochasticIntegralSimpleIntegrands}
I^{w}_{T}(X)= \sum_{i=1}^{n} \sum_{j=1}^{m} \mathbbm{1}_{F_{j}} M((s_{j}, t_{j}],A_{i})(\phi_{i,j}).
\end{equation}

It is easy to see from the finite-additivity of $M$ on $\mathcal{R}$ and the linearity on $\Phi$ of $M(t,A)$ for any $t \geq 0$, $A \in \mathcal{R}$, that $I^{w}_{T}(X)$ is independent (up to modifications) of the representation of $X \in \mathcal{S}_{w}(T)$ (i.e. of the expression of $X$ as in \eqref{equationSimpleIntegrandsWeakIntegral}). Furthermore, we have the following:

\begin{theo} \label{theoItoIsometrySimpleIntegrandsWeakIntegral}
For every $X \in \mathcal{S}_{w}(T)$,   
\begin{equation}\label{itoIsometrySimpleIntegrandsWeakIntegral}
\Exp \left( I^{w}_{T}(X) \right)=0, \quad \Exp \left( \abs{ I^{w}_{T}(X) }^{2} \right)= \norm{X}^{2}_{w,T}. 
\end{equation}
Moreover, the map $I^{w}_{T}: \mathcal{S}_{w}(T) \rightarrow L^{2} \ProbSpace$, $X \mapsto I^{w}_{T}(X)$, is a linear isometry.  
\end{theo}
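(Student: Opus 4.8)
The plan is to verify the three claims in turn, the last being an immediate consequence of the first two together with linearity of $I^w_T$ on $\mathcal{S}_w(T)$ (already noted above) and the polarization identity. First I would fix $X \in \mathcal{S}_w(T)$ written in the form \eqref{equationSimpleIntegrandsWeakIntegral} with a partition satisfying \eqref{separationPartitionIntervalsSimpleForm}, so that the rectangles $]s_j,t_j] \times F_j$ are ``disjoint'' in the sense that for $k \neq j$ either the time intervals coincide and $F_k \cap F_j = \emptyset$, or the time intervals are disjoint. For the zero-mean claim, I would use that each $(M(t,A_i):t\geq 0)$ is a cylindrical zero-mean martingale, so $\Exp\left( M((s_j,t_j],A_i)(\phi_{i,j}) \mid \mathcal{F}_{s_j} \right) = 0$; since $F_j \in \mathcal{F}_{s_j}$, we get $\Exp\left( \mathbbm{1}_{F_j} M((s_j,t_j],A_i)(\phi_{i,j}) \right) = 0$ by conditioning on $\mathcal{F}_{s_j}$, and summing over $i,j$ gives $\Exp(I^w_T(X)) = 0$.

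For the isometry identity I would expand $\Exp\left( \abs{I^w_T(X)}^2 \right)$ as a double sum over pairs of indices $(i,j)$ and $(i',j')$ of terms $\Exp\left( \mathbbm{1}_{F_j \cap F_{j'}} M((s_j,t_j],A_i)(\phi_{i,j}) M((s_{j'},t_{j'}],A_{i'})(\phi_{i',j'}) \right)$ and show every off-diagonal term vanishes, splitting into cases. When $i \neq i'$ so $A_i$ and $A_{i'}$ are disjoint, orthogonality property (4) of Definition \ref{martingValuedMeasureOnDualSpace}, together with the independent-increments structure and a conditioning argument to absorb the indicator, kills the term. When $i = i'$ but the time intervals $]s_j,t_j]$ and $]s_{j'},t_{j'}]$ are disjoint --- say $t_j \leq s_{j'}$ --- I would condition on $\mathcal{F}_{s_{j'}}$: the factor $\mathbbm{1}_{F_j \cap F_{j'}} M((s_j,t_j],A_i)(\phi_{i,j})$ is $\mathcal{F}_{s_{j'}}$-measurable (here using $t_j \leq s_{j'}$ and that $M((s_j,t_j],A_i)(\phi_{i,j})$ is the increment of a \cadlag martingale up to time $t_j$), while the conditional expectation of the remaining increment vanishes by the martingale property. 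When $i = i'$ and $]s_j,t_j] = ]s_{j'},t_{j'}]$ with $j \neq j'$, the indicator $\mathbbm{1}_{F_j \cap F_{j'}}$ is zero by \eqref{separationPartitionIntervalsSimpleForm}. The surviving diagonal terms are $\sum_{i,j} \Exp\left( \mathbbm{1}_{F_j} \abs{M((s_j,t_j],A_i)(\phi_{i,j})}^2 \right)$, and here I would invoke the nuclearity relation \eqref{covarianceFunctionalNuclearMartValuedMeasure}, noting that by the independent-increments assumption $M((s_j,t_j],A_i)(\phi_{i,j})$ is independent of $\mathcal{F}_{s_j} \supseteq \sigma(F_j)$, so that the expectation factorizes as $\Prob(F_j) \int_{s_j}^{t_j} \int_{A_i} q_{r,u}(\phi_{i,j})^2 \mu(du)\lambda(dr)$. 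Finally I would compute $\norm{X}^2_{w,T}$ directly from \eqref{finiteSecondMomentIntegrandsWeakIntg}, using that on the set where the indicators are active $q_{r,u}(X(r,\omega,u)) = q_{r,u}(i_{q_{r,u}}\phi_{i,j}) = q_{r,u}(\phi_{i,j})$ and that the rectangles are disjoint, to see the two expressions agree.

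The main obstacle I anticipate is the bookkeeping in the off-diagonal case $i = i'$, $]s_j,t_j] \cap \, ]s_{j'},t_{j'}] \neq \emptyset$ with $]s_j,t_j] \neq \, ]s_{j'},t_{j'}]$: after imposing \eqref{separationPartitionIntervalsSimpleForm} one must be careful that a common refinement of the partition can always be chosen so that any two time intervals appearing are either equal or disjoint, and that this refinement does not disturb the measurability/adaptedness conditions ($F_j \in \mathcal{F}_{s_j}$ is preserved since refining only shrinks intervals and intersects the $F_j$'s with sets in coarser $\sigma$-algebras). Once the partition is in this normal form every cross term falls into one of the three cases above. The rest --- the zero-mean computation, the diagonal identification, and deducing that $I^w_T$ is a linear isometry from \eqref{itoIsometrySimpleIntegrandsWeakIntegral} --- is routine.
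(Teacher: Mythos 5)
Your proposal is correct and follows essentially the same route as the paper: expand the square of the sum, eliminate the cross terms, and evaluate the diagonal via the covariance structure \eqref{covarianceFunctionalNuclearMartValuedMeasure} together with the independence of each increment from $\mathcal{F}_{s_{j}}$. In fact your three-case treatment of the off-diagonal terms (disjoint $A$'s; equal $A$'s with disjoint time intervals, handled by conditioning on $\mathcal{F}_{s_{j'}}$; equal $A$'s and equal intervals with disjoint $F$'s) is more complete than the paper's one-line appeal to Definition \ref{martingValuedMeasureOnDualSpace}(4), which as written only covers the case $i \neq k$ and $j \neq l$ simultaneously; the one point both arguments leave implicit is that the $A_{i}$'s should also be refined into disjoint members of $\mathcal{R}$, so that the diagonal identification of $\norm{X}^{2}_{w,T}$ involves no cross terms in the $u$-variable.
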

\begin{prf}
Let $X \in \mathcal{S}_{w}(T)$ be of the form \eqref{equationSimpleIntegrandsWeakIntegral} and satisfying \eqref{separationPartitionIntervalsSimpleForm}. From the definition of $I^{w}_{T}(X)$ in \eqref{weakStochasticIntegralSimpleIntegrands}, the independent increments of $M$ and Definition \ref{martingValuedMeasureOnDualSpace}(3), we have 
$$\Exp \left( I^{w}_{T}(X) \right) = \sum_{i=1}^{n} \sum_{j=1}^{m} \Prob (F_{j}) \Exp \left(  M((s_{j}, t_{j}],A_{i})(\phi_{i,j}) \right)=0. $$ 

Now, note that from the orthogonality of $M$ on the ring $\mathcal{A}$ (Definition \ref{martingValuedMeasureOnDualSpace}(4)) we have that 
$$ \Exp \left( M((s_{j} , t_{j} ],A_{i})(\phi_{i,j}) \cdot M((s_{l} , t_{l} ],A_{k})(\phi_{k,l})  \right) = 0. $$
for each $i,k=1, \dots, n$, $j,l=1, \dots, m$, $i \neq k$, $j \neq l$.  Then, 
\begin{eqnarray} 
\Exp \left( \abs{ I^{w}_{T}(X) }^{2} \right)
& = & \sum_{i,k=1}^{n} \sum_{j,l=1}^{m}  \Exp \left( \mathbbm{1}_{F_{j}} M((s_{j}, t_{j}],A_{i})(\phi_{i,j}) \mathbbm{1}_{F_{l}} M((s_{l}, t_{l}],A_{k})(\phi_{k,l})  \right) \hspace{25pt} \label{secondMomentWeakIntegralSimpleFamilies} \\
& = & \sum_{i=1}^{n} \sum_{j=1}^{m} \Prob (F_{j}) \Exp \left( \abs{ M((s_{j}, t_{j}],A_{i})(\phi_{i,j}) }^{2} \right)  \nonumber\\
& = & \sum_{i=1}^{n} \sum_{j=1}^{m} \Prob (F_{j}) \int_{s_{j}}^{t_{j}} \int_{A_{i}} q_{r,u}(\phi_{i,j} )^{2} \mu (du) \lambda (dr) \nonumber \\
& = & \Exp \int_{0}^{T} \int_{U} q_{r,u}(X(r,u))^{2} \mu(du) \lambda(dr) \nonumber \\
& = & \norm{X}_{w,T}^{2} \nonumber.  
\end{eqnarray}
The linearity of the map $I^{w}_{T}: \mathcal{S}_{w}(T) \rightarrow L^{2} \ProbSpace$ follows from the properties (2) and (3) of $M$ in Definition \ref{martingValuedMeasureOnDualSpace}. Finally, that $I^{w}_{T}$ is an isometry is a consequence of \eqref{itoIsometrySimpleIntegrandsWeakIntegral}.  
\end{prf}

Now, from Proposition \ref{simpleIntegrandsDenseInSquareIntegIntegrandsWeakIntegral} and Theorem \ref{theoItoIsometrySimpleIntegrandsWeakIntegral}, the map $I^{w}_{T}$ extends to a linear isometry from $\Lambda^{2}_{w}(T)$ into $L^{2} \ProbSpace$, that we still denote by $I^{w}_{T}$. Moreover, from \eqref{itoIsometrySimpleIntegrandsWeakIntegral} we have the following \emph{It\^{o} isometry}
\begin{equation}\label{itoIsometryWeakIntegral}
\Exp \left( \abs{ I^{w}_{T}(X) }^{2} \right)= \norm{X}^{2}_{w,T}, \quad \forall \, X \in \Lambda^{2}_{w}(T). 
\end{equation}
For $0 \leq t \leq T$, $X \in \Lambda^{2}_{w}(T)$, it is clear that $\mathbbm{1}_{[0,t]}X \in \Lambda^{2}_{w}(T)$ and hence we can define a real-valued process $I^{w}(X)=\{ I^{w}_{t}(X) \}_{t \geq 0}$ by means of the prescription 
\begin{equation} \label{defiWeakIntegralAsProcess}
I^{w}_{t}(X) \defeq I^{w}_{T}(\mathbbm{1}_{[0,t]} X), \quad \forall \, t \in [0,T]. 
\end{equation}
The process $I^{w}(X)$ will be called the \emph{weak stochastic integral} of $X$, and sometimes we denote it by $\left\{ \int^{t}_{0} \int_{U} X (r,u) M (dr, du): t \in [0,T] \right\}$.  Some of the properties of the weak stochastic integral are given in the following theorem. 

\begin{theo} \label{propertiesWeakIntegralSquareIntegIntegrands}
For each $X \in \Lambda^{2}_{w}(T)$, $I^{w}(X)=\{ I^{w}_{t}(X) \}_{t \in [0,T]}$ is a real-valued zero-mean, square integrable, c\`{a}dl\`{a}g martingale with second moments given by
\begin{equation} \label{secondMomentWeakStochasticIntegral}
\Exp \left( \abs{I^{w}_{t}(X)}^{2} \right) = \Exp \int^{t}_{0} \int_{U} q_{r,u}(X(r,u))^{2} \mu(du) \lambda(dr), \quad \forall \, t \in [0,T]. 
\end{equation}
Moreover, $I^{w}(X)$ is mean square continuous and it has a predictable version. Furthermore, the mapping $I^{w}: \Lambda^{2}_{w}(T) \rightarrow \mathcal{M}^{2}_{T}(\R)$ given by $X \mapsto I^{w}(X)=\{ I^{w}_{t}(X) \}_{t \in [0,T]}$ is linear and continuous. 
\end{theo}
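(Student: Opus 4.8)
The plan is to establish each assertion in turn, leveraging the Itô isometry \eqref{itoIsometryWeakIntegral} and the fact that $\mathcal{S}_{w}(T)$ is dense in $\Lambda^{2}_{w}(T)$ (Proposition \ref{simpleIntegrandsDenseInSquareIntegIntegrandsWeakIntegral}). First I would verify the martingale property: for $X \in \mathcal{S}_{w}(T)$ of the form \eqref{equationSimpleIntegrandsWeakIntegral}, the process $I^{w}_{t}(X) = I^{w}_{T}(\mathbbm{1}_{[0,t]}X)$ is an explicit finite sum of terms $\mathbbm{1}_{F_{j}} M((s_{j}\wedge t, t_{j}\wedge t], A_{i})(\phi_{i,j})$, and since each $M((s,t],A)(\phi)$ is a zero-mean c\`{a}dl\`{a}g martingale (Definition \ref{martingValuedMeasureOnDualSpace}(3)) with $F_{j} \in \mathcal{F}_{s_{j}}$ chosen ``before'' the increment, the adaptedness and martingale property follow from a routine conditioning argument using the independent increments of $M$. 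For general $X \in \Lambda^{2}_{w}(T)$, pick $X_{k} \in \mathcal{S}_{w}(T)$ with $X_{k} \to X$ in $\Lambda^{2}_{w}(T)$; then by the Itô isometry $I^{w}_{t}(X_{k}) \to I^{w}_{t}(X)$ in $L^{2}\ProbSpace$ for each $t$, and since $\mathcal{M}^{2}_{T}(\R)$ is closed in the relevant sense (Doob's inequality gives $\norm{I^{w}(X_{k}) - I^{w}(X_{l})}_{\mathcal{M}^{2}_{T}(\R)} \leq 2\norm{I^{w}_{T}(X_{k} - X_{l})}_{L^{2}} = 2\norm{X_{k}-X_{l}}_{w,T}$), the limit is a zero-mean square-integrable c\`{a}dl\`{a}g martingale. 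Formula \eqref{secondMomentWeakStochasticIntegral} is then immediate from \eqref{itoIsometryWeakIntegral} applied to $\mathbbm{1}_{[0,t]}X$.

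Next I would address mean-square continuity. For $0 \leq s \leq t \leq T$ we have $\Exp\left(\abs{I^{w}_{t}(X) - I^{w}_{s}(X)}^{2}\right) = \norm{\mathbbm{1}_{(s,t]}X}^{2}_{w,T} = \Exp \int_{s}^{t}\int_{U} q_{r,u}(X(r,u))^{2}\mu(du)\lambda(dr)$, and this tends to $0$ as $t \to s$ (or $s \to t$) by dominated convergence, using \eqref{finiteSecondMomentIntegrandsWeakIntg} and the fact that $\lambda$ is finite on $[0,T]$. The existence of a predictable version follows from a standard fact: an adapted, mean-square (hence stochastically) continuous process has a predictable modification — indeed, a c\`{a}dl\`{a}g martingale that is stochastically continuous is automatically quasi-left-continuous, but more simply one can invoke that any adapted c\`{a}dl\`{a}g process is progressively measurable and, being a martingale, one may pass to a predictable version via the usual regularization. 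Here I would invoke the convention stated in the preliminaries and cite the standard argument (e.g. via approximation by left-continuous adapted processes).

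Finally, for continuity of the map $I^{w}: \Lambda^{2}_{w}(T) \to \mathcal{M}^{2}_{T}(\R)$, linearity is clear from the linearity of $I^{w}_{T}$ on $\mathcal{S}_{w}(T)$ and the density argument. For continuity, Doob's maximal inequality gives
\begin{equation*}
\norm{I^{w}(X)}_{\mathcal{M}^{2}_{T}(\R)}^{2} = \Exp\left(\sup_{t \in [0,T]}\abs{I^{w}_{t}(X)}^{2}\right) \leq 4\, \Exp\left(\abs{I^{w}_{T}(X)}^{2}\right) = 4\norm{X}^{2}_{w,T},
\end{equation*}
so the map is bounded, hence continuous.

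I expect the main obstacle to be the construction of the predictable version: establishing mean-square continuity is a one-line dominated-convergence argument, but upgrading the c\`{a}dl\`{a}g martingale to a \emph{predictable} process requires either citing a general regularization theorem or carrying out the approximation by left-continuous step processes explicitly and checking that the $\mathcal{M}^{2}_{T}(\R)$-limit inherits predictability. Everything else reduces cleanly to the Itô isometry \eqref{itoIsometryWeakIntegral}, Doob's inequality, and the completeness of $\mathcal{M}^{2}_{T}(\R)$.
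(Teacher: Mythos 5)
Your proposal is correct and follows essentially the same route as the paper: simple integrands plus the It\^{o} isometry, Doob's inequality and completeness of $\mathcal{M}^{2}_{T}(\R)$ for the martingale property, dominated convergence for mean-square continuity, and the standard fact that an adapted stochastically continuous process has a predictable version (the paper cites Peszat--Zabczyk, Proposition 3.21, which is exactly the fact you need, so the step you flag as the ``main obstacle'' is a one-line citation). Your derivation of \eqref{secondMomentWeakStochasticIntegral} directly from \eqref{itoIsometryWeakIntegral} applied to $\mathbbm{1}_{[0,t]}X$ is a slight streamlining of the paper's limit argument, and your Doob constant $4$ is the correct one (the paper writes $4T$, which is harmless for continuity but not the sharp maximal inequality).
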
	
\begin{prf}
Let $X \in \mathcal{S}_{w}(T)$ be of the form \eqref{equationSimpleIntegrandsWeakIntegral} and satisfying \eqref{separationPartitionIntervalsSimpleForm}. From \eqref{weakStochasticIntegralSimpleIntegrands},  \eqref{defiWeakIntegralAsProcess}, the independent increments of $M$, Definition \ref{martingValuedMeasureOnDualSpace}(3) and \eqref{itoIsometrySimpleIntegrandsWeakIntegral} it follows that  $I^{w}(X) \in \mathcal{M}^{2}_{T}(\R)$. Moreover, similar calculations to those used in \eqref{secondMomentWeakIntegralSimpleFamilies} shows that $I^{w}(X)$ satisfies \eqref{secondMomentWeakStochasticIntegral}.  

Now, if $X \in \Lambda^{2}_{w}(T)$ there exist a sequence $\{ X_{k} \}_{k \in \N} \subseteq \mathcal{S}_{w}(T)$ that converges to $X$ in $\Lambda^{2}_{w}(T)$ (Proposition \ref{simpleIntegrandsDenseInSquareIntegIntegrandsWeakIntegral}). Then, from the linearity of the weak integral,  \eqref{itoIsometryWeakIntegral}, Doob's inequality and the completeness of $\mathcal{M}^{2}_{T}(\R)$, we have that $\{ I^{w}(X_{k}) \}_{k \in \N}$ converges to $I^{w}(X)$ in $\mathcal{M}^{2}_{T}(\R)$, and hence $I^{w}(X) \in \mathcal{M}^{2}_{T}(\R)$. Moreover, because each $X_{k}$ satisfies \eqref{secondMomentWeakStochasticIntegral}, the fact that  $\{ I^{w}_{t}(X_{k}) \}_{k \in \N}$ converges to $I^{w}_{t}(X)$ in $L^{2}\ProbSpace$ for each $t \in [0,T]$ implies that $I^{w}_{t}(X)$ satisfies \eqref{secondMomentWeakStochasticIntegral}.  Then for every $X \in \Lambda^{2}_{w}(T)$ it follows from Doob's inequality and \eqref{itoIsometryWeakIntegral} that  
\begin{equation} \label{weakIntegralMapIsContinuousInequ}
\norm{I^{w}(X)}^{2}_{\mathcal{M}^{2}_{T}(\R)} = \Exp \left( \sup_{t\in [0,T]} \abs{I^{w}_{t}(X)}^{2} \right) \leq 4 T \, \Exp \left( \abs{I^{w}_{T}(X)}^{2} \right)= 4T \, \norm{X}^{2}_{w,T}, 
\end{equation} 
and so the linear map $I^{w}: \Lambda^{2}_{w}(T) \rightarrow \mathcal{M}^{2}_{T}(\R)$, $X \mapsto I^{w}(X)=\{ I^{w}_{t}(X) \}_{t \in [0,T]}$, is continuous.

To prove the mean square continuity property, note that if $X \in \Lambda^{2}_{w}(T)$, then it follows from \eqref{secondMomentWeakStochasticIntegral} that for any $0 \leq s \leq t \leq T$ we have:
$$ \Exp \left( \abs{I^{w}_{t}(X)-I^{w}_{s}(X)}^{2} \right) = \Exp \int^{t}_{s} \int_{U} q_{r,u}(X(r,u))^{2} \mu(du) \lambda(dr) \leq \norm{X}_{w,T}^{2},$$
and hence from an application of the dominated convergence theorem we have 
$$ \Exp \left( \abs{I^{w}_{t}(X)-I^{w}_{s}(X)}^{2} \right) \rightarrow 0 \quad \mbox{ as } s \rightarrow t, \mbox{ or } t \rightarrow s. $$
Thus, $I^{w}(X)$ is mean square continuous. Now, as $I^{w}(X)$ is $\{ \mathcal{F}_{t}\}$-adapted and stochastically continuous it has a predictable version (see \cite{PeszatZabczyk}, Proposition 3.21, p.27). 
\end{prf}

\begin{defi}
We call the map $I^{w}$ defined in Theorem \ref{propertiesWeakIntegralSquareIntegIntegrands} the \emph{weak stochastic integral} mapping.
\end{defi}

\begin{prop}\label{propWeakStochIntegContIfCylinMartIsCont}
If for each $A \in \mathcal{R}$ and $\phi \in \Phi$, the real-valued process $(M(t,A)(\phi): t \geq 0)$ is continuous, then for each $X \in \Lambda^{2}_{w}(T)$ the stochastic integral $I^{w}(X)$ is a continuous process.  
\end{prop}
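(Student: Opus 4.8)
The plan is to reduce the continuity of $I^{w}(X)$ for general $X \in \Lambda^{2}_{w}(T)$ to the continuity for simple integrands $X \in \mathcal{S}_{w}(T)$, and then handle the simple case by hand. First I would observe that if $X \in \mathcal{S}_{w}(T)$ is of the form \eqref{equationSimpleIntegrandsWeakIntegral} satisfying \eqref{separationPartitionIntervalsSimpleForm}, then by \eqref{weakStochasticIntegralSimpleIntegrands} and \eqref{defiWeakIntegralAsProcess},
\begin{equation*}
I^{w}_{t}(X)= \sum_{i=1}^{n} \sum_{j=1}^{m} \mathbbm{1}_{F_{j}} M((s_{j} \wedge t, t_{j} \wedge t],A_{i})(\phi_{i,j}),
\end{equation*}
which, for fixed $i,j$, is an affine combination (with the $\{\mathcal{F}_{t}\}$-measurable, time-independent coefficient $\mathbbm{1}_{F_{j}}$) of increments of the real-valued process $t \mapsto M(t,A_{i})(\phi_{i,j})$. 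By hypothesis each such process is continuous, so $t \mapsto I^{w}_{t}(X)$ is continuous (almost surely) for every $X \in \mathcal{S}_{w}(T)$.

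Next I would pass to a general $X \in \Lambda^{2}_{w}(T)$. By Proposition \ref{simpleIntegrandsDenseInSquareIntegIntegrandsWeakIntegral} there is a sequence $\{X_{k}\}_{k \in \N} \subseteq \mathcal{S}_{w}(T)$ converging to $X$ in $\Lambda^{2}_{w}(T)$, and by Theorem \ref{propertiesWeakIntegralSquareIntegIntegrands} the maps $I^{w}(X_{k})$ converge to $I^{w}(X)$ in $\mathcal{M}^{2}_{T}(\R)$, i.e.
\begin{equation*}
\Exp \left( \sup_{t \in [0,T]} \abs{I^{w}_{t}(X_{k}) - I^{w}_{t}(X)}^{2} \right) \longrightarrow 0 \quad \text{as } k \to \infty.
\end{equation*}
Passing to a subsequence, this convergence holds $\Prob$-a.s., so on a set of full probability $I^{w}(X_{k})$ converges to $I^{w}(X)$ uniformly on $[0,T]$. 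Since each $I^{w}(X_{k})$ has a.s.\ continuous paths and a uniform limit of continuous functions is continuous, $I^{w}(X)$ has a continuous version; by our standing convention (replace a process by its continuous version) we take $I^{w}(X)$ to be continuous.

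The main obstacle is a bookkeeping subtlety rather than a deep difficulty: the continuity statement for $I^{w}(X)$ only makes sense up to modification, so one must be careful that the "predictable version" of $I^{w}(X)$ produced in Theorem \ref{propertiesWeakIntegralSquareIntegIntegrands} is in fact indistinguishable from the continuous version obtained here. This is immediate because two c\`{a}dl\`{a}g (indeed, one predictable-modification and one continuous) versions of the same process that agree at each fixed time are indistinguishable; alternatively, a predictable process with a continuous version is itself, up to indistinguishability, continuous. A secondary point of care is the choice of the a.s.-convergent subsequence and ensuring the exceptional null sets combine into a single null set, which is routine. With these remarks the proof is complete.
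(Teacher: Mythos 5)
Your proof is correct and follows exactly the route the paper takes: the paper's own proof is a one-line remark that continuity is clear from the definition of $I^{w}(X)$ for $X \in \mathcal{S}_{w}(T)$ and extends by denseness, which is precisely the two-step argument you carry out in detail. Your explicit treatment of the simple case and the extraction of an a.s. uniformly convergent subsequence from convergence in $\mathcal{M}^{2}_{T}(\R)$ correctly fills in the details the paper omits.
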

\begin{prf}
The result follows clearly from the definition of $I^{w}(X)$ for $X \in \mathcal{S}_{w}(T)$ and this can be extended by the denseness of $\mathcal{S}_{w}(T)$ in $\Lambda^{2}_{w}(T)$ to any $X \in \Lambda^{2}_{w}(T)$. 
\end{prf}

\subsection{Properties of the Weak Stochastic Integral}\label{subSectionPWSI}

In this section we obtain some properties of the weak stochastic integral. 
The following result can be proven using similar arguments to those in the proof of Lemma 4.9 in \cite{DaPratoZabczyk}, p.94-5.

\begin{prop} \label{propStoppedIntegralWeakCase}
Let $X \in \Lambda^{2}_{w}(T)$ and $\sigma$ be an $\{\mathcal{F}_{t}\}$-stopping time such that $\Prob (\sigma \leq T)=1$. Then, $\Prob$-a.e. 
\begin{equation} \label{stoppedIntegralWeakCase}
I^{w}_{t}(\mathbbm{1}_{[0,\sigma]} X )= I^{w}_{t \wedge \sigma}(X), \quad \forall \, t \in [0,T].
\end{equation}
\end{prop}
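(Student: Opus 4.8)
The plan is to verify \eqref{stoppedIntegralWeakCase} first on a dense subclass of integrands and stopping times, and then pass to the limit using the It\^{o} isometry \eqref{itoIsometryWeakIntegral}. First I would reduce to the case of a stopping time $\sigma$ taking finitely many values $0 \leq u_{1} < u_{2} < \dots < u_{N} = T$; indeed, an arbitrary $\sigma$ with $\Prob(\sigma \leq T)=1$ can be approximated from above by such discrete stopping times $\sigma_{k} \downarrow \sigma$, and both sides of \eqref{stoppedIntegralWeakCase} are stable under this approximation: the right-hand side because $I^{w}(X)$ is \cadlag (Theorem \ref{propertiesWeakIntegralSquareIntegIntegrands}), so $I^{w}_{t \wedge \sigma_{k}}(X) \to I^{w}_{t \wedge \sigma}(X)$ $\Prob$-a.e.\ along the right-continuous approximation, and the left-hand side because $\mathbbm{1}_{[0,\sigma_{k}]}X \to \mathbbm{1}_{[0,\sigma]}X$ in $\Lambda^{2}_{w}(T)$ by dominated convergence (using \eqref{finiteSecondMomentIntegrandsWeakIntg}), so $I^{w}_{t}(\mathbbm{1}_{[0,\sigma_{k}]}X) \to I^{w}_{t}(\mathbbm{1}_{[0,\sigma]}X)$ in $L^{2}\ProbSpace$ by \eqref{itoIsometryWeakIntegral}, uniformly in $t$ via Doob's inequality as in \eqref{weakIntegralMapIsContinuousInequ}.

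Next I would reduce to $X \in \mathcal{S}_{w}(T)$. Since $\mathcal{S}_{w}(T)$ is dense in $\Lambda^{2}_{w}(T)$ (Proposition \ref{simpleIntegrandsDenseInSquareIntegIntegrandsWeakIntegral}) and, for fixed discrete $\sigma$, the maps $X \mapsto I^{w}_{t}(\mathbbm{1}_{[0,\sigma]}X)$ and $X \mapsto I^{w}_{t \wedge \sigma}(X)$ are continuous from $\Lambda^{2}_{w}(T)$ into $L^{2}\ProbSpace$ (the first by \eqref{itoIsometryWeakIntegral} and the fact that multiplication by $\mathbbm{1}_{[0,\sigma]}$ is a contraction on $\Lambda^{2}_{w}(T)$; the second because $\{I^{w}_{t\wedge\sigma}(X_{k})\}$ converges in $L^2$ uniformly in $t$, again by \eqref{weakIntegralMapIsContinuousInequ} applied to $X_{k}-X$), it suffices to prove the identity for simple integrands. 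For such an $X$ of the form \eqref{equationSimpleIntegrandsWeakIntegral} and a discrete $\sigma$, one writes $\mathbbm{1}_{[0,\sigma]}X$ explicitly: on the set $\{\sigma = u_{l}\} \in \mathcal{F}_{u_{l}}$ the truncation replaces each time-interval indicator $\mathbbm{1}_{]s_{j},t_{j}]}(r)$ by $\mathbbm{1}_{]s_{j},\, t_{j}\wedge u_{l}]}(r)$, and one checks this truncated family is again in $\mathcal{S}_{w}(T)$, refining the partition and the $\mathcal{F}$-sets as in \eqref{separationPartitionIntervalsSimpleForm} so that the new time-endpoints incorporate the $u_{l}$'s and the new $\Omega$-sets are of the form $F_{j} \cap \{\sigma = u_{l}\} \in \mathcal{F}_{s_{j}\vee u_{l}} \subseteq \mathcal{F}_{\text{new left endpoint}}$. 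Then applying the definition \eqref{weakStochasticIntegralSimpleIntegrands} to both sides and using the finite additivity of $M$ on $\mathcal{R}$ together with the additivity of $M((s,t],A)$ in the time variable (i.e.\ $M((a,c],A) = M((a,b],A)+M((b,c],A)$ for $a<b<c$, which follows from Definition \ref{martingValuedMeasureOnDualSpace}(3) since each $M(\cdot,A)$ is a martingale so increments are additive), the two sides reduce to the same finite sum $\Prob$-a.e.

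\textbf{Main obstacle.} The routine but delicate point is the bookkeeping in the simple-integrand step: one must verify that $\mathbbm{1}_{[0,\sigma]}X$, after intersecting with $\{\sigma=u_{l}\}$, genuinely satisfies the measurability constraint $F \in \mathcal{F}_{s}$ with $s$ the left endpoint of the relevant time interval — this is where adaptedness of $\sigma$ is essential, and it is the analogue of the corresponding verification in \cite[Lemma~4.9]{DaPratoZabczyk}. The limiting arguments, by contrast, are automatic once the $L^{2}$-continuity of both sides in $X$ and the \cadlag regularity in $t$ are in hand. I do not expect any difficulty arising from the locally convex / nuclear setting here, since the weak integral is real-valued and all estimates go through \eqref{itoIsometryWeakIntegral} and \eqref{finiteSecondMomentIntegrandsWeakIntg} exactly as in the Hilbert-space case.
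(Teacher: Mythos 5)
Your strategy coincides with the paper's, which at this point simply invokes the argument of Lemma 4.9 in \cite{DaPratoZabczyk}: verify \eqref{stoppedIntegralWeakCase} for simple integrands and discrete stopping times, then pass to the limit via the It\^{o} isometry \eqref{itoIsometryWeakIntegral} and Doob's inequality; your reductions and limiting steps are all sound. The one point that needs repair is exactly the bookkeeping you flag as delicate, and as written it does not work: decomposing $\mathbbm{1}_{[0,\sigma]}$ over the events $\{\sigma=u_{l}\}\in\mathcal{F}_{u_{l}}$ produces, on a nonempty truncated interval $]s_{j},t_{j}\wedge u_{l}]$ (so $u_{l}>s_{j}$), the $\Omega$-set $F_{j}\cap\{\sigma=u_{l}\}$, which is only $\mathcal{F}_{u_{l}}$-measurable, while every refined subinterval of $]s_{j},t_{j}\wedge u_{l}]$ has left endpoint strictly less than $u_{l}$; hence the claimed inclusion $\mathcal{F}_{s_{j}\vee u_{l}}\subseteq\mathcal{F}_{\text{new left endpoint}}$ fails and the truncated family is not in $\mathcal{S}_{w}(T)$. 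The correct decomposition, after refining the partition so that every $u_{l}$ is an endpoint, is $\mathbbm{1}_{[0,\sigma]}(r)=\mathbbm{1}_{[0,u_{1}]}(r)+\sum_{l\geq 2}\mathbbm{1}_{]u_{l-1},u_{l}]}(r)\,\mathbbm{1}_{\{\sigma\geq u_{l}\}}(\omega)$, where $\{\sigma\geq u_{l}\}=\{\sigma\leq u_{l-1}\}^{c}\in\mathcal{F}_{u_{l-1}}$ is measurable with respect to the $\sigma$-algebra at the \emph{left} endpoint of the corresponding subinterval; with this substitution the truncated family does lie in $\mathcal{S}_{w}(T)$ and the rest of your computation, together with the additivity of $M(\cdot,A)$ in the time variable, goes through. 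Everything else (the $L^{2}$-continuity of both sides in $X$, the upgrade from fixed-$t$ equality to equality for all $t$ via right-continuity, and the approximation $\sigma_{k}\downarrow\sigma$) is as in the Hilbert-space case and is fine here.
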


\begin{prop} \label{propWeakIntegralInSubintervalAndRandomSubset}
Let $0 \leq s_{0} < t_{0} \leq T$ and $F_{0} \in \mathcal{F}_{s_{0}}$. Then, for every $X \in \Lambda^{2}_{w}(T)$, $\Prob$-a.e. we have   
\begin{equation} \label{weakIntegralInSubintervalAndRandomSubset}
I^{w}_{t}(\mathbbm{1}_{]s_{0},t_{0}]\times F_{0}} X)= \mathbbm{1}_{F_{0}} \left( I^{w}_{t \wedge t_{0}}(X)-I^{w}_{t \wedge s_{0}}(X) \right), \quad \forall \, t \in [0,T]. 
\end{equation}
\end{prop}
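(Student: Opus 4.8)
The plan is to verify \eqref{weakIntegralInSubintervalAndRandomSubset} first for simple integrands $X \in \mathcal{S}_{w}(T)$ and then pass to the limit. For the simple case, I would write $X$ in the form \eqref{equationSimpleIntegrandsWeakIntegral} satisfying the separation condition \eqref{separationPartitionIntervalsSimpleForm}; then $\mathbbm{1}_{]s_{0},t_{0}] \times F_{0}} X$ is again simple, with the time-intervals replaced by $]s_{j},t_{j}] \cap \, ]s_{0},t_{0}]$ and the $\mathcal{F}_{s_{j}}$-sets replaced by $F_{j} \cap F_{0}$. Using the definition \eqref{weakStochasticIntegralSimpleIntegrands} together with the definition \eqref{defiWeakIntegralAsProcess} of $I^{w}_{t}$ as $I^{w}_{T}(\mathbbm{1}_{[0,t]} \cdot)$, both sides of \eqref{weakIntegralInSubintervalAndRandomSubset} unwind to the same finite sum of terms $\mathbbm{1}_{F_{j} \cap F_{0}} M((s_{j}',t_{j}'],A_{i})(\phi_{i,j})$ with $s_{j}' = t \wedge s_{j} \vee s_{0}$, $t_{j}' = t \wedge t_{j} \wedge t_{0}$; the only subtlety is checking that $F_{j} \cap F_{0} \in \mathcal{F}_{s_{j} \vee s_{0}}$, which holds since $F_{0} \in \mathcal{F}_{s_{0}} \subseteq \mathcal{F}_{s_{j} \vee s_{0}}$ and $F_{j} \in \mathcal{F}_{s_{j}} \subseteq \mathcal{F}_{s_{j} \vee s_{0}}$. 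The finite additivity of $M$ on $\mathcal{R}$ and linearity of $M(t,A)$ on $\Phi$ (Definition \ref{martingValuedMeasureOnDualSpace}) handle any splitting needed to match the two expressions.

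To pass to general $X \in \Lambda^{2}_{w}(T)$, I would take a sequence $\{X_{k}\}_{k \in \N} \subseteq \mathcal{S}_{w}(T)$ with $X_{k} \to X$ in $\Lambda^{2}_{w}(T)$ (Proposition \ref{simpleIntegrandsDenseInSquareIntegIntegrandsWeakIntegral}). Since multiplication by the predictable indicator $\mathbbm{1}_{]s_{0},t_{0}] \times F_{0}}$ is a bounded (norm-nonincreasing) operation on $\Lambda^{2}_{w}(T)$, we have $\mathbbm{1}_{]s_{0},t_{0}] \times F_{0}} X_{k} \to \mathbbm{1}_{]s_{0},t_{0}] \times F_{0}} X$ in $\Lambda^{2}_{w}(T)$ as well. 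By the continuity of the weak integral map $I^{w} \colon \Lambda^{2}_{w}(T) \to \mathcal{M}^{2}_{T}(\R)$ established in Theorem \ref{propertiesWeakIntegralSquareIntegIntegrands} (specifically the bound \eqref{weakIntegralMapIsContinuousInequ}), the left-hand sides converge in $\mathcal{M}^{2}_{T}(\R)$, hence $\Prob$-a.e. uniformly in $t$ along a subsequence. For the right-hand side, $I^{w}(X_{k}) \to I^{w}(X)$ in $\mathcal{M}^{2}_{T}(\R)$ gives, along a further subsequence, $\sup_{t \in [0,T]} \abs{I^{w}_{t}(X_{k}) - I^{w}_{t}(X)} \to 0$ $\Prob$-a.e., and since $t \mapsto t \wedge t_{0}$ and $t \mapsto t \wedge s_{0}$ are deterministic time-changes, $I^{w}_{t \wedge t_{0}}(X_{k}) \to I^{w}_{t \wedge t_{0}}(X)$ and $I^{w}_{t \wedge s_{0}}(X_{k}) \to I^{w}_{t \wedge s_{0}}(X)$ uniformly in $t$, $\Prob$-a.e.; multiplying by the bounded random variable $\mathbbm{1}_{F_{0}}$ preserves this. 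Passing to the limit in \eqref{weakIntegralInSubintervalAndRandomSubset} for the $X_{k}$ yields the identity for $X$ up to a $\Prob$-null set, and since both sides are \cadlag in $t$ (Theorem \ref{propertiesWeakIntegralSquareIntegIntegrands}), the exceptional set can be chosen uniformly in $t$.

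I do not expect a serious obstacle here; the argument is the standard "verify on simple integrands, then use the isometry/continuity and density" template. The one place that requires a little care is the bookkeeping in the simple case: correctly identifying the restricted partition, verifying the measurability of the intersected sets with respect to the appropriate $\sigma$-algebras, and confirming that the two ways of restricting (first restrict the integrand, then integrate up to $t$; versus integrate up to $t \wedge t_{0}$ and subtract the integral up to $t \wedge s_{0}$, then multiply by $\mathbbm{1}_{F_{0}}$) produce literally the same finite sum after using finite additivity of $M$. Alternatively, one could avoid part of this bookkeeping by invoking Proposition \ref{propStoppedIntegralWeakCase} with the (deterministic) stopping times $t_{0}$ and $s_{0}$ to rewrite $I^{w}_{t}(\mathbbm{1}_{]s_{0},t_{0}]} X) = I^{w}_{t \wedge t_{0}}(X) - I^{w}_{t \wedge s_{0}}(X)$, and then handle the multiplication by $\mathbbm{1}_{F_{0}}$ separately on simple integrands and extend by density; I would likely present the proof this way to keep it short.
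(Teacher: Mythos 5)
Your proposal is correct and follows essentially the same route as the paper: verify \eqref{weakIntegralInSubintervalAndRandomSubset} on simple integrands (the paper checks it on the single-term elementary families \eqref{simpleFormWeakIntegrandsInSubintervalsAndRandomSubsets} and invokes linearity, while you work directly with the general form \eqref{equationSimpleIntegrandsWeakIntegral}, which is immaterial), then extend by the denseness of $\mathcal{S}_{w}(T)$ and the continuity of $I^{w}$. Your limit-passage details (subsequences, uniform a.e. convergence, stability under the deterministic time changes and multiplication by $\mathbbm{1}_{F_{0}}$) are sound and merely spell out what the paper leaves implicit.
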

\begin{prf}
Let $X$ be of the simple form:
\begin{equation} \label{simpleFormWeakIntegrandsInSubintervalsAndRandomSubsets}
X(r,\omega,u)=\ind{]s_{1},t_{1}]}{r} \ind{F_{1}}{\omega} \ind{A}{u} i_{q_{r,u}} \phi, \quad \forall \, r \in [0,T], \, \omega \in \Omega, \, u \in U, 
\end{equation} 
where $0 \leq s_{1} < t_{1} \leq T$, $\mathcal{F}_{1} \in \mathcal{F}_{s_{1}}$, $A \in \mathcal{R}$ and $\phi \in \Phi$. Then, for such simple $X$ one can easily check that $\mathbbm{1}_{]s_{0},t_{0}] \times F_{0}}X$ belongs to $S_{w}(T)$ and that \eqref{weakIntegralInSubintervalAndRandomSubset} is satisfied.  The linearity of the integral implies that \eqref{weakIntegralInSubintervalAndRandomSubset} is valid for any $X \in S_{w}(T)$. Moreover, by the denseness of $S_{w}(T)$ in $\Lambda^{2}_{w}(T)$ and the continuity of the weak stochastic integral mapping $I^{w}$ (Theorem \ref{propertiesWeakIntegralSquareIntegIntegrands}), it follows that \eqref{weakIntegralInSubintervalAndRandomSubset} is satisfied for every $X \in \Lambda^{2}_{w}(T)$.    
\end{prf}
 
We finalize this section with the following result that will be of great importance our study in Section \ref{subSectionALNC} of SPDEs driven by general L\'{e}vy noise. We omit its proof as it can be proven by following similar artuments to those in the proof of  Proposition 3.7 in \cite{BojdeckiJakubowski:1990}.

\begin{prop} \label{propDecompWeakIntegralSumIndepMartValMeasu}
Let $N_{1}$, $N_{2}$ be two nuclear cylindrical martingale-valued measures with independent increments on $\R_{+} \times \mathcal{R}$, each with covariance structure as in \eqref{covarianceFunctionalNuclearMartValuedMeasure} determined by the family $\{ p_{r,u}^{j} \}_{r,u}$ of continuous Hilbertian semi-norms on $\Phi$ and measures $\lambda_{j}=\lambda$, $\mu_{j}=\mu$, for $j = 1,2$; all of them satisfying the conditions given in Definition \ref{nuclearMartingaleValMeasDualSpace}. Assume furthermore that for all $A, B \in \mathcal{R}$ and all $\phi$, $\varphi \in \Phi$, the real valued processes $\{ N_{1}(t,A)(\phi) \}_{t \geq 0}$ and $\{ N_{2}(t,B)(\varphi) \}_{t \geq 0}$ are independent. Let $M= (M(t,A): r \geq 0, A \in \mathcal{R})$ be given by the prescription:
\begin{equation*} \label{defiSumIndepMartValuedMeasures}
M(t,A) \defeq N_{1}(t,A)+N_{2}(t,A), \quad \forall \, t \in \R_{+}, \, A \in \mathcal{R}. 
\end{equation*} 
Then, $M$ is also a nuclear cylindrical martingale-valued measure with independent increments on $\R_{+} \times \mathcal{R}$, with covariance structure determined by $\lambda$, $\mu$ and the family of continuous Hilbertian semi-norms $\{ q_{r,u} \}_{r,u}$ satisfying $q_{r,u}(\phi)^{2}=p^{1}_{r,u}(\phi)^{2}+ p^{2}_{r,u}(\phi)^{2}$ for all $r \geq 0$, $u \in U$, $\phi \in \Phi$. Moreover, if $X \in \Lambda^{2}_{w}(M;T)$ we have: 
\begin{enumerate}
\item For each $j=1,2$, $\{ i_{p_{r,u}^{j},q_{r,u}} X(r,\omega,u): r \in [0,T], \omega \in \Omega, u \in U \}  \in \Lambda^{2}_{w}(N_{j};T)$. 
\item $\Prob$-a.e., for all $t \in [0,T]$ we have, 
\begin{eqnarray*}
\int_{0}^{t}\int_{U} X(r,u) M(dr,du) & = & \int_{0}^{t}\int_{U} i_{p_{r,u}^{1},q_{r,u}} X(r,u) N_{1}(dr,du) \label{decompWeakIntegralSumIndpMartValMeasu} \\
& {} & + \int_{0}^{t}\int_{U} i_{p_{r,u}^{2},q_{r,u}} X(r,u) N_{2}(dr,du) \nonumber.  
\end{eqnarray*}
\end{enumerate}
\end{prop}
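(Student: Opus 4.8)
The plan is to run, almost verbatim, the ``simple integrands, then density'' scheme that was used to build $I^{w}$ itself, transporting the picture through the canonical maps $i_{p^{j}_{r,u},q_{r,u}}:\Phi_{q_{r,u}}\to\Phi_{p^{j}_{r,u}}$. First I would check that $M$ is indeed a nuclear cylindrical martingale-valued measure with independent increments having the stated covariance. Conditions (1)--(3) of Definition~\ref{martingValuedMeasureOnDualSpace} for $M$ are inherited from $N_{1}$ and $N_{2}$, a finite sum of cylindrical zero-mean square integrable c\`adl\`ag martingales being again one. For condition (4) and for \eqref{covarianceFunctionalNuclearMartValuedMeasure} I would expand the relevant square (resp.\ product) of $M((s,t],A)(\phi)=N_{1}((s,t],A)(\phi)+N_{2}((s,t],A)(\phi)$: the diagonal terms reproduce the contributions governed by $p^{1}_{r,u}$ and by $p^{2}_{r,u}$ (with the common $\mu$ and $\lambda$), while every mixed term $\Exp(N_{1}(\cdot)(\phi)N_{2}(\cdot)(\varphi))$ factorizes into $\Exp(N_{1}(\cdot)(\phi))\Exp(N_{2}(\cdot)(\varphi))=0$ by the mutual independence of the two families together with their being zero-mean. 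This yields $q_{r,u}(\phi)^{2}=p^{1}_{r,u}(\phi)^{2}+p^{2}_{r,u}(\phi)^{2}$; that $q_{r,u}$ is then a continuous Hilbertian semi-norm and that $(r,u)\mapsto q_{r,u}(\phi,\varphi)$ is measurable and locally bounded is routine (a sum of two such objects), and Assumption~\ref{assumpDenseSubsetSemiNormsMartValuedMeasures} for $\{q_{r,u}\}$ follows, e.g., from Proposition~\ref{conditionsAssumpDenseSubsetSemiNormsMVM}. The independent-increments property of $M$ is obtained by reconciling the $\mathcal{F}_{s}$-independence of each $N_{j}$-increment with the mutual independence of the two families, arguing exactly as in the proof of Proposition~3.7 of \cite{BojdeckiJakubowski:1990}.

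For item (1), I would start with a simple $X\in\mathcal{S}_{w}(M;T)$ of the form \eqref{equationSimpleIntegrandsWeakIntegral}; the factorization $i_{p^{l}_{r,u}}=i_{p^{l}_{r,u},q_{r,u}}\circ i_{q_{r,u}}$ gives immediately $i_{p^{l}_{r,u},q_{r,u}}X=\sum_{i,j}\mathbbm{1}_{]s_{j},t_{j}]}(r)\mathbbm{1}_{F_{j}}(\omega)\mathbbm{1}_{A_{i}}(u)\,i_{p^{l}_{r,u}}\phi_{i,j}\in\mathcal{S}_{w}(N_{l};T)\subseteq\Lambda^{2}_{w}(N_{l};T)$. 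For general $X\in\Lambda^{2}_{w}(M;T)$, pick simple $X_{k}\to X$ in $\Lambda^{2}_{w}(M;T)$ (Proposition~\ref{simpleIntegrandsDenseInSquareIntegIntegrandsWeakIntegral}); since $p^{l}_{r,u}\leq q_{r,u}$, the map $i_{p^{l}_{r,u},q_{r,u}}$ has operator norm at most $1$, so $\norm{i_{p^{l}_{r,u},q_{r,u}}(X_{k}-X_{m})}_{w,N_{l};T}\leq\norm{X_{k}-X_{m}}_{w,M;T}$, and hence $\{i_{p^{l}_{r,u},q_{r,u}}X_{k}\}_{k}$ is Cauchy in the Hilbert space $\Lambda^{2}_{w}(N_{l};T)$ (Proposition~\ref{spaceIntegrandsWeakIntegralIsHilbertSpace}) and converges there. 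Extracting a subsequence along which $X_{k}\to X$ and $i_{p^{l}_{r,u},q_{r,u}}X_{k}$ converge $\lambda\otimes\Prob\otimes\mu$-a.e., continuity of $i_{p^{l}_{r,u},q_{r,u}}$ identifies the limit with $i_{p^{l}_{r,u},q_{r,u}}X$ a.e.; thus the latter defines an element of $\Lambda^{2}_{w}(N_{l};T)$ and $i_{p^{l}_{r,u},q_{r,u}}X_{k}\to i_{p^{l}_{r,u},q_{r,u}}X$ there.

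For item (2), I would again begin with simple $X$: reading $I^{w}_{t}(X;M)$ off from \eqref{weakStochasticIntegralSimpleIntegrands} and \eqref{defiWeakIntegralAsProcess}, substituting $M=N_{1}+N_{2}$, and using the finite additivity and $\Phi$-linearity of $N_{1},N_{2}$ gives $I^{w}_{t}(X;M)=I^{w}_{t}(i_{p^{1}_{r,u},q_{r,u}}X;N_{1})+I^{w}_{t}(i_{p^{2}_{r,u},q_{r,u}}X;N_{2})$, $\Prob$-a.e.\ for all $t\in[0,T]$ (both sides being c\`adl\`ag). For general $X$, take simple $X_{k}\to X$ in $\Lambda^{2}_{w}(M;T)$; by continuity of the weak integral mapping (Theorem~\ref{propertiesWeakIntegralSquareIntegIntegrands}) and by the previous paragraph, $I^{w}(X_{k};M)\to I^{w}(X;M)$ and $I^{w}(i_{p^{l}_{r,u},q_{r,u}}X_{k};N_{l})\to I^{w}(i_{p^{l}_{r,u},q_{r,u}}X;N_{l})$ in $\mathcal{M}^{2}_{T}(\R)$. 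Passing to the limit in the identity for $X_{k}$ — addition is continuous and limits are unique in $\mathcal{M}^{2}_{T}(\R)$ — gives the identity for $X$ as an equality in $\mathcal{M}^{2}_{T}(\R)$, hence $\Prob$-a.e.\ simultaneously for all $t\in[0,T]$ since all three processes are c\`adl\`ag.

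The analytic content here is light once the bookkeeping is settled, so I expect the only genuinely delicate step to be the verification, in the first paragraph, that $M$ has independent increments — specifically reconciling the individual $\mathcal{F}_{s}$-independence of each summand $N_{j}((s,t],A)(\phi)$ with the mutual independence of the two families — together with the transfer of Assumption~\ref{assumpDenseSubsetSemiNormsMartValuedMeasures} to the family $\{q_{r,u}\}$. Everything after that is the by-now-standard ``simple integrands $+$ It\^o isometry $+$ density'' routine and should go through mechanically.
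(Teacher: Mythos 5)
Your proposal is correct and follows exactly the route the paper intends: the paper omits the proof entirely, referring to the analogous Proposition 3.7 of Bojdecki--Jakubowski (1990), and your ``simple integrands, It\^{o} isometry, density'' scheme is precisely that argument transported to the present setting. The one step you rightly flag as delicate --- that $M$ inherits independent increments, which requires joint (not merely pairwise) independence of the two increments $N_{1}((s,t],A)(\phi)$ and $N_{2}((s,t],A)(\phi)$ from $\mathcal{F}_{s}$ --- is also the only step you defer rather than execute, but it is handled the same way in the cited reference, so this matches the paper's own level of detail.
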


\subsection{An Extension of The Class of Integrands}\label{subSectionAECIWI}

The final step in our construction of the weak stochastic integral is to extend it to the following families of random variables with only almost sure second moments:  

\begin{defi} \label{integrandsWeakIntegAlmostSureSquareMoments}
Let $\Lambda^{2,loc}_{w}(M;T)$ denote the collection of families $X=\{X(r,\omega,u): r \in [0,T], \omega \in \Omega, u \in U \}$ of Hilbert space-valued maps satisfying the following conditions:
\begin{enumerate}
\item $X(r,\omega,u) \in \Phi_{q_{r,u}}$, for all $r \in [0, T]$, $\omega \in \Omega$, $u \in U$, 
\item $X$ is \emph{$q_{r,u}$-predictable}, i.e. for each $\phi \in \Phi$, the mapping $[0,T] \times \Omega \times U \rightarrow \R_{+}$ given by $(r,\omega,u) \mapsto q_{r,u}(X(r,\omega,u), \phi)$ is $\mathcal{P}_{T} \otimes \mathcal{B}(U)$-measurable.
\item 
\begin{equation} \label{almostSureSecondMomentIntegrandsWeakIntg}
\Prob \left( \omega \in \Omega: \int_{0}^{T} \int_{U} q_{r,u}(X(r,\omega,u))^{2} \mu(du) \lambda(dr) < \infty \right)=1.
\end{equation} 
\end{enumerate}
\end{defi}

As before, we will sometimes denote $\Lambda^{2,loc}_{w}(M;T)$ by $\Lambda^{2,loc}_{w}(T)$ when is clear to which cylindrical martingale-valued measure $M$ we are referring. 

We equip the linear space $\Lambda^{2,loc}_{w}(T)$ with the vector topology $\mathcal{T}_{2,loc}^{M}$ generated by the local base of neighbourhoods of zero $\{ \Gamma_{\epsilon, \delta}: \epsilon > 0, \delta >0 \}$, where $\Gamma_{\epsilon, \delta}$ is given by 
$$ \Gamma_{\epsilon, \delta} = \left\{ X \in \Lambda^{2,loc}_{w}(T): \Prob \left( \omega \in \Omega: \int_{0}^{T} \int_{U} q_{r,u}(X(r,\omega,u))^{2} \mu(du) \lambda(dr) > \epsilon \right) \leq \delta \right\}. $$

\begin{prop}\label{extendedClassWeakIntegrandsInMetrizable}
$(\Lambda^{2,loc}_{w}(T),\mathcal{T}_{2,loc}^{M})$ is a complete, metrizable topological vector space.  
\end{prop}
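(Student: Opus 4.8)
The plan is to show $(\Lambda^{2,loc}_{w}(T),\mathcal{T}_{2,loc}^{M})$ is metrizable by exhibiting a translation-invariant metric generating $\mathcal{T}_{2,loc}^{M}$, and then prove completeness with respect to that metric. First I would introduce, for $X \in \Lambda^{2,loc}_{w}(T)$, the random variable
\begin{equation*}
\zeta(X)(\omega) \defeq \int_{0}^{T}\int_{U} q_{r,u}(X(r,\omega,u))^{2}\mu(du)\lambda(dr),
\end{equation*}
which is $\mathcal{F}_{T}$-measurable and $\Prob$-a.s. finite by Definition \ref{integrandsWeakIntegAlmostSureSquareMoments}(3) (measurability of the integrand being guaranteed by Proposition \ref{compatibilityPredictableMapsAndSeminorms}, cf. Remark \ref{normOfIntegrandsIsPredictable}). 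Then $d(X,Y) \defeq \Exp\left( \min\{ \sqrt{\zeta(X-Y)},\, 1 \} \right)$ (or any equivalent bounded increasing concave function of $\sqrt{\zeta}$) is a pseudo-metric; it is translation invariant since $\zeta(X-Y)$ depends only on $X-Y$, and $\sqrt{\zeta(\cdot)}$ is subadditive because $q_{r,u}$ is a Hilbertian semi-norm (Minkowski in $L^{2}(\lambda\otimes\mu)$ pointwise in $\omega$), which gives the triangle inequality. Quotienting by the subspace where $d(X,Y)=0$ — i.e. identifying $X$ with $Y$ when $q_{r,u}(X-Y)=0$ for $\lambda\otimes\Prob\otimes\mu$-a.e.\ $(r,\omega,u)$ — makes $d$ a genuine metric. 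The standard equivalence ``$X_{n}\to 0$ in probability with respect to $\zeta$'' $\iff$ ``$d(X_{n},0)\to 0$'' then shows the balls of $d$ and the sets $\Gamma_{\epsilon,\delta}$ generate the same topology, so $\mathcal{T}_{2,loc}^{M}$ is metrizable.

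Next I would verify that $d$-convergence is exactly convergence of $\zeta(X_{n}-X)$ to $0$ in probability: if $\Prob(\zeta(X_{n}-X) > \epsilon) \le \delta$ for all large $n$ and any $\epsilon,\delta$, then $d(X_{n},X)\le \sqrt{\epsilon} + \delta$, and conversely Markov's inequality bounds $\Prob(\zeta(X_{n}-X)>\epsilon)$ by $d(X_{n},X)/(\sqrt{\epsilon}\wedge 1)$ up to truncation; this is the routine "convergence in probability is metrizable by a bounded metric" argument applied to the nonnegative random variables $\zeta(X_{n}-X)$.

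For completeness, I would take a $d$-Cauchy sequence $\{X_{n}\}$, pass to a subsequence with $d(X_{n_{k+1}},X_{n_{k}}) \le 2^{-k}$, so that $\sum_{k}\Prob(\zeta(X_{n_{k+1}}-X_{n_{k}}) > 4^{-k}) < \infty$ and Borel--Cantelli gives that, $\Prob$-a.e., $\sqrt{\zeta(X_{n_{k+1}}-X_{n_{k}})} \le 2^{-k}$ eventually. On that full-probability event, $\{X_{n_{k}}(\cdot,\omega,\cdot)\}_{k}$ is Cauchy in the Hilbert space $L^{2}([0,T]\times U, \lambda\otimes\mu; \text{fibrewise } \Phi_{q_{r,u}})$ — more precisely, Cauchy in the $L^{2}(\lambda\otimes\mu)$-norm of the semi-norms $q_{r,u}$ — hence converges $\lambda\otimes\mu$-a.e.\ (along a further subsequence) and in that $L^{2}$-norm to some measurable limit $X(\cdot,\omega,\cdot)$; on the null event set $X\equiv 0$. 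One then checks $X \in \Lambda^{2,loc}_{w}(T)$: property (1) is immediate ($X(r,\omega,u)$ lands in $\Phi_{q_{r,u}}$, a complete space); property (3) follows since $\zeta(X)(\omega) < \infty$ a.e.\ by the a.e.\ $L^{2}$-convergence; and property (2), the $q_{r,u}$-predictability of $X$, follows because for each $\phi$, $q_{r,u}(X(r,\omega,u),\phi)$ is an a.e.\ pointwise limit of the $\mathcal{P}_{T}\otimes\mathcal{B}(U)$-measurable maps $q_{r,u}(X_{n_{k}}(r,\omega,u),\phi)$ (a suitable predictable choice of the convergent subsequence, independent of $\phi$, can be made by selecting it along a $\mathcal{P}_{T}\otimes\mathcal{B}(U)$-measurable exhaustion, as in the classical argument). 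Finally $d(X_{n_{k}},X)\to 0$ by dominated convergence applied to the bounded integrand $\min\{\sqrt{\zeta(X_{n_{k}}-X)},1\}$, and since $\{X_{n}\}$ is $d$-Cauchy with a convergent subsequence it converges, so $(\Lambda^{2,loc}_{w}(T),d)$ is complete.

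The main obstacle I anticipate is the measurability/predictability bookkeeping in the completeness step: one must produce a single limiting object $X$ that is genuinely $q_{r,u}$-predictable (jointly in $(r,\omega,u)$) rather than merely a fibrewise $L^{2}$-limit for each fixed $\omega$, and this requires choosing the almost-everywhere convergent subsequence in a $\mathcal{P}_{T}\otimes\mathcal{B}(U)$-measurable way and invoking Proposition \ref{compatibilityPredictableMapsAndSeminorms} (and the separability/denseness of the countable set $D$ from Assumption \ref{assumpDenseSubsetSemiNormsMartValuedMeasures}) to transfer measurability of $q_{r,u}(X_{n_{k}},\phi)$ for $\phi \in D$ to all $\phi\in\Phi$ and thence to the limit. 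The rest — translation invariance, triangle inequality, Borel--Cantelli, dominated convergence — is routine and parallels the treatment of $\mathcal{M}^{2,loc}_{T}(\R)$ and $\mathcal{M}^{n,loc}_{T}(\R)$ recalled earlier in the paper.
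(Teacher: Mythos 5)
Your proposal is correct and follows essentially the same route as the paper: the paper likewise metrizes $\mathcal{T}^{M}_{2,loc}$ by a translation-invariant distance $d_{\Lambda}(X,Y)=\Exp\left[ G\left( \zeta(X-Y)\right)\right]$ with $G(x)=x/(1+x)$, identifies the resulting convergence with convergence in probability of $\zeta(X_{n}-X)$, and defers completeness to the argument of Proposition 2.4 in Bojdecki--Jakubowski, which is exactly the Borel--Cantelli plus fibrewise-$L^{2}$ argument you spell out. Two small remarks: your choice of applying a bounded subadditive function to the seminorm $\sqrt{\zeta}$ rather than to $\zeta$ itself is the safer one (the paper's $G(\zeta(X-Y))$ is not subadditive for small values, since $t\mapsto t^{2}/(1+t^{2})$ is superadditive near $0$), and in your Borel--Cantelli step the subsequence should be chosen with $d(X_{n_{k+1}},X_{n_{k}})\leq 4^{-k}$ rather than $2^{-k}$ so that Markov's inequality makes $\sum_{k}\Prob\left(\sqrt{\zeta(X_{n_{k+1}}-X_{n_{k}})}>2^{-k}\right)$ summable.
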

\begin{prf}
On $\Lambda^{2,loc}_{w}(T)$, we introduce the translation invariant metric $d_{\Lambda}$ given by 
$$ d_{\Lambda}(X,Y)= \Exp \left[ G \left( \int_{0}^{T} \int_{U} q_{r,u}(X(r,u)-Y(r,u))^{2} \mu(du) \lambda(dr)  \right) \right], \quad \forall \, X, Y \in \Lambda^{2,loc}_{w}(T),$$
where $G:\R \rightarrow \R$ is given by $G(x)=\frac{x}{1+x}$, for each $x \in \R$. It is easy to show that $d_{\Lambda}$ generates a vector topology equivalent to $\mathcal{T}^{M}_{2,loc}$. Therefore, $(\Lambda^{2,loc}_{w}(T),\mathcal{T}_{2,loc}^{M})$ is a metrizable topological vector space. The proof of the completeness can be carried out by following similar arguments to those used in the proof of Proposition 2.4 in \cite{BojdeckiJakubowski:1990}. 
\end{prf}

\begin{rema} \label{spaceExtendedWeakIntegrandsNonLocallyConvex}
If $\Prob$ is an atomless measure (see \cite{BogachevMT}, Definition 1.12.7, p.55) we can show using similar arguments to those in used in Remarque 1 of Badrikian \cite{Badrikian} p.2, that every convex neighbourhood of zero in $\Lambda^{2,loc}_{w}(T)$ is identical to it, and hence $\Lambda^{2,loc}_{w}(T)$ is not locally convex. 
\end{rema}

The extension of the weak stochastic integral to integrands in $\Lambda^{2,loc}_{w}(T)$ will utilise the following result. The proof follows from standard arguments (see Section 4.2 of \cite{DaPratoZabczyk}) by using the properties of the weak stochastic integral obtained in Theorem \ref{propertiesWeakIntegralSquareIntegIntegrands} and Proposition \ref{propStoppedIntegralWeakCase}.  

\begin{theo} \label{existenceExtendedStochasticIntegralWeakCase}
Let $X \in \Lambda^{2,loc}_{w}(T)$. Then, 
\begin{enumerate}
\item There exists an increasing sequence $\{ \tau_{n} \}_{n \in \N}$ of $\{\mathcal{F}_{t}\}$-stopping times satisfying $\lim_{n \rightarrow \infty} \tau_{n} = T$ $\Prob$-a.e. and such that for each $n \in \N$, $\mathbbm{1}_{[0,\tau_{n}]} X \in \Lambda^{2}_{w}(T)$. 
\item There exists a unique  $\hat{I}^{w}(X)=\{ \hat{I}^{w}_{t}(X) \}_{t \in [0,T]} \in \mathcal{M}^{2,loc}_{T}(\R)$ such that for any sequence of $\{\mathcal{F}_{t}\}$-stopping times $\{ \sigma_{n} \}_{n \in \N}$ satisfying $\lim_{n \rightarrow \infty} \sigma_{n} = T$ $\Prob$-a.e. and $\mathbbm{1}_{[0,\sigma_{n}]} X \in \Lambda^{2}_{w}(T)$ for each $n \in \N$, the process $\hat{I}^{w}(X)$ satisfies: 
\begin{equation} \label{compatibilityCondExtendedStochasticIntegralWeakCase}
\hat{I}^{w}_{t \wedge \sigma_{n} }(X) = I^{w}_{t}(\mathbbm{1}_{[0,\sigma_{n}]} X), \quad \forall \, t \in [0,T], \, n \in \N,
\end{equation}
where the process on the right-hand side of \eqref{compatibilityCondExtendedStochasticIntegralWeakCase} is the weak stochastic integral of $\mathbbm{1}_{[0,\sigma_{n}]} X$.  
\end{enumerate}
\end{theo}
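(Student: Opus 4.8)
The plan is to mimic the classical localization procedure for stochastic integrals (as in Section 4.2 of \cite{DaPratoZabczyk}), using the properties of $I^{w}$ already established in Theorem \ref{propertiesWeakIntegralSquareIntegIntegrands} and the stopping behaviour from Proposition \ref{propStoppedIntegralWeakCase}. For part (1), given $X \in \Lambda^{2,loc}_{w}(T)$, I would define
\begin{equation*}
\tau_{n} \defeq \inf \left\{ t \in [0,T]: \int_{0}^{t} \int_{U} q_{r,u}(X(r,u))^{2} \mu(du) \lambda(dr) \geq n \right\} \wedge T,
\end{equation*}
with the usual convention $\inf \emptyset = T$. The key point is that $t \mapsto \int_{0}^{t} \int_{U} q_{r,u}(X(r,u))^{2} \mu(du) \lambda(dr)$ is a continuous, nondecreasing, $\{\mathcal{F}_{t}\}$-adapted process (continuity and monotonicity from the properties of $\lambda$ and the integrand; adaptedness from the $q_{r,u}$-predictability of $X$ together with Remark \ref{normOfIntegrandsIsPredictable}), so each $\tau_{n}$ is an $\{\mathcal{F}_{t}\}$-stopping time; the sequence is increasing; and by the finiteness hypothesis \eqref{almostSureSecondMomentIntegrandsWeakIntg} we get $\tau_{n} \to T$ $\Prob$-a.e. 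That $\mathbbm{1}_{[0,\tau_{n}]}X \in \Lambda^{2}_{w}(T)$ follows because $\mathbbm{1}_{[0,\tau_{n}]}$ is predictable (as $\{(r,\omega): r \leq \tau_{n}(\omega)\}$ is predictable) and because by construction $\Exp \int_{0}^{T}\int_{U} q_{r,u}(\mathbbm{1}_{[0,\tau_{n}]}X(r,u))^{2}\mu(du)\lambda(dr) \leq n < \infty$.

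For part (2), existence: fix the sequence $\{\tau_{n}\}$ from part (1). For each $n$ set $Y^{n} \defeq I^{w}(\mathbbm{1}_{[0,\tau_{n}]}X) \in \mathcal{M}^{2}_{T}(\R)$, which exists by Theorem \ref{propertiesWeakIntegralSquareIntegIntegrands}. The consistency relation $Y^{n}_{t \wedge \tau_{m}} = Y^{m}_{t}$ for $m \leq n$ holds $\Prob$-a.e.: indeed $\mathbbm{1}_{[0,\tau_{m}]}(\mathbbm{1}_{[0,\tau_{n}]}X) = \mathbbm{1}_{[0,\tau_{m}]}X$ since $\tau_{m} \leq \tau_{n}$, so by Proposition \ref{propStoppedIntegralWeakCase} applied to $\mathbbm{1}_{[0,\tau_{n}]}X$ with the stopping time $\tau_{m}$,
\begin{equation*}
I^{w}_{t}(\mathbbm{1}_{[0,\tau_{m}]}X) = I^{w}_{t}\left(\mathbbm{1}_{[0,\tau_{m}]}(\mathbbm{1}_{[0,\tau_{n}]}X)\right) = I^{w}_{t \wedge \tau_{m}}(\mathbbm{1}_{[0,\tau_{n}]}X),
\end{equation*}
i.e. $Y^{m}_{t} = Y^{n}_{t \wedge \tau_{m}}$. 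Hence the processes patch together: define $\hat{I}^{w}_{t}(X)(\omega) \defeq Y^{n}_{t}(\omega)$ for $\omega$ with $\tau_{n}(\omega) \geq t$ (well defined off a null set by the consistency just shown, and every $(t,\omega)$ is covered since $\tau_{n} \to T$). Then $\hat{I}^{w}(X)$ is $\{\mathcal{F}_{t}\}$-adapted, and stopped at $\tau_{n}$ it equals $Y^{n} \in \mathcal{M}^{2}_{T}(\R)$, so $\hat{I}^{w}(X) \in \mathcal{M}^{2,loc}_{T}(\R)$ with localizing sequence $\{\tau_{n}\}$.

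The remaining point — and the one requiring the most care — is the \emph{characterization} in part (2): that $\hat{I}^{w}(X)$ satisfies \eqref{compatibilityCondExtendedStochasticIntegralWeakCase} for \emph{any} localizing sequence $\{\sigma_{n}\}$, not just the canonical $\{\tau_{n}\}$, and that it is the unique such process. For this I would take an arbitrary sequence $\{\sigma_{n}\}$ of stopping times with $\sigma_{n} \to T$ a.e. and $\mathbbm{1}_{[0,\sigma_{n}]}X \in \Lambda^{2}_{w}(T)$, and show $\hat{I}^{w}_{t \wedge \sigma_{n}}(X) = I^{w}_{t}(\mathbbm{1}_{[0,\sigma_{n}]}X)$ $\Prob$-a.e. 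Apply Proposition \ref{propStoppedIntegralWeakCase} to $\mathbbm{1}_{[0,\sigma_{n}]}X$ with the stopping time $\tau_{m}$, and to $\mathbbm{1}_{[0,\tau_{m}]}X$ with $\sigma_{n}$, and use $\mathbbm{1}_{[0,\sigma_{n} \wedge \tau_{m}]}X = \mathbbm{1}_{[0,\tau_{m}]}(\mathbbm{1}_{[0,\sigma_{n}]}X) = \mathbbm{1}_{[0,\sigma_{n}]}(\mathbbm{1}_{[0,\tau_{m}]}X)$ to deduce
\begin{equation*}
I^{w}_{t \wedge \tau_{m}}(\mathbbm{1}_{[0,\sigma_{n}]}X) = I^{w}_{t \wedge \sigma_{n}}(\mathbbm{1}_{[0,\tau_{m}]}X) = Y^{m}_{t \wedge \sigma_{n}} = \hat{I}^{w}_{t \wedge \sigma_{n} \wedge \tau_{m}}(X).
\end{equation*}
Letting $m \to \infty$ and using $\tau_{m} \to T$ a.e. together with the càdlàg property (so the processes on both sides converge pointwise in $t$ on compacts) yields $I^{w}_{t \wedge \sigma_{n}}(\cdots)$... more precisely, $\hat{I}^{w}_{t \wedge \sigma_{n}}(X) = I^{w}_{t}(\mathbbm{1}_{[0,\sigma_{n}]}X)$; this is the delicate limiting step, where one must be careful that the exceptional null sets can be chosen uniformly (take a countable dense set of $t$ and use right-continuity). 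Uniqueness is then immediate: if $\hat{J}$ is any element of $\mathcal{M}^{2,loc}_{T}(\R)$ satisfying \eqref{compatibilityCondExtendedStochasticIntegralWeakCase} for some localizing sequence $\{\sigma_{n}\}$, then $\hat{J}_{t \wedge \sigma_{n}} = I^{w}_{t}(\mathbbm{1}_{[0,\sigma_{n}]}X) = \hat{I}^{w}_{t \wedge \sigma_{n}}(X)$ for all $n$, and letting $n \to \infty$ gives $\hat{J} = \hat{I}^{w}(X)$ up to indistinguishability. I expect the main obstacle to be the bookkeeping of null sets in this last limiting argument; everything else is a routine transcription of the scalar localization construction, justified by the hypotheses on $\lambda$ and the two cited results.
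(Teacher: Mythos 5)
Your proposal is correct and is precisely the standard localization argument that the paper invokes by reference to Section 4.2 of \cite{DaPratoZabczyk}: the canonical stopping times $\tau_{n}$ built from the increasing process $A_{t}\defeq\int_{0}^{t}\int_{U}q_{r,u}(X(r,u))^{2}\mu(du)\lambda(dr)$, consistency via Proposition \ref{propStoppedIntegralWeakCase}, patching, and the two-sided stopping trick for independence of the localizing sequence; the null-set bookkeeping you flag is handled exactly as you describe.

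One caveat worth recording: your claim that $t\mapsto A_{t}$ is continuous, and the resulting bound $\Exp\int_{0}^{T}\int_{U}q_{r,u}(\mathbbm{1}_{[0,\tau_{n}]}X(r,u))^{2}\mu(du)\lambda(dr)\leq n$, uses that $\lambda$ is atomless. Definition \ref{nuclearMartingaleValMeasDualSpace} only requires $\lambda$ to be $\sigma$-finite and finite on bounded intervals, so $A$ may jump across the level $n$ at an atom of $\lambda$ and $A_{\tau_{n}}$ need not have finite expectation (indeed, if $\lambda(\{0\})>0$ and $\int_{U}q_{0,u}(X(0,u))^{2}\mu(du)$ has infinite mean, no localizing sequence exists at all). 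This is a defect shared by the paper's own implicit argument rather than one specific to your write-up, and it is harmless in every application in the paper, where $\lambda$ is Lebesgue measure; but strictly speaking part (1) requires the measure $r\mapsto\int_{U}q_{r,u}(X(r,u))^{2}\mu(du)\,\lambda(dr)$ to be atomless, or the hypothesis on $\lambda$ to be strengthened accordingly.
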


\begin{defi} For every $X \in \Lambda^{2,loc}_{w}(T)$, we will call the process $\hat{I}^{w}(X)$ given in Theorem \ref{existenceExtendedStochasticIntegralWeakCase} the \emph{weak stochastic integral} of $X$ and denote it by $\left\{ \int^{t}_{0} \int_{U} X (r,u) M (dr, du): t \in [0,T] \right\}$.  
\end{defi}

The property \eqref{compatibilityCondExtendedStochasticIntegralWeakCase} allow us to ``transfer'' the properties of the weak stochastic integral for integrands in $\Lambda^{2}_{w}(T)$ (see Section \ref{subSectionPWSI}) to those in $\Lambda^{2,loc}_{w}(T)$. We summarize this in the following result:

\begin{prop} \label{propPropertWeakStochIntegExtenToIntegLocalSecoMomen}
Let $X \in \Lambda^{2,loc}_{w}(T)$. Then, all the assertions in Propositions \ref{propStoppedIntegralWeakCase}, \ref{propWeakIntegralInSubintervalAndRandomSubset} and \ref{propDecompWeakIntegralSumIndepMartValMeasu} are valid for the weak stochastic integral $\hat{I}^{w}(X)$ of $X$. 
\end{prop}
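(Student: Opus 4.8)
The plan is to reduce all three statements to the already-established properties of the weak stochastic integral on $\Lambda^{2}_{w}(T)$ by means of the localization procedure of Theorem \ref{existenceExtendedStochasticIntegralWeakCase}. First I would check that each of the modified integrands appearing in Propositions \ref{propStoppedIntegralWeakCase}, \ref{propWeakIntegralInSubintervalAndRandomSubset} and \ref{propDecompWeakIntegralSumIndepMartValMeasu} again belongs to the relevant local space of integrands. For a stopping time $\sigma$ with $\Prob(\sigma\leq T)=1$ the stochastic interval $[0,\sigma]$ is a predictable set, so $\mathbbm{1}_{[0,\sigma]}X \in \Lambda^{2,loc}_{w}(M;T)$; likewise $\mathbbm{1}_{]s_{0},t_{0}]\times F_{0}}X \in \Lambda^{2,loc}_{w}(M;T)$ for $F_{0}\in\mathcal{F}_{s_{0}}$, since $]s_{0},t_{0}]\times F_{0}$ is a predictable rectangle. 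In the setting of Proposition \ref{propDecompWeakIntegralSumIndepMartValMeasu}, because $q_{r,u}^{2}=(p^{1}_{r,u})^{2}+(p^{2}_{r,u})^{2}\geq (p^{j}_{r,u})^{2}$ and $\norm{i_{p^{j}_{r,u},q_{r,u}}}\leq 1$, the family $i_{p^{j}_{r,u},q_{r,u}}X$ lies in $\Lambda^{2,loc}_{w}(N_{j};T)$, its $q_{r,u}$-predictability being handled exactly as in Proposition \ref{compatibilityPredictableMapsAndSeminorms}. In every case the defining $\lambda\otimes\Prob\otimes\mu$-integral of the modified integrand is dominated pointwise by that of $X$, which proves in particular assertion (1) of Proposition \ref{propDecompWeakIntegralSumIndepMartValMeasu}.

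The essential observation is that one localizing sequence serves all the objects simultaneously. Fix a sequence $\{\tau_{n}\}_{n\in\N}$ of $\{\mathcal{F}_{t}\}$-stopping times with $\tau_{n}\uparrow T$ $\Prob$-a.e.\ and $\mathbbm{1}_{[0,\tau_{n}]}X \in \Lambda^{2}_{w}(M;T)$ as in Theorem \ref{existenceExtendedStochasticIntegralWeakCase}(1). The domination just noted gives $\mathbbm{1}_{[0,\tau_{n}]}(\mathbbm{1}_{[0,\sigma]}X)=\mathbbm{1}_{[0,\sigma]}(\mathbbm{1}_{[0,\tau_{n}]}X)$ and $\mathbbm{1}_{[0,\tau_{n}]}(\mathbbm{1}_{]s_{0},t_{0}]\times F_{0}}X) \in \Lambda^{2}_{w}(M;T)$, and $\mathbbm{1}_{[0,\tau_{n}]}(i_{p^{j}_{r,u},q_{r,u}}X)=i_{p^{j}_{r,u},q_{r,u}}(\mathbbm{1}_{[0,\tau_{n}]}X) \in \Lambda^{2}_{w}(N_{j};T)$ (using $\Exp\int_{0}^{\tau_{n}}\int_{U}(p^{j}_{r,u}(i_{p^{j}_{r,u},q_{r,u}}X))^{2}\mu(du)\lambda(dr)\leq\norm{\mathbbm{1}_{[0,\tau_{n}]}X}_{w,M;T}^{2}<\infty$). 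Thus $\{\tau_{n}\}$ is a common $M$-localizing sequence for $\mathbbm{1}_{[0,\sigma]}X$ and $\mathbbm{1}_{]s_{0},t_{0}]\times F_{0}}X$, and an $N_{j}$-localizing sequence for $i_{p^{j}_{r,u},q_{r,u}}X$.

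Next I would apply Propositions \ref{propStoppedIntegralWeakCase}, \ref{propWeakIntegralInSubintervalAndRandomSubset} and \ref{propDecompWeakIntegralSumIndepMartValMeasu} to the $\Lambda^{2}_{w}$-integrand $\mathbbm{1}_{[0,\tau_{n}]}X$ (respectively to $i_{p^{j}_{r,u},q_{r,u}}(\mathbbm{1}_{[0,\tau_{n}]}X)$), and rewrite both sides using the compatibility identity \eqref{compatibilityCondExtendedStochasticIntegralWeakCase} for $\hat{I}^{w}(X)$, $\hat{I}^{w}(\mathbbm{1}_{[0,\sigma]}X)$, $\hat{I}^{w}(\mathbbm{1}_{]s_{0},t_{0}]\times F_{0}}X)$ and the $\hat{I}^{w}_{N_{j}}(i_{p^{j}_{r,u},q_{r,u}}X)$ — all taken relative to the same sequence $\{\tau_{n}\}$, which is legitimate by Theorem \ref{existenceExtendedStochasticIntegralWeakCase}(2). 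This yields, for each $n$ and $\Prob$-a.e.,
\begin{align*}
\hat{I}^{w}_{t\wedge\tau_{n}}(\mathbbm{1}_{[0,\sigma]}X) & = \hat{I}^{w}_{t\wedge\sigma\wedge\tau_{n}}(X), \\
\hat{I}^{w}_{t\wedge\tau_{n}}(\mathbbm{1}_{]s_{0},t_{0}]\times F_{0}}X) & = \mathbbm{1}_{F_{0}}\bigl(\hat{I}^{w}_{t\wedge t_{0}\wedge\tau_{n}}(X)-\hat{I}^{w}_{t\wedge s_{0}\wedge\tau_{n}}(X)\bigr), \qquad \forall\, t\in[0,T],
\end{align*}
and the corresponding stopped version of the decomposition in Proposition \ref{propDecompWeakIntegralSumIndepMartValMeasu}(2).

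Finally I would let $n\to\infty$. Since the localizing sequence furnished by the proof of Theorem \ref{existenceExtendedStochasticIntegralWeakCase} satisfies, for $\Prob$-a.e.\ $\omega$, $\tau_{n}(\omega)=T$ for all $n$ large enough, one has $t\wedge\tau_{n}=t$ (and $t\wedge\sigma\wedge\tau_{n}=t\wedge\sigma$, etc.) for all $t\in[0,T]$ once $n$ is large, so the displayed identities pass to the limit and give exactly \eqref{stoppedIntegralWeakCase}, \eqref{weakIntegralInSubintervalAndRandomSubset} and the decomposition for $\hat{I}^{w}(X)$ with $X\in\Lambda^{2,loc}_{w}(M;T)$. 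The only genuinely delicate point is the bookkeeping of the first two paragraphs — verifying that a single localizing sequence controls every integrand in every one of the three statements at once; once that is in place, the remaining steps are the routine localization arguments of \cite[Section~4.2]{DaPratoZabczyk}.
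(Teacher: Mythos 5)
Your proposal is correct and follows exactly the route the paper intends: the paper omits the proof, asserting only that the compatibility identity \eqref{compatibilityCondExtendedStochasticIntegralWeakCase} transfers the $\Lambda^{2}_{w}(T)$ properties to $\Lambda^{2,loc}_{w}(T)$, and your write-up is the standard localization argument that makes this precise (common localizing sequence, domination of the modified integrands, apply the square-integrable case, pass to the limit using that $\tau_{n}=T$ eventually a.s.). No gaps.
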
 

As was shown for the weak stochastic integral for integrands in $\Lambda^{2}_{w}(T)$, we can also prove that the \emph{extended weak stochastic integral} map $\hat{I}^{w}: \Lambda^{2,loc}_{w}(T) \rightarrow \mathcal{M}^{2,loc}_{T}(\R)$, $X \mapsto \hat{I}^{w}(X)$, is linear and continuous.

The linearity of the map $\hat{I}^{w}$ follows from \eqref{compatibilityCondExtendedStochasticIntegralWeakCase} and the corresponding linearity of the map $I^{w}: \Lambda^{2}_{w}(T) \rightarrow \mathcal{M}^{2}_{T}(\R)$. The continuity follows from the following estimate that can by proved by similar arguments to those used in the proof of Proposition 4.16 in  \cite{DaPratoZabczyk}, p.104-5. 

\begin{prop} \label{estimateContinuityWeakIntegralMap}
Assume $X \in \Lambda^{2,loc}_{w}(T)$. Then, for arbitrary $a >0$, $b >0$, 
$$ \Prob \left( \sup_{t \in [0,T]} \abs{\hat{I}^{w}_{t}(X)} > a \right) \leq \frac{b}{a^{2}} + \Prob \left( \int_{0}^{T} \int_{U} q_{r,u}(X(r,u))^{2} \mu(du) \lambda(dr) > b \right). $$ 
\end{prop}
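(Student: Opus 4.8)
The plan is to reduce the estimate for $\hat{I}^{w}(X)$ to the already-established properties of the weak integral $I^{w}$ on $\Lambda^{2}_{w}(T)$ by a standard truncation/localization argument, together with Chebyshev's inequality and Doob's martingale inequality. The fundamental device is the random time at which the (predictable, increasing, continuous) process
\begin{equation*}
A_{t}(X) \defeq \int_{0}^{t} \int_{U} q_{r,u}(X(r,u))^{2} \mu(du) \lambda(dr), \qquad t \in [0,T],
\end{equation*}
first exceeds the level $b$.

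First I would fix $X \in \Lambda^{2,loc}_{w}(T)$ and $b>0$, and define the $\{\mathcal{F}_{t}\}$-stopping time $\tau_{b} \defeq \inf\{ t \in [0,T]: A_{t}(X) > b \}$ (with $\tau_{b}=T$ if the set is empty); this is indeed a stopping time because $A(X)$ is adapted and continuous (left-continuity of $A(X)$ in $t$ gives $A_{\tau_{b}}(X) \leq b$ on $\{\tau_{b} \leq T\}$). Then $\mathbbm{1}_{[0,\tau_{b}]}X \in \Lambda^{2}_{w}(T)$, since by construction $\Exp A_{T}(\mathbbm{1}_{[0,\tau_{b}]} X) = \Exp A_{\tau_{b}}(X) \leq b < \infty$. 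By the compatibility property \eqref{compatibilityCondExtendedStochasticIntegralWeakCase} of Theorem \ref{existenceExtendedStochasticIntegralWeakCase}, applied with an approximating sequence of stopping times and then specialized, we have $\hat{I}^{w}_{t \wedge \tau_{b}}(X) = I^{w}_{t}(\mathbbm{1}_{[0,\tau_{b}]}X)$ for all $t \in [0,T]$, $\Prob$-a.e. (more carefully, one takes a localizing sequence $\{\sigma_n\}$ for $X$, notes $\sigma_n \wedge \tau_b$ is again such a sequence with the extra integrability, passes to the limit).

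Next I would split the event according to whether the integral $A_{T}(X)$ has already exceeded $b$:
\begin{align*}
\Prob \left( \sup_{t \in [0,T]} \abs{\hat{I}^{w}_{t}(X)} > a \right)
& \leq \Prob \left( \sup_{t \in [0,T]} \abs{\hat{I}^{w}_{t}(X)} > a, \, A_{T}(X) \leq b \right) + \Prob \left( A_{T}(X) > b \right).
\end{align*}
On the event $\{A_{T}(X) \leq b\}$ one has $\tau_{b}=T$, hence $\sup_{t \in [0,T]} \abs{\hat{I}^{w}_{t}(X)} = \sup_{t \in [0,T]}\abs{\hat{I}^{w}_{t \wedge \tau_{b}}(X)} = \sup_{t\in[0,T]}\abs{I^{w}_{t}(\mathbbm{1}_{[0,\tau_{b}]}X)}$, so the first term is bounded by $\Prob \left( \sup_{t \in [0,T]} \abs{I^{w}_{t}(\mathbbm{1}_{[0,\tau_{b}]}X)} > a \right)$. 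Now apply Chebyshev's inequality followed by Doob's $L^{2}$-inequality (or, more directly, the fact from Theorem \ref{propertiesWeakIntegralSquareIntegIntegrands} that $I^{w}(\mathbbm{1}_{[0,\tau_{b}]}X)$ is a square-integrable martingale): since $\mathbbm{1}_{[0,\tau_{b}]}X \in \Lambda^{2}_{w}(T)$, by \eqref{itoIsometryWeakIntegral} and Doob's inequality,
\begin{equation*}
\Prob \left( \sup_{t \in [0,T]} \abs{I^{w}_{t}(\mathbbm{1}_{[0,\tau_{b}]}X)} > a \right) \leq \frac{1}{a^{2}} \Exp \left( \sup_{t\in[0,T]} \abs{I^{w}_{t}(\mathbbm{1}_{[0,\tau_{b}]}X)}^{2} \right).
\end{equation*}
Here I would use the tighter bound coming from the fact that $I^{w}(\mathbbm{1}_{[0,\tau_b]}X)$ is an $L^2$-bounded martingale whose terminal second moment is exactly $\Exp A_{T}(\mathbbm{1}_{[0,\tau_b]}X) \leq b$: by Doob's inequality with the sharp constant (or by a direct stopping argument) one gets $\Exp ( \sup_{t} \abs{I^{w}_{t}(\mathbbm{1}_{[0,\tau_b]}X)}^{2}) \leq b$, giving the claimed $b/a^{2}$.

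The main obstacle — and really the only delicate point — is making the identification $\hat{I}^{w}_{t \wedge \tau_{b}}(X) = I^{w}_{t}(\mathbbm{1}_{[0,\tau_{b}]}X)$ rigorous: one must check that $\tau_{b}$ can be inserted into the defining property \eqref{compatibilityCondExtendedStochasticIntegralWeakCase}, which is stated for a localizing sequence tending to $T$, not for a single truncating time. This is handled by noting that if $\{\sigma_{n}\}$ localizes $X$ in the sense of Theorem \ref{existenceExtendedStochasticIntegralWeakCase}(1), then $\{\sigma_{n} \wedge \tau_{b}\}_{n}$ also has $\mathbbm{1}_{[0,\sigma_{n} \wedge \tau_{b}]}X \in \Lambda^{2}_{w}(T)$, and one applies Proposition \ref{propStoppedIntegralWeakCase} (stopping commutes with the integral) together with \eqref{compatibilityCondExtendedStochasticIntegralWeakCase} and lets $n \to \infty$ using that $\hat I^w(X) \in \mathcal M^{2,loc}_T(\R)$. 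Also the sharpness of the Doob constant needs the elementary observation that a stopped $L^2$-martingale has monotone quadratic variation, so no extra factor of $4$ appears; alternatively one simply absorbs it and proves the (weaker but still sufficient) bound, but the statement as written needs the sharp form, which follows from $\Exp(\sup_t M_t^2) \le \Exp\langle M\rangle_T$ for the compensated sum. These are all routine once the localization identity is in place, so the estimate follows. \qed
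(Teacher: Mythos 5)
Your overall route --- truncating at $\tau_{b}=\inf\{t\in[0,T]:A_{t}(X)>b\}$, identifying $\hat{I}^{w}_{t\wedge\tau_{b}}(X)$ with $I^{w}_{t}(\mathbbm{1}_{[0,\tau_{b}]}X)$ via Theorem \ref{existenceExtendedStochasticIntegralWeakCase} and Proposition \ref{propStoppedIntegralWeakCase}, and splitting the probability over $\{A_{T}(X)\leq b\}$ and its complement --- is exactly the Da Prato--Zabczyk argument (their Proposition 4.16) that the paper points to, and that part of your proposal is sound. The genuine error is in how you extract the $b/a^{2}$ term. The inequality you invoke, $\Exp\left(\sup_{t}M_{t}^{2}\right)\leq\Exp\langle M\rangle_{T}=\Exp\left(M_{T}^{2}\right)$, is false: the sharp constant in Doob's $L^{2}$ maximal inequality is $4$, not $1$ (for the two-step symmetric random walk one has $\Exp\left(\max_{t\leq 2}M_{t}^{2}\right)=5/2$ while $\Exp\left(M_{2}^{2}\right)=2$), and monotonicity of the quadratic variation has no bearing on removing that constant. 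As you yourself observe, the factor $4$ cannot simply be absorbed, because the statement asserts $b/a^{2}$ and not $4b/a^{2}$, so as written your final step does not close.

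The repair is one line: do not pass through $\Exp\left(\sup_{t}M_{t}^{2}\right)$ at all. Apply Doob's weak-type maximal inequality to the non-negative c\`{a}dl\`{a}g submartingale $M_{t}^{2}$, where $M=I^{w}(\mathbbm{1}_{[0,\tau_{b}]}X)$, namely
$$ \Prob\Bigl(\sup_{t\in[0,T]}\abs{M_{t}}>a\Bigr)\leq\Prob\Bigl(\sup_{t\in[0,T]}M_{t}^{2}\geq a^{2}\Bigr)\leq\frac{1}{a^{2}}\,\Exp\left(M_{T}^{2}\right)=\frac{1}{a^{2}}\norm{\mathbbm{1}_{[0,\tau_{b}]}X}^{2}_{w,T}\leq\frac{b}{a^{2}}, $$
where the equality is the It\^{o} isometry \eqref{itoIsometryWeakIntegral} and the last bound is $\Exp\, A_{\tau_{b}}(X)\leq b$. (That last bound uses continuity of $t\mapsto A_{t}(X)$, which holds when $\lambda$ is atomless; for a general $\lambda$ one should instead truncate by the predictable set $([0,T]\times\{A_{T}(X)\leq b\})$ or by $\mathbbm{1}_{[0,\tau_{b})}$, noting that on $\{A_{T}(X)\leq b\}$ one has $\tau_{b}=T$ in any case.) With this substitution, and your localization argument left intact, the proof is complete.
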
   

\begin{prop} \label{extendedWeakIntegralMapIsLinearContinuous} The extended weak stochastic integral mapping $\hat{I}^{w}: \Lambda^{2,loc}_{w}(T) \rightarrow \mathcal{M}^{2,loc}_{T}(\R)$ is linear and continuous. 
\end{prop}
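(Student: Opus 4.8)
The plan is to dispatch linearity and continuity separately, in each case reducing to the corresponding properties of $I^{w}$ on $\Lambda^{2}_{w}(T)$ (Theorem \ref{propertiesWeakIntegralSquareIntegIntegrands}) through the defining relation \eqref{compatibilityCondExtendedStochasticIntegralWeakCase}, together with the tail estimate of Proposition \ref{estimateContinuityWeakIntegralMap}.

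For linearity I would take $X, Y \in \Lambda^{2,loc}_{w}(T)$ and $\alpha, \beta \in \R$, choose reducing sequences of stopping times for each of them via Theorem \ref{existenceExtendedStochasticIntegralWeakCase}(1), and replace them by their pointwise minimum to obtain a single increasing sequence $\{\tau_{n}\}_{n \in \N}$ with $\tau_{n} \to T$ $\Prob$-a.e. such that $\mathbbm{1}_{[0,\tau_{n}]} X$, $\mathbbm{1}_{[0,\tau_{n}]} Y$, and hence also $\mathbbm{1}_{[0,\tau_{n}]}(\alpha X + \beta Y)$, all lie in $\Lambda^{2}_{w}(T)$ (that $\mathbbm{1}_{[0,\tau_{n}]}$ of an element of $\Lambda^{2}_{w}(T)$ stays in $\Lambda^{2}_{w}(T)$ is routine: predictability of the predictable set $[0,\tau_{n}]$ and the pointwise domination of the seminorm integrand). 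Applying \eqref{compatibilityCondExtendedStochasticIntegralWeakCase} to each of the three integrands and using linearity of $I^{w}: \Lambda^{2}_{w}(T) \rightarrow \mathcal{M}^{2}_{T}(\R)$ then gives $\hat{I}^{w}_{t \wedge \tau_{n}}(\alpha X + \beta Y) = \alpha \hat{I}^{w}_{t \wedge \tau_{n}}(X) + \beta \hat{I}^{w}_{t \wedge \tau_{n}}(Y)$ for all $t \in [0,T]$, $n \in \N$; letting $n \to \infty$ (using $\tau_{n} \to T$ a.e. and the defining property of $\hat{I}^{w}$) yields the claimed identity in $\mathcal{M}^{2,loc}_{T}(\R)$.

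For continuity I would exploit that both $\Lambda^{2,loc}_{w}(T)$ (Proposition \ref{extendedClassWeakIntegrandsInMetrizable}) and $\mathcal{M}^{2,loc}_{T}(\R)$ are metrizable topological vector spaces, so that for the linear map $\hat{I}^{w}$ it suffices to verify sequential continuity at $0$. Given $X_{k} \to 0$ in $\Lambda^{2,loc}_{w}(T)$ and $\epsilon, \delta > 0$, the goal is $\hat{I}^{w}(X_{k}) \in O_{\epsilon, \delta}$ for all large $k$, i.e. $\Prob \left( \sup_{t \in [0,T]} \abs{\hat{I}^{w}_{t}(X_{k})}^{2} > \epsilon \right) < \delta$ for $k$ large. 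I would apply Proposition \ref{estimateContinuityWeakIntegralMap} with $a = \sqrt{\epsilon}$ and $b = \epsilon \delta / 2$, obtaining
\begin{equation*}
\Prob \left( \sup_{t \in [0,T]} \abs{\hat{I}^{w}_{t}(X_{k})} > \sqrt{\epsilon} \right) \leq \frac{\delta}{2} + \Prob \left( \int_{0}^{T} \int_{U} q_{r,u}(X_{k}(r,u))^{2} \mu(du) \lambda(dr) > \frac{\epsilon \delta}{2} \right),
\end{equation*}
and then observe that since $X_{k} \to 0$ in $\mathcal{T}^{M}_{2,loc}$ one has $X_{k} \in \Gamma_{\epsilon \delta / 2, \, \delta / 2}$ for all large $k$, so the last probability is $\leq \delta/2$ and the desired bound follows, using that $\{\sup_{t} \abs{\hat{I}^{w}_{t}(X_{k})} > \sqrt{\epsilon}\} = \{\sup_{t} \abs{\hat{I}^{w}_{t}(X_{k})}^{2} > \epsilon\}$.

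I do not anticipate a genuine obstacle; the only point demanding care is the order of the quantifiers in the last step — one must fix $b$ as a function of both $\epsilon$ and $\delta$ (hence of the target neighbourhood $O_{\epsilon,\delta}$) \emph{before} invoking the convergence $X_{k} \to 0$ — after which the argument is a direct chaining of Proposition \ref{estimateContinuityWeakIntegralMap} with the explicit local bases $\{O_{\epsilon,\delta}\}$ and $\{\Gamma_{\epsilon,\delta}\}$ of the two vector topologies.
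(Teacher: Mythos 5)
Your proposal is correct and follows essentially the same route as the paper: linearity via the compatibility relation \eqref{compatibilityCondExtendedStochasticIntegralWeakCase} with a common localizing sequence (which the paper leaves implicit), and continuity by combining metrizability of both spaces with the tail estimate of Proposition \ref{estimateContinuityWeakIntegralMap}, choosing $a$ and $b$ so that $b/a^{2}=\delta/2$. The only cosmetic difference is your choice $a=\sqrt{\epsilon}$ versus the paper's $a=\epsilon$, which merely reflects whether one matches the neighbourhood $O_{\epsilon,\delta}$ or $O_{\epsilon^{2},\delta}$; both are fine since these form a base.
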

\begin{prf}
As the map $\hat{I}^{w}$ is linear, we need only to show its continuity. Let $\{ X_{n} \}_{n \in \N}$ be a sequence converging to $X$ in $\Lambda^{2,loc}_{w}(T)$. As both $\Lambda^{2,loc}_{w}(T)$ and $ \mathcal{M}^{2,loc}_{T}(\R)$ are metrizable, it is sufficient to prove that $\{ \hat{I}^{w}(X_{n}) \}_{n \in \N}$ converges to $\hat{I}^{w}(X)$ in $ \mathcal{M}^{2,loc}_{T}(\R)$. 

Let $\epsilon, \delta >0$. As $\{ X_{n} \}_{n \in \N}$ converges to $X$ in $\Lambda^{2,loc}_{w}(T)$, then there exists some $N_{\epsilon, \delta} \in \N$ such that for all $n \geq N_{\epsilon, \delta}$,
\begin{equation} \label{convergSequenceProofContinuityWeakIntegralExtendedClass}
\Prob \left( \int_{0}^{T} \int_{U} q_{r,u}(X(r,u)-X_{n}(r,u))^{2} \mu(du) \lambda(dr) > \frac{\delta \epsilon^{2}}{2} \right) \leq \frac{\delta}{2}. 
\end{equation}  
By linearity of the integral map, Proposition \ref{estimateContinuityWeakIntegralMap} and \eqref{convergSequenceProofContinuityWeakIntegralExtendedClass}, for all $n \geq N_{\epsilon, \delta}$, we have 
\begin{multline*}
\Prob \left( \sup_{t \in [0,T]} \abs{\hat{I}^{w}_{t}(X)-\hat{I}^{w}_{t}(X_{n})} > \epsilon \right) \\ 
\leq  \frac{\delta}{2} + \Prob \left( \int_{0}^{T} \int_{U} q_{r,u}(X(r,u)-X_{n}(r,u))^{2} \mu(du) \lambda(dr) > \frac{\delta \epsilon^{2}}{2} \right) \leq \delta.
\end{multline*}
And hence (see \eqref{defiLocalBaseNeigZeroSpaceLocalSquareIntegRealMarting}) $\{ \hat{I}^{w}(X_{n}) \}_{n \in \N}$ converges to $\hat{I}^{w}(X)$ in $ \mathcal{M}^{2,loc}_{T}(\R)$. 
\end{prf}

\subsection{The Stochastic Fubini Theorem}\label{subSectionSFTWI}

The final topic in our study of the properties of the weak stochastic integral is the stochastic Fubini theorem that we introduce and prove below.  We start by describing the class of integrands for which the theorem is valid. 

\begin{defi} \label{defiClassIntegrandsStochFubiniTheo} Let $(E, \mathcal{E}, \varrho)$ be a $\sigma$-finite measure space. We denote by $\Xi^{1,2}_{\omega}(T,E)$\index{.x X12TE@$\Xi^{1,2}_{\omega}(T,E)$} the linear space of all (equivalence classes of) families $X=\{X(r,\omega,u,e): r \in [0,T], \omega \in \Omega, u \in U, e \in E\}$
of Hilbert space-valued maps satisfying the following conditions:
\begin{enumerate}
\item $X(r, \omega, u, e) \in \Phi_{q_{r,u}}$, $\forall r \in [0,T]$, $\omega \in \Omega$, $u \in U$, $e \in E$.
\item The map $[0,T] \times \Omega \times U \times E \rightarrow \R_{+}$ given by $(r, \omega, u, e)\mapsto q_{r,u} (X(r, \omega, u, e), \phi)$ is $\mathcal{P}_{T} \otimes \mathcal{B}(U) \otimes \mathcal{E}$-measurable, for every $\phi \in \Phi$.
\item 
$$ \abs{\norm{X}}_{w,T,E} \defeq \int_{E} \norm{X(\cdot,\cdot,\cdot,e)}_{w,T} \varrho (de) < \infty. $$

\end{enumerate}
\end{defi}

It is easy to see that $\Xi^{1,2}_{\omega}(T,E)$ is a Banach space when equipped with the norm $\abs{\norm{\cdot}}_{w,T,E}$. 
We will denote by $\Xi^{2,2}_{\omega}(T,E)$ the subspace of $\Xi^{1,2}_{\omega}(T,E)$ comprising all $X=\{X(r,\omega,u,e): r \in [0,T], \omega \in \Omega, u \in U, e \in E\}$ satisfying:
$$  \abs{\norm{X}}^{2}_{w,2,T,E} \defeq \int_{E} \norm{X(\cdot,\cdot,\cdot,e)}^{2}_{w,T} \varrho (de) < \infty.$$
The space $\Xi^{2,2}_{\omega}(T,E)$ is a Hilbert space when equipped with the Hilbertian norm $ \abs{\norm{\cdot}}_{w,2,T,E}$. 

\begin{rema}
Properties (1)-(3) of Definition \ref{defiClassIntegrandsStochFubiniTheo} together with Fubini's Theorem imply that the map $e\mapsto \norm{X(\cdot,\cdot,\cdot,e)}^{2}_{w,T}$ is $\mathcal{E}$-measurable. Hence, the map $e\mapsto X(\cdot,\cdot,\cdot,e) \in \Lambda^{2}_{w}(T)$ is $\mathcal{E}/\mathcal{B}(\Lambda^{2}_{w}(T))$-measurable.
Thus, $\Xi^{1,2}_{w}(T,E)$ is a subspace of $L^{1}(E, \mathcal{E}, \varrho; \Lambda^{2}_{w}(T))$ and  $\Xi^{2,2}_{w}(T,E)$ is a subspace of $L^{2}(E, \mathcal{E}, \varrho; \Lambda^{2}_{w}(T))$.
\end{rema}


\begin{lemm}\label{lemmaSequenceSquareIntegApproxIntegFubini} Let $X \in \Xi^{1,2}_{w}(T)$. There exists a sequence $\{X_{n}\}_{n \in \N} \subseteq \Xi^{2,2}_{w}(T)$ such that $\varrho$-a.e. $\norm{X_{n}(\cdot,\cdot,\cdot,e)}_{w,T} \leq \norm{X_{n+1}(\cdot,\cdot,\cdot,e)}_{w,T}$, $\forall n \in \N$, and $$ \lim_{n \rightarrow \infty} \abs{\norm{X-X_{n}}}_{w,T,E}=0.$$
\end{lemm}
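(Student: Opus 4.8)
The plan is to exhibit the approximating sequence explicitly by truncating $X$ in the variable $e\in E$, using that $\varrho$ is $\sigma$-finite and that the map $e\mapsto \norm{X(\cdot,\cdot,\cdot,e)}_{w,T}$ is $\mathcal{E}$-measurable (this measurability is guaranteed by the Remark preceding the lemma). First I would fix an increasing sequence $\{E_{k}\}_{k\in\N}\subseteq\mathcal{E}$ with $\varrho(E_{k})<\infty$ and $\bigcup_{k}E_{k}=E$, and for $n\in\N$ define
\begin{equation*}
X_{n}(r,\omega,u,e) \defeq \mathbbm{1}_{G_{n}}(e)\, X(r,\omega,u,e), \qquad G_{n} \defeq E_{n}\cap\left\{ e\in E : \norm{X(\cdot,\cdot,\cdot,e)}_{w,T} \leq n \right\}.
\end{equation*}
Each $G_{n}\in\mathcal{E}$ by the measurability just quoted, the sequence $\{G_{n}\}$ is increasing, and $\bigcup_{n}G_{n}=\{e: \norm{X(\cdot,\cdot,\cdot,e)}_{w,T}<\infty\}$, which has full $\varrho$-measure because $X\in\Xi^{1,2}_{w}(T,E)$ forces $\norm{X(\cdot,\cdot,\cdot,e)}_{w,T}<\infty$ for $\varrho$-a.e. $e$ (an integrable function is a.e. finite).

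Next I would check the three required properties. For membership in $\Xi^{2,2}_{w}(T,E)$: properties (1) and (2) of Definition \ref{defiClassIntegrandsStochFubiniTheo} are inherited from $X$ since multiplication by the indicator $\mathbbm{1}_{G_{n}}(e)$ preserves $q_{r,u}$-predictability (the map $(r,\omega,u,e)\mapsto q_{r,u}(\mathbbm{1}_{G_{n}}(e)X(r,\omega,u,e),\phi)=\mathbbm{1}_{G_{n}}(e)q_{r,u}(X(r,\omega,u,e),\phi)$ is still $\mathcal{P}_{T}\otimes\mathcal{B}(U)\otimes\mathcal{E}$-measurable), and for property (3) one estimates
\begin{equation*}
\abs{\norm{X_{n}}}^{2}_{w,2,T,E} = \int_{G_{n}} \norm{X(\cdot,\cdot,\cdot,e)}^{2}_{w,T}\,\varrho(de) \leq n\int_{G_{n}}\norm{X(\cdot,\cdot,\cdot,e)}_{w,T}\,\varrho(de) \leq n\,\abs{\norm{X}}_{w,T,E} < \infty,
\end{equation*}
using the bound $\norm{X(\cdot,\cdot,\cdot,e)}_{w,T}\leq n$ on $G_{n}$. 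Monotonicity of the norms is immediate: $\norm{X_{n}(\cdot,\cdot,\cdot,e)}_{w,T}=\mathbbm{1}_{G_{n}}(e)\norm{X(\cdot,\cdot,\cdot,e)}_{w,T}\leq\mathbbm{1}_{G_{n+1}}(e)\norm{X(\cdot,\cdot,\cdot,e)}_{w,T}=\norm{X_{n+1}(\cdot,\cdot,\cdot,e)}_{w,T}$ since $G_{n}\subseteq G_{n+1}$.

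Finally, for the convergence $\abs{\norm{X-X_{n}}}_{w,T,E}\to 0$, note that $X(r,\omega,u,e)-X_{n}(r,\omega,u,e)=\mathbbm{1}_{E\setminus G_{n}}(e)X(r,\omega,u,e)$, so
\begin{equation*}
\abs{\norm{X-X_{n}}}_{w,T,E} = \int_{E}\mathbbm{1}_{E\setminus G_{n}}(e)\,\norm{X(\cdot,\cdot,\cdot,e)}_{w,T}\,\varrho(de),
\end{equation*}
and the integrands decrease pointwise $\varrho$-a.e. to zero (since a.e. $e$ eventually lies in $G_{n}$) while being dominated by the $\varrho$-integrable function $e\mapsto\norm{X(\cdot,\cdot,\cdot,e)}_{w,T}$; the dominated convergence theorem then gives the claim. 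I do not expect any serious obstacle here — the only point requiring care is confirming that the indicator $\mathbbm{1}_{G_{n}}(e)$ does not destroy the joint $q_{r,u}$-predictability required in property (2), which is where the $\mathcal{E}$-measurability of $G_{n}$ (itself a consequence of the Remark) is used, and otherwise the argument is the standard $L^{1}$-truncation scheme adapted to the vector-valued setting $L^{1}(E,\mathcal{E},\varrho;\Lambda^{2}_{w}(T))$.
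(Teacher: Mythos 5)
Your proof is correct and follows essentially the same truncation scheme as the paper: exhaust $E$ by finite-measure sets, cut off at level $n$ in the norm $\norm{X(\cdot,\cdot,\cdot,e)}_{w,T}$, verify membership in $\Xi^{2,2}_{w}(T,E)$, monotonicity, and $L^{1}$-convergence via dominated convergence. The only (immaterial) difference is that on the set where $\norm{X(\cdot,\cdot,\cdot,e)}_{w,T}>n$ you set $X_{n}$ to zero, whereas the paper rescales $X$ there by the factor $n/\norm{X(\cdot,\cdot,\cdot,e)}_{w,T}$; both choices yield the claimed monotonicity and convergence.
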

\begin{prf}
First, from Definition \ref{defiClassIntegrandsStochFubiniTheo}(3), there exists $E_{0} \subseteq E$ with $\varrho (E \setminus E_{0})=0$ such that $\forall e \in  E_{0}$, $\norm{X(\cdot,\cdot,\cdot,e)}_{w,T} < \infty$.

Let $\{G_{n}\}_{n \in \N}$ be an increasing sequence on $\mathcal{E}$ such that $E_{0}= \bigcup_{n \in \N} G_{n}$ and such that $\forall n \in \N$, $\varrho (G_{n})< \infty$. For each $n \in \N$, let $X_{n}=\{X_{n}(r,\omega,u,e)\}$ be the family of bounded random variables defined by:
\begin{eqnarray} \label{defiApproxSquareMomentIntegrandsFubiniTheorem}
X_{n}(r,\omega ,u,e) & = & \frac{n X(r,\omega,u,e)}{\norm{X(\cdot,\cdot,\cdot,e)}_{w,T}} \ind{\{e \in G_{n}: \norm{X(\cdot,\cdot,\cdot,e)}_{w,T}> n\}}{e} \\
& { } & +X(r,\omega,u,e) \ind{\{e \in G_{n}: \norm{X(\cdot,\cdot,\cdot,e)}_{w,T} \leq n \}}{e}. \nonumber
\end{eqnarray}

The properties (1)-(3) of Definition \ref{defiClassIntegrandsStochFubiniTheo} for $X$ imply that $X_{n}$ satisfies properties (1) and (2) of Definition \ref{defiClassIntegrandsStochFubiniTheo}. Moreover, \eqref{defiApproxSquareMomentIntegrandsFubiniTheorem} implies that $\abs{\norm{X_{n}}}^{2}_{w,2,T,E}  \varrho (de) \leq n^{2} \varrho (G_{n})< \infty$ and therefore $X_{n} \in \Xi^{2,2}_{w} (T,E)$.

Now, observe that from the definition of $E_{0}$ and of the $G_{n}$s we have:
\begin{equation} \label{limitSetsGnWeakMild}
\lim_{n \rightarrow \infty} \ind{\{e \in E_{0}: \norm{X(\cdot,\cdot,\cdot,e)}_{w,T}> n\}}{e}=0, \quad \lim_{n \rightarrow \infty} \ind{E_{0} \setminus G_{n}}{e}=0, \quad \forall e \in E.
\end{equation}
Hence, from \eqref{defiApproxSquareMomentIntegrandsFubiniTheorem}, \eqref{limitSetsGnWeakMild}, Definition \ref{defiClassIntegrandsStochFubiniTheo}(3) and the dominated convergence theorem:
\begin{flalign*}
&  \abs{\norm{X-X_{n}}}_{w,T,E} \\
& =  \int_{E_{0}} \ind{E_{0} \setminus G_{n}}{e}  \norm{X(\cdot,\cdot,\cdot,e)}_{w,T}\varrho (de) \\
& \hspace{10pt} +  \int_{E_{0}} \ind{\{e \in G_{n}: \norm{X(\cdot,\cdot,\cdot,e)}_{w,T}> n \}}{e} \abs{1- \frac{n}{\norm{X(\cdot,\cdot,\cdot,e)}_{w,T}}} \norm{X(\cdot,\cdot,\cdot,e)}_{w,T}\varrho (de) \\
& \leq   \int_{E_{0}} \ind{E_{0} \setminus G_{n}}{e}  \norm{X(\cdot,\cdot,\cdot,e)}_{w,T}\varrho (de)  \\ 
& \hspace{10pt} + 2 \int_{E_{0}} \ind{\{e \in E_{0}: \norm{X(\cdot,\cdot,\cdot,e)}_{w,T}> n \}}{e} \norm{X(\cdot,\cdot,\cdot,e)}_{w,T}\varrho (de) \\
& \rightarrow  0 \quad \mbox{as } n \rightarrow \infty .
\end{flalign*}
Finally, the fact that for every $e \in E_{0}$, $\norm{X_{n}(\cdot,\cdot,\cdot,e)}_{w,T} \leq \norm{X_{n+1}(\cdot,\cdot,\cdot,e)}_{w,T}$, $\forall n \in \N$, follows from \eqref{defiApproxSquareMomentIntegrandsFubiniTheorem}.   
\end{prf}

A proof of the following result can be carried out using similar arguments to those in the proof of Proposition \ref{simpleIntegrandsDenseInSquareIntegIntegrandsWeakIntegral}.

\begin{lemm} \label{lemmaDenseSimpleProcessInSquareInegFubini} Let $ S_{w}(T,E)$ denotes  the collection of all families $ X= \{X(r,\omega,u,e): r \in [0,T], \omega \in \Omega, u \in U, e \in E \}$ of Hilbert space-valued maps of the form:
\begin{equation} \label{defiClassSimpleIntegrandsStochFubiniTheo}
 X(r,\omega,u,e)= \sum^{p}_{l=1} \sum^{n}_{i=1} \sum^{m}_{j=1} \ind{]S_{j},t_{j}]}{r} \ind{F_{j}}{w} \ind{A_{i}}{u} \ind{D_{l}}{e} i_{q_{r,u}} \phi_{i,j,l},
\end{equation}
for all $r \in [0,T]$, $\omega \in \Omega$, $u \in U$, $e \in E$, where $m$, $n$, $p \in \N$, and for $l=1, \dots, p$, $i=1, \dots, n$, $j=1, \dots,m$, $0\leq s_{j} < t_{j} \leq T$, $F_{j} \in \mathcal{F}_{j}$, $A_{i} \in \mathcal{R}$, $D_{l} \in \mathcal{E}$ and $\phi_{i,j,l} \in \Phi$. Then, $S_{w}(T,E)$ is dense in $\Xi^{2,2}_{w}(T,E)$.
\end{lemm}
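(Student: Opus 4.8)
The plan is to mimic the proof of Proposition \ref{simpleIntegrandsDenseInSquareIntegIntegrandsWeakIntegral}, adapting it to account for the extra variable $e \in E$. First I would introduce the collection $C_{w}(T,E)$ of all families of the elementary form
\begin{equation*}
Y(r,\omega,u,e)= \ind{]s,t]}{r}\ind{F}{\omega}\ind{A}{u}\ind{D}{e}\, i_{q_{r,u}}\phi,
\end{equation*}
with $0 \leq s < t \leq T$, $F \in \mathcal{F}_{s}$, $A \in \mathcal{R}$, $D \in \mathcal{E}$ with $\varrho(D) < \infty$, and $\phi \in \Phi$. It is immediate from \eqref{defiClassSimpleIntegrandsStochFubiniTheo} that $C_{w}(T,E)$ spans $S_{w}(T,E)$, and each such $Y$ lies in $\Xi^{2,2}_{w}(T,E)$ because $\abs{\norm{Y}}^{2}_{w,2,T,E}$ equals $\varrho(D)$ times a finite quantity of the form appearing in \eqref{finiteSecondMomentIntegrandsWeakIntg}. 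Hence it suffices to show that the only element of $\Xi^{2,2}_{w}(T,E)$ orthogonal to every $Y \in C_{w}(T,E)$ is (the equivalence class of) $0$.

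Next I would compute, for $X \in \Xi^{2,2}_{w}(T,E)$ and $Y \in C_{w}(T,E)$ as above, the inner product in $\Xi^{2,2}_{w}(T,E)$, obtaining
\begin{equation*}
\inner{X}{Y}_{w,2,T,E}= \int_{D}\int_{F}\int_{s}^{t}\int_{A} q_{r,u}\bigl(X(r,\omega,u,e),i_{q_{r,u}}\phi\bigr)\,\mu(du)\,\lambda(dr)\,\Prob(d\omega)\,\varrho(de),
\end{equation*}
the interchange of integrals being justified by Fubini's theorem together with the measurability in Definition \ref{defiClassIntegrandsStochFubiniTheo}(2) and the finiteness supplied by Definition \ref{defiClassIntegrandsStochFubiniTheo}(3). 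Assuming $X$ is orthogonal to all of $C_{w}(T,E)$, this integral vanishes for every choice of $s,t,F,A,D,\phi$. Since the sets of the form $]s,t] \times F \times A \times D$ (with $\varrho(D) < \infty$ and the rest as above) generate $\mathcal{P}_{T} \otimes \mathcal{B}(U) \otimes \mathcal{E}$ and $\varrho$ is $\sigma$-finite, a monotone class argument (plus the fact that $q_{r,u}(X(r,\omega,u,e),i_{q_{r,u}}\phi)$ is the difference of its positive and negative parts, each integrable against $\lambda \otimes \Prob \otimes \mu \otimes \varrho$ on bounded pieces) gives $q_{r,u}(X(r,\omega,u,e),i_{q_{r,u}}\phi)=0$ for $\lambda \otimes \Prob \otimes \mu \otimes \varrho$-a.e.\ $(r,\omega,u,e)$, for each fixed $\phi \in \Phi$.

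Finally, since $\Phi$ is separable on each $\Phi_{q_{r,u}}$ in the relevant sense—more precisely, Assumption \ref{assumpDenseSubsetSemiNormsMartValuedMeasures} provides a countable set $D_{0} \subseteq \Phi$ dense in every $\Phi_{q_{r,u}}$, $r \in [0,T]$, $u \in U$—the exceptional null set can be taken independent of $\phi$ by ranging over $D_{0}$, and then density of $i_{q_{r,u}}(D_{0})$ in $\Phi_{q_{r,u}}$ forces $X(r,\omega,u,e)=0$ for $\lambda \otimes \Prob \otimes \mu \otimes \varrho$-a.e.\ $(r,\omega,u,e)$. Thus $X$ is the zero element of $\Xi^{2,2}_{w}(T,E)$, which proves that $S_{w}(T,E)$ is dense. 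I expect the only mildly delicate point to be the measure-theoretic bookkeeping in the monotone class step: one must restrict to the finite-measure generating rectangles and use $\sigma$-finiteness of $\lambda$, $\mu$ and $\varrho$ to exhaust $[0,T] \times \Omega \times U \times E$, exactly as in the proof of Proposition \ref{simpleIntegrandsDenseInSquareIntegIntegrandsWeakIntegral}; everything else is a routine transcription of that argument with the additional coordinate $e$ carried along.
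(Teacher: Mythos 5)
Your proof is correct and follows exactly the route the paper indicates, namely transcribing the orthogonal-complement argument of Proposition \ref{simpleIntegrandsDenseInSquareIntegIntegrandsWeakIntegral} to the Hilbert space $\Xi^{2,2}_{w}(T,E)$ with the extra coordinate $e$ carried along. Your explicit attention to restricting to generating sets $D$ with $\varrho(D)<\infty$ and to invoking the countable dense set from Assumption \ref{assumpDenseSubsetSemiNormsMartValuedMeasures} so that the exceptional null set can be chosen independently of $\phi$ is precisely the right bookkeeping.
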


\begin{theo}[Stochastic Fubini's Theorem]\label{stochasticFubiniTheorem} Let $X \in \Xi^{1,2}_{w}(T,E)$. Then,
\begin{enumerate}
\item For a.e. $(r,\omega, u) \in [0,T] \times \Omega \times U$, the mapping $E \backin e \mapsto X(r,\omega,u,e) \in \Phi_{q_{r,u}}$ is Bochner integrable. Moreover,  
$$ \int_{E} X(\cdot,\cdot,\cdot,e)\, \varrho \, (de)= \left\{ \int_{E} X(r, \omega, u, e) \, \varrho (de): r \in [0,T], \omega \in \Omega,u \in U \right\} \in \Lambda^{2}_{w}(T) $$ 
\item The mapping $E \backin e \mapsto  I^{w} (X(\cdot,\cdot,\cdot,e)) \in  \mathcal{M}^{2}_{T}(\R)$ is Bochner integrable. Furthermore, 
\begin{equation} \label{defiIntegSecondPartStochFubini}
\left( \int_{E} I^{w} (X(\cdot,\cdot,\cdot,e)) \, \varrho \, (de) \right)_{t} = \int_{E} I^{w}_{t} (X(\cdot,\cdot,\cdot,e)) \, \varrho \, (de), \quad \forall \, t \geq 0.
\end{equation}
\item The following equality holds $\Prob$-a.e. 
\begin{equation} \label{identityFubiniTheoremWeakCase}
I^{w}_{t} \left( \int_{E} X (\cdot,\cdot,\cdot,e) \, \varrho  (de)\right) = \int_{E} I^{w}_{t} (X(\cdot,\cdot,\cdot,e))\, \varrho (de), \quad \forall \, t \in [0,T].
\end{equation}
\end{enumerate}
\end{theo}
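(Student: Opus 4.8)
The plan is to establish the three assertions first for simple integrands in $S_{w}(T,E)$, then pass to $X \in \Xi^{2,2}_{w}(T,E)$ by the density of $S_{w}(T,E)$ (Lemma \ref{lemmaDenseSimpleProcessInSquareInegFubini}) together with the continuity of $I^{w}$, and finally extend from $\Xi^{2,2}_{w}(T,E)$ to $\Xi^{1,2}_{w}(T,E)$ using the approximating sequence from Lemma \ref{lemmaSequenceSquareIntegApproxIntegFubini}. For the simple case, if $X$ is of the form \eqref{defiClassSimpleIntegrandsStochFubiniTheo}, then $\int_{E} X(r,\omega,u,e)\varrho(de)$ is the obvious finite sum with coefficients $\varrho(D_{l})$, membership in $\Lambda^{2}_{w}(T)$ is immediate, and by linearity of $I^{w}$ (Theorem \ref{propertiesWeakIntegralSquareIntegIntegrands}) and \eqref{weakStochasticIntegralSimpleIntegrands} both sides of \eqref{identityFubiniTheoremWeakCase} reduce to the same finite linear combination of the random variables $\mathbbm{1}_{F_{j}} M((s_{j},t_{j}],A_{i})(\phi_{i,j,l})$; the Bochner integrability assertions in (1) and (2) are trivial for finitely-valued maps. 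This settles all three parts on $S_{w}(T,E)$.

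For the passage to $X \in \Xi^{2,2}_{w}(T,E)$, I would fix a sequence $\{X_{n}\}_{n \in \N} \subseteq S_{w}(T,E)$ with $\abs{\norm{X-X_{n}}}_{w,2,T,E} \to 0$. Since $e \mapsto X(\cdot,\cdot,\cdot,e)$ lies in $L^{2}(E,\mathcal{E},\varrho;\Lambda^{2}_{w}(T))$, the Bochner integral $\int_{E} X(\cdot,\cdot,\cdot,e)\varrho(de) \in \Lambda^{2}_{w}(T)$ exists, and one must check it coincides $\lambda \otimes \Prob \otimes \mu$-a.e. with the pointwise $\Phi_{q_{r,u}}$-valued Bochner integral; this follows because Bochner integration commutes with the continuous evaluation maps $Y \mapsto q_{r,u}(Y(r,\omega,u),\phi)$ after passing to a subsequence and a full-measure set of $(r,\omega,u)$. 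For (2), $I^{w}: \Lambda^{2}_{w}(T) \to \mathcal{M}^{2}_{T}(\R)$ is linear and bounded, so $e \mapsto I^{w}(X(\cdot,\cdot,\cdot,e))$ is Bochner integrable in $\mathcal{M}^{2}_{T}(\R)$, and \eqref{defiIntegSecondPartStochFubini} follows by applying the bounded evaluation functional $M \mapsto M_{t}$ at each fixed $t$ and using the completeness of $\mathcal{M}^{2}_{T}(\R)$ recorded after Theorem \ref{theoItoIsometrySimpleIntegrandsWeakIntegral}. For (3), continuity of $I^{w}$ gives $I^{w}(\int_{E} X_{n}\varrho(de)) \to I^{w}(\int_{E} X\varrho(de))$ in $\mathcal{M}^{2}_{T}(\R)$, while boundedness of $I^{w}$ gives $\int_{E} I^{w}(X_{n})\varrho(de) \to \int_{E} I^{w}(X)\varrho(de)$ in $\mathcal{M}^{2}_{T}(\R)$; since the two sides agree for each $X_{n}$ by the simple case, they agree for $X$, and convergence in $\mathcal{M}^{2}_{T}(\R)$ yields the $\Prob$-a.e. identity \eqref{identityFubiniTheoremWeakCase} uniformly in $t \in [0,T]$ along a subsequence, hence (adjusting on a null set) for the chosen c\`{a}dl\`{a}g versions.

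Finally, for general $X \in \Xi^{1,2}_{w}(T,E)$ I would invoke Lemma \ref{lemmaSequenceSquareIntegApproxIntegFubini} to obtain $\{X_{n}\}_{n \in \N} \subseteq \Xi^{2,2}_{w}(T,E)$ with $\abs{\norm{X-X_{n}}}_{w,T,E} \to 0$ and $\norm{X_{n}(\cdot,\cdot,\cdot,e)}_{w,T}$ increasing in $n$. The monotonicity lets one control $\norm{(\int_{E} X_{n}\varrho(de)) - (\int_{E} X\varrho(de))}_{w,T}$ via $\abs{\norm{X-X_{n}}}_{w,T,E}$ and, by the It\^{o} isometry \eqref{itoIsometryWeakIntegral}, transfers to convergence of the left-hand sides of \eqref{identityFubiniTheoremWeakCase} in $L^{2}\ProbSpace$ for each $t$; likewise the right-hand sides converge in $L^{2}\ProbSpace$ because $\norm{I^{w}_{t}(X_{n}(\cdot,\cdot,\cdot,e)) - I^{w}_{t}(X(\cdot,\cdot,\cdot,e))}_{L^{2}}$ is dominated using Doob's inequality and \eqref{weakIntegralMapIsContinuousInequ}. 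Part (1) for $\Xi^{1,2}_{w}$ requires checking $\int_{E} X(r,\omega,u,e)\varrho(de) \in \Phi_{q_{r,u}}$ a.e., which follows from $\norm{X(\cdot,\cdot,\cdot,e)}_{w,T} < \infty$ $\varrho$-a.e. and the $\sigma$-finiteness of $\varrho$ via the usual Bochner-integrability criterion.

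I expect the main obstacle to be the careful interchange in part (1): reconciling the $\Lambda^{2}_{w}(T)$-valued Bochner integral with the $(r,\omega,u)$-pointwise $\Phi_{q_{r,u}}$-valued Bochner integral, so that the family $\{\int_{E} X(r,\omega,u,e)\varrho(de)\}$ is genuinely $q_{r,u}$-predictable and square-integrable as an element of $\Lambda^{2}_{w}(T)$; this is where measurability of the semi-norm maps (Proposition \ref{compatibilityPredictableMapsAndSeminorms}) and a subsequence-and-null-set argument must be deployed with some care, whereas the identity \eqref{identityFubiniTheoremWeakCase} itself is essentially a linearity-plus-density statement once part (1) is in place.
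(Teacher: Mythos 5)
Your overall architecture --- prove everything for simple families in $S_{w}(T,E)$, pass to $\Xi^{2,2}_{w}(T,E)$ by density and continuity of $I^{w}$, then to $\Xi^{1,2}_{w}(T,E)$ via Lemma \ref{lemmaSequenceSquareIntegApproxIntegFubini} --- is essentially the paper's, except that the paper composes the two approximations into a single sequence of simple families converging to $X$ in $\abs{\norm{\cdot}}_{w,T,E}$ and takes one limit; parts (2) and (3) of your plan would go through with only cosmetic changes (the estimate $\int_{E}\norm{I^{w}(X(\cdot,\cdot,\cdot,e))-I^{w}(X_{k}(\cdot,\cdot,\cdot,e))}_{\mathcal{M}^{2}_{T}(\R)}\varrho(de)\leq 2\sqrt{T}\,\abs{\norm{X-X_{k}}}_{w,T,E}$ plus Borel--Cantelli for strong measurability is exactly what is needed).

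The genuine gap is in part (1), precisely where you anticipated trouble. First, your closing justification for the pointwise Bochner integrability --- ``follows from $\norm{X(\cdot,\cdot,\cdot,e)}_{w,T}<\infty$ $\varrho$-a.e.\ and the $\sigma$-finiteness of $\varrho$'' --- is not sufficient: $\varrho$-a.e.\ finiteness of the individual norms says nothing about $\int_{E}q_{r,u}(X(r,\omega,u,e))\,\varrho(de)$ for fixed $(r,\omega,u)$. The ingredient you are missing is Minkowski's inequality for integrals applied to the full hypothesis $\abs{\norm{X}}_{w,T,E}<\infty$, which yields
\begin{equation*}
\int_{[0,T]\times\Omega\times U}\left(\int_{E}q_{r,u}(X(r,\omega,u,e))\,\varrho(de)\right)^{2}(\lambda\otimes\Prob\otimes\mu)(d(r,\omega,u))\;\leq\;\abs{\norm{X}}^{2}_{w,T,E}<\infty ,
\end{equation*}
and this single estimate delivers both the a.e.\ finiteness of the inner integral (hence Bochner integrability of $e\mapsto X(r,\omega,u,e)$ in $\Phi_{q_{r,u}}$) and the square-integrability condition \eqref{finiteSecondMomentIntegrandsWeakIntg} for membership of the pointwise integral in $\Lambda^{2}_{w}(T)$. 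Second, your proposed mechanism for reconciling the abstract $\Lambda^{2}_{w}(T)$-valued Bochner integral with the pointwise one --- commuting the integral with ``continuous evaluation maps'' --- does not work as stated, because evaluation at a point $(r,\omega,u)$ is not a continuous (or even well-defined) map on $\Lambda^{2}_{w}(T)$, whose elements are only equivalence classes. The paper avoids this entirely: it never identifies the two integrals, but instead defines the pointwise Bochner integral directly and verifies the three conditions of Definition \ref{integrandsWeakIntegSquareMoments} for it by hand, using the Minkowski estimate above for condition (3) and the scalar Fubini theorem applied to $(r,\omega,u,e)\mapsto q_{r,u}(X(r,\omega,u,e),\phi)$ for the $q_{r,u}$-predictability in condition (2). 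You should restructure part (1) along those lines; once that is done, your treatment of \eqref{defiIntegSecondPartStochFubini} and \eqref{identityFubiniTheoremWeakCase} is sound.
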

\begin{prf}
Assume $X \in \Xi^{1,2}_{w} (T,E)$. For convenience we divide the proof into three parts. 

\textbf{Proof of (1).} First, from Definition \ref{defiClassIntegrandsStochFubiniTheo}(2) the mapping $(r,\omega,u,e)\mapsto q_{r,u} (X(r,\omega,u,e))$ is $P_{T} \otimes U \otimes \mathcal{E}$-measurable, then from the Minkowski inequality for integrals it follows that: 
\begin{flalign}
& \int_{[0,T] \times \Omega \times U} \left( \int_{E} \abs{q_{r,u}(X(r,\omega,u,e))}\varrho (de)\right) ^{2} (\lambda \otimes \Prob \otimes \mu) (d (r,\omega,u)) \label{finiteSquareMomentFubini} \\
& \leq \left(\int_{E} \left( \int_{[0,T] \times \Omega \times U}q_{r,u} (X(r,\omega,u,e))^{2}(\lambda \otimes \Prob \otimes \mu) (d (r,\omega,u))\right) ^{\frac{1}{2}} \varrho (de)\right)^{2} \nonumber \\
& = \abs{\norm{X}}^{2}_{W,T,E} < \infty \nonumber
\end{flalign}
Therefore, we conclude from \eqref{finiteSquareMomentFubini} that
\begin{equation} \label{aeFiniteMomentFubini}
\int_{E} q_{r,u} (X(r,\omega,u,e))\varrho (de)< \infty, \quad \mbox{for} \quad \lambda \otimes \Prob \otimes \mu \mbox{-a.e.} (r,\omega,u) \in [0,T] \times \Omega \times U. 
\end{equation}
Now, as for fixed $(r,\omega,u)$ the map $e \mapsto q_{r,u}(X(r,\omega,u,e))$ is $\mathcal{E}$-measurable, then the map $e \mapsto X(r,\omega,u,e)$ is $\mathcal{E}/ \mathcal{B} (\Phi_{q_{r,u}})$-measurable. Hence, because the Hilbert space $\Phi_{q_{r,u}}$ is separable it follows that the map $e \mapsto X(r,\omega,u,e)$ is strongly measurable. Moreover, \eqref{aeFiniteMomentFubini} implies that for almost every $(r,\omega,u)$ the map $e \mapsto X(r,\omega,u,e)$ is Bochner integrable.

To prove the second statement, let $\Gamma_{0} \subseteq [0,T] \times \Omega \times U$ be such that \eqref{aeFiniteMomentFubini} is satisfied. Then, for every $(r,\omega,u)\in \Gamma_{0}$ the Bochner integral $\int_{E} X(r,\omega,u,e) \varrho (de) \in \Phi_{q_{r,u}}$ exists. For $(r,\omega,u) \in \Gamma_{0}^{c}$ we define $\int_{E} X(r,\omega,u,e) \varrho (de)=0$. Thus, the family $\int_{E}X (\cdot,\cdot,\cdot,e) \varrho (de)$ defined in this way satisfies Definition \ref{defiClassIntegrandsStochFubiniTheo}(1).

Now, for each $\phi \in \Phi$ as the map $(r,\omega,u,e)\mapsto q_{r,u} (X(r,\omega,u,e)),\phi)$ is $\mathcal{P}_{T} \otimes \mathcal{B}(U)\otimes \mathcal{E}$-measurable and $\varrho$-integrable for all $(r,\omega,u) \in \Gamma_{0}$, then by Fubini's theorem the map 
$$(r,\omega,u)\mapsto q_{r,u} \left( \int_{E} X(r,\omega,u,e) \varrho (de),\phi\right)=\int_{E} q_{r,u} (X(r,\omega,u,e)),\phi) \varrho (de)$$ 
is $ \mathcal{P}_{T} \times \mathcal{B}(U)$-measurable. Hence Definition \ref{defiClassIntegrandsStochFubiniTheo}(2) is satisfied.

Finally, from  \eqref{finiteSquareMomentFubini} and Fubini's theorem, it follows that
\begin{flalign}
& \Exp \int^{T}_{0} \int_{U} q_{r,u}\left(\int_{E} X (\cdot,\cdot,\cdot,e) \varrho (de)\right) ^{2} \mu (du) \lambda (dr) 
\label{finiteNormSpaceIntegrandsBochnerIntegral} \\
& \leq \Exp \int^{T}_{0} \int_{U} \abs{\int_{E}q_{r,u}( X (r,\omega,u,e)) \varrho (de)}^{2} \mu (du) \lambda (dr) \nonumber \\
& \leq \int_{[0,T]\times \Omega \times U} \left( \int_{E} \abs{q_{r,u} (X(r,\omega,u,e))} \varrho (de)\right) ^{2} (\lambda \otimes \Prob \otimes \mu) (d(r,\omega,u)) < \infty . \nonumber 
\end{flalign}
Thus, $\norm{\int_{E} X (\cdot,\cdot,\cdot,e) \varrho (de)}_{w,T} < \infty$, and hence $\int_{E} X (\cdot,\cdot,\cdot,e) \varrho (de) \in \Lambda^{2}_{w}(T)$.

\textbf{Proof of (2).} First, note that from Definition \ref{defiClassIntegrandsStochFubiniTheo}(3), there exists some $E_{1} \subseteq E$ such that $\varrho(E \setminus E_{1})=0$ and such that $\norm{X (\cdot,\cdot,\cdot,e)}_{w,T}< \infty$, $\forall \, e \in E_{1}$. Hence, by redefining a version of $X$ to be equal to $X$ whenever $e \in E_{1}$ and to be $0$ whenever  $e \in E \setminus E_{1}$, if we call this version again by $X$, we have that for every  $e \in E$, $X(\cdot,\cdot,\cdot,e) \in \Lambda^{2}_{w}(T)$. Therefore, for every $e \in E$ the stochastic integral $I^{w}(X(\cdot,\cdot,\cdot,e)) \in \mathcal{M}^{2}_{T}(\R)$ exists.

Now we prove that the map $e \mapsto I^{w}(X(\cdot,\cdot,\cdot,e))$ is strongly measurable. First,  by an application of Lemmas \ref{lemmaSequenceSquareIntegApproxIntegFubini} and \ref{lemmaDenseSimpleProcessInSquareInegFubini}, there exists a sequence $\{X_{k}\}_{k \in \N}$ of families of the simple form \eqref{defiClassSimpleIntegrandsStochFubiniTheo}  such that 
\begin{equation} \label{convergenceSimpleProcessToXSTochFubini}
\lim_{k \rightarrow \infty} \abs{\norm{X-X_{k}}}_{w,T,E}=0.
\end{equation}
Note that if $X_{k}$ is of the form \eqref{defiClassSimpleIntegrandsStochFubiniTheo}, then for $e \in E$ its stochastic integral takes the form: 
$$I^{w}_{t}(X_{k}(\cdot,\cdot,\cdot,e))=\sum_{l=1}^{p} \sum^{n}_{i=1} \sum^{m}_{j=1} \ind{D_{l}}{e}\mathbbm{1}_{F_{j}}M((s_{j} \wedge t,t_{j} \wedge t],A_{i})[\phi_{i,j,l}], \quad \forall t \in [0,T],$$
(to simplify the notation, above we have omitted the dependence on $k$ of the components of \eqref{defiClassSimpleIntegrandsStochFubiniTheo}). Therefore, for each $k \in \N$ the map $e \mapsto I^{w}(X_{k}(\cdot,\cdot,\cdot,e))$ from $E$ into $\mathcal{M}^{2}_{T}(\R)$ is simple. Moreover, from the linearity of the map $I^{w}$, Doob's inequality, and \eqref{weakIntegralMapIsContinuousInequ}, it follows that:
\begin{multline} \label{convergPart1BochIntegralsStochIntegralsFubini}  
 \lim_{k\rightarrow \infty} \int_{E} \norm{I^{w}(X(\cdot,\cdot,\cdot,e))-I^{w}(X_{k}(\cdot,\cdot,\cdot,e))}_{\mathcal{M}^{2}_{T}(\R)} \varrho (de) \\
\leq  2 \sqrt{T} \lim_{k\rightarrow \infty} \int_{E} \norm{X(\cdot,\cdot,\cdot,e)-X_{k}(\cdot,\cdot,\cdot,e)}_{w,T} \varrho (de) 
 =  \lim_{k \rightarrow \infty} \abs{\norm{X-X_{k}}}_{w,T,E}.
\end{multline}
Then, it follows from \eqref{convergenceSimpleProcessToXSTochFubini},  \eqref{convergPart1BochIntegralsStochIntegralsFubini} and a standard use of the Chebyshev inequality and the Borel-Cantelli lemma that there exists a set $E_{2}\subseteq E$ with $\varrho(E \setminus E_{2})=0$ and a subsequence $\{X_{k_{q}}\}_{q \in \N}$ such that
$$\lim_{q \rightarrow \infty} \norm{I^{w}(X(\cdot,\cdot,\cdot,e))-I^{w}(X_{k_{q}}(\cdot,\cdot,\cdot,e))}_{\mathcal{M}^{2}_{T}(\R)}=0, \quad \forall e \in E_{2}. $$

In particular, this implies that the map $e \mapsto I^{w}(X(\cdot,\cdot,\cdot,e))$ is strongly measurable. Moreover, by a similar calculation to that in \eqref{convergPart1BochIntegralsStochIntegralsFubini} we have
$$\int_{E} \norm{I^{w}(X(\cdot,\cdot,\cdot,e))}_{\mathcal{M}^{2}_{T}(\R)} \varrho (de) \leq 2 \sqrt{T} \abs{\norm{X}}_{w,T,E}< \infty,$$
hence the mapping $e \mapsto I^{w}(X(\cdot,\cdot,\cdot,e))$ is Bochner integrable and furthermore, 
\begin{equation} \label{convergence2FubiniTheorem}
\norm{\int_{E} I^{w}(X(\cdot,\cdot,\cdot,e))\varrho (de)-\int_{E}I^{w} (X_{k_{q}}(\cdot,\cdot,\cdot,e))\varrho (de)}_{\mathcal{M}^{2}_{T}(\R)} \rightarrow 0, \quad \mbox{ as } q \rightarrow \infty
\end{equation}
 Finally, \eqref{defiIntegSecondPartStochFubini} follows from \eqref{convergence2FubiniTheorem} and the fact that \eqref{defiIntegSecondPartStochFubini} is satisfied for every $X_{k_{q}}$ due to their simple form.

\textbf{Proof of (3).} 
Let $\{X_{k_{q}}\}_{q \in \N}$ be the sequence to simple families as defined  in the proof of (2). For each $q \in \N$, the simple form of $X_{k_{q}}$ (see \eqref{defiClassSimpleIntegrandsStochFubiniTheo}) implies that it satisfies \eqref{identityFubiniTheoremWeakCase}.

Now, from Doob's inequality, \eqref{weakIntegralMapIsContinuousInequ}, \eqref{finiteNormSpaceIntegrandsBochnerIntegral} and \eqref{convergenceSimpleProcessToXSTochFubini}, and the linearity of both the weak stochastic integral and the Bochner integral, it follows that
\begin{flalign}
& \norm{I^{w}\left(\int_{E} X(\cdot,\cdot,\cdot,e)\varrho (de)\right)-I^{w}\left(\int_{E} X_{k_{q}}(\cdot,\cdot,\cdot,e)\varrho (de)\right)}_{\mathcal{M}^{2}_{T}(\R)} \label{convergence1FubiniTheorem} \\
& \leq  2 \sqrt{T} \norm{\int_{E} (X(\cdot,\cdot,\cdot,e)-X_{k_{q}}(\cdot,\cdot,\cdot,e))\varrho (de)}_{w,T}  \nonumber \\
& \leq  2 \sqrt{T} \abs{\norm{X-X_{k_{q}}}}_{w,T,E} \rightarrow 0, \quad  \mbox{ as } q \rightarrow \infty. \nonumber
\end{flalign}
Then, it follows from \eqref{convergence2FubiniTheorem} and  \eqref{convergence1FubiniTheorem}, and the fact that \eqref{identityFubiniTheoremWeakCase} is valid for each $X_{k_{q}}$, that the processes $I^{w} \left(\int_{E}(X(\cdot,\cdot,\cdot,e)\varrho (de)\right) $ and $\int_{E}I^{w} (\cdot,\cdot,\cdot,e)\varrho (de)$ are equal as elements of ${\mathcal{M}^{2}_{T}(\R)}$ and hence this implies that $X$ satisfies \eqref{identityFubiniTheoremWeakCase}.
\end{prf}

\section{The Strong Stochastic Integral} \label{SubsectionSSI}

\subsection{Construction of the Strong Stochastic Integral} \label{subsectionSSICABP}

In this section we proceed to construct the strong stochastic integral. We start by introducing the class of strong integrands. 

\begin{defi} \label{integrandsStrongIntegSquareMoments}
Let $\Lambda^{2}_{s}(\Psi,M;T)$ denote the collection of families $R=\{R(r,\omega,u): r \in [0,T], \omega \in \Omega, u \in U \}$ of operator-valued maps satisfying the following conditions:
\begin{enumerate}
\item $R(r,\omega,u) \in \mathcal{L}(\Phi'_{q_{r,u}},\Psi'_{\beta})$, for all $r \in [0, T]$, $\omega \in \Omega$, $u \in U$, 
\item $R$ is \emph{$q_{r,u}$-predictable}, i.e. for each $\phi \in \Phi$, $\psi \in \Psi$, the mapping $[0,T] \times \Omega \times U \rightarrow \R_{+}$ given by $(r,\omega,u) \mapsto q_{r,u}(R(r,\omega,u)' \psi, \phi)$ is $\mathcal{P}_{T} \otimes \mathcal{B}(U)$-measurable.
\item 
\begin{equation} \label{finiteSecondMomentIntegrandsStrongIntg}
\Exp \int_{0}^{T} \int_{U} q_{r,u}(R(r,u)'\psi)^{2} \mu(du) \lambda(dr) < \infty, \quad \forall \, \psi \in \Psi.
\end{equation} 
\end{enumerate}
\end{defi}

\begin{rema} \label{remaStrongIntegransSecondMomentsWellDefined}
Note that because $\Psi$ is reflexive,  the dual operator $R(r,\omega,u)'$ of $R(r,\omega,u)$ satisfies $R(r,\omega,u)' \in \mathcal{L}(\Psi, \Phi_{q_{r,u}})$. Then, Proposition \ref{compatibilityPredictableMapsAndSeminorms} guarantees that the map $(r,\omega,u) \mapsto q_{r,u}(R(r,\omega,u)'\psi)^{2}$ is $\mathcal{P}_{T} \otimes \mathcal{B}(U)$-measurable and hence the integral in \eqref{finiteSecondMomentIntegrandsStrongIntg} is well defined.  
\end{rema} 

It is easy to check that $\Lambda^{2}_{s}(\Psi,M;T)$ is a linear space. Now we introduce a class that extends the integrands considered in \cite{BojdeckiJakubowski:1990} to integrands depending also on the ``jump space variable'' $u$.

\begin{defi} \label{integrandsStrongIntegSquareMomentsInHilbertSpace}
Let $p$ be a continuous Hilbertian semi-norm on $\Psi$. Let $\Lambda^{2}_{s}(\Psi,M;p,T)$ denote the collection of families $R=\{R(r,\omega,u): r \in [0,T], \omega \in \Omega, u \in U \}$ of operator-valued maps satisfying the following conditions:
\begin{enumerate}
\item $R(r,\omega,u) \in \mathcal{L}_{2}(\Phi'_{q_{r,u}},\Psi'_{p})$, for all $r \in [0, T]$, $\omega \in \Omega$, $u \in U$, 
\item $R$ is \emph{$q_{r,u}$-predictable}, i.e. for each $\phi \in \Phi$, $\psi \in \Psi$, the mapping $[0,T] \times \Omega \times U \rightarrow \R_{+}$ given by $(r,\omega,u) \mapsto q_{r,u}(R(r,\omega,u)' \psi, \phi)$ is $\mathcal{P}_{T} \otimes \mathcal{B}(U)$-measurable.
\item 
\begin{equation} \label{finiteSecondMomentIntegrandsStrongIntgInHilbertSpace}
\norm{R}_{s,p,T}^{2} \defeq \Exp \int_{0}^{T} \int_{U} \norm{R(r,u)}^{2}_{\mathcal{L}_{2}(\Phi'_{q_{r,u}},\Psi'_{p})} \mu(du) \lambda(dr) < \infty.
\end{equation} 
\end{enumerate}
\end{defi}

\begin{rema} \label{remaStrongIntegrandsSecondMomentsHilbertSpaceWellDefined}
Note that as in Remark \ref{remaStrongIntegransSecondMomentsWellDefined}, Proposition \ref{compatibilityPredictableMapsAndSeminorms} guarantees that the map $(r,\omega,u) \mapsto \norm{R(r,\omega,u)}^{2}_{\mathcal{L}_{2}(\Phi'_{q_{r,u}},\Psi'_{p})}$ is $\mathcal{P}_{T} \otimes \mathcal{B}(U)$-measurable and hence the integral in \eqref{finiteSecondMomentIntegrandsStrongIntgInHilbertSpace} is well defined. 
\end{rema}

When there is no necessity to give emphasis to the dependence of the spaces $\Lambda^{2}_{s}(\Psi,M;T)$ and $\Lambda^{2}_{s}(\Psi,M;p,T)$ with respect to $\Psi$ and $M$, we denote these spaces by $\Lambda^{2}_{s}(T)$ and $\Lambda^{2}_{s}(p,T)$. We will use this shorter notation for the remainder of this article unless otherwise stated.

\begin{prop}[\cite{BojdeckiJakubowski:1990}, Proposition 2.4]\label{spaceStrongIntegrandsInHIlbertSpaceIsHilbertSpace}
 $\Lambda^{2}_{s}(p,T)$ is a Hilbert space when equipped with the inner product corresponding to the Hilbertian norm $\norm{\cdot}_{s,p,T}$. 
\end{prop}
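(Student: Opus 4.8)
The plan is to recognize $\Lambda^{2}_{s}(p,T)$ as (a closed subspace of, but in fact all of) an $L^{2}$-space of Hilbert-space-valued maps and to mimic the argument for Proposition \ref{spaceIntegrandsWeakIntegralIsHilbertSpace}, i.e. Proposition 2.4 of \cite{BojdeckiJakubowski:1990}. First I would note that, thanks to Remark \ref{remaStrongIntegrandsSecondMomentsHilbertSpaceWellDefined}, the functional $\norm{\cdot}_{s,p,T}$ defined in \eqref{finiteSecondMomentIntegrandsStrongIntgInHilbertSpace} is well defined on $\Lambda^{2}_{s}(p,T)$, and it is Hilbertian: it is induced by the bilinear form
\begin{equation*}
\inner{R}{S}_{s,p,T} = \Exp \int_{0}^{T} \int_{U} \inner{R(r,u)}{S(r,u)}_{\mathcal{L}_{2}(\Phi'_{q_{r,u}},\Psi'_{p})} \mu(du)\lambda(dr),
\end{equation*}
since the Hilbert-Schmidt norm on each $\mathcal{L}_{2}(\Phi'_{q_{r,u}},\Psi'_{p})$ comes from the Hilbert-Schmidt inner product, and the integrand is jointly measurable by the argument of Proposition \ref{compatibilityPredictableMapsAndSeminorms} applied coordinatewise in a fixed orthonormal basis. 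After quotienting by the null maps one obtains an inner product space, so the only real content is \emph{completeness}.

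For completeness I would take a Cauchy sequence $\{R_{n}\}_{n\in\N}$ in $\Lambda^{2}_{s}(p,T)$ and, passing to a subsequence, arrange $\norm{R_{n+1}-R_{n}}_{s,p,T}\le 2^{-n}$. Then $\sum_{n}\norm{R_{n+1}-R_{n}}_{\mathcal{L}_{2}(\Phi'_{q_{r,u}},\Psi'_{p})}<\infty$ for $\lambda\otimes\Prob\otimes\mu$-a.e.\ $(r,\omega,u)$ by the monotone convergence theorem, so for such $(r,\omega,u)$ the sequence $R_{n}(r,\omega,u)$ converges in the Hilbert space $\mathcal{L}_{2}(\Phi'_{q_{r,u}},\Psi'_{p})$ to a limit which I call $R(r,\omega,u)$ (set $R=0$ on the exceptional set). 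This gives a family satisfying Definition \ref{integrandsStrongIntegSquareMomentsInHilbertSpace}(1). Predictability (Definition \ref{integrandsStrongIntegSquareMomentsInHilbertSpace}(2)) passes to the limit because for fixed $\phi\in\Phi$, $\psi\in\Psi$ the map $(r,\omega,u)\mapsto q_{r,u}(R(r,\omega,u)'\psi,\phi)$ is the pointwise a.e.\ limit of the $\mathcal{P}_{T}\otimes\mathcal{B}(U)$-measurable maps $(r,\omega,u)\mapsto q_{r,u}(R_{n}(r,\omega,u)'\psi,\phi)$ (here one uses that convergence in $\mathcal{L}_{2}(\Phi'_{q_{r,u}},\Psi'_{p})$ implies convergence of $R_{n}(r,\omega,u)'\psi$ in $\Phi_{q_{r,u}}$, hence of the inner products against $i_{q_{r,u}}\phi$), modified on a $\mathcal{P}_{T}\otimes\mathcal{B}(U)$-null set. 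Finally Fatou's lemma applied to $\norm{R-R_{m}}_{s,p,T}^{2}=\Exp\int_{0}^{T}\int_{U}\lim_{n}\norm{R_{n}-R_{m}}_{\mathcal{L}_{2}(\Phi'_{q_{r,u}},\Psi'_{p})}^{2}\,\mu(du)\lambda(dr)\le\liminf_{n}\norm{R_{n}-R_{m}}_{s,p,T}^{2}$ shows $R\in\Lambda^{2}_{s}(p,T)$ and $R_{m}\to R$ in $\norm{\cdot}_{s,p,T}$; combined with the Cauchy property this upgrades the subsequential convergence to convergence of the whole sequence.

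The step I expect to require the most care is verifying that the pointwise limit $R(r,\omega,u)$ retains the measurability/predictability property, since the exceptional null set on which $R_{n}(r,\omega,u)$ fails to converge is a priori only $\lambda\otimes\Prob\otimes\mu$-null and one must argue that it can be taken inside $\mathcal{P}_{T}\otimes\mathcal{B}(U)$ (using that $\lambda\otimes\mu$ is $\sigma$-finite and $\Prob$ is a probability measure, so the completion causes no trouble) and that redefining $R$ to be $0$ there does not disturb the norm. The remaining bookkeeping — joint measurability of the Hilbert-Schmidt-valued integrand, that $\norm{\cdot}_{s,p,T}$ is genuinely Hilbertian, and the dominated/monotone convergence manipulations — is routine, and I would simply reference the parallel argument in the proof of Proposition 2.4 of \cite{BojdeckiJakubowski:1990} rather than reproduce it.
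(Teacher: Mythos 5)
Your proof is correct, and the paper itself gives no argument here — it simply cites Proposition 2.4 of Bojdecki–Jakubowski (1990), whose proof is exactly the Riesz–Fischer-type completeness argument you spell out (rapidly convergent subsequence, a.e. convergence in the fibre Hilbert spaces $\mathcal{L}_{2}(\Phi'_{q_{r,u}},\Psi'_{p})$, Fatou). Your worry about the exceptional set resolves itself: the divergence set $\{\sum_{n}\norm{R_{n+1}-R_{n}}_{\mathcal{L}_{2}(\Phi'_{q_{r,u}},\Psi'_{p})}=\infty\}$ is already in $\mathcal{P}_{T}\otimes\mathcal{B}(U)$ by Remark \ref{remaStrongIntegrandsSecondMomentsHilbertSpaceWellDefined}, so no completion argument is needed; just take care to express the Hilbert–Schmidt inner product via a fixed orthonormal basis $\{\psi_{j}^{p}\}$ of $\Psi_{p}$ acting through the dual operators $R(r,u)'$, since an orthonormal basis of $\Phi'_{q_{r,u}}$ would vary with $(r,u)$.
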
 

The relation between the spaces $\Lambda^{2}_{s}(T)$ and $\Lambda^{2}_{s}(p,T)$ is described in the following two results. 

\begin{prop} \label{spaceStrongIntegrandsHilbertSpaceAreOrdered}
If $p $ and $q$ are continuous Hilbertian semi-norms on $\Psi$ such that $p \leq q$, then $i'_{p,q} ( \Lambda^{2}_{s}(p,T)) \subseteq \Lambda^{2}_{s}(q,T)$, i.e. for each $R=\{R(r,\omega,u) \}  \in \Lambda^{2}_{s}(p,T)$, we have $i'_{p,q} R \defeq \{ i'_{p,q} R(r,\omega,u) \} \in \Lambda^{2}_{s}(q,T)$.  
\end{prop}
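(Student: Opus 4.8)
The plan is to check directly that $i'_{p,q}R = \{ i'_{p,q}R(r,\omega,u) \}$ satisfies the three defining conditions of Definition~\ref{integrandsStrongIntegSquareMomentsInHilbertSpace} relative to the continuous Hilbertian semi-norm $q$ on $\Psi$. So fix $R = \{ R(r,\omega,u) \} \in \Lambda^{2}_{s}(p,T)$.

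First I would record a pointwise bound. Since $p \leq q$ on $\Psi$, the canonical map $i_{p,q}\colon \Psi_{q} \to \Psi_{p}$ has $\norm{i_{p,q}}_{\mathcal{L}(\Psi_{q},\Psi_{p})} \leq 1$, and therefore so does its dual $i'_{p,q}\colon \Psi'_{p} \to \Psi'_{q}$. Because the composition of a Hilbert--Schmidt operator with a bounded operator is again Hilbert--Schmidt, for every $(r,\omega,u)$ one gets $i'_{p,q}R(r,\omega,u) \in \mathcal{L}_{2}(\Phi'_{q_{r,u}},\Psi'_{q})$ together with
\[
\norm{ i'_{p,q}R(r,\omega,u) }_{\mathcal{L}_{2}(\Phi'_{q_{r,u}},\Psi'_{q})} \;\leq\; \norm{i'_{p,q}}_{\mathcal{L}(\Psi'_{p},\Psi'_{q})}\, \norm{R(r,\omega,u)}_{\mathcal{L}_{2}(\Phi'_{q_{r,u}},\Psi'_{p})} \;\leq\; \norm{R(r,\omega,u)}_{\mathcal{L}_{2}(\Phi'_{q_{r,u}},\Psi'_{p})},
\]
which gives condition~(1).

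Next I would settle $q_{r,u}$-predictability, for which the key point is that the dual operators coincide on $\Psi$: for every $\psi \in \Psi$,
\[
( i'_{p,q}R(r,\omega,u) )' \psi = R(r,\omega,u)' \psi .
\]
Indeed, identifying the Hilbert spaces with their biduals one has $(i'_{p,q})' = i_{p,q}$, so $( i'_{p,q}R(r,\omega,u) )' = R(r,\omega,u)' \circ i_{p,q}$ as operators $\Psi_{q} \to \Phi_{q_{r,u}}$; precomposing with the canonical map $\Psi \to \Psi_{q}$ and using $i_{p} = i_{p,q}\circ i_{q}$ on $\Psi$ yields the displayed identity. Consequently, for each $\phi \in \Phi$ and $\psi \in \Psi$ the map
\[
(r,\omega,u) \mapsto q_{r,u}\big( ( i'_{p,q}R(r,\omega,u) )'\psi, \phi \big) = q_{r,u}\big( R(r,\omega,u)'\psi, \phi \big)
\]
is $\mathcal{P}_{T} \otimes \mathcal{B}(U)$-measurable since $R$ is $q_{r,u}$-predictable, which is condition~(2); and by the argument indicated in Remark~\ref{remaStrongIntegrandsSecondMomentsHilbertSpaceWellDefined} (an application of Proposition~\ref{compatibilityPredictableMapsAndSeminorms}) the map $(r,\omega,u) \mapsto \norm{ i'_{p,q}R(r,\omega,u) }^{2}_{\mathcal{L}_{2}(\Phi'_{q_{r,u}},\Psi'_{q})}$ is then $\mathcal{P}_{T} \otimes \mathcal{B}(U)$-measurable, so that the integral below is well defined.

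Finally, integrating the pointwise bound from the first step and using monotonicity of the expectation and of the integrals against $\mu$ and $\lambda$,
\[
\norm{ i'_{p,q}R }^{2}_{s,q,T} = \Exp \int_{0}^{T} \int_{U} \norm{ i'_{p,q}R(r,u) }^{2}_{\mathcal{L}_{2}(\Phi'_{q_{r,u}},\Psi'_{q})}\, \mu(du)\, \lambda(dr) \leq \Exp \int_{0}^{T} \int_{U} \norm{ R(r,u) }^{2}_{\mathcal{L}_{2}(\Phi'_{q_{r,u}},\Psi'_{p})}\, \mu(du)\, \lambda(dr) = \norm{R}^{2}_{s,p,T} < \infty,
\]
which is condition~(3); in particular this also records the estimate $\norm{ i'_{p,q}R }_{s,q,T} \leq \norm{R}_{s,p,T}$. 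Hence $i'_{p,q}R \in \Lambda^{2}_{s}(q,T)$, as claimed. The only step demanding genuine care is the operator identity $( i'_{p,q}R )'\psi = R'\psi$ on $\Psi$ — it is pure bookkeeping with duals and biduals of Hilbert spaces and with the factorization $i_{p} = i_{p,q}\circ i_{q}$ — while everything else is routine once this is in place.
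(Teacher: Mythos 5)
Your proposal is correct and follows the same route as the paper's (much terser) proof: verify conditions (1)--(3) of Definition \ref{integrandsStrongIntegSquareMomentsInHilbertSpace} using that $i'_{p,q}$ is a contraction and that composition with a bounded operator preserves the Hilbert--Schmidt class, then integrate the pointwise norm bound. The only detail you add beyond the paper is the explicit identity $(i'_{p,q}R)'\psi = R'\psi$ on $\Psi$ via $i_{p}=i_{p,q}\circ i_{q}$, which is exactly the right justification for the predictability condition that the paper leaves implicit.
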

\begin{prf}
The fact that $i'_{p,q} \in \mathcal{L}(\Psi'_{p},\Psi'_{q})$ implies that for $R \in \Lambda^{2}_{s}(p,T)$, the family $i'_{p,q} R$ satisfies Definition \ref{integrandsStrongIntegSquareMomentsInHilbertSpace}(1)-(2) for $\Lambda^{2}_{s}(q,T)$. Moreover, from \eqref{finiteSecondMomentIntegrandsStrongIntgInHilbertSpace} it follows that $\norm{i'_{p,q} R}_{s,q,T} \leq \norm{i'_{p,q}}_{\mathcal{L}(\Psi'_{p},\Psi'_{q})} \norm{R}_{s,p,T}< \infty$.  
Hence, $i'_{p,q} R \in \Lambda^{2}_{s}(q,T)$.
\end{prf}

\begin{prop}\label{strongIntegrandsInHilbertSpaceAreSubspaces} For every continuous Hilbertian semi-norm $p$ on $\Psi$, we have $i'_{p} (\Lambda^{2}_{s}(p,T)) \subseteq \Lambda^{2}_{s}(T)$, i.e. for each $R=\{R(r,\omega,u) \}  \in \Lambda^{2}_{s}(p,T)$, we have $i'_{p} R \defeq \{ i'_{p} R(r,\omega,u) \} \in \Lambda^{2}_{s}(T)$. 
\end{prop}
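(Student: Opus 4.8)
The plan is to verify the three defining conditions of Definition \ref{integrandsStrongIntegSquareMoments} for the family $i'_{p} R = \{ i'_{p} R(r,\omega,u) \}$. For condition (1) I would simply observe that $i'_{p} \in \mathcal{L}(\Psi'_{p},\Psi'_{\beta})$ and $\mathcal{L}_{2}(\Phi'_{q_{r,u}},\Psi'_{p}) \subseteq \mathcal{L}(\Phi'_{q_{r,u}},\Psi'_{p})$, so that the composition $i'_{p} R(r,\omega,u)$ lies in $\mathcal{L}(\Phi'_{q_{r,u}},\Psi'_{\beta})$ for all $r \in [0,T]$, $\omega \in \Omega$, $u \in U$.

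The crucial point for conditions (2) and (3) is that, since $\Psi$ is reflexive, the dual operator of $i'_{p} \in \mathcal{L}(\Psi'_{p},\Psi'_{\beta})$ is exactly the canonical map $i_{p} \in \mathcal{L}(\Psi,\Psi_{p})$; indeed $(i'_{p} f)[\psi] = f[i_{p}\psi]$ for all $f \in \Psi'_{p}$, $\psi \in \Psi$. Hence, for each $(r,\omega,u)$, one has $(i'_{p} R(r,\omega,u))' = R(r,\omega,u)' \circ i_{p}$ as an operator from $\Psi$ into $\Phi_{q_{r,u}}$, and this right-hand side is precisely what the symbol $R(r,\omega,u)'\psi$ (for $\psi \in \Psi$) denotes in Definition \ref{integrandsStrongIntegSquareMomentsInHilbertSpace}(2), where $R(r,\omega,u)'\colon \Psi_{p} \to \Phi_{q_{r,u}}$. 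Consequently the $q_{r,u}$-predictability required of $i'_{p} R$ in Definition \ref{integrandsStrongIntegSquareMoments}(2) is verbatim the $q_{r,u}$-predictability of $R$, which holds by hypothesis; and the $\mathcal{P}_{T} \otimes \mathcal{B}(U)$-measurability of $(r,\omega,u) \mapsto q_{r,u}((i'_{p} R(r,u))'\psi)^{2}$ then follows from Proposition \ref{compatibilityPredictableMapsAndSeminorms}, exactly as in Remark \ref{remaStrongIntegransSecondMomentsWellDefined}.

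For condition (3) I would use the pointwise estimate, valid for every $r,\omega,u$ and every $\psi \in \Psi$,
\[
q_{r,u}\bigl((i'_{p} R(r,\omega,u))'\psi\bigr) = q_{r,u}\bigl(R(r,\omega,u)' i_{p}\psi\bigr) \leq \norm{R(r,\omega,u)}_{\mathcal{L}(\Phi'_{q_{r,u}},\Psi'_{p})}\, p(\psi) \leq \norm{R(r,\omega,u)}_{\mathcal{L}_{2}(\Phi'_{q_{r,u}},\Psi'_{p})}\, p(\psi),
\]
which relies on $p(\psi) = \norm{i_{p}\psi}_{\Psi_{p}}$, on the invariance of the operator norm under passing to the dual operator, and on the domination of the operator norm by the Hilbert--Schmidt norm. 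Squaring, integrating over $[0,T] \times U$ against $\lambda \otimes \mu$ and taking expectations then gives
\[
\Exp \int_{0}^{T} \int_{U} q_{r,u}\bigl((i'_{p} R(r,u))'\psi\bigr)^{2} \mu(du) \lambda(dr) \leq p(\psi)^{2}\, \norm{R}_{s,p,T}^{2} < \infty ,
\]
so condition (3) holds and therefore $i'_{p} R \in \Lambda^{2}_{s}(T)$. I do not expect a genuine obstacle in this argument: the only delicate point is the identification of the dual operator of $i'_{p}$ with $i_{p}$ --- equivalently, the reconciliation of the two a priori different meanings of the prime in ``$R(r,\omega,u)'\psi$'' --- which is exactly where the reflexivity of $\Psi$, already invoked in Remark \ref{remaStrongIntegransSecondMomentsWellDefined}, enters.
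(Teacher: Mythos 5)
Your proposal is correct and follows essentially the same route as the paper: conditions (1)--(2) from the continuity of $i'_{p}$ together with the identity $(i'_{p}R(r,\omega,u))'=R(r,\omega,u)'i_{p}$, and condition (3) from the pointwise bound $q_{r,u}(R(r,u)'i_{p}\psi)\leq \norm{R(r,u)}_{\mathcal{L}_{2}(\Phi'_{q_{r,u}},\Psi'_{p})}\,p(\psi)$ integrated to give $p(\psi)^{2}\norm{R}_{s,p,T}^{2}$. The extra care you take in identifying the dual of $i'_{p}$ with $i_{p}$ is exactly the point the paper leaves implicit.
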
 
\begin{prf}
For $R \in \Lambda^{2}_{s}(p,T)$, the fact that $i'_{p} \in \mathcal{L}(\Psi'_{p},\Psi'_{\beta})$ implies that family $i'_{p} R$ satisfies Definition \ref{integrandsStrongIntegSquareMoments}(1)-(2). Moreover, from  \eqref{finiteSecondMomentIntegrandsStrongIntgInHilbertSpace} we have that 
\begin{equation*}
\Exp \int_{0}^{T} \int_{U} q_{r,u}(R(r,u)'i_{p} \psi)^{2} \mu(du) \lambda(dr) \\
\leq p(i_{p} \psi)^{2} \, \norm{R}_{s,p,T}^{2} < \infty, \quad \forall \, \psi \in \Psi. 
\end{equation*}
Hence, the fact that $(i'_{p} R(r,\omega,u))'=R(r,\omega,u)'i_{p}$ for every $(r,\omega,u)$, and the above inequality implies that $i'_{p} R$ satisfies \eqref{finiteSecondMomentIntegrandsStrongIntg}. Thus, $i'_{p} R \in \Lambda^{2}_{s}(T)$. 
\end{prf}

\begin{rema}
Proposition \ref{strongIntegrandsInHilbertSpaceAreSubspaces} shows that each $\Lambda^{2}_{s}(p,T)$ is a subspace of $\Lambda^{2}_{s}(T)$. Hence, our class of integrands $\Lambda^{2}_{s}(T)$ generalizes the class of integrands in \cite{BojdeckiJakubowski:1990}.  
\end{rema}

We now proceed to construct the strong stochastic integral. For integrands in the Hilbert space $\Lambda^{2}_{s}(p,T)$ we can try to generalize the arguments used in the construction of the weak stochastic integral by first defining the strong integral for a class of simple families and then to extend it by means of an isometry to integrands in $\Lambda^{2}_{s}(p,T)$. Indeed, a procedure of this type was carried out in \cite{BojdeckiJakubowski:1990} for integrals with respect to generalized Wiener process.  However, for the larger class of integrands $\Lambda^{2}_{s}(T)$ such a construction is not possible because the weak second moments condition \eqref{finiteSecondMomentIntegrandsStrongIntg} is not strong enough to provide a Hilbert space structure to the space $\Lambda^{2}_{s}(T)$. Therefore, a different approach has to be considered and we proceed to do this in what follows. 

Our construction of the strong stochastic integral can be summarized in the following way. First, we will identify each integrand $R$ in $\Lambda^{2}_{s}(T)$ with a unique element $\Delta(R)$ in $\mathcal{L}(\Psi,\Lambda^{2}_{w}(T))$ (see Theorem \ref{strongIntegrandsAsOperatorsToWeakIntegrans}). Then, we use the continuity and linearity of the weak integral map to show that  
$I^{w} \circ \Delta(R): \Psi \rightarrow \mathcal{M}^{2}_{T}(\R)$ defines a cylindrical martingale in $\Psi'$ satisfying the conditions in Theorem \ref{propertiesMartingales} and hence the existence of the strong integral will be provided by this theorem (see Theorem \ref{existenceStrongStochasticIntegSquareMoments}).
In accordance with the above plan, we proceed to show that $\Lambda^{2}_{s}(T)$ and $\mathcal{L}(\Psi,\Lambda^{2}_{w}(T))$ are isomorphic. 
 
\begin{theo} \label{strongIntegrandsAsOperatorsToWeakIntegrans}
The mapping $\Delta: \Lambda^{2}_{s}(T) \rightarrow \mathcal{L}(\Psi,\Lambda^{2}_{w}(T))$ given by 
\begin{equation} \label{isomorphismStrongIntegralsAndOperatorsToWeakIntegrands}
R \mapsto \left(\psi \mapsto R'\psi \defeq \{ R(r,\omega,u)'\psi: r \in [0,T], \omega \in \Omega, u \in U \} \right),
\end{equation}
is an isomorphism.  
\end{theo}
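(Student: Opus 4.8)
The plan is to check that $\Delta$ is well defined into $\mathcal{L}(\Psi,\Lambda^{2}_{w}(T))$, linear and injective, and then that it is surjective, the surjectivity being the substantive step. First fix $R\in\Lambda^{2}_{s}(T)$ and $\psi\in\Psi$. Since $\Psi$ is reflexive we have $R(r,\omega,u)'\in\mathcal{L}(\Psi,\Phi_{q_{r,u}})$ (Remark \ref{remaStrongIntegransSecondMomentsWellDefined}), so the family $R'\psi=\{R(r,\omega,u)'\psi\}$ meets conditions (1)--(3) of Definition \ref{integrandsWeakIntegSquareMoments} by, respectively, that remark, Definition \ref{integrandsStrongIntegSquareMoments}(2), and Definition \ref{integrandsStrongIntegSquareMoments}(3); hence $R'\psi\in\Lambda^{2}_{w}(T)$ and $\psi\mapsto R'\psi$ is linear. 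For its continuity I would invoke the closed graph theorem: $\Psi$ is barrelled (being quasi-complete and bornological) and $\Lambda^{2}_{w}(T)$ is a Hilbert space (Proposition \ref{spaceIntegrandsWeakIntegralIsHilbertSpace}), and the graph is closed because if $\psi_{n}\to\psi$ in $\Psi$ and $R'\psi_{n}\to Y$ in $\Lambda^{2}_{w}(T)$, then along a subsequence $R(r,\omega,u)'\psi_{n}\to Y(r,\omega,u)$ in $\Phi_{q_{r,u}}$ for a.e.\ $(r,\omega,u)$, while the continuity of each $R(r,\omega,u)'$ forces that limit to be $R(r,\omega,u)'\psi$. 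Thus $\Delta(R)\in\mathcal{L}(\Psi,\Lambda^{2}_{w}(T))$; linearity of $\Delta$ is immediate, and injectivity is routine ($\Delta(R)=0$ gives $R(r,\omega,u)'\psi=0$ for a.e.\ $(r,\omega,u)$ and every $\psi$, whence, using the continuity of the $R(r,\omega,u)'$, the separability of the $\Phi_{q_{r,u}}$, Assumption \ref{assumpDenseSubsetSemiNormsMartValuedMeasures}, and a countable total subset of $\Psi$, one gets $R=0$ in $\Lambda^{2}_{s}(T)$).

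For surjectivity let $J\in\mathcal{L}(\Psi,\Lambda^{2}_{w}(T))$. Continuity of $J$ together with the nuclearity of $\Psi$ yields a continuous Hilbertian semi-norm $p$ on $\Psi$ with $\norm{J\psi}_{w,T}\le p(\psi)$ for all $\psi$, and then a continuous Hilbertian semi-norm $p'\ge p$ with $i_{p,p'}\in\mathcal{L}_{2}(\Psi_{p'},\Psi_{p})$. As $\Lambda^{2}_{w}(T)$ is complete, $J$ factors through $i_{p}$ as $J=\hat{J}\circ i_{p}$ with $\hat{J}\in\mathcal{L}(\Psi_{p},\Lambda^{2}_{w}(T))$, so $J=K\circ i_{p'}$ with $K:=\hat{J}\circ i_{p,p'}\in\mathcal{L}_{2}(\Psi_{p'},\Lambda^{2}_{w}(T))$. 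Fix an orthonormal basis $\{f_{k}\}$ of $\Psi_{p'}$ and $q_{r,u}$-predictable representatives of the families $Kf_{k}\in\Lambda^{2}_{w}(T)$. By Proposition \ref{compatibilityPredictableMapsAndSeminorms} the set $\Gamma_{0}$ of all $(r,\omega,u)$ with $\sum_{k}q_{r,u}(Kf_{k}(r,\omega,u))^{2}<\infty$ belongs to $\mathcal{P}_{T}\otimes\mathcal{B}(U)$, and Tonelli's theorem applied to $\sum_{k}\norm{Kf_{k}}_{w,T}^{2}=\norm{K}^{2}_{\mathcal{L}_{2}(\Psi_{p'},\Lambda^{2}_{w}(T))}<\infty$ shows $\Gamma_{0}$ has full $(\lambda\otimes\Prob\otimes\mu)$-measure. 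On $\Gamma_{0}$ the square-summability lets me define $S(r,\omega,u)\in\mathcal{L}_{2}(\Psi_{p'},\Phi_{q_{r,u}})$ by $S(r,\omega,u)f=\sum_{k}\inner{f}{f_{k}}Kf_{k}(r,\omega,u)$ (the series converging in $\Phi_{q_{r,u}}$), with $S(r,\omega,u)=0$ off $\Gamma_{0}$; I then set $R(r,\omega,u):=i'_{p'}\circ S(r,\omega,u)'\in\mathcal{L}(\Phi'_{q_{r,u}},\Psi'_{\beta})$, whose transpose is $R(r,\omega,u)'=S(r,\omega,u)\circ i_{p'}$ by reflexivity of $\Psi$.

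It remains to check $R=\{R(r,\omega,u)\}\in\Lambda^{2}_{s}(T)$ and $\Delta(R)=J$. Fixing $\psi$, the series $\sum_{k}\inner{i_{p'}\psi}{f_{k}}Kf_{k}$ converges to $K(i_{p'}\psi)=J\psi$ in $\Lambda^{2}_{w}(T)$ and, on $\Gamma_{0}$, pointwise in $\Phi_{q_{r,u}}$ to $S(r,\omega,u)(i_{p'}\psi)$; comparing almost-everywhere convergent subsequences gives $R(r,\omega,u)'\psi=S(r,\omega,u)(i_{p'}\psi)=J(\psi)(r,\omega,u)$ for a.e.\ $(r,\omega,u)$. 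Since $S$ is constructed as an honest pointwise series limit of the predictable maps $Kf_{k}(\cdot,\cdot,\cdot)$ on the predictable set $\Gamma_{0}$ (not merely as an a.e.\ version of one), $R$ satisfies Definition \ref{integrandsStrongIntegSquareMoments}(2); Definition \ref{integrandsStrongIntegSquareMoments}(3) holds because $\Exp\int_{0}^{T}\int_{U}q_{r,u}(R(r,u)'\psi)^{2}\mu(du)\lambda(dr)=\norm{J\psi}_{w,T}^{2}<\infty$; and $\Delta(R)\psi=R'\psi=J\psi$ in $\Lambda^{2}_{w}(T)$ for every $\psi$, so $\Delta(R)=J$. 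I expect this last step to be the main obstacle: the real content is the disintegration of a single Hilbert--Schmidt operator $K$ into the \emph{direct-integral-like} space $\Lambda^{2}_{w}(T)$ as a genuinely $\mathcal{P}_{T}\otimes\mathcal{B}(U)$-measurable field $(r,\omega,u)\mapsto R(r,\omega,u)$ of operators representing $J$ simultaneously for every $\psi\in\Psi$; once the representatives and exceptional sets are chosen correctly this is careful bookkeeping, but it is the part that cannot be glossed over. (As a by-product the $R$ produced lies in $i'_{p'}(\Lambda^{2}_{s}(\Psi,M;p',T))$, so $\Delta$ already maps $\bigcup_{p}i'_{p}(\Lambda^{2}_{s}(p,T))$ onto $\mathcal{L}(\Psi,\Lambda^{2}_{w}(T))$.)
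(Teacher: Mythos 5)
Your proposal is correct and follows essentially the same route as the paper: well-definedness and continuity of $\Delta(R)$ via the closed graph theorem (the paper invokes the ultrabornological version, which is what quasi-complete plus bornological actually gives and what licenses working only with sequential closedness of the graph), and surjectivity by factoring a given $J\in\mathcal{L}(\Psi,\Lambda^{2}_{w}(T))$ through a Hilbert space via nuclearity so that it becomes Hilbert--Schmidt, then disintegrating it into a pointwise-defined, measurable family of Hilbert--Schmidt operators on a full-measure set. The only cosmetic difference is that you expand $K$ along an orthonormal basis of $\Psi_{p'}$ whereas the paper uses the singular value representation $\tilde{S}\psi=\sum_{j}\gamma_{j}\,p(\psi,\psi_{j}^{p})X_{j}$; both produce the same operator family and the same conclusion that the preimage lies in $i'_{p}\Lambda^{2}_{s}(p,T)$.
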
   
\begin{prf} We divide the proof into two steps. 

\textbf{Step 1.} \emph{For every $R \in \Lambda^{2}_{s}(T)$, the map $\psi \mapsto R'\psi$ is an element of $\mathcal{L}(\Psi,\Lambda^{2}_{w}(T))$. Hence, the map $\Delta$ given by \eqref{isomorphismStrongIntegralsAndOperatorsToWeakIntegrands} is well-defined and linear. }

Let $R \in \Lambda^{2}_{s}(T)$. First, note that from Definition \ref{integrandsStrongIntegSquareMoments} and Remark \ref{remaStrongIntegransSecondMomentsWellDefined}, for every $\psi \in \Psi$, the family $R'\psi$ given by \eqref{isomorphismStrongIntegralsAndOperatorsToWeakIntegrands} satisfies the conditions of Definition \ref{integrandsWeakIntegSquareMoments}, and hence it is an element of $\Lambda^{2}_{w}(T)$. Therefore, the map $\psi \mapsto R'\psi$ from $\Psi$ into $\Lambda^{2}_{w}(T)$ is well-defined. Moreover, it is also linear as one can easily see from the linearity of each operator $R(r,\omega,u)' \in \mathcal{L}(\Psi, \Phi_{q_{r,u}})$. 

To prove that $\psi \mapsto R'\psi$ is also continuous, we will prove firstly that it is a sequentially closed operator. In such a case, from the fact that $\Psi$ is ultrabornological and $\Lambda^{2}_{w}(T)$ is a Hilbert space, the closed graph theorem (see \cite{NariciBeckenstein}, Theorem 14.7.3. p.475) implies that it is continuous. 

Let $\{ \psi_{n} \}_{n \in \N}$ be a sequence in $\Psi$ converging to some $\psi \in \Psi$ and let $X \in \Lambda^{2}_{w}(T)$ be such that $\{ R' \psi_{n} \}_{n \in \N}$ converges to $X$ in $\Lambda^{2}_{w}(T)$, i.e. we have
\begin{equation} \label{convergenceForProofClosedOperatorStrongIntegralsAsOperators}
\lim_{n \rightarrow \infty} \norm{R'\psi_{n}-X}_{w,T}^{2} = \lim_{n \rightarrow \infty} \Exp \int_{0}^{T} \int_{U} q_{r,u}(R(r,u)'\psi_{n}-X(r,u))^{2} \mu(du) \lambda(dr)=0 . 
\end{equation}
We need to prove that $X=R'\psi$. First, note that as for each $(r,\omega, u) \in [0,T] \times \Omega \times U$, we have $R(r,\omega,u)' \in \mathcal{L}(\Psi, \Phi_{q_{r,u}})$, then it follows that for each $(r,\omega,u)$, $\{ R(r,\omega,u)'\psi_{n} \}_{n \in \N}$ converges to $R(r,\omega,u)'\psi$ in $\Phi_{q_{r,u}}$ as $n \rightarrow \infty$. It follows from this, Fatou's lemma and \eqref{convergenceForProofClosedOperatorStrongIntegralsAsOperators}, that 
\begin{eqnarray*}
\norm{R'\psi - X}_{w,T}^{2} & = & \Exp \int_{0}^{T} \int_{U} q_{r,u}(R(r,u)'\psi - X(r,u))^{2} \mu(du) \lambda(dr) \\
& = & \Exp \int_{0}^{T} \int_{U} \lim_{n \rightarrow \infty} q_{r,u}(R(r,u)'\psi_{n}-X(r,u))^{2} \mu(du) \lambda(dr) \\
& \leq & \liminf_{n \rightarrow \infty} \Exp \int_{0}^{T} \int_{U} q_{r,u}(R(r,u)'\psi_{n}-X(r,u))^{2} \mu(du) \lambda(dr)=0 
\end{eqnarray*}
Therefore, we have $X=R'\psi$. Thus, $\psi \mapsto R'\psi$ is sequentially closed and by the closed graph theorem this implies that it is continuous. Hence, $\psi \mapsto R'\psi$ belongs to $\mathcal{L}(\Psi,\Lambda^{2}_{w}(T))$. This in particular implies that the mapping $\Delta$ is well-defined.

Finally, the fact that $\Delta$ is linear follows easily from \eqref{isomorphismStrongIntegralsAndOperatorsToWeakIntegrands} and the fact that for any $a \in \R$, $R, S \in \Lambda^{2}_{s}(T)$, for every $(r,\omega, u) \in [0,T]\times \Omega \times U$, it follows that $a R(r,\omega,u)'+S(r,\omega,u)'=\left(a R(r,\omega,u)+S(r,\omega,u) \right)'$.   

\textbf{Step 2.} \emph{The mapping $\Delta$ given by \eqref{isomorphismStrongIntegralsAndOperatorsToWeakIntegrands} is invertible.}   

We start by proving that $\Delta$ is injective. 
Let $R \in \Lambda^{2}_{s}(T)$ be such that $\Delta(R)=0$. Then, $R(r,\omega,u)' \psi =0$, for all $(r,\omega, u) \in [0,T]\times \Omega \times U$ and all $\psi \in \Psi$. Therefore, $R=0$. Thus, $\mbox{Ker}(\Delta)=\{ 0 \}$. But as $\Delta$ is linear, the above implies that it is also injective. 

Now, to prove that $\Delta$ is surjective, let $S \in \mathcal{L}(\Psi,\Lambda^{2}_{w}(T))$. It is easy to check that $q: \Psi \rightarrow \R_{+}$ given by 
$$ q(\psi) = \norm{ S \psi }_{w,T} = \left( \Exp \int_{0}^{T} \int_{U} q_{r,u}(S \psi)^{2} \mu(du) \lambda (dr) \right)^{1/2}, \quad \forall \, \psi \in \Psi, $$ is a continuous Hilbertian semi-norm on $\Psi$. Now, as $\Psi$ is a nuclear space, there exists a continuous Hilbertian semi-norm $p$ on $\Psi$ such that $q \leq p$ and $i_{q,p}$ is Hilbert-Schmidt. Hence, $ \norm{ S \psi }_{w,T} \leq p(\psi)$, for all $\psi \in \Psi$ and therefore $S$ is $p$-continuous. As $\Psi$ is dense in $\Psi_{p}$, it follows that $S$ has an extension $\tilde{S}$ such that $\tilde{S} \in \mathcal{L}(\Psi_{p}, \Lambda^{2}_{w}(T))$. Moreover, $\tilde{S}$ is Hilbert-Schmidt. This is because if $\{ \psi_{j}^{p} \}_{j \in \N} \subseteq \Psi$ is a complete orthonormal system in $\Psi_{p}$, then 
$$ \norm{ \tilde{S} }^{2}_{\mathcal{L}_{2}(\Psi_{p}, \Lambda^{2}_{w}(T))}= \sum_{j = 1}^{\infty} \norm{ \tilde{S} i_{p} \psi_{j}^{p} }^{2}_{w,T} = \sum_{j =1}^{\infty} q( i_{p} \psi_{j}^{p} )^{2}= \norm{ i_{q,p} }^{2}_{\mathcal{L}_{2}(\Psi_{p},\Psi_{q})} < \infty. $$   
Then, because $\tilde{S} \in \mathcal{L}_{2}(\Psi_{p}, \Lambda^{2}_{w}(T))$, there exists an orthonormal system $\{ \psi_{j}^{p} \}_{j \in J}$ in $\Psi_{p}$, an orthonormal system $\{ X_{j} \}_{j \in J}$ in $\Lambda^{2}_{w}(T)$ and a sequence of positive numbers $\{ \gamma_{j} \}_{j \in J}$ satisfying $\sum_{j \in J} \gamma_{j}^{2} < \infty$, with $J \subseteq \N$, such that $\tilde{S}$ admits the representation:
\begin{equation} \label{hilbertSchmidtRepresentExtenOperatS}
\tilde{S} \psi = \sum_{j \in J} \gamma_{j} \, p(\psi,\psi_{j}^{p}) X_{j}, \quad \forall \, \psi \in \Psi_{p}.  
\end{equation}    
Choose a complete orthonormal system $\{ \psi^{p}_{j} \}_{j \in \N}$ which is an extension of the orthonormal system $\{ \psi^{p}_{j} \}_{j \in J}$. Then, from \eqref{hilbertSchmidtRepresentExtenOperatS} we have
\begin{equation} \label{operatorSTildeInExtendedComplOrthoSystem}
\tilde{S} \psi_{j}^{p} = \gamma_{j} X_{j} \mbox{  if  } j \in J, \quad  \mbox{and} \quad \tilde{S} \psi^{p}_{j}=0 \mbox{  if  }j \in \N \setminus J. 
\end{equation}
Now, from Parseval's identity and the fact that $\tilde{S} \in \mathcal{L}_{2}(\Psi_{p}, \Lambda^{2}_{w}(T))$ it follows that 
\begin{equation} \label{finiteInOrthonormalBasisOperatorToWeakIntegrands}
 \Exp \int_{0}^{T} \int_{U} \sum_{j \in \N} q_{r,u}((\tilde{S} \psi_{j}^{p})(r,\omega,u))^{2} \mu(du) \lambda(dr) 
 = || \tilde{S} ||^{2}_{\mathcal{L}_{2}(\Psi_{p}, \Lambda^{2}_{w}(T))} < \infty.
\end{equation} 
Then, it follows from \eqref{operatorSTildeInExtendedComplOrthoSystem} and \eqref{finiteInOrthonormalBasisOperatorToWeakIntegrands} that there exists $\Gamma \subseteq [0,T] \times \Omega \times U$, such that $(\lambda \otimes \Prob \otimes \mu)(\Gamma)=1$ and 
\begin{equation} \label{almostSureHilbSchmFamilyOperMapDeltaIsSubjective}
\sum_{j \in J} \gamma_{j}^{2} q_{r,u}(X_{j}(r,\omega,u))^{2} = \sum_{j \in \N} q_{r,u}((\tilde{S} \psi_{j}^{p})(r,\omega,u))^{2}  < \infty, \quad \forall \,  (r,\omega,u) \in \Gamma. 
\end{equation}  

Let $F = \{ F(r,\omega,u): r \in [0,T], \omega \in \Omega, u \in U \}$, where for every $\psi \in \Psi$,   
\begin{equation}\label{dualFamilyOperatorsProofMapDeltaIsSubjective}
F(r,\omega,u) \psi = 
\begin{cases} 
(\tilde{S} \psi) (r,\omega,u), & \forall \, (r,\omega,u) \in \Gamma, \\
0, &  \forall \, (r,\omega,u) \in \Omega \setminus \Gamma. 
\end{cases}
\end{equation}
Our objective is to prove that the family $F$ satisfies the following properties:
\begin{enumerate}[label=(\alph*)]
\item $F(r,\omega,u) \in \mathcal{L}_{2}(\Psi_{p},\Phi_{q_{r,u}})$, for all $(r,\omega,u) \in [0,T] \times \Omega \times U$, 
\item The map $ (r,\omega,u) \mapsto q_{r,u}(F(r,\omega,u)\psi,\phi)$ is $\mathcal{P}_{T} \otimes \mathcal{B}(U)$-measurable, for each $\phi \in \Phi$, $\psi \in \Psi$, 
\item $\Exp \int_{0}^{T} \int_{U} \norm{F(r,u)}^{2}_{\mathcal{L}_{2}(\Psi_{p},\Phi_{q_{r,u}})} \mu(du) \lambda(dr) < \infty$. 
\end{enumerate}

To prove (a), first note that from \eqref{hilbertSchmidtRepresentExtenOperatS} and \eqref{dualFamilyOperatorsProofMapDeltaIsSubjective}, $F(r,\omega,u)$ is a linear operator from $\Psi_{p}$ into $\Phi_{q_{r,u}}$, for all $(r,\omega,u) \in [0,T] \times \Omega \times U$. 

Fix $(r,\omega,u) \in \Gamma$. Then, from \eqref{hilbertSchmidtRepresentExtenOperatS}, \eqref{dualFamilyOperatorsProofMapDeltaIsSubjective}, the Cauchy-Schwarz inequality and Parseval's identity, it follows that for all $\psi \in \Psi_{p}$ we have
\begin{equation*}
q_{r,u}( F(r,\omega, u)\psi)^{2} 
 \leq \left( \sum_{j \in J} p(\psi,\psi_{j}^{p})^{2} \right)   \,  \cdot \, \left( \sum_{j \in J} \gamma_{j}^{2} \, q_{r,u}\left( X_{j}(r,\omega, u) \right)^{2}  \right) \leq  C \, p(\psi)^{2},     
\end{equation*}
where $C= \sum_{j =1}^{\infty} \gamma_{j}^{2} q_{r,u}\left( X_{j}(r,\omega, u) \right)^{2} < \infty$ by \eqref{almostSureHilbSchmFamilyOperMapDeltaIsSubjective}. Thus, $F(r,\omega, u)$ is a continuous operator from $\Psi_{p}$ into $\Phi_{q_{r,u}}$. Moreover, because $\{\psi_{j}^{p}\}_{j \in \N}$ is a complete orthonormal system in $\Psi_{p}$, then \eqref{operatorSTildeInExtendedComplOrthoSystem}, \eqref{almostSureHilbSchmFamilyOperMapDeltaIsSubjective} and \eqref{dualFamilyOperatorsProofMapDeltaIsSubjective} show that $\norm{F(r,\omega,u)}_{\mathcal{L}_{2}(\Psi_{p},\Phi_{q_{r,u}})}< \infty$ and hence $F(r,\omega,u) \in \mathcal{L}_{2}(\Psi_{p},\Phi_{q_{r,u}})$. As for $(r,\omega,u) \in \Omega \setminus \Gamma$ we have $F(r,\omega,u)=0$, from the above it follows that $F(r,\omega,u) \in \mathcal{L}_{2}(\Psi_{p},\Phi_{q_{r,u}})$ for all $(r,\omega, u) \in [0,T] \times \Omega \times U$ and therefore we have proved (a). 

For (b), fix $\psi \in \Psi$ and $\phi \in \Phi$. From \eqref{hilbertSchmidtRepresentExtenOperatS} and  \eqref{dualFamilyOperatorsProofMapDeltaIsSubjective}, for all $(r,\omega,u) \in \Gamma$ we have 
\begin{equation} \label{innerProdPreImageFamilyMapDeltaSurjective}
 q_{r,u}(F(r,\omega,u) \psi, \phi)= \sum_{j \in J} \gamma_{j} \, p(\psi,\psi_{j}^{p}) q_{r,u}(X_{j}(r,\omega,u), \phi). 
\end{equation} 
As for each $j \in J$, $X_{j} \in \Lambda^{2}_{w}(T)$, then the map $(r,\omega,u) \mapsto q_{r,u}(X_{j}(r,\omega,u), \phi)$ is $\mathcal{P}_{T}$-measurable (see Definition \ref{integrandsWeakIntegSquareMoments}). Then, from   \eqref{innerProdPreImageFamilyMapDeltaSurjective} the map $(r,\omega,u) \mapsto  q_{r,u}(F(r,\omega,u) \psi, \phi)$ is $\mathcal{P}_{T}$-measurable. So we have proved (b). 

Finally, (c) follows because \eqref{finiteInOrthonormalBasisOperatorToWeakIntegrands} and \eqref{dualFamilyOperatorsProofMapDeltaIsSubjective} imply that 
\begin{multline*} 
\Exp \int_{0}^{T} \int_{U} \norm{ F(r,u)}^{2}_{\mathcal{L}_{2}(\Psi_{p},\Phi_{q_{r,u}})} \mu(du) \lambda(dr) 
=  \Exp \int_{0}^{T} \int_{U} \sum_{j \in J} q_{r,u}((\tilde{S} \psi_{j}^{p})(r,u))^{2} \mu(du) \lambda(dr) <  \infty.
\end{multline*} 

Define $R= \{ R(r,\omega,u): r \in [0,T], \omega \in \Omega, u \in U \}$ by:
\begin{equation} \label{preImageFamilyOperatorsProofMapDeltaIsSubjective}
R(r,\omega,u)=F(r,\omega,u)', \quad \forall \, (r,\omega,u) \in [0,T] \times \Omega \times U. 
\end{equation}
then from the properties (a)-(c) above it follows that $R \in \Lambda^{2}_{s}(p,T)$ (see Definition \ref{integrandsStrongIntegSquareMomentsInHilbertSpace}) and hence by Proposition \ref{strongIntegrandsInHilbertSpaceAreSubspaces} we have $i'_{p} R \in \Lambda^{2}_{s}(T)$. Moreover, as $\tilde{S}$ is an extension of $S$, from \eqref{dualFamilyOperatorsProofMapDeltaIsSubjective} for every $\psi \in \Psi$ and $(r,\omega,u) \in \Gamma$, we have that 
\begin{equation} \label{almostSureImageProofSurjectiveMapDelta}
(i'_{p} R(r,\omega,u))'\psi =F(r,\omega,u) i_{p} \psi = (\tilde{S} i_{p} \psi)(r,\omega,u)= (S \psi)(r,\omega,u),
\end{equation}
and then from \eqref{isomorphismStrongIntegralsAndOperatorsToWeakIntegrands} it follows that $S= \Delta (i'_{p} R)$. Therefore, the map $\Delta$ is surjective and hence it is an isomorphism. 
\end{prf}

\begin{coro} \label{strongIntegrandsAsUnionIntegrandsInHilbertSpaces} Let $R \in \Lambda^{2}_{s}(T)$. There exists a continuous Hilbertian semi-norm $p$ on $\Psi$ and $\tilde{R} \in \Lambda^{2}_{s}(p,T)$ such that $R(r,\omega,u)= i'_{p} \tilde{R}(r,\omega,u)$, for $\lambda \otimes \Prob \otimes \mu$-a.e. $(r,\omega,u) \in [0,T] \times \Omega \times U$. 

Moreover, if $H(\Psi)$ denotes the collection of all the continuous Hilbertian semi-norms on $\Psi$, then
$$\Lambda^{2}_{s}(T)= \bigcup_{p \in H(\Psi)} i'_{p} \Lambda^{2}_{s}(p,T).$$
\end{coro}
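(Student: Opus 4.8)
The plan is to extract both assertions from the isomorphism $\Delta$ of Theorem~\ref{strongIntegrandsAsOperatorsToWeakIntegrans} by re-using the explicit construction carried out in the proof of its surjectivity; no genuinely new work is needed.

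First I would fix $R \in \Lambda^{2}_{s}(T)$ and pass to $S \defeq \Delta(R) = \bigl(\psi \mapsto R'\psi\bigr) \in \mathcal{L}(\Psi,\Lambda^{2}_{w}(T))$. Then I would feed this particular $S$ into the surjectivity half of the proof of Theorem~\ref{strongIntegrandsAsOperatorsToWeakIntegrans}: the map $q \colon \psi \mapsto \norm{S\psi}_{w,T}$ is a continuous Hilbertian semi-norm on $\Psi$, nuclearity of $\Psi$ yields a continuous Hilbertian semi-norm $p$ on $\Psi$ with $q \leq p$ and $i_{q,p}$ Hilbert--Schmidt, $S$ extends to a Hilbert--Schmidt operator $\widetilde{S} \in \mathcal{L}_{2}(\Psi_{p},\Lambda^{2}_{w}(T))$, and the construction there produces a family $\widetilde{R} \in \Lambda^{2}_{s}(p,T)$ with $\Delta(i'_{p}\widetilde{R}) = S = \Delta(R)$. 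Since $\Delta$ is injective, $i'_{p}\widetilde{R}$ and $R$ agree as elements of $\Lambda^{2}_{s}(T)$, i.e. $R(r,\omega,u) = i'_{p}\widetilde{R}(r,\omega,u)$ for $\lambda \otimes \Prob \otimes \mu$-a.e.\ $(r,\omega,u) \in [0,T] \times \Omega \times U$, which is the first assertion.

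For the displayed identity I would argue by double inclusion. The inclusion $\bigcup_{p \in H(\Psi)} i'_{p}\Lambda^{2}_{s}(p,T) \subseteq \Lambda^{2}_{s}(T)$ is precisely Proposition~\ref{strongIntegrandsInHilbertSpaceAreSubspaces}. The reverse inclusion is immediate from the first assertion, which exhibits, for each $R \in \Lambda^{2}_{s}(T)$, a continuous Hilbertian semi-norm $p$ and a family $\widetilde{R} \in \Lambda^{2}_{s}(p,T)$ with $R = i'_{p}\widetilde{R}$ $\lambda \otimes \Prob \otimes \mu$-a.e.; combining the two inclusions gives equality.

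The only point demanding a little care---and the place where I would be most careful---is the bookkeeping of the almost-everywhere qualifier: equality in the Hilbert space $\Lambda^{2}_{w}(T)$, and hence in $\Lambda^{2}_{s}(T)$ through $\Delta$, means equality up to $\lambda \otimes \Prob \otimes \mu$-null sets, so that injectivity of $\Delta$ delivers exactly the a.e.\ identification claimed and nothing stronger. Everything else is routine, being inherited verbatim from the proof of Theorem~\ref{strongIntegrandsAsOperatorsToWeakIntegrans}.
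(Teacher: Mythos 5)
Your proposal is correct and follows essentially the same route as the paper: the author likewise applies Step 2 (the surjectivity construction) of the proof of Theorem \ref{strongIntegrandsAsOperatorsToWeakIntegrans} to the operator $\psi \mapsto R'\psi$ to produce $p$ and $\tilde{R} \in \Lambda^{2}_{s}(p,T)$ with $R = i'_{p}\tilde{R}$ a.e., and then obtains the displayed equality by the same double inclusion via Proposition \ref{strongIntegrandsInHilbertSpaceAreSubspaces}. Your explicit appeal to the injectivity of $\Delta$ to pin down the a.e.\ identification is a harmless reformulation of the paper's direct reference to \eqref{almostSureImageProofSurjectiveMapDelta}.
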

\begin{prf} First, from Step 1 of the proof of Theorem \ref{strongIntegrandsAsOperatorsToWeakIntegrans} we have that $\psi \mapsto R'\psi$ given in \eqref{isomorphismStrongIntegralsAndOperatorsToWeakIntegrands} is an element of $\mathcal{L}(\Psi,\Lambda^{2}_{w}(T))$. Then it follows from Step 2 of the proof of Theorem \ref{strongIntegrandsAsOperatorsToWeakIntegrans} that there exists a continuous Hilbertian semi-norm $p$ on $\Psi$ and there exists $\tilde{R}$ in $\Lambda^{2}_{s}(p,T)$ such that 
 for $\lambda \otimes \Prob \otimes \mu$-a.e. $(r,\omega,u) \in [0,T] \times \Omega \times U$, $R(r,\omega,u)'\psi= (i'_{p} \tilde{R}(r,\omega,u))'\psi$ (note that this is \eqref{almostSureImageProofSurjectiveMapDelta} with $S$ replaced by the map $\psi \mapsto R'\psi$). 

To prove the second statement, note that as a consequence of the first statement we have $\Lambda^{2}_{s}(T) \subseteq  \bigcup_{p \in H(\Psi)} i'_{p} \Lambda^{2}_{s}(p,T)$. Now, from Proposition \ref{strongIntegrandsInHilbertSpaceAreSubspaces} we have that $i'_{p} \Lambda^{2}_{s}(p_{\gamma},T) \subseteq \Lambda^{2}_{s}(T)$, for each $p \in H(\Psi)$. Then, $\bigcup_{p \in H(\Psi)} i'_{p} \Lambda^{2}_{s}(p,T) \subseteq \Lambda^{2}_{s}(T)$.     
\end{prf}

We are now ready to construct the strong stochastic integral for elements of $\Lambda^{2}_{s}(T)$. We do this in the following theorem.

\begin{theo} \label{existenceStrongStochasticIntegSquareMoments} Let $R \in \Lambda^{2}_{s}(T)$. Then there exist a continuous Hilbertian semi-norm $p$ on $\Psi$ and $\tilde{R} \in \Lambda^{2}_{s}(p,T)$ such that $R(r,\omega,u)= i'_{p} \tilde{R}(r,\omega,u)$, for $\lambda \otimes \Prob \otimes \mu$-a.e. $(r,\omega,u) \in [0,T] \times \Omega \times U$ and a $\Phi'_{p}$-valued zero-mean square-integrable c\`{a}dl\`{a}g martingale $I^{s}(R)=\{ I^{s}_{t}(R) \}_{t \in [0,T]}$,  satisfying      
\begin{equation} \label{itoIsometryStrongIntegHilbSchmiIntegrands}
\Exp \, p' (I^{s}_{t}(R))^{2} \, = \Exp \int_{0}^{t} \int_{U} \norm{\tilde{R}(r,u)}^{2}_{\mathcal{L}_{2}(\Phi'_{q_{r,u}},\Psi'_{p})} \mu(du) \lambda(dr)=\norm{\tilde{R}}_{s,p,t}^{2}, 
\end{equation}
for all $t \in [0,T]$. Moreover, $I^{s}(R)$ is also mean-square continuous and has a predictable version. 

Furthermore, $I^{s}(R) \in \mathcal{M}^{2}_{T}(\Phi'_{\beta})$ and it is the unique (up to indistinguishable versions) $\Psi'_{\beta}$-valued process such that for all $\psi \in \Psi$, $\Prob$-a.e. 
\begin{equation} \label{weakStrongCompatibilityStochIntegral}
I^{s}_{t}(R)[\psi]=I^{w}_{t}(R'\psi), \quad \forall  \, t \in [0,T], 
\end{equation} 
where the stochastic process in the right-hand side of \eqref{weakStrongCompatibilityStochIntegral} corresponds to the weak stochastic integral of $R' \psi \in \Lambda^{2}_{w}(T)$ defined in \eqref{isomorphismStrongIntegralsAndOperatorsToWeakIntegrands}. 
\end{theo}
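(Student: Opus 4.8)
The strategy is to define the strong integral via the weak integral, using the isomorphism $\Delta$ from Theorem \ref{strongIntegrandsAsOperatorsToWeakIntegrans} together with the martingale regularization result (Theorem \ref{propertiesMartingales}). First, given $R \in \Lambda^{2}_{s}(T)$, Corollary \ref{strongIntegrandsAsUnionIntegrandsInHilbertSpaces} supplies a continuous Hilbertian semi-norm $p$ on $\Psi$ and $\tilde{R} \in \Lambda^{2}_{s}(p,T)$ with $R(r,\omega,u)=i'_{p}\tilde{R}(r,\omega,u)$ for $\lambda\otimes\Prob\otimes\mu$-a.e. $(r,\omega,u)$. Composing the map $\psi \mapsto R'\psi$ (which lies in $\mathcal{L}(\Psi,\Lambda^{2}_{w}(T))$ by Step 1 of Theorem \ref{strongIntegrandsAsOperatorsToWeakIntegrans}) with the linear continuous weak integral map $I^{w}:\Lambda^{2}_{w}(T)\to\mathcal{M}^{2}_{T}(\R)$ (Theorem \ref{propertiesWeakIntegralSquareIntegIntegrands}), I obtain a continuous linear map $\Psi \to \mathcal{M}^{2}_{T}(\R)$, $\psi\mapsto I^{w}(R'\psi)$. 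This defines a cylindrical process in $\Psi'$ that is, for each fixed $\psi$, a zero-mean square-integrable c\`adl\`ag martingale, and for each $t$ the map $\psi\mapsto I^{w}_{t}(R'\psi)$ into $L^{0}\ProbSpace$ is continuous (being continuous into $L^{2}\ProbSpace$). Hence Proposition \ref{dualNuclearIntegrableMartingalesAsOperatorsToRealIntegrableMartingales} (or directly Theorem \ref{propertiesMartingales}) yields a process $I^{s}(R) \in \mathcal{M}^{2}_{T}(\Psi'_{\beta})$ with $I^{s}_{t}(R)[\psi]=I^{w}_{t}(R'\psi)$ $\Prob$-a.e. for every $\psi\in\Psi$; uniqueness up to indistinguishability among $\Psi'_{\beta}$-valued regular c\`adl\`ag processes follows from Proposition \ref{propCondiIndistingProcess}.

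Next I upgrade the state space from $\Psi'_{\beta}$ to $\Phi'_{p}$ and establish the isometry \eqref{itoIsometryStrongIntegHilbSchmiIntegrands}. Since $\tilde{R}\in\Lambda^{2}_{s}(p,T)$ takes values in $\mathcal{L}_{2}(\Phi'_{q_{r,u}},\Psi'_{p})$, I compute the second moment of $I^{w}_{t}(R'\psi)$ using the It\^o isometry \eqref{secondMomentWeakStochasticIntegral}: for $\psi\in\Psi$,
\begin{equation*}
\Exp\left(\abs{I^{w}_{t}(R'\psi)}^{2}\right)=\Exp\int_{0}^{t}\int_{U} q_{r,u}(R(r,u)'\psi)^{2}\mu(du)\lambda(dr)=\Exp\int_{0}^{t}\int_{U} q_{r,u}(\tilde{R}(r,u)'i_{p}\psi)^{2}\mu(du)\lambda(dr),
\end{equation*}
and the right-hand side is bounded by $p(\psi)^{2}\,\norm{\tilde{R}}^{2}_{s,p,t}$. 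Applying Theorem \ref{theoExistenceCadlagContVersionHilbertSpaceUniformBoundedMoments} (with $n=2$, using that $\{I^{w}_{t}(R'\psi)\}$ has a c\`adl\`ag version since it lies in $\mathcal{M}^{2}_{T}(\R)$) together with the sharper bound from the moment inequality \eqref{weakIntegralMapIsContinuousInequ} and Doob's inequality, I obtain that $I^{s}(R)$ is in fact $\Phi'_{p}$-valued and c\`adl\`ag. To identify the precise second moment, I expand $\norm{\tilde{R}(r,u)}^{2}_{\mathcal{L}_{2}(\Phi'_{q_{r,u}},\Psi'_{p})}=\sum_{j} q_{r,u}(\tilde{R}(r,u)'i_{p}\psi^{p}_{j})^{2}$ over a complete orthonormal system $\{\psi^{p}_{j}\}$ of $\Psi_{p}$, combine with Parseval's identity applied to $p'(I^{s}_{t}(R))^{2}=\sum_{j} I^{s}_{t}(R)[\psi^{p}_{j}]^{2}$ and the It\^o isometry for each $I^{w}_{t}(R'\psi^{p}_{j})$, and use the monotone convergence theorem to interchange sum, expectation and integral; this yields \eqref{itoIsometryStrongIntegHilbSchmiIntegrands}.

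Finally, mean-square continuity of $I^{s}(R)$ follows from $\Exp\, p'(I^{s}_{t}(R)-I^{s}_{s}(R))^{2}=\Exp\int_{s}^{t}\int_{U}\norm{\tilde{R}(r,u)}^{2}_{\mathcal{L}_{2}(\Phi'_{q_{r,u}},\Psi'_{p})}\mu(du)\lambda(dr)$ together with dominated convergence, exactly as in the proof of Theorem \ref{propertiesWeakIntegralSquareIntegIntegrands}; and since $I^{s}(R)$ is $\{\mathcal{F}_{t}\}$-adapted and stochastically continuous it has a predictable version. I expect the main obstacle to be the passage from the $\Psi'_{\beta}$-valued regular martingale produced by Theorem \ref{propertiesMartingales} to the $\Phi'_{p}$-valued c\`adl\`ag process with the exact isometry: one must be careful that the semi-norm $q$ produced abstractly by Theorem \ref{theoExistenceCadlagContVersionHilbertSpaceUniformBoundedMoments} can be taken to be $p'$ itself (or, if not, that $I^{s}(R)$ nevertheless takes values in $\Phi'_{p}$ by a separability/denseness argument using Assumption \ref{assumpDenseSubsetSemiNormsMartValuedMeasures}), and that the orthonormal-expansion computation of $p'(I^{s}_{t}(R))^{2}$ is justified — this requires checking that the exceptional $\Prob$-null sets in $I^{s}_{t}(R)[\psi^{p}_{j}]=I^{w}_{t}(R'\psi^{p}_{j})$ can be chosen uniformly in $j$, which is where the countable dense set $D$ from Assumption \ref{assumpDenseSubsetSemiNormsMartValuedMeasures} and the continuity of both sides in $\psi$ are used.
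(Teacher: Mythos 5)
Your proposal takes essentially the same route as the paper: identify $R$ with $\Delta(R)\in\mathcal{L}(\Psi,\Lambda^{2}_{w}(T))$, compose with the continuous weak integral map $I^{w}$, regularize the resulting cylindrical martingale via Theorem \ref{propertiesMartingales}, and obtain the isometry \eqref{itoIsometryStrongIntegHilbSchmiIntegrands} by Parseval's identity together with \eqref{secondMomentWeakStochasticIntegral}. The obstacle you flag at the end is resolved in the paper not by forcing the regularized process into the Hilbert space attached to the semi-norm coming from Corollary \ref{strongIntegrandsAsUnionIntegrandsInHilbertSpaces}, but the other way around: one lets Theorem \ref{propertiesMartingales} produce a possibly larger semi-norm $p \geq q$ (where $q$ and $\hat{R}\in\Lambda^{2}_{s}(q,T)$ come from the corollary) and then sets $\tilde{R}=i'_{q,p}\hat{R}$, using Proposition \ref{spaceStrongIntegrandsHilbertSpaceAreOrdered} to keep $\tilde{R}$ in the Hilbert--Schmidt class — this relabelling is legitimate since the theorem only asserts existence of some such $p$.
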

\begin{prf}
Let $R \in \Lambda^{2}_{s}(T)$. First, it follows from Corollary \ref{strongIntegrandsAsUnionIntegrandsInHilbertSpaces} that there exists a continuous Hilbertian semi-norm $q$ on $\Psi$ and $\hat{R} \in \Lambda^{2}_{s}(q,T)$ such that $R(r,\omega,u)= i'_{q} \hat{R}(r,\omega,u)$, for $\lambda \otimes \Prob \otimes \mu$-a.e. $(r,\omega,u) \in [0,T] \times \Omega \times U$.

On the other hand, from the continuity of the weak integral map $I^{w}: \Lambda^{2}_{w}(T) \rightarrow \mathcal{M}^{2}_{T}(\R)$ (Theorem \ref{propertiesWeakIntegralSquareIntegIntegrands}) and Theorem \ref{strongIntegrandsAsOperatorsToWeakIntegrans}, it follows that the map $I^{w} \circ \Delta(R): \Psi \rightarrow \mathcal{M}^{2}_{T}(\R)$ is linear and continuous. Therefore, 
$I^{w} \circ \Delta(R) = \{ I_{t}^{w} \circ \Delta(R)\}_{t \in [0,T]}$ is a cylindrical zero-mean square integrable c\`{a}dl\`{a}g, martingale in $\Psi'$ such that for each $t \in [0,T]$ the linear map $I_{t}^{w} \circ \Delta(R): \Psi \mapsto L^{0} \ProbSpace$, $\psi \mapsto I_{t}^{w}(\Delta(R) \psi)$, is continuous. Then, by Theorem \ref{propertiesMartingales} there exist a  continuous Hilbertian semi-norm $p$ on $\Psi$, that we can choose  satisfying that $q \leq p$,  and a $\Phi'_{p}$-valued  zero-mean square integrable c\`{a}dl\`{a}g, martingale $I^{s}(R)=\{ I^{s}_{t}(R) \}_{t \in [0,T]}$ such that for each $\psi \in \Psi$ the real-valued process $I^{s}(R)[\psi]$ is a version of $I^{w} \circ \Delta(R)(\psi)=I^{w} (R'\psi)$. But as for every $\psi \in \Psi$ the processes $I^{s}(R)[\psi]$ and $I^{w} \circ \Delta(R)(\psi)=I^{w} (R'\psi)$ are both c\`{a}dl\`{a}g then they are indistinguishable. This shows \eqref{weakStrongCompatibilityStochIntegral}. Moreover, because $I^{s}(R)$ is also a $\Phi'_{\beta}$-valued regular c\`{a}dl\`{a}g process such that for each $\psi \in \Psi$ we have $I^{s} (R)[\psi] \in \mathcal{M}^{2}_{T}(\R)$  (this last follows from \eqref{weakStrongCompatibilityStochIntegral} and the fact that 
$I^{w} (R'\psi) \in \mathcal{M}^{2}_{T}(\R)$) then by definition we have that $I^{s}(R) \in \mathcal{M}^{2}_{T}(\Phi'_{\beta})$. Furthermore, \eqref{weakStrongCompatibilityStochIntegral} and Proposition \ref{propCondiIndistingProcess} shows that $I^{s}(R)$ is the unique (up to indistinguishable versions) $\Psi'_{\beta}$-valued process satisfying the conditions on the statement of the theorem. 

Now, if we define $\tilde{R}=i'_{q,p} \hat{R}$, then by Proposition \ref{spaceStrongIntegrandsHilbertSpaceAreOrdered} we have $\tilde{R} \in \Lambda^{2}_{s}(q,T)$ and moreover because $i'_{q}=i'_{p} \circ i'_{q,p}$ then we have that  $R(r,\omega,u)= i'_{p} \tilde{R}(r,\omega,u)$, for $\lambda \otimes \Prob \otimes \mu$-a.e. $(r,\omega,u) \in [0,T] \times \Omega \times U$. Hence, for each $t \in [0,T]$, from Parseval's identity, Fubini's theorem, \eqref{secondMomentWeakStochasticIntegral} and  \eqref{weakStrongCompatibilityStochIntegral} we have
\begin{eqnarray*}
\Exp \, p' (I^{s}_{t}(R))^{2} & = & \sum_{j = 1}^{\infty} \Exp \left[ \abs{I^{s}_{t}(R)[\psi_{j}^{p}] }^{2} \right] \\
& = &  \sum_{j = 1}^{\infty} \Exp \left[ \abs{I^{w}_{t}(R'\psi_{j}^{p}) }^{2} \right] \\
& = &  \sum_{j = 1}^{\infty} \Exp \int_{0}^{t} \int_{U} q_{r,u}(\tilde{R}(r,u)'i_{p} \psi_{j}^{p})^{2} \mu(du) \lambda(dr) \\
& = & \Exp \int_{0}^{t} \int_{U} \norm{\tilde{R}(r,u)}^{2}_{\mathcal{L}_{2}(\Phi'_{q_{r,u}},\Psi'_{p})} \mu(du) \lambda(dr).
\end{eqnarray*}
This proves \eqref{itoIsometryStrongIntegHilbSchmiIntegrands}. Now, to show that $I^{s}(R)$ is mean-square continuous, note that from \eqref{itoIsometryStrongIntegHilbSchmiIntegrands} it follows that for any $0 \leq s \leq t \leq T$ we have:
$$ \Exp \left( p'(I^{s}_{s}(R) - I^{s}_{t}(R) )^{2} \right) = \Exp \int^{t}_{s} \norm{\tilde{R}(r,u)}^{2}_{\mathcal{L}_{2}(\Phi'_{q_{r,u}},\Psi'_{p})} \mu(du) \lambda(dr) \leq \norm{\tilde{R}}_{s,p,T}^{2},$$
and hence from an application of the dominated convergence theorem we have 
$$ \Exp \left( p'(I^{s}_{s}(R) - I^{s}_{t}(R) )^{2} \right) \rightarrow 0 \quad \mbox{ as } s \rightarrow t, \mbox{ or } t \rightarrow s. $$
Thus, $I^{s}(R)$ is mean square continuous. Finally, as $I^{s}(R)$ is a $\Psi'_{p}$-valued, $\{ \mathcal{F}_{t}\}$-adapted and stochastically continuous process it has a predictable version (see Proposition 3.21 of Peszat and Zabczyk \cite{PeszatZabczyk}, p.27). \end{prf}

\begin{prop} \label{propStrongStochIntegContIfCylinMartIsCont}
If for each $A \in \mathcal{R}$ and $\phi \in \Phi$, the real-valued process $(M(t,A)(\phi): t \geq 0)$ is continuous, then for each $R \in \Lambda^{2}_{s}(T)$, the $\Psi'_{p}$-valued process $I^{s}(R)$ defined in Theorem \ref{existenceStrongStochasticIntegSquareMoments} is continuous.
\end{prop}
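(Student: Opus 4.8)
The plan is to transfer the path continuity already established for the weak stochastic integral (Proposition \ref{propWeakStochIntegContIfCylinMartIsCont}) through the construction of $I^{s}(R)$ carried out in the proof of Theorem \ref{existenceStrongStochasticIntegSquareMoments}, using the fact that the regularisation theorem (Theorem \ref{propertiesMartingales}) preserves continuity.

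First I would recall that, in that proof, $I^{s}(R)$ was obtained by applying Theorem \ref{propertiesMartingales} to the cylindrical zero-mean square integrable c\`{a}dl\`{a}g martingale $I^{w} \circ \Delta(R) = \{ I^{w}_{t} \circ \Delta(R) \}_{t \in [0,T]}$ in $\Psi'$, where $\Delta$ is the isomorphism of Theorem \ref{strongIntegrandsAsOperatorsToWeakIntegrans}; in particular, for each $\psi \in \Psi$ the real-valued component process equals $I^{w}(R'\psi)$, the weak stochastic integral of $R'\psi \in \Lambda^{2}_{w}(T)$. Now, under the standing hypothesis that $(M(t,A)(\phi): t \geq 0)$ is continuous for each $A \in \mathcal{R}$ and $\phi \in \Phi$, Proposition \ref{propWeakStochIntegContIfCylinMartIsCont} gives that $I^{w}(X)$ is a continuous process for every $X \in \Lambda^{2}_{w}(T)$; applied with $X = R'\psi$ this shows that every real-valued component of the cylindrical martingale $I^{w} \circ \Delta(R)$ has a continuous version.

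Consequently the last assertion of Theorem \ref{propertiesMartingales} applies: the regular c\`{a}dl\`{a}g version $\widetilde{M} \in \mathcal{M}^{2}_{T}(\Psi'_{\beta})$ it produces can be chosen to be continuous, and by the second part of that theorem there is a continuous Hilbertian semi-norm $p$ on $\Psi$ for which $\{ \widetilde{M}_{t} \}_{t \in [0,T]}$ is a $\Psi'_{p}$-valued continuous martingale; moreover $\widetilde{M}$ satisfies $\widetilde{M}_{t}[\psi] = I^{w}_{t}(R'\psi)$ $\Prob$-a.e. for all $t \in [0,T]$, $\psi \in \Psi$, i.e.\ it obeys the characterising identity \eqref{weakStrongCompatibilityStochIntegral}. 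Since $I^{s}(R)$ is the unique (up to indistinguishable versions) $\Psi'_{\beta}$-valued process satisfying \eqref{weakStrongCompatibilityStochIntegral} --- equivalently, since $I^{s}(R)$ and $\widetilde{M}$ are both $\Psi'_{\beta}$-valued regular right-continuous processes whose $\psi$-components are versions of $I^{w}(R'\psi)$, so that Proposition \ref{propCondiIndistingProcess} applies --- the processes $I^{s}(R)$ and $\widetilde{M}$ are indistinguishable. Adopting the paper's convention of replacing a process by its continuous version, $I^{s}(R)$ is therefore continuous.

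There is essentially no hard step here: the only point requiring care is to check that the semi-norm $p$ and the regularised process furnished by Theorem \ref{propertiesMartingales} in the continuous case can be matched with those appearing in the statement of Theorem \ref{existenceStrongStochasticIntegSquareMoments}, and this is immediate from the uniqueness up to indistinguishable versions of $I^{s}(R)$.
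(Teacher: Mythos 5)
Your proposal is correct and follows essentially the same route as the paper: apply Proposition \ref{propWeakStochIntegContIfCylinMartIsCont} to each component $I^{w}(R'\psi)$ and then invoke the final assertion of Theorem \ref{propertiesMartingales} to choose the regularised $\Psi'_{p}$-valued version continuous. Your extra care in identifying that continuous version with $I^{s}(R)$ via the uniqueness clause and Proposition \ref{propCondiIndistingProcess} is a detail the paper leaves implicit, but it is the right justification.
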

\begin{prf}
Let $R \in \Lambda^{2}_{s}(T)$. With the notation of the proof of Theorem \ref{existenceStrongStochasticIntegSquareMoments}, it follows from Proposition \ref{propWeakStochIntegContIfCylinMartIsCont} that for each $\psi \in \Psi$ the real-valued process $\{I_{t}^{w}(\Delta(R) \psi)\}_{t \in [0,T]}$ is continuous. Then, by Theorem \ref{propertiesMartingales} we can choose $p$ such that $I^{s}(R)$ is a $\Psi'_{p}$-valued zero-mean square integrable continuous martingale.  
\end{prf}

\begin{defi} \label{defiStrongStochasticIntegralSquareMoments} For $R \in \Lambda^{2}_{s}(T)$ let $I^{s}(R)$ be the $\Psi'_{\beta}$-valued process defined in Theorem \ref{existenceStrongStochasticIntegSquareMoments}. We call $I^{s}(R)$ the \emph{strong stochastic integral} of $R$. We will sometimes denote the stochastic integral $I^{s}(R)$ of $R$ by $\left\{ \int^{t}_{0} \int_{U} R (r,u) M (dr, du): t \in [0,T] \right\}$. The map $I^{s}:\Lambda^{2}_{s}(T) \rightarrow \mathcal{M}^{2}_{T}(\Psi'_{\beta})$ given by $R \mapsto I^{s}(R)$, will be called the \emph{strong integral mapping}.   
\end{defi}

The relation \eqref{weakStrongCompatibilityStochIntegral} between the weak and strong stochastic integral is called the \emph{weak-strong compatibility} (see \cite{Applebaum:2006}). Note that this relation is an intrinsic consequence of our construction. 

Now we proceed to study some properties of the strong integral mapping. 

\begin{prop} \label{linearityStrongIntegralLinearOperators}
The strong integral mapping $I^{s}:\Lambda^{2}_{s}(T) \rightarrow \mathcal{M}^{2}_{T}(\Psi'_{\beta})$ is linear.  
\end{prop}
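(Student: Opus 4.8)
The plan is to deduce linearity of $I^{s}$ directly from the weak--strong compatibility relation \eqref{weakStrongCompatibilityStochIntegral} together with the linearity of the weak stochastic integral map $I^{w}$ (Theorem \ref{propertiesWeakIntegralSquareIntegIntegrands}) and of the map $\Delta$ (Theorem \ref{strongIntegrandsAsOperatorsToWeakIntegrans}). The key observation is that an element of $\mathcal{M}^{2}_{T}(\Psi'_{\beta})$ is determined, up to indistinguishability, by its action on the vectors $\psi \in \Psi$, so it suffices to check the identity pairwise against each $\psi$.

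First I would fix $a, b \in \R$ and $R, S \in \Lambda^{2}_{s}(T)$, and note that $aR + bS \in \Lambda^{2}_{s}(T)$ since $\Lambda^{2}_{s}(T)$ is a linear space. For each $\psi \in \Psi$, the map $\Delta$ being linear gives $(aR+bS)'\psi = a R'\psi + b S'\psi$ as elements of $\Lambda^{2}_{w}(T)$ (this is just the pointwise identity $(aR(r,\omega,u)+bS(r,\omega,u))' = aR(r,\omega,u)' + bS(r,\omega,u)'$ for the dual operators). Applying the linear map $I^{w}$ and using \eqref{weakStrongCompatibilityStochIntegral} three times, I get, $\Prob$-a.e., for all $t \in [0,T]$,
\begin{equation*}
I^{s}_{t}(aR+bS)[\psi] = I^{w}_{t}((aR+bS)'\psi) = a I^{w}_{t}(R'\psi) + b I^{w}_{t}(S'\psi) = a I^{s}_{t}(R)[\psi] + b I^{s}_{t}(S)[\psi].
\end{equation*}
Thus the $\Psi'_{\beta}$-valued processes $I^{s}(aR+bS)$ and $aI^{s}(R)+bI^{s}(S)$ agree when paired with each $\psi \in \Psi$.

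To conclude, I would invoke Proposition \ref{propCondiIndistingProcess}: both processes are $\Psi'_{\beta}$-valued regular c\`adl\`ag processes (they lie in $\mathcal{M}^{2}_{T}(\Psi'_{\beta})$ by Theorem \ref{existenceStrongStochasticIntegSquareMoments}, and a finite linear combination of such processes is again regular and c\`adl\`ag), and for each $\psi$ the real-valued process $I^{s}(aR+bS)[\psi]$ is a version of $(aI^{s}(R)+bI^{s}(S))[\psi]$; since both are right-continuous, they are indistinguishable. Hence $I^{s}(aR+bS) = aI^{s}(R) + bI^{s}(S)$ in $\mathcal{M}^{2}_{T}(\Psi'_{\beta})$, which is the asserted linearity. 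I do not anticipate a serious obstacle here; the only point requiring mild care is confirming that linear combinations stay within the regular c\`adl\`ag class so that Proposition \ref{propCondiIndistingProcess} applies, but this is immediate from the closure properties of $\mathcal{M}^{2}_{T}(\Psi'_{\beta})$ recorded earlier.
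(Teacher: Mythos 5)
Your proof is correct and follows exactly the route the paper indicates: weak--strong compatibility \eqref{weakStrongCompatibilityStochIntegral}, linearity of the weak integral (via $\Delta$), and Proposition \ref{propCondiIndistingProcess} to upgrade the $\psi$-wise identity to indistinguishability of the regular c\`adl\`ag processes. You have simply supplied the details the paper leaves to the reader.
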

\begin{prf} The result can be easily proved using \eqref{weakStrongCompatibilityStochIntegral}, the linearity of the weak integral and Proposition \ref{propCondiIndistingProcess}. We leave the details to the reader. 
\end{prf}

Our next objective is to introduce some topologies on the space  $\Lambda^{2}_{s}(T)$ for which the strong integral mapping is continuous. In view of Theorem \ref{strongIntegrandsAsOperatorsToWeakIntegrans}, because the spaces $\Lambda^{2}_{s}(T)$ and $\mathcal{L}(\Psi,\Lambda^{2}_{w}(T))$ are isomorphic we can then equip $\Lambda^{2}_{s}(T)$ with a locally convex topology induced by the operator topology on $\mathcal{L}(\Psi,\Lambda^{2}_{w}(T))$. 

In effect, we identify each element $R$ of $\Lambda^{2}_{s}(T)$ with the unique element $(\psi \mapsto R' \psi)$ in $\mathcal{L}(\Psi,\Lambda^{2}_{w}(T))$ given by \eqref{isomorphismStrongIntegralsAndOperatorsToWeakIntegrands}. Then, on $\Lambda^{2}_{s}(T)$ we define the topology of bounded (respectively simple) convergence as the locally convex topology generated by the following family of semi-norms:
\begin{equation} \label{semiNormsBoundedConvTopoStrongIntegrans}
 R \rightarrow \sup_{ \psi \in B} \norm{ R' \psi}_{w,T}= \sup_{\psi \in B}  \left( \Exp \int_{0}^{T} \int_{U} q_{r,u}(R(r,u)'\psi)^{2} \mu(du) \lambda(dr) \right)^{\frac{1}{2}},
\end{equation}
where $B$ runs over the bounded (respectively finite) subsets of $\Psi$. Hence, the topology of bounded (respectively simple) convergence on $\Lambda^{2}_{s}(T)$ is the topology of bounded (respectively simple) convergence on $\mathcal{L}(\Psi,\Lambda^{2}_{w}(T))$ defined on $\Lambda^{2}_{s}(T)$ via the isomorphism \eqref{isomorphismStrongIntegralsAndOperatorsToWeakIntegrands}. 

\begin{prop}
The space $\Lambda^{2}_{s}(T)$ is complete equipped with the topology of bounded convergence and quasi-complete equipped with the topology of simple convergence. 
\end{prop}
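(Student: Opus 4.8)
The plan is to transport the statement to the space of continuous linear operators $\mathcal{L}(\Psi,\Lambda^{2}_{w}(T))$ and use the classical completeness properties of operator topologies, exactly as in the proof of Proposition \ref{propertiesTopologiesSpaceDualNuclearMartingales}. By Theorem \ref{strongIntegrandsAsOperatorsToWeakIntegrans} the map $\Delta$ of \eqref{isomorphismStrongIntegralsAndOperatorsToWeakIntegrands} is a linear isomorphism of $\Lambda^{2}_{s}(T)$ onto $\mathcal{L}(\Psi,\Lambda^{2}_{w}(T))$, and by the very definition \eqref{semiNormsBoundedConvTopoStrongIntegrans} of the topology of bounded (respectively simple) convergence on $\Lambda^{2}_{s}(T)$ it is a topological isomorphism when $\mathcal{L}(\Psi,\Lambda^{2}_{w}(T))$ carries the topology of bounded (respectively simple) convergence. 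Thus it suffices to prove that $\mathcal{L}(\Psi,\Lambda^{2}_{w}(T))$ is complete for the topology of bounded convergence and quasi-complete for the topology of simple convergence.

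Here I would feed in the structural facts already available: $\Lambda^{2}_{w}(T)$ is a Hilbert space by Proposition \ref{spaceIntegrandsWeakIntegralIsHilbertSpace}, hence complete and in particular quasi-complete; and $\Psi$, being quasi-complete and bornological, is ultrabornological (as already used in the proof of Theorem \ref{strongIntegrandsAsOperatorsToWeakIntegrans}), and in particular barrelled. With these in hand the two assertions follow from the standard results on operator topologies (see \cite{KotheII}, Chapter 39): (i) for a bornological domain and a complete range, the space of operators with the topology of uniform convergence on bounded sets is complete; (ii) for a barrelled domain and a quasi-complete range, the space of operators with the topology of pointwise convergence is quasi-complete. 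Pulling (i) and (ii) back through $\Delta$ yields the proposition.

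If one wants the argument in full, (i) runs as follows: a Cauchy net $(R_{\alpha})$ for bounded convergence is pointwise Cauchy, hence has a pointwise limit $R$ in the complete space $\Lambda^{2}_{w}(T)$, and $R$ is linear; letting the second index tend to infinity in the Cauchy estimate gives $\sup_{\psi\in B}\norm{R_{\alpha}'\psi-R'\psi}_{w,T}\to 0$ for every bounded $B\subseteq\Psi$, so $R'(B)$ is bounded, i.e. $R$ is a bounded linear map, hence continuous because $\Psi$ is bornological, and then $R_{\alpha}\to R$ for bounded convergence. For (ii): a closed bounded set $H\subseteq\mathcal{L}(\Psi,\Lambda^{2}_{w}(T))$ is pointwise bounded, hence equicontinuous by Banach--Steinhaus since $\Psi$ is barrelled; a Cauchy net in $H$ is pointwise Cauchy with values in the bounded set $\{R'\psi:R\in H\}$, which is complete by quasi-completeness of $\Lambda^{2}_{w}(T)$, so the net has a pointwise limit $R$; this $R$ is linear and, being a pointwise limit of an equicontinuous family, continuous, so $R\in H$ ($H$ closed) and the net converges to $R$ in the topology of simple convergence. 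The only point requiring any care is the passage ``quasi-complete $+$ bornological $\Rightarrow$ ultrabornological $\Rightarrow$ barrelled'', which is classical; everything else is routine once the identification $\Lambda^{2}_{s}(T)\cong\mathcal{L}(\Psi,\Lambda^{2}_{w}(T))$ is used.
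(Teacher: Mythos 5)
Your proof is correct and follows essentially the same route as the paper: the paper's own proof simply invokes the isomorphism $\Lambda^{2}_{s}(T)\cong\mathcal{L}(\Psi,\Lambda^{2}_{w}(T))$, the facts that $\Psi$ is ultrabornological and $\Lambda^{2}_{w}(T)$ is a Hilbert space, and the standard completeness results for the operator topologies from \cite{KotheII}, Section 39.6. Your write-up merely makes explicit the standard arguments the paper leaves to the reference.
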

\begin{prf}
The assertion follow from the corresponding properties of the topologies of bounded and simple convergence of the space $\mathcal{L}(\Psi,\Lambda^{2}_{w}(T))$, and the fact that $\Psi$ is ultrabornological and $\Lambda^{2}_{w}(T)$ is a Hilbert space. See \cite{KotheII}, Section 39.6, for details on these topologies.
\end{prf}

From Proposition \ref{strongIntegrandsInHilbertSpaceAreSubspaces}, the spaces $\Lambda^{2}_{s}(p,T)$, where $p$ ranges over the continuous Hilbertian semi-norms $p$ on $\Psi$, are linear subspaces of $\Lambda^{2}_{s}(T)$. The following result shows that the Hilbert topology on each space $\Lambda^{2}_{s}(p,T)$ (see Proposition \ref{strongIntegrandsInHilbertSpaceAreSubspaces}) is finer than the subspace topology induced on them by the topologies of simple and bounded convergence on $\Lambda^{2}_{s}(T)$. 

\begin{prop} \label{topoHilbSubpIsFinerThanBounAndSimpConvStrongIntg}
Let $p$ be a continuous Hilbertian semi-norm on $\Psi$. Let $\Lambda^{2}_{s}(T)$ be equipped with either the topology of simple or the topology of bounded convergence. Then, the inclusion map $i'_{p}:\Lambda^{2}_{s}(p,T) \rightarrow \Lambda^{2}_{s}(T)$, $R \mapsto i'_{p}R$, is linear and continuous. 
\end{prop}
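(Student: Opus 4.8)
The plan is to reduce the claim to a statement about the operator-valued representations, via the isomorphism $\Delta$ of Theorem~\ref{strongIntegrandsAsOperatorsToWeakIntegrans}. Recall that $\Lambda^{2}_{s}(T)$ is identified, as a topological vector space under either the topology of simple or of bounded convergence, with $\mathcal{L}(\Psi,\Lambda^{2}_{w}(T))$ carrying the corresponding operator topology. So it suffices to show that the map which sends $R \in \Lambda^{2}_{s}(p,T)$ to the operator $\psi \mapsto (i'_{p} R)'\psi = R(r,\omega,u)' i_{p} \psi$ in $\mathcal{L}(\Psi,\Lambda^{2}_{w}(T))$ is continuous when $\Lambda^{2}_{s}(p,T)$ carries its Hilbert-space topology.

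First I would check linearity, which is immediate from the linearity of $i'_{p}$ and of taking duals. For continuity, let $B$ be a finite (resp.\ bounded) subset of $\Psi$; I must estimate $\sup_{\psi \in B}\norm{(i'_{p} R)'\psi}_{w,T}$. Using $(i'_{p} R(r,\omega,u))' = R(r,\omega,u)' i_{p}$, one has
\begin{equation*}
\norm{(i'_{p} R)'\psi}_{w,T}^{2} = \Exp \int_{0}^{T} \int_{U} q_{r,u}(R(r,u)' i_{p}\psi)^{2} \mu(du)\lambda(dr).
\end{equation*}
Since $R(r,\omega,u) \in \mathcal{L}_{2}(\Phi'_{q_{r,u}},\Psi'_{p})$, its dual satisfies $R(r,\omega,u)' \in \mathcal{L}_{2}(\Psi_{p},\Phi_{q_{r,u}})$ with the same Hilbert-Schmidt norm, and hence $q_{r,u}(R(r,u)' i_{p}\psi) \leq \norm{R(r,u)}_{\mathcal{L}_{2}(\Phi'_{q_{r,u}},\Psi'_{p})}\, p(i_{p}\psi) = \norm{R(r,u)}_{\mathcal{L}_{2}(\Phi'_{q_{r,u}},\Psi'_{p})}\, p(\psi)$. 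Plugging this in and taking the supremum over $\psi \in B$ gives
\begin{equation*}
\sup_{\psi \in B}\norm{(i'_{p} R)'\psi}_{w,T} \leq \Big(\sup_{\psi \in B} p(\psi)\Big)\, \norm{R}_{s,p,T},
\end{equation*}
which is a continuous (indeed bounded by a constant multiple of the defining norm) estimate on $\Lambda^{2}_{s}(p,T)$, because $\sup_{\psi\in B}p(\psi) < \infty$ for any bounded $B$, a fortiori for any finite $B$. This establishes continuity of $i'_{p}:\Lambda^{2}_{s}(p,T)\to\Lambda^{2}_{s}(T)$ for both topologies simultaneously.

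I do not anticipate a genuine obstacle here; the only point requiring a little care is to phrase the argument so that it covers both the topology of simple convergence and that of bounded convergence in one stroke — which the bound above does, since it is uniform over all bounded $B$ and finite sets are bounded. A secondary bookkeeping point is to make sure one is comparing the right norms: the dual of a Hilbert–Schmidt operator between Hilbert spaces is again Hilbert–Schmidt with equal norm, and $p(i_{p}\psi) = p(\psi)$ by the very definition of the canonical map $i_{p}:\Psi\to\Psi_{p}$ and of the norm on $\Psi_{p}$. With these standard facts in hand the proof is a short computation.
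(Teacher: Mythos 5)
Your proposal is correct and follows essentially the same route as the paper: both proofs bound the defining semi-norm $\sup_{\psi\in B}\norm{R'i_{p}\psi}_{w,T}$ by controlling $q_{r,u}(R(r,u)'i_{p}\psi)$ with the Hilbert--Schmidt norm of $R(r,u)$ times $p(\psi)$, and then use that $p$ is bounded on the bounded set $B$ (the paper phrases this via $B\subseteq C B_{p}(1)$, you via $\sup_{\psi\in B}p(\psi)<\infty$, which is the same thing). The resulting estimate $\sup_{\psi\in B}\norm{R'i_{p}\psi}_{w,T}\leq C\norm{R}_{s,p,T}$ is exactly the paper's, and it covers both topologies at once just as you note.
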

\begin{prf} The linearity of the inclusion map is evident. To prove its continuity, let $B$ be any bounded subset of $\Psi$. As $p$ is continuous, there exists $C >0$ such that $B \subseteq C B_{p}(1)$. Then, for any $R \in \Lambda^{2}_{s}(p,T)$ we have from \eqref{finiteSecondMomentIntegrandsStrongIntgInHilbertSpace} and \eqref{semiNormsBoundedConvTopoStrongIntegrans} that, 
\begin{eqnarray*}
\sup_{\psi \in B} \norm{R' i_{p} \psi}^{2}_{w,T} 
& \leq & C^{2} \sup_{\psi \in B_{p}(1)} \Exp \int_{0}^{T} \int_{U} q_{r,u}(R(r,u)' i_{p} \psi)^{2} \mu(du) \lambda(dr) \\
& \leq & C^{2} \left( \sup_{\psi \in B_{p}(1)} p(\psi)^{2} \right) \, \Exp \int_{0}^{T} \int_{U} \norm{ i'_{p} R(r,u)}^{2}_{\mathcal{L}_{2}(\Phi'_{q_{r,u}},\Psi'_{p})}\mu(du) \lambda(dr) \\
& = &  C^{2} \norm{i'_{p} R}_{s,p,T}^{2}.
\end{eqnarray*}
Then, the inclusion map $i'_{p}:\Lambda^{2}_{s}(p,T) \rightarrow \Lambda^{2}_{s}(T)$ is continuous. 
\end{prf}

The next result shows that the strong integral map is continuous from $\Lambda^{2}_{s}(T)$ into $\mathcal{M}^{2}_{T}(\Psi'_{\beta})$. We will need the topologies on $\mathcal{M}^{2}_{T}(\Psi'_{\beta})$ defined in Section \ref{subSectionMDNS}. 
 
\begin{prop} \label{continuityStrongIntegralMapLinearOperators}
Let $\Lambda^{2}_{s}(T)$ and $\mathcal{M}^{2}_{T}(\Psi'_{\beta})$ be equipped with either the topology of simple or the topology of bounded convergence. Then, the map $I^{s}:\Lambda^{2}_{s}(T) \rightarrow \mathcal{M}^{2}_{T}(\Psi'_{\beta})$ is continuous. 
\end{prop}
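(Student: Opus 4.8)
The plan is to reduce the statement to the continuity of the weak integral map, by transporting everything through the two operator-space identifications already established. By Theorem \ref{strongIntegrandsAsOperatorsToWeakIntegrans} the map $\Delta: R \mapsto (\psi \mapsto R'\psi)$ identifies $\Lambda^{2}_{s}(T)$ with $\mathcal{L}(\Psi,\Lambda^{2}_{w}(T))$, and by Proposition \ref{dualNuclearIntegrableMartingalesAsOperatorsToRealIntegrableMartingales} (applied with $\Psi$ in place of $\Phi$) the map $M \mapsto (\psi \mapsto M[\psi])$ identifies $\mathcal{M}^{2}_{T}(\Psi'_{\beta})$ with $\mathcal{L}(\Psi,\mathcal{M}^{2}_{T}(\R))$. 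By the very definition of the topologies of bounded and of simple convergence on $\Lambda^{2}_{s}(T)$ (via the semi-norms \eqref{semiNormsBoundedConvTopoStrongIntegrans}) and on $\mathcal{M}^{2}_{T}(\Psi'_{\beta})$ (via the semi-norms \eqref{semiNormsBoundedConvTopoDualNuclearMartingales} with $\Psi$ in place of $\Phi$), these identifications are topological isomorphisms when the operator spaces carry the corresponding topologies. Hence it suffices to check that $I^{s}$ corresponds, under these identifications, to a continuous map between the two operator spaces.

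First I would observe that $I^{s}$ is, modulo these isomorphisms, nothing but post-composition with the weak integral map $I^{w}$. Indeed, the weak-strong compatibility \eqref{weakStrongCompatibilityStochIntegral} gives, for each $R \in \Lambda^{2}_{s}(T)$ and each $\psi \in \Psi$, the equality $I^{s}(R)[\psi] = I^{w}(R'\psi) = I^{w}(\Delta(R)\psi)$ in $\mathcal{M}^{2}_{T}(\R)$; therefore the operator $\psi \mapsto I^{s}(R)[\psi]$ equals $I^{w}\circ\Delta(R)$. Thus $I^{s}$ corresponds to the map $\mathcal{L}(\Psi,\Lambda^{2}_{w}(T)) \to \mathcal{L}(\Psi,\mathcal{M}^{2}_{T}(\R))$ sending $A$ to $I^{w}\circ A$.

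Then I would invoke the continuity estimate for the weak integral, namely \eqref{weakIntegralMapIsContinuousInequ}, which yields $\norm{I^{w}(X)}_{\mathcal{M}^{2}_{T}(\R)} \leq 2\sqrt{T}\,\norm{X}_{w,T}$ for every $X \in \Lambda^{2}_{w}(T)$. Combining this with the displayed identity, for any finite (respectively bounded) subset $B$ of $\Psi$ and any $R \in \Lambda^{2}_{s}(T)$ one obtains
$$ \sup_{\psi\in B}\norm{I^{s}(R)[\psi]}_{\mathcal{M}^{2}_{T}(\R)} = \sup_{\psi\in B}\norm{I^{w}(R'\psi)}_{\mathcal{M}^{2}_{T}(\R)} \leq 2\sqrt{T}\,\sup_{\psi\in B}\norm{R'\psi}_{w,T}. $$
The left-hand side is exactly the generating semi-norm of $\mathcal{M}^{2}_{T}(\Psi'_{\beta})$ associated to $B$ (in either topology) evaluated at $I^{s}(R)$, and the right-hand side is $2\sqrt{T}$ times the generating semi-norm \eqref{semiNormsBoundedConvTopoStrongIntegrans} of $\Lambda^{2}_{s}(T)$ associated to the same $B$ evaluated at $R$. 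By the characterization of a locally convex topology through a generating family of semi-norms, together with the linearity of $I^{s}$ already recorded in Proposition \ref{linearityStrongIntegralLinearOperators}, this is precisely the asserted continuity; and since the estimate holds verbatim whether $B$ ranges over finite or over bounded subsets of $\Psi$, it covers both topologies at once.

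I do not expect any genuine obstacle here. The only point requiring care is the bookkeeping: one must check that the same index set $B$ appears on both sides of the estimate, so that the finite-to-finite and bounded-to-bounded matchings of the two semi-norm families go through — and this is immediate from the way the topologies on $\Lambda^{2}_{s}(T)$ and $\mathcal{M}^{2}_{T}(\Psi'_{\beta})$ were defined in terms of the operator topologies on $\mathcal{L}(\Psi,\Lambda^{2}_{w}(T))$ and $\mathcal{L}(\Psi,\mathcal{M}^{2}_{T}(\R))$.
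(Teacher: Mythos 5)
Your argument is correct and coincides with the paper's own proof: both reduce the claim, via the weak--strong compatibility \eqref{weakStrongCompatibilityStochIntegral} and the estimate \eqref{weakIntegralMapIsContinuousInequ}, to the inequality $\sup_{\psi\in B}\norm{I^{s}(R)[\psi]}_{\mathcal{M}^{2}_{T}(\R)} \leq 2\sqrt{T}\,\sup_{\psi\in B}\norm{R'\psi}_{w,T}$ for $B$ finite or bounded, which matches the generating semi-norms of the two topologies. Your additional remarks on the operator-space identifications merely make explicit what the paper leaves implicit.
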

\begin{prf}
Let $B$ be any bounded subset of $\Psi$. For any $R \in \Lambda^{2}_{s}(T)$, it follows from \eqref{weakIntegralMapIsContinuousInequ}, \eqref{semiNormsBoundedConvTopoStrongIntegrans} and \eqref{weakStrongCompatibilityStochIntegral} that 
$$ \sup_{\psi \in B} \norm{I^{s}(R)[\psi]}^{2}_{\mathcal{M}^{2}_{T}(\R)} = \sup_{\psi \in B} \norm{I^{w}(R' \psi)}^{2}_{\mathcal{M}^{2}_{T}(\R)} \leq 4T \, \sup_{\psi \in B} \norm{R' \psi}^{2}_{w,T}.$$
And hence $I^{s}$ is continuous for  $\Lambda^{2}_{s}(T)$ and $\mathcal{M}^{2}_{T}(\Psi'_{\beta})$ equipped with either the topology of simple or of bounded convergence. 
\end{prf}

\subsection{Properties of the Strong Stochastic Integral}\label{subsubsectionSPSSI}

In this section we prove some further properties of the strong stochastic integral. Thanks to the weak-strong compatibility given in \eqref{weakStrongCompatibilityStochIntegral}, we will see that most of the properties of the weak integral can be ``transferred'' to the strong integral. 

\begin{prop} \label{propImageStrongIntegralUnderContinuousOperator} Let $\Upsilon$ be a quasi-complete, bornological, nuclear space and let $S \in \mathcal{L}(\Psi'_{\beta},\Upsilon'_{\beta})$. Then, for each $R \in \Lambda^{2}_{s}(\Psi,M;T)$, we have $S \circ R \defeq \{ S \circ R(r,\omega,u) : r \in [0,T], \omega \in \Omega, u \in U\} \in \Lambda^{2}_{s}(\Upsilon,M;T)$, and moreover $\Prob$-a.e., we have
\begin{equation} \label{imageStrongIntegralUnderContinuousOperator}
I^{s}_{t}(S \circ R) = S \left( I^{s}_{t}(R) \right), \quad \forall \, t \in [0,T]. 
\end{equation}  
\end{prop}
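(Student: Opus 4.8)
The plan is to reduce everything to the weak--strong compatibility relation \eqref{weakStrongCompatibilityStochIntegral}, which turns the identity \eqref{imageStrongIntegralUnderContinuousOperator} into an identity about real-valued weak stochastic integrals that can be checked against test functionals $\upsilon \in \Upsilon$. First I would verify that $S \circ R$ is a legitimate integrand, i.e. that $S \circ R \in \Lambda^{2}_{s}(\Upsilon,M;T)$. For each $(r,\omega,u)$ we have $R(r,\omega,u) \in \mathcal{L}(\Phi'_{q_{r,u}},\Psi'_{\beta})$ and $S \in \mathcal{L}(\Psi'_{\beta},\Upsilon'_{\beta})$, so the composition lies in $\mathcal{L}(\Phi'_{q_{r,u}},\Upsilon'_{\beta})$, giving Definition \ref{integrandsStrongIntegSquareMoments}(1). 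For the dual operator one has $(S\circ R(r,\omega,u))' = R(r,\omega,u)' \circ S'$ where $S' \in \mathcal{L}(\Upsilon,\Psi)$ by reflexivity; hence for $\upsilon \in \Upsilon$, $(S\circ R)'\upsilon = R' (S'\upsilon)$, and since $S'\upsilon \in \Psi$ the $q_{r,u}$-predictability in Definition \ref{integrandsStrongIntegSquareMoments}(2) and the second-moment bound \eqref{finiteSecondMomentIntegrandsStrongIntg} for $S\circ R$ follow directly from the corresponding properties of $R$ applied to the fixed element $\psi = S'\upsilon$.

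Next I would apply Theorem \ref{existenceStrongStochasticIntegSquareMoments} to both $R$ and $S\circ R$. This produces $\Psi'_{\beta}$-valued and $\Upsilon'_{\beta}$-valued processes $I^{s}(R) \in \mathcal{M}^{2}_{T}(\Psi'_{\beta})$ and $I^{s}(S\circ R) \in \mathcal{M}^{2}_{T}(\Upsilon'_{\beta})$ characterised by
\begin{equation*}
I^{s}_{t}(R)[\psi] = I^{w}_{t}(R'\psi) \text{ for all } \psi \in \Psi, \qquad I^{s}_{t}(S\circ R)[\upsilon] = I^{w}_{t}((S\circ R)'\upsilon) \text{ for all } \upsilon \in \Upsilon,
\end{equation*}
each holding $\Prob$-a.e. for all $t \in [0,T]$. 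Now $S(I^{s}(R))$ is a $\Upsilon'_{\beta}$-valued process, and since $S$ is continuous it maps the c\`adl\`ag (regular, adapted) process $I^{s}(R)$ to a c\`adl\`ag process; moreover $S(I^{s}(R))$ is regular because the image under a continuous linear map of a regular process is regular (this uses that $S$ is continuous from $\Psi'_{\beta}$, so it factors through some $\Psi'_{\theta}$-type space, a point I would spell out using the regularity of $I^{s}(R)$ supplied by Theorem \ref{existenceStrongStochasticIntegSquareMoments} together with Theorem \ref{propertiesMartingales}). For each $\upsilon \in \Upsilon$ one computes, $\Prob$-a.e. for all $t$,
\begin{equation*}
(S(I^{s}_{t}(R)))[\upsilon] = I^{s}_{t}(R)[S'\upsilon] = I^{w}_{t}(R'(S'\upsilon)) = I^{w}_{t}((S\circ R)'\upsilon) = I^{s}_{t}(S\circ R)[\upsilon],
\end{equation*}
using the definition of the dual pairing, the weak--strong compatibility for $R$, the identity $R'(S'\upsilon) = (S\circ R)'\upsilon$ from the first step, and the weak--strong compatibility for $S\circ R$.

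Finally I would invoke Proposition \ref{propCondiIndistingProcess}: both $S(I^{s}(R))$ and $I^{s}(S\circ R)$ are $\Upsilon'_{\beta}$-valued regular processes, they are right-continuous (indeed c\`adl\`ag), and for each $\upsilon \in \Upsilon$ the real-valued processes $S(I^{s}(R))[\upsilon]$ and $I^{s}(S\circ R)[\upsilon]$ are versions of one another by the displayed computation; hence they are indistinguishable, which is exactly \eqref{imageStrongIntegralUnderContinuousOperator}. The only genuinely delicate point is confirming that $S(I^{s}(R))$ is a \emph{regular} $\Upsilon'_{\beta}$-valued process so that Proposition \ref{propCondiIndistingProcess} applies; this is where I would be most careful, arguing that since $I^{s}(R)$ takes values in a Hilbert space $\Psi'_{p}$ continuously included in $\Psi'_{\beta}$ (Theorem \ref{existenceStrongStochasticIntegSquareMoments}) and $S$ restricted to $\Psi'_{p}$ is continuous into $\Upsilon'_{\beta}$, the image lands in the dual of a countably Hilbertian topology on $\Upsilon$, giving regularity. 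Everything else is a routine unwinding of definitions.
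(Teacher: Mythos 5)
Your proposal is correct and follows essentially the same route as the paper: verify membership of $S\circ R$ in $\Lambda^{2}_{s}(\Upsilon,M;T)$ via $(S\circ R)'=R'\circ S'$ with $S'\upsilon\in\Psi$, then chain the weak--strong compatibility \eqref{weakStrongCompatibilityStochIntegral} for both integrands and conclude with Proposition \ref{propCondiIndistingProcess}. Your extra attention to the regularity of $S(I^{s}(R))$ is a point the paper's proof passes over silently, and your sketch of it (via the $\Psi'_{p}$-valued version from Theorem \ref{existenceStrongStochasticIntegSquareMoments}) is sound.
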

\begin{prf}
First, since for each $(r,\omega,u) \in [0,T] \times \Omega \times U$, we have $R(r,\omega,u) \in \mathcal{L}(\Phi'_{q_{r,u}},\Psi'_{\beta})$ and $S \in \mathcal{L}(\Psi'_{\beta},\Upsilon'_{\beta})$, it follows that $S \circ R(r,\omega,u) \in \mathcal{L}(\Phi'_{q_{r,u}},\Upsilon'_{\beta})$. Now, let $\phi \in \Phi$ and $\upsilon \in \Upsilon$. As $S' \upsilon \in \Psi$, Definition \ref{integrandsStrongIntegSquareMoments}(2) applied to $R$ implies that the mapping $[0,T] \times \Omega \times U \rightarrow \R_{+}$ given by 
$$(r,\omega,u) \mapsto q_{r,u}((S \circ R(r,\omega,u))' \upsilon, \phi)= q_{r,u}(R(r,\omega,u)' S' \upsilon, \phi),$$ 
is $\mathcal{P}_{T} \otimes \mathcal{B}(U)$-measurable. Finally, as $S' \upsilon \in \Psi$ for every $\upsilon \in \Upsilon$, Definition \ref{integrandsStrongIntegSquareMoments}(3) applied to $R$ implies that 
\begin{equation*} 
\Exp \int_{0}^{T} \int_{U} q_{r,u}((S \circ R(r,u))' \upsilon)^{2} \mu(du) \lambda(dr) 
= \Exp \int_{0}^{T} \int_{U} q_{r,u}(R(r,u)' S' \upsilon)^{2} \mu(du) \lambda(dr) < \infty,
\end{equation*} 
for every $\upsilon \in \Upsilon$. Therefore, $S \circ R \in \Lambda^{2}_{s}(\Upsilon,M;T)$. 

Now, note that \eqref{weakStrongCompatibilityStochIntegral} implies that for all $\upsilon \in \Upsilon$, for $\Prob$-a.e. $\omega \in \Omega$ we have 
\begin{equation*}
I^{s}_{t}(S \circ R)(\omega)[\upsilon] = I^{w}_{t}(R' \circ S' \upsilon)(\omega)= I^{s}_{t}(R)(\omega)[S' \upsilon]= S \left( I^{s}_{t}(R)(\omega) \right)[\upsilon], \quad \forall \, t \in [0,T].
\end{equation*}   
Therefore, we have that for all $\upsilon \in \Upsilon$, $I^{s}(S \circ R)[\upsilon] = S \left( I^{s}(R) \right)[\upsilon]$ are indistinguishable processes. Then, Proposition \ref{propCondiIndistingProcess} shows that the $\Psi'_{\beta}$-valued processes $I^{s}(S \circ R)$ and $S \left( I^{s}(R) \right)$ are indistinguishable. This shows \eqref{imageStrongIntegralUnderContinuousOperator}. 
\end{prf}
 
\begin{prop} \label{propStrongIntegralInSubintervalAndRandomSubset}
Let $0 \leq s_{0} < t_{0} \leq T$ and $F_{0} \in \mathcal{F}_{s_{0}}$. Then, for every $R \in \Lambda^{2}_{s}(T)$, $\Prob$-a.e. we have 
\begin{equation} \label{strongIntegralInSubintervalAndRandomSubset}
I^{s}_{t}(\mathbbm{1}_{]s_{0},t_{0}]\times F_{0}} R)= \mathbbm{1}_{F_{0}} \left( I^{s}_{t \wedge t_{0}}(R)-I^{s}_{t \wedge s_{0}}(R) \right), \quad \forall \, t \in [0,T]. 
\end{equation}
\end{prop}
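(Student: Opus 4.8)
The plan is to reduce the identity to its weak-integral counterpart, Proposition~\ref{propWeakIntegralInSubintervalAndRandomSubset}, through the weak-strong compatibility relation \eqref{weakStrongCompatibilityStochIntegral}, and then to pass from the real-valued coordinate processes back to the $\Psi'_{\beta}$-valued processes via Proposition~\ref{propCondiIndistingProcess}. The first thing to verify is that $\mathbbm{1}_{]s_{0},t_{0}]\times F_{0}} R \defeq \{\mathbbm{1}_{]s_{0},t_{0}]}(r)\mathbbm{1}_{F_{0}}(\omega)R(r,\omega,u): r\in[0,T],\, \omega\in\Omega,\, u\in U\}$ is again an element of $\Lambda^{2}_{s}(T)$. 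Because $F_{0}\in\mathcal{F}_{s_{0}}$ the set $]s_{0},t_{0}]\times F_{0}$ is predictable, so $\mathbbm{1}_{]s_{0},t_{0}]\times F_{0}}$ is $\mathcal{P}_{T}$-measurable; combined with the three conditions of Definition~\ref{integrandsStrongIntegSquareMoments} for $R$, this shows that $\mathbbm{1}_{]s_{0},t_{0}]\times F_{0}}R$ takes values in $\mathcal{L}(\Phi'_{q_{r,u}},\Psi'_{\beta})$, is $q_{r,u}$-predictable (the relevant functions are products of $\mathcal{P}_{T}\otimes\mathcal{B}(U)$-measurable functions), and satisfies the weak second-moment bound \eqref{finiteSecondMomentIntegrandsStrongIntg} (its second moment is dominated by that of $R$). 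Equivalently, one may observe that $(\mathbbm{1}_{]s_{0},t_{0}]\times F_{0}}R)'\psi = \mathbbm{1}_{]s_{0},t_{0}]\times F_{0}}(R'\psi)$ belongs to $\Lambda^{2}_{w}(T)$ for every $\psi\in\Psi$, exactly as noted in the proof of Proposition~\ref{propWeakIntegralInSubintervalAndRandomSubset}.

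Next I would fix $\psi\in\Psi$ and combine \eqref{weakStrongCompatibilityStochIntegral} with Proposition~\ref{propWeakIntegralInSubintervalAndRandomSubset} applied to the weak integrand $X = R'\psi\in\Lambda^{2}_{w}(T)$, obtaining $\Prob$-a.e., for all $t\in[0,T]$,
\begin{align*}
I^{s}_{t}(\mathbbm{1}_{]s_{0},t_{0}]\times F_{0}}R)[\psi]
& = I^{w}_{t}\left(\mathbbm{1}_{]s_{0},t_{0}]\times F_{0}}(R'\psi)\right) \\
& = \mathbbm{1}_{F_{0}}\left(I^{w}_{t\wedge t_{0}}(R'\psi) - I^{w}_{t\wedge s_{0}}(R'\psi)\right) \\
& = \mathbbm{1}_{F_{0}}\left(I^{s}_{t\wedge t_{0}}(R)[\psi] - I^{s}_{t\wedge s_{0}}(R)[\psi]\right),
\end{align*}
the first and last equalities being instances of \eqref{weakStrongCompatibilityStochIntegral} (note that $t\wedge t_{0}$ and $t\wedge s_{0}$ lie in $[0,T]$). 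Hence, for each $\psi\in\Psi$, the real-valued process $I^{s}(\mathbbm{1}_{]s_{0},t_{0}]\times F_{0}}R)[\psi]$ is a version of $\mathbbm{1}_{F_{0}}\big(I^{s}_{\cdot\wedge t_{0}}(R) - I^{s}_{\cdot\wedge s_{0}}(R)\big)[\psi]$.

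Finally, both $I^{s}(\mathbbm{1}_{]s_{0},t_{0}]\times F_{0}}R)$ and $\mathbbm{1}_{F_{0}}\big(I^{s}_{\cdot\wedge t_{0}}(R)-I^{s}_{\cdot\wedge s_{0}}(R)\big)$ are $\Psi'_{\beta}$-valued regular c\`{a}dl\`{a}g processes: the former because it is a strong stochastic integral and hence lies in $\mathcal{M}^{2}_{T}(\Psi'_{\beta})$ by Theorem~\ref{existenceStrongStochasticIntegSquareMoments}; the latter because $I^{s}(R)\in\mathcal{M}^{2}_{T}(\Psi'_{\beta})$ is $\Psi'_{\beta}$-valued regular c\`{a}dl\`{a}g, the time-changes $t\mapsto t\wedge t_{0}$ and $t\mapsto t\wedge s_{0}$ are continuous, and multiplication by the fixed $\{0,1\}$-valued random variable $\mathbbm{1}_{F_{0}}$ preserves the c\`{a}dl\`{a}g property pathwise (it is constant in $t$) and also preserves regularity (the weaker countably Hilbertian topology $\theta$ witnessing the regularity of $I^{s}(R)$ has a linear dual space $\Psi'_{\theta}$, which is stable under these operations). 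Proposition~\ref{propCondiIndistingProcess} then upgrades the coordinatewise ``version'' statement of the previous paragraph to indistinguishability, which is precisely \eqref{strongIntegralInSubintervalAndRandomSubset}. I do not anticipate a genuine difficulty here; the two points that need a little care are the check that $\mathbbm{1}_{]s_{0},t_{0}]\times F_{0}}R$ is still an admissible strong integrand and the verification that the candidate right-hand side is a regular c\`{a}dl\`{a}g process, so that Proposition~\ref{propCondiIndistingProcess} applies — the rest is a direct transcription of the weak-integral identity through weak-strong compatibility.
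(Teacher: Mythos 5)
Your proposal is correct and follows essentially the same route as the paper: check that $\mathbbm{1}_{]s_{0},t_{0}]\times F_{0}}R$ remains an admissible strong integrand, transfer the identity to the weak integral of $R'\psi$ via the weak-strong compatibility \eqref{weakStrongCompatibilityStochIntegral} together with Proposition \ref{propWeakIntegralInSubintervalAndRandomSubset}, and then upgrade the coordinatewise statement to indistinguishability of the regular c\`{a}dl\`{a}g $\Psi'_{\beta}$-valued processes using Proposition \ref{propCondiIndistingProcess}. Your treatment is in fact slightly more explicit than the paper's on the two routine verifications (admissibility of the truncated integrand and the regular c\`{a}dl\`{a}g property of the right-hand side), but the argument is the same.
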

\begin{prf}
Let $R \in \Lambda^{2}_{s}(T)$. Then, it is easy to see that $\mathbbm{1}_{]s_{0},t_{0}] \times F_{0}} R \in \Lambda^{2}_{s}(T)$ and hence its strong stochastic integral exists. Now, let $\psi \in \Psi$. It follows from Theorem \ref{strongIntegrandsAsOperatorsToWeakIntegrans} that $R' \psi \in \Lambda^{2}_{w}(T)$. Then, from Proposition \ref{propWeakIntegralInSubintervalAndRandomSubset} there exists $\Gamma_{\psi} \subseteq \Omega$, such that $\Prob (\Gamma_{\psi})=1$ and for each $\omega \in \Gamma_{\psi}$, 
\begin{equation} \label{weakIntegralInSubintervalAndRandomPointwise}
I^{w}_{t}(\mathbbm{1}_{]s_{0},t_{0}]\times F_{0}} R' \psi)(\omega)= \mathbbm{1}_{F_{0}} \left( I^{w}_{t \wedge t_{0}}(R'\psi)(\omega)-I^{w}_{t \wedge s_{0}}(R'\psi)(\omega) \right), \quad \forall \, t \in [0,T].
\end{equation}

On the other hand, it follows from \eqref{weakStrongCompatibilityStochIntegral} that there exists $\Omega_{\psi} \subseteq \Omega$, with $\Prob (\Omega_{\psi})=1$, such that for each $\omega \in \Omega_{\psi}$, we have
\begin{equation} \label{subIntervalWeakStrongComp1}
I^{s}_{t}(\mathbbm{1}_{]s_{0},t_{0}]\times F_{0}} R)(\omega)[\psi] =I^{w}_{t}(\mathbbm{1}_{]s_{0},t_{0}]\times F_{0}} R' \psi)(\omega), \quad \forall \, t \in [0,T],
\end{equation}
\begin{equation} \label{subIntervalWeakStrongComp2}
I^{s}_{t \wedge t_{0}}(R)(\omega)[\psi]- I^{s}_{t \wedge s_{0}}(R)(\omega)[\psi] = I^{w}_{t \wedge t_{0}}(R' \psi)(\omega) - I^{w}_{t \wedge s_{0}}(R' \psi)(\omega),  \quad \forall \, t \in [0,T]. 
\end{equation}
Let $\Theta_{\psi}= \Gamma_{\psi} \cap \Omega_{\psi}$. Then, $\Prob (\Theta_{\psi})=1$. Moreover, from \eqref{weakIntegralInSubintervalAndRandomPointwise}, \eqref{subIntervalWeakStrongComp1} and \eqref{subIntervalWeakStrongComp2}, for every $\omega \in \Theta_{\psi}$ it follows that
\begin{equation*}
I^{s}_{t}(\mathbbm{1}_{]s_{0},t_{0}]\times F_{0}} R)(\omega)[\psi] 
= I^{s}_{t \wedge t_{0}}(R)(\omega)[\psi]- I^{s}_{t \wedge s_{0}}(R)(\omega)[\psi],  \quad \forall \, t \in [0,T].
\end{equation*}
Thus, for every $\psi \in \Psi$, $I^{s}(\mathbbm{1}_{]s_{0},t_{0}]\times F_{0}} R)[\psi]$ and $I^{s}_{\cdot \wedge t_{0}}(R)[\psi]- I^{s}_{\cdot \wedge s_{0}}(R)[\psi]$ are indistinguishable processes. But as the $\Psi'_{\beta}$-valued processes $I^{s}(\mathbbm{1}_{]s_{0},t_{0}]\times F_{0}} R)$ and $I^{s}_{\cdot \wedge t_{0}}(R)- I^{s}_{\cdot \wedge s_{0}}(R)$ are regular and c\`{a}dl\`{a}g, it follows from Proposition \ref{propCondiIndistingProcess} that they are indistinguishable. This shows \eqref{strongIntegralInSubintervalAndRandomSubset}.   
\end{prf}

\begin{prop} \label{propStoppedIntegralStrongCase}
Let $R \in \Lambda^{2}_{s}(T)$ and $\sigma$ be an $\{\mathcal{F}_{t}\}$-stopping time such that $\Prob (\sigma \leq T)=1$. Then, $\Prob$-a.e. 
\begin{equation} \label{stoppedIntegralStrongCase}
I^{s}_{t}(\mathbbm{1}_{[0,\sigma]} R )= I^{s}_{t \wedge \sigma}(R), \quad \forall \, t \in [0,T].
\end{equation}
\end{prop}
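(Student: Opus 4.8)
The plan is to reduce the assertion to the corresponding property of the weak stochastic integral, Proposition \ref{propStoppedIntegralWeakCase}, by means of the weak-strong compatibility \eqref{weakStrongCompatibilityStochIntegral}, exactly as was done in the proof of Proposition \ref{propStrongIntegralInSubintervalAndRandomSubset}. The first step is to verify that $\mathbbm{1}_{[0,\sigma]} R \in \Lambda^{2}_{s}(T)$, so that its strong integral is defined. Since $\sigma$ is an $\{\mathcal{F}_{t}\}$-stopping time the stochastic interval $[0,\sigma]$ is predictable, so for each $\phi \in \Phi$ and $\psi \in \Psi$ the map $(r,\omega,u) \mapsto \mathbbm{1}_{[0,\sigma]}(r,\omega)\, q_{r,u}(R(r,\omega,u)'\psi,\phi)$ is $\mathcal{P}_{T}\otimes\mathcal{B}(U)$-measurable, which gives conditions (1)--(2) of Definition \ref{integrandsStrongIntegSquareMoments}; condition (3) follows from the pointwise bound $q_{r,u}((\mathbbm{1}_{[0,\sigma]}R)(r,\omega,u)'\psi) \leq q_{r,u}(R(r,\omega,u)'\psi)$. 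I would also record that, under the isomorphism of Theorem \ref{strongIntegrandsAsOperatorsToWeakIntegrans}, one has $(\mathbbm{1}_{[0,\sigma]}R)'\psi = \mathbbm{1}_{[0,\sigma]}(R'\psi)$ in $\Lambda^{2}_{w}(T)$ for every $\psi \in \Psi$.

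Next, fix $\psi \in \Psi$. By Theorem \ref{strongIntegrandsAsOperatorsToWeakIntegrans} we have $R'\psi \in \Lambda^{2}_{w}(T)$, so Proposition \ref{propStoppedIntegralWeakCase} applied to $R'\psi$ yields an event of full probability on which $I^{w}_{t}(\mathbbm{1}_{[0,\sigma]}(R'\psi)) = I^{w}_{t\wedge\sigma}(R'\psi)$ for all $t \in [0,T]$. Invoking \eqref{weakStrongCompatibilityStochIntegral} for the integrand $\mathbbm{1}_{[0,\sigma]}R$ gives $I^{s}_{t}(\mathbbm{1}_{[0,\sigma]}R)[\psi] = I^{w}_{t}(\mathbbm{1}_{[0,\sigma]}(R'\psi))$ $\Prob$-a.e. for all $t \in [0,T]$, while invoking \eqref{weakStrongCompatibilityStochIntegral} for $R$ itself and then evaluating the resulting $\Prob$-a.e. identity $I^{s}_{t}(R)[\psi] = I^{w}_{t}(R'\psi)$ (which holds simultaneously for all $t$ by right-continuity of both sides) at the time $t\wedge\sigma$ gives $I^{s}_{t\wedge\sigma}(R)[\psi] = I^{w}_{t\wedge\sigma}(R'\psi)$ $\Prob$-a.e. for all $t \in [0,T]$. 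Chaining these three identities shows that $I^{s}(\mathbbm{1}_{[0,\sigma]}R)[\psi]$ and the process $\{ I^{s}_{t\wedge\sigma}(R)[\psi] \}_{t\in[0,T]}$ are indistinguishable real-valued processes, for every $\psi \in \Psi$.

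Finally, both $\Psi'_{\beta}$-valued processes $I^{s}(\mathbbm{1}_{[0,\sigma]}R)$ and $\{ I^{s}_{t\wedge\sigma}(R) \}_{t\in[0,T]}$ are regular and c\`{a}dl\`{a}g: the former by Theorem \ref{existenceStrongStochasticIntegSquareMoments} (it belongs to $\mathcal{M}^{2}_{T}(\Psi'_{\beta})$), and the latter because it is obtained from the regular c\`{a}dl\`{a}g process $I^{s}(R)$ by stopping at $\sigma$, which preserves the c\`{a}dl\`{a}g property and keeps the sample paths inside the Hilbert subspace $\Psi'_{p}$ furnished by Theorem \ref{existenceStrongStochasticIntegSquareMoments}. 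Hence Proposition \ref{propCondiIndistingProcess} upgrades the pointwise-in-$\psi$ indistinguishability obtained above to indistinguishability of the $\Psi'_{\beta}$-valued processes themselves, which is precisely \eqref{stoppedIntegralStrongCase}. I expect the only genuinely delicate point to be the step of composing the weak-strong compatibility identity with the stopping time $\sigma$; this is handled in the routine way, by observing that off a single null set both $t\mapsto I^{s}_{t}(R)[\psi]$ and $t\mapsto I^{w}_{t}(R'\psi)$ are c\`{a}dl\`{a}g functions that coincide on $[0,T]$, hence also coincide at $t\wedge\sigma(\omega)$.
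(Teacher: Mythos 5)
Your proposal is correct and follows exactly the route the paper intends: reduce to Proposition \ref{propStoppedIntegralWeakCase} via the weak--strong compatibility \eqref{weakStrongCompatibilityStochIntegral} and Theorem \ref{strongIntegrandsAsOperatorsToWeakIntegrans}, then pass from the scalar identities to the $\Psi'_{\beta}$-valued one with Proposition \ref{propCondiIndistingProcess}, as in the proof of Proposition \ref{propStrongIntegralInSubintervalAndRandomSubset}. You have merely written out the details (membership of $\mathbbm{1}_{[0,\sigma]}R$ in $\Lambda^{2}_{s}(T)$, the identity $(\mathbbm{1}_{[0,\sigma]}R)'\psi=\mathbbm{1}_{[0,\sigma]}(R'\psi)$, and the evaluation at $t\wedge\sigma$ off a single null set) that the paper leaves to the reader.
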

\begin{prf}
The proof follows from Proposition \ref{propStoppedIntegralWeakCase}, Theorem \ref{strongIntegrandsAsOperatorsToWeakIntegrans} and similar arguments to those used in Proposition \ref{propStrongIntegralInSubintervalAndRandomSubset}. 
\end{prf}

\begin{rema}
An analogue of Proposition \ref{propDecompWeakIntegralSumIndepMartValMeasu} is also valid for the strong integral. We leave to the reader the task of stating and proving it using the techniques developed in this section. 
\end{rema}

\subsection{Extension of the Strong stochastic Integral}\label{subSectionECSI}

We now proceed to extend the strong stochastic integral to a larger class of integrands. 

\begin{defi} \label{integrandsExtStrongIntegAlmostSureSquareMoments}
Let $\Lambda_{s}(\Psi,M;T)$ denote the collection of families $R=\{R(r,\omega,u): r \in [0,T], \omega \in \Omega, u \in U \}$ of operator-valued maps satisfying the following conditions:
\begin{enumerate}
\item $R(r,\omega,u) \in \mathcal{L}(\Phi'_{q_{r,u}},\Psi'_{\beta})$, for all $r \in [0, T]$, $\omega \in \Omega$, $u \in U$, 
\item $R$ is \emph{$q_{r,u}$-predictable}, i.e. for each $\phi \in \Phi$, $\psi \in \Psi$, the mapping $[0,T] \times \Omega \times U \rightarrow \R_{+}$ given by $(r,\omega,u) \mapsto q_{r,u}(R(r,\omega,u)' \psi, \phi)$ is $\mathcal{P}_{T} \otimes \mathcal{B}(U)$-measurable.
\item For every $\psi \in \Psi$,   
\begin{equation} \label{almostSureSecondMomentExtIntegrandsStrongIntg}
\Prob \left( \omega \in \Omega: \int_{0}^{T} \int_{U} q_{r,u}(R(r,\omega,u)'\psi)^{2} \mu(du) \lambda(dr) < \infty \right)=1.
\end{equation} 
\end{enumerate}
\end{defi}

\begin{rema}
The class $\Lambda_{s}(\Psi,M;T)$ generalizes considerably the class of extended stochastic integrands in \cite{BojdeckiJakubowski:1990} (see Definition 2.6 there). Indeed, to the extent of our knowledge $\Lambda_{s}(\Psi,M;T)$ is one of the largest classes of integrands considered in the literature of stochastic integration in duals of nuclear spaces.      
\end{rema}

Again, when it is not necessary to give emphasis to the dependence of the space $\Lambda_{s}(\Psi,M;T)$ with respect to $\Psi$ and $M$, we denote this space by $\Lambda_{s}(T)$. One can easily check that $\Lambda_{s}(T)$ is a linear space. Moreover, $\Lambda^{2}_{s}(T) \subseteq \Lambda_{s}(T)$. 

We proceed to construct the strong stochastic integral for the integrands belonging to $\Lambda_{s}(T)$. We start with the following result that is the analogue of Theorem \ref{strongIntegrandsAsOperatorsToWeakIntegrans} for the elements of $\Lambda_{s}(T)$. 

\begin{theo} \label{localMomentsStrongIntegrandsAsOperatorsToLocalWeakIntegrands}
The mapping $\Delta': \Lambda_{s}(T) \rightarrow \mathcal{L}(\Psi,\Lambda^{2,loc}_{w}(T))$ given by 
\begin{equation} \label{embeddingIntegralsAndOperatorsToWeakIntegrands}
R \mapsto \left(\psi \mapsto R'\psi \defeq \{ R(r,\omega,u)'\psi: r \in [0,T], \omega \in \Omega, u \in U \} \right),
\end{equation}
is an injective linear operator.  
\end{theo}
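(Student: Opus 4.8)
The plan is to mimic the structure of the proof of Theorem~\ref{strongIntegrandsAsOperatorsToWeakIntegrans}, but now working with the space $\Lambda^{2,loc}_{w}(T)$ (which is only a complete metrizable topological vector space, not a Hilbert space) in place of $\Lambda^{2}_{w}(T)$. First I would check that the map $\Delta'$ is well-defined, i.e. that for each $R\in\Lambda_{s}(T)$ and each $\psi\in\Psi$ the family $R'\psi=\{R(r,\omega,u)'\psi\}$ belongs to $\Lambda^{2,loc}_{w}(T)$. This is immediate from Definition~\ref{integrandsExtStrongIntegAlmostSureSquareMoments}: property (1) together with the reflexivity of $\Psi$ gives $R(r,\omega,u)'\in\mathcal{L}(\Psi,\Phi_{q_{r,u}})$ so that $R'\psi$ satisfies Definition~\ref{integrandsWeakIntegAlmostSureSquareMoments}(1); property (2) gives the $q_{r,u}$-predictability in Definition~\ref{integrandsWeakIntegAlmostSureSquareMoments}(2); and property (3), namely \eqref{almostSureSecondMomentExtIntegrandsStrongIntg}, is exactly the almost-sure finiteness condition \eqref{almostSureSecondMomentIntegrandsWeakIntg}. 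Linearity of each $\psi\mapsto R'\psi$ follows from the linearity of the operators $R(r,\omega,u)'$, and linearity of $\Delta'$ itself follows from $(aR(r,\omega,u)+S(r,\omega,u))'=aR(r,\omega,u)'+S(r,\omega,u)'$.

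The main point, and the step I expect to be the principal obstacle, is to show that for each fixed $R\in\Lambda_{s}(T)$ the linear map $\psi\mapsto R'\psi$ from $\Psi$ into $\Lambda^{2,loc}_{w}(T)$ is continuous, so that it genuinely lies in $\mathcal{L}(\Psi,\Lambda^{2,loc}_{w}(T))$. Since $\Lambda^{2,loc}_{w}(T)$ is not locally convex (Remark~\ref{spaceExtendedWeakIntegrandsNonLocallyConvex}), one cannot invoke the closed graph theorem in the ultrabornological--Banach form used in Theorem~\ref{strongIntegrandsAsOperatorsToWeakIntegrans}. However, $\Psi$ is bornological and $\Lambda^{2,loc}_{w}(T)$ is a complete metrizable topological vector space (Proposition~\ref{extendedClassWeakIntegrandsInMetrizable}), and a closed graph theorem is available in this generality (e.g. the Pták/de~Wilde-type closed graph theorem, or the Baire-category argument valid when the domain is bornological and the codomain is an $F$-space). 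So I would argue: $\psi\mapsto R'\psi$ has sequentially closed graph. Indeed, if $\psi_{n}\to\psi$ in $\Psi$ and $R'\psi_{n}\to X$ in $\Lambda^{2,loc}_{w}(T)$, then for each $(r,\omega,u)$ we have $R(r,\omega,u)'\psi_{n}\to R(r,\omega,u)'\psi$ in $\Phi_{q_{r,u}}$ (continuity of $R(r,\omega,u)'$), while convergence in $\Lambda^{2,loc}_{w}(T)$ forces a subsequence along which $\int_{0}^{T}\int_{U}q_{r,u}(R(r,u)'\psi_{n_{k}}-X(r,u))^{2}\mu(du)\lambda(dr)\to 0$ $\Prob$-a.e.; by Fatou's lemma applied pathwise one then gets $\int_{0}^{T}\int_{U}q_{r,u}(R(r,u)'\psi-X(r,u))^{2}\mu(du)\lambda(dr)=0$ $\Prob$-a.e., hence $X=R'\psi$ in $\Lambda^{2,loc}_{w}(T)$. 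Applying the closed graph theorem gives continuity. This establishes $\Delta'(R)\in\mathcal{L}(\Psi,\Lambda^{2,loc}_{w}(T))$ and hence that $\Delta'$ is well-defined.

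Finally, for injectivity of $\Delta'$: since $\Delta'$ is linear it suffices to show its kernel is trivial. If $\Delta'(R)=0$, then $R(r,\omega,u)'\psi=0$ in $\Phi_{q_{r,u}}$ for every $\psi\in\Psi$ and every $(r,\omega,u)\in[0,T]\times\Omega\times U$; since $\Psi$ is reflexive (so that $R(r,\omega,u)=(R(r,\omega,u)')'$ is determined by its adjoint), this forces $R(r,\omega,u)=0$ for all $(r,\omega,u)$, i.e. $R=0$ in $\Lambda_{s}(T)$. Thus $\Delta'$ is an injective linear operator. Note that, in contrast to Theorem~\ref{strongIntegrandsAsOperatorsToWeakIntegrans}, surjectivity is not claimed here: the nuclearity argument that produced a Hilbert-Schmidt factorization used the Hilbert-space structure of $\Lambda^{2}_{w}(T)$, which is unavailable for $\Lambda^{2,loc}_{w}(T)$, so $\Delta'$ is in general only an embedding.
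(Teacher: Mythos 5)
Your proposal is correct and follows essentially the same route as the paper's proof: well-definedness and linearity read off from Definition \ref{integrandsExtStrongIntegAlmostSureSquareMoments}, sequential closedness of the graph of $\psi \mapsto R'\psi$ by the pathwise Fatou argument (the paper's Step 1 of Theorem \ref{strongIntegrandsAsOperatorsToWeakIntegrans} with $\norm{\cdot}_{w,T}$ replaced by the metric $d_{\Lambda}$ of Proposition \ref{extendedClassWeakIntegrandsInMetrizable}), the closed graph theorem for linear maps from the ultrabornological space $\Psi$ into the complete metrizable space $\Lambda^{2,loc}_{w}(T)$, and triviality of the kernel for injectivity. Your explicit observation that the failure of local convexity of $\Lambda^{2,loc}_{w}(T)$ forces one to use a closed graph theorem valid for $F$-space codomains is a welcome clarification of a point the paper passes over in silence.
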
   
\begin{prf} The proof follows from similar arguments to those used in the proof of Theorem \ref{strongIntegrandsAsOperatorsToWeakIntegrans} and hence we will mention only the main points. 

First, note that for every $R \in \Lambda^{2}_{s}(T)$ the properties listed in Definition \ref{integrandsExtStrongIntegAlmostSureSquareMoments} imply that the map $\psi \mapsto R'\psi$ from $\Psi$ into $\Lambda^{2,loc}_{w}(T)$ is well-defined. Moreover, we can easily see that it is also linear; indeed this follows from the linearity of each operator $R(r,\omega,u)' \in \mathcal{L}(\Psi, \Phi_{q_{r,u}})$. 

We need to prove that $\psi \mapsto R'\psi$ is also continuous. First, we can show that $\psi \mapsto R' \psi$ is sequentially closed, this by following similar arguments to those used in Step 1 of the proof of Theorem \ref{strongIntegrandsAsOperatorsToWeakIntegrans} but with the norm $\norm{\cdot}_{w,T}$ there being replaced by the metric $d_{\Lambda}$ defined in the proof of Proposition \ref{extendedClassWeakIntegrandsInMetrizable}. Then, the closed graph theorem shows that $\psi \mapsto R'\psi$ is continuous. Therefore the mapping $\Delta'$ is well-defined. The proof that $\Delta'$ is linear and injective is exactly as in the proof of Theorem \ref{strongIntegrandsAsOperatorsToWeakIntegrans}. 
\end{prf}

\begin{rema} \label{remaMapDeltaStrongIntegralIsNotSurjective}
We do not know if the map $\Delta'$ defined in Theorem \ref{localMomentsStrongIntegrandsAsOperatorsToLocalWeakIntegrands} is surjective. This is because as the space $\Lambda^{2,loc}_{w}(T)$ is not in general locally convex (see Remark \ref{spaceExtendedWeakIntegrandsNonLocallyConvex}), it is not clear how 
the arguments used in Step 2 of the proof of Theorem \ref{strongIntegrandsAsOperatorsToWeakIntegrans} can be modified for elements of $\mathcal{L}(\Psi,\Lambda^{2,loc}_{w}(T))$. 
\end{rema}
 
The existence of the extension of the strong stochastic integral to the elements of $\Lambda_{s}(T)$ is provided in the following result. 

\begin{theo} \label{existenceStrongStochIntegForAlmostSureSecondMomentIntegrands}
Let $R \in \Lambda_{s}(T)$. There exist a unique (up to indistinguishable versions) process $\hat{I}^{s}(R)=\{ \hat{I}^{s}_{t}(R) \}_{t \in [0,T]} \in \mathcal{M}^{2,loc}_{T}(\Phi'_{\beta})$, such that for all $\psi \in \Psi$, $\Prob$-a.e. 
\begin{equation} \label{weakStrongCompatibilityAlmostSureIntegrands}
\hat{I}^{s}_{t}(R)[\psi]=\hat{I}^{w}_{t}(R' \psi), \quad \forall \, t \in [0,T]. 
\end{equation} 
where for each $\psi \in \Psi$, the stochastic process in the right-hand side of \eqref{weakStrongCompatibilityAlmostSureIntegrands} corresponds to the weak stochastic integral of $R' \psi \in \Lambda^{2,loc}_{w}(T)$. 
\end{theo}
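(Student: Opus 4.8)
The strategy is to localize via stopping times, exactly as was done in Theorem \ref{existenceExtendedStochasticIntegralWeakCase} for the weak integral, and then patch together the resulting pieces of the strong integral using the weak-strong compatibility \eqref{weakStrongCompatibilityStochIntegral}. First I would observe that for $R \in \Lambda_{s}(T)$ and a fixed countable dense subset $D \subseteq \Psi$ (with respect to the relevant Hilbertian topologies; recall Assumption \ref{assumpDenseSubsetSemiNormsMartValuedMeasures} and the nuclearity of $\Psi$), the property \eqref{almostSureSecondMomentExtIntegrandsStrongIntg} together with the closed-graph argument from Theorem \ref{localMomentsStrongIntegrandsAsOperatorsToLocalWeakIntegrands} shows that $\psi \mapsto R'\psi$ is a continuous linear map $\Psi \to \Lambda^{2,loc}_{w}(T)$. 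Hence the seminorm-like functional $\psi \mapsto$ (the metric-distance of $R'\psi$ to $0$) is continuous, and by nuclearity of $\Psi$ one can find a continuous Hilbertian seminorm $p$ on $\Psi$ and a single sequence of stopping times that simultaneously localizes $R'\psi$ for all $\psi$ in a $p$-dense countable set; more precisely, applying Theorem \ref{existenceExtendedStochasticIntegralWeakCase}(1) to suitable scalar quantities, I would build an increasing sequence $\{\tau_{n}\}_{n\in\N}$ of $\{\mathcal{F}_{t}\}$-stopping times with $\tau_{n}\uparrow T$ a.s. such that $\mathbbm{1}_{[0,\tau_{n}]}R \in \Lambda^{2}_{s}(T)$ for every $n$. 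The natural choice is $\tau_{n} = \inf\{ t \in [0,T] : \int_{0}^{t}\int_{U} \norm{(\mathbbm{1}_{[0,t]}R)(r,u)'\psi}$-type quantities exceed $n$ \}, but since $\Lambda^{2}_{s}(T)$ is defined by the \emph{weak} moment condition \eqref{finiteSecondMomentIntegrandsStrongIntg} rather than a norm, I would instead pass through the operator $\Delta'(R)$: choose $p$ and a Hilbert-Schmidt factorization so that $\int_{0}^{\cdot}\int_{U}\norm{\tilde{R}(r,u)}^{2}_{\mathcal{L}_{2}(\Phi'_{q_{r,u}},\Psi'_{p})}\mu(du)\lambda(dr)$ is a.s. finite (this is exactly the content that makes $R'\psi \in \Lambda^{2,loc}_{w}(T)$ uniformly in $\psi$), and stop when that increasing process reaches level $n$.

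For each $n$, Theorem \ref{existenceStrongStochasticIntegSquareMoments} produces $I^{s}(\mathbbm{1}_{[0,\tau_{n}]}R) \in \mathcal{M}^{2}_{T}(\Psi'_{\beta})$ satisfying, for each $\psi$, $\Prob$-a.e., $I^{s}_{t}(\mathbbm{1}_{[0,\tau_{n}]}R)[\psi] = I^{w}_{t}(\mathbbm{1}_{[0,\tau_{n}]}R'\psi)$ for all $t\in[0,T]$. The key consistency step is to show these agree on overlaps: using Proposition \ref{propStoppedIntegralStrongCase} (for the strong integral) together with Proposition \ref{propStoppedIntegralWeakCase} and the compatibility \eqref{weakStrongCompatibilityStochIntegral}, I get, for $m \leq n$, $I^{s}_{t\wedge\tau_{m}}(\mathbbm{1}_{[0,\tau_{n}]}R) = I^{s}_{t}(\mathbbm{1}_{[0,\tau_{m}]}R)$ $\Prob$-a.e. (testing against each $\psi \in D$ and invoking Proposition \ref{propCondiIndistingProcess} for the $\Psi'_{\beta}$-valued regular c\`adl\`ag processes, since both sides are regular and c\`adl\`ag). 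This lets me define $\hat{I}^{s}_{t}(R)(\omega) := I^{s}_{t}(\mathbbm{1}_{[0,\tau_{n}]}R)(\omega)$ for $t \leq \tau_{n}(\omega)$, consistently in $n$, yielding a well-defined $\Psi'_{\beta}$-valued regular c\`adl\`ag process that is locally a zero-mean square-integrable martingale, i.e. $\hat{I}^{s}(R) \in \mathcal{M}^{2,loc}_{T}(\Psi'_{\beta})$. Property \eqref{weakStrongCompatibilityAlmostSureIntegrands} then follows by the same localization applied on the weak side: $\hat{I}^{w}(R'\psi)$ is characterized by $\hat{I}^{w}_{t\wedge\tau_{n}}(R'\psi) = I^{w}_{t}(\mathbbm{1}_{[0,\tau_{n}]}R'\psi) = I^{s}_{t}(\mathbbm{1}_{[0,\tau_{n}]}R)[\psi] = \hat{I}^{s}_{t\wedge\tau_{n}}(R)[\psi]$, and letting $n\to\infty$ gives the claim for all $\psi$ simultaneously after intersecting the countably many null sets coming from $D$ and extending by continuity/regularity.

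Uniqueness is immediate from \eqref{weakStrongCompatibilityAlmostSureIntegrands}: if $Y \in \mathcal{M}^{2,loc}_{T}(\Phi'_{\beta})$ also satisfies it, then for every $\psi\in\Psi$ the real-valued processes $Y[\psi]$ and $\hat{I}^{s}(R)[\psi]$ are both versions of $\hat{I}^{w}(R'\psi)$, hence (being c\`adl\`ag) indistinguishable; Proposition \ref{propCondiIndistingProcess} applied to the two regular c\`adl\`ag processes $Y$ and $\hat{I}^{s}(R)$ gives that they are indistinguishable. The main obstacle I anticipate is the first step: producing a \emph{single} localizing sequence $\{\tau_{n}\}$ that works for $R$ as an operator-valued object, rather than one $\psi$ at a time. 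This is where one must genuinely use nuclearity of $\Psi$ — to replace the uncountable family of conditions $\{R'\psi \in \Lambda^{2,loc}_{w}(T)\}_{\psi\in\Psi}$ by a Hilbert-Schmidt factorization through a single $\Psi_{p}$, so that a scalar increasing process controls all of them uniformly, after which Theorem \ref{existenceExtendedStochasticIntegralWeakCase}(1) applies directly. Once the stopping times are in hand, everything else is a routine patching argument transported from the weak integral via \eqref{weakStrongCompatibilityStochIntegral} and Proposition \ref{propCondiIndistingProcess}.
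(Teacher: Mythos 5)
Your overall architecture (localize, integrate each piece with Theorem \ref{existenceStrongStochasticIntegSquareMoments}, patch via Propositions \ref{propStoppedIntegralStrongCase} and \ref{propCondiIndistingProcess}) is coherent, and your uniqueness argument is exactly the paper's. But the step you yourself flag as the main obstacle is a genuine gap, not a routine one, and it is not closed by anything available in the paper. To produce a \emph{single} sequence $\{\tau_{n}\}$ with $\mathbbm{1}_{[0,\tau_{n}]}R\in\Lambda^{2}_{s}(T)$ you invoke ``a Hilbert--Schmidt factorization through a single $\Psi_{p}$'' so that one scalar increasing process $\int_{0}^{\cdot}\int_{U}\norm{\tilde{R}(r,u)}^{2}_{\mathcal{L}_{2}(\Phi'_{q_{r,u}},\Psi'_{p})}\mu(du)\lambda(dr)$ dominates all the conditions $R'\psi\in\Lambda^{2,loc}_{w}(T)$ simultaneously. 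Such a factorization is only established (Corollary \ref{strongIntegrandsAsUnionIntegrandsInHilbertSpaces}) under the integrability hypothesis \eqref{finiteSecondMomentIntegrandsStrongIntg}, i.e.\ for $R\in\Lambda^{2}_{s}(T)$; its proof goes through the Hilbert space structure of $\Lambda^{2}_{w}(T)$. For $R\in\Lambda_{s}(T)$ the target is $\Lambda^{2,loc}_{w}(T)$, which is not locally convex (Remark \ref{spaceExtendedWeakIntegrandsNonLocallyConvex}), and the paper explicitly records in Remark \ref{remaMapDeltaStrongIntegralIsNotSurjective} that it does not know how to adapt that factorization argument there. The continuity of $\Delta'(R)$ only yields uniform-in-$\psi$ tail bounds of the form $\Prob(\,\cdot>\epsilon)\le\delta$ on a $p$-ball; it does not give a pathwise bound $\int_{0}^{t}\int_{U}q_{r,u}(R(r,\omega,u)'\psi)^{2}\mu(du)\lambda(dr)\le A_{t}(\omega)\,p(\psi)^{2}$ by an a.s.\ finite adapted increasing process, which is what your stopping times require. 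Interchanging ``for all $\psi$, a.s.'' into ``a.s., for all $\psi$ with summability over an orthonormal basis'' is precisely the unresolved point.

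The paper's proof avoids localization of the integrand altogether: it composes $\Delta'$ (Theorem \ref{localMomentsStrongIntegrandsAsOperatorsToLocalWeakIntegrands}) with the continuous extended weak integral map $\hat{I}^{w}:\Lambda^{2,loc}_{w}(T)\rightarrow\mathcal{M}^{2,loc}_{T}(\R)$ (Proposition \ref{extendedWeakIntegralMapIsLinearContinuous}) to obtain a continuous linear map $\Psi\rightarrow\mathcal{M}^{2,loc}_{T}(\R)$, i.e.\ a cylindrical locally square integrable c\`adl\`ag martingale with $\psi\mapsto\hat{I}^{w}_{t}(R'\psi)$ continuous into $L^{0}\ProbSpace$ for each $t$, and then applies the regularization Theorem \ref{propertiesMartingales} (local case) to produce $\hat{I}^{s}(R)\in\mathcal{M}^{2,loc}_{T}(\Psi'_{\beta})$ directly, with uniqueness from Proposition \ref{propCondiIndistingProcess}. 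I recommend you rewrite your proof along those lines; the localization route would only become viable if you first prove an analogue of Corollary \ref{strongIntegrandsAsUnionIntegrandsInHilbertSpaces} for $\Lambda_{s}(T)$, which is an open point in the paper.
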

\begin{prf}  
The proof follows from similar arguments to those used in the proof of Theorem  \ref{existenceStrongStochasticIntegSquareMoments} by applying Theorem \ref{localMomentsStrongIntegrandsAsOperatorsToLocalWeakIntegrands} and Proposition \ref{propCondiIndistingProcess}. 
\end{prf}

\begin{defi} \label{defiStochasticIntegralAlmostSureSecondMmentsStrongCase} For every $R \in \Lambda_{s}(T)$, we will call the process $\hat{I}^{s}(R)$ given in Theorem  \eqref{existenceStrongStochIntegForAlmostSureSecondMomentIntegrands}  the \emph{strong stochastic integral} of $R$. We will sometimes denote the process $\hat{I}^{s}(R)$ by $\left\{ \int^{t}_{0} \int_{U} R (r,u) M (dr, du): t \in [0,T] \right\}$.  The map $\hat{I}^{s}: \Lambda_{s}(T) \rightarrow \mathcal{M}^{2,loc}_{T}(\Psi'_{\beta})$ given by $R \mapsto \hat{I}^{s}(R)$, will be called the \emph{extended strong integral mapping}.
\end{defi}

By using the \emph{weak-strong compatibility}  \eqref{weakStrongCompatibilityAlmostSureIntegrands} and the same arguments in the proof of Proposition \ref{linearityStrongIntegralLinearOperators} we can show the following result. 

\begin{prop}\label{propExtendedStrongIntegMappingIsLinear} The extended strong integral mapping $\hat{I}^{s}: \Lambda_{s}(T) \rightarrow \mathcal{M}^{2,loc}_{T}(\Psi'_{\beta})$ is linear. 
\end{prop}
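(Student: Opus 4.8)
The plan is to deduce linearity of $\hat{I}^{s}$ from the already-established linearity of the extended weak stochastic integral (Proposition \ref{extendedWeakIntegralMapIsLinearContinuous}) by means of the weak-strong compatibility \eqref{weakStrongCompatibilityAlmostSureIntegrands}, following the approach indicated for Proposition \ref{linearityStrongIntegralLinearOperators}. Fix $a \in \R$ and $R_{1}, R_{2} \in \Lambda_{s}(T)$. First I would observe that $\Lambda_{s}(T)$ is a linear space, so $a R_{1} + R_{2} \in \Lambda_{s}(T)$ and hence $\hat{I}^{s}(aR_{1}+R_{2})$ is defined; moreover, since the map $\Delta'$ of Theorem \ref{localMomentsStrongIntegrandsAsOperatorsToLocalWeakIntegrands} is linear, one has $(aR_{1}+R_{2})'\psi = a\,R_{1}'\psi + R_{2}'\psi$ in $\Lambda^{2,loc}_{w}(T)$ for every $\psi \in \Psi$.

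Next I would fix $\psi \in \Psi$ and combine \eqref{weakStrongCompatibilityAlmostSureIntegrands} (applied to each of $aR_{1}+R_{2}$, $R_{1}$ and $R_{2}$) with the linearity of $\hat{I}^{w}$ on $\Lambda^{2,loc}_{w}(T)$ to obtain a set $\Omega_{\psi}$ of full probability on which
\begin{multline*}
\hat{I}^{s}_{t}(aR_{1}+R_{2})[\psi] = \hat{I}^{w}_{t}\bigl((aR_{1}+R_{2})'\psi\bigr) = a\,\hat{I}^{w}_{t}(R_{1}'\psi) + \hat{I}^{w}_{t}(R_{2}'\psi) \\ = a\,\hat{I}^{s}_{t}(R_{1})[\psi] + \hat{I}^{s}_{t}(R_{2})[\psi], \qquad \forall \, t \in [0,T].
\end{multline*}
Thus, for each $\psi \in \Psi$, the real-valued c\`{a}dl\`{a}g processes $\hat{I}^{s}(aR_{1}+R_{2})[\psi]$ and $\bigl(a\,\hat{I}^{s}(R_{1}) + \hat{I}^{s}(R_{2})\bigr)[\psi]$ are indistinguishable.

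Finally, since both $\hat{I}^{s}(aR_{1}+R_{2})$ and $a\,\hat{I}^{s}(R_{1}) + \hat{I}^{s}(R_{2})$ belong to $\mathcal{M}^{2,loc}_{T}(\Psi'_{\beta})$ and are therefore $\Psi'_{\beta}$-valued regular c\`{a}dl\`{a}g processes, Proposition \ref{propCondiIndistingProcess} upgrades the previous paragraph to indistinguishability of the $\Psi'_{\beta}$-valued processes, i.e.\ to equality in $\mathcal{M}^{2,loc}_{T}(\Psi'_{\beta})$; this yields the claimed linearity. I do not expect a genuine obstacle here: the only delicate point is the standard bookkeeping of the $\psi$-dependent null sets, which is precisely what passing through regularity and Proposition \ref{propCondiIndistingProcess} takes care of.
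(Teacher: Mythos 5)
Your proposal is correct and follows exactly the route the paper intends: weak--strong compatibility \eqref{weakStrongCompatibilityAlmostSureIntegrands}, linearity of the extended weak integral, and Proposition \ref{propCondiIndistingProcess} to pass from indistinguishability of the real-valued processes $\hat{I}^{s}(\cdot)[\psi]$ to indistinguishability of the $\Psi'_{\beta}$-valued regular c\`{a}dl\`{a}g processes. The paper leaves these details to the reader (citing the same argument as Proposition \ref{linearityStrongIntegralLinearOperators}), and your write-up supplies them faithfully, including the handling of the $\psi$-dependent null sets.
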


From \eqref{weakStrongCompatibilityAlmostSureIntegrands} and the properties of the weak stochastic integral for integrands in $\Lambda^{2,loc}_{w}(T)$ (see Proposition \ref{propPropertWeakStochIntegExtenToIntegLocalSecoMomen}) we can show that the properties of the stochastic integral for integrands in $\Lambda^{2}_{s}(T)$ (see Section \ref{subsubsectionSPSSI}) are also satisfied for the strong stochastic integral for integrands in $\Lambda_{s}(T)$. We summarize this in the following result:

\begin{prop} \label{propStochasticIntegralExtendsToIntegrandsLocalSecondMoments}
Let $R \in \Lambda_{s}(T)$. Then, all the assertions in Propositions \ref{propImageStrongIntegralUnderContinuousOperator}, \ref{propStrongIntegralInSubintervalAndRandomSubset} and  \ref{propStoppedIntegralStrongCase} are true for the strong stochastic integral $\hat{I}^{s}(R)$ of $R$. 
\end{prop}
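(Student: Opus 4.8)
The plan is to transcribe, almost verbatim, the proofs of Propositions \ref{propImageStrongIntegralUnderContinuousOperator}, \ref{propStrongIntegralInSubintervalAndRandomSubset} and \ref{propStoppedIntegralStrongCase}, replacing the class $\Lambda^{2}_{s}(T)$ by $\Lambda_{s}(T)$, the weak-strong compatibility \eqref{weakStrongCompatibilityStochIntegral} by its extended form \eqref{weakStrongCompatibilityAlmostSureIntegrands}, and the properties of the weak integral $I^{w}$ on $\Lambda^{2}_{w}(T)$ by their counterparts for the extended weak integral $\hat{I}^{w}$ on $\Lambda^{2,loc}_{w}(T)$ collected in Proposition \ref{propPropertWeakStochIntegExtenToIntegLocalSecoMomen}. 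The structural fact that makes all of this routine is that, by Theorem \ref{localMomentsStrongIntegrandsAsOperatorsToLocalWeakIntegrands}, for every $R \in \Lambda_{s}(T)$ and $\psi \in \Psi$ one has $R'\psi \in \Lambda^{2,loc}_{w}(T)$, so each scalar projection of $\hat{I}^{s}(R)$ is an extended weak integral to which Section \ref{subSectionAECIWI} applies, and Proposition \ref{propCondiIndistingProcess} then lifts pointwise-in-$\psi$ identities to identities of $\Psi'_{\beta}$-valued regular c\`{a}dl\`{a}g processes.

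Concretely, for the analogue of Proposition \ref{propImageStrongIntegralUnderContinuousOperator} I would first verify that $S \circ R \in \Lambda_{s}(\Upsilon,M;T)$: conditions (1)--(2) of Definition \ref{integrandsExtStrongIntegAlmostSureSquareMoments} follow as in the proof of Proposition \ref{propImageStrongIntegralUnderContinuousOperator} using $(S \circ R(r,\omega,u))'\upsilon = R(r,\omega,u)'S'\upsilon$ and $S'\upsilon \in \Psi$, while \eqref{almostSureSecondMomentExtIntegrandsStrongIntg} for $S \circ R$ with test element $\upsilon$ is precisely \eqref{almostSureSecondMomentExtIntegrandsStrongIntg} for $R$ with test element $S'\upsilon$. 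Then \eqref{weakStrongCompatibilityAlmostSureIntegrands} gives, for fixed $\upsilon$, that $\Prob$-a.e. $\hat{I}^{s}_{t}(S\circ R)[\upsilon] = \hat{I}^{w}_{t}(R'S'\upsilon) = \hat{I}^{s}_{t}(R)[S'\upsilon] = S(\hat{I}^{s}_{t}(R))[\upsilon]$ for all $t \in [0,T]$; since both sides are $\Upsilon'_{\beta}$-valued regular c\`{a}dl\`{a}g processes, Proposition \ref{propCondiIndistingProcess} yields indistinguishability, i.e. \eqref{imageStrongIntegralUnderContinuousOperator}. For the analogues of Propositions \ref{propStrongIntegralInSubintervalAndRandomSubset} and \ref{propStoppedIntegralStrongCase} one first notes that $\mathbbm{1}_{]s_{0},t_{0}]\times F_{0}}R$ and $\mathbbm{1}_{[0,\sigma]}R$ again lie in $\Lambda_{s}(T)$ (multiplication by these indicators preserves $q_{r,u}$-predictability and can only decrease the integral in \eqref{almostSureSecondMomentExtIntegrandsStrongIntg}); then, fixing $\psi \in \Psi$, one applies the $\Lambda^{2,loc}_{w}$-versions of Propositions \ref{propWeakIntegralInSubintervalAndRandomSubset} and \ref{propStoppedIntegralWeakCase} (available via Proposition \ref{propPropertWeakStochIntegExtenToIntegLocalSecoMomen}) to $R'\psi$, transfers the resulting pathwise identity through \eqref{weakStrongCompatibilityAlmostSureIntegrands} exactly as in the proofs of Propositions \ref{propStrongIntegralInSubintervalAndRandomSubset} and \ref{propStoppedIntegralStrongCase}, and closes with Proposition \ref{propCondiIndistingProcess}.

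The only genuinely non-mechanical point, and hence the step I expect to need the most care, is checking that these three operations keep us inside $\Lambda_{s}(T)$ --- in particular that composition with $S$ preserves the requirement $R(r,\omega,u) \in \mathcal{L}(\Phi'_{q_{r,u}},\Psi'_{\beta})$, which is where one must use that $S$ is continuous on all of $\Psi'_{\beta}$ and that $\Psi$ and $\Upsilon$ are reflexive, so that $S'$ maps $\Upsilon$ into $\Psi$. Everything else is a transcription of the $\Lambda^{2}_{s}(T)$ arguments with $\hat{I}^{w}$, $\hat{I}^{s}$, $\Lambda^{2,loc}_{w}(T)$ and Proposition \ref{propPropertWeakStochIntegExtenToIntegLocalSecoMomen} in place of their square-integrable analogues, so I would simply indicate these substitutions and refer the reader to the earlier proofs for the remaining details.
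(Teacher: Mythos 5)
Your proposal is correct and follows exactly the route the paper intends: the paper itself only asserts the result after remarking that the weak-strong compatibility \eqref{weakStrongCompatibilityAlmostSureIntegrands} together with Proposition \ref{propPropertWeakStochIntegExtenToIntegLocalSecoMomen} allows the properties from Section \ref{subsubsectionSPSSI} to be transferred to integrands in $\Lambda_{s}(T)$, which is precisely the substitution-and-transfer argument you spell out. Your added care in verifying that $S \circ R$, $\mathbbm{1}_{]s_{0},t_{0}]\times F_{0}}R$ and $\mathbbm{1}_{[0,\sigma]}R$ remain in $\Lambda_{s}(T)$ is the right (and only nontrivial) checkpoint.
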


\section{Stochastic Evolution Equations in Duals of Nuclear Spaces}
\label{sectionSEEDNS}

\subsection{Semigroups of Linear Operators in Locally Convex Spaces}\label{sectionSLOLCS}
 
Only for this section, let $\Psi$ denote a quasi-complete locally convex space. In this section we review basic properties  of $(C_{0},1)$-semigroups on $\Psi$.  This class of semigroups was introduced by Babalola  \cite{Babalola:1974}. 

We recall that a family $\{ S(t) \}_{t \geq 0} \subseteq \mathcal{L}(\Psi,\Psi)$  is called a \emph{$C_{0}$-semigroup} on $\Psi$ if: \begin{inparaenum}[(i)] \item $S(0)=I$, $S(t)S(s)=S(t+s)$ for all $t, s \geq 0$, and \item $\lim_{t \rightarrow s}S(t) \psi = S(s) \psi$, for all $s \geq 0$ and any $\psi \in \Psi$. 
\end{inparaenum} The \emph{infinitesimal generator} $A$ of a $C_{0}$-semigroup $\{ S(t) \}_{t \geq 0}$ on $\Psi$ is defined by 
$$ A \psi = \lim_{h \downarrow 0} \frac{S(h) \psi -\psi}{h} \quad \mbox{(limit in $\Psi$)},$$
whenever the limit exists, the domain of $A$ being the set $\mbox{Dom}(A) \subseteq \Psi$ for which the above limit exists. 

A $C_{0}$-semigroup $\{ S(t) \}_{t \geq 0}$ on $\Psi$ is said to be a \emph{$(C_{0},1)$-semigroup} if for each continuous semi-norm $p$ on $\Psi$ there exist some $\vartheta_{p} \geq 0$ and a continuous semi-norm $q$ on $\Psi$ such that $p(S(t)\psi) \leq e^{\vartheta_{p} t} q(\psi)$, for all $t \geq 0$, $\psi \in \Psi$. Furthermore, if the above is satisfied with $\vartheta_{p} = 0$  then we say that $\{ S(t) \}_{t \geq 0}$ is an \emph{equicontinuous semigroup}. Hence,  every equicontinuous semigroup is a $(C_{0},1)$-semigroup but the converse is not true in general (see \cite{Babalola:1974} p.177). 

Some of the most important properties of $(C_{0},1)$-semigroup for our study of solutions to stochastic evolution equations are given in the following result. 

\begin{theo}[\cite{Babalola:1974}, Theorems 2.3 and 2.6] \label{theoExtensC01SemigroupsToBanachSpaces}
Let $\{ S(t) \}_{t \geq 0}$ be a $C_{0}$-semigroup on $\Psi$. Then, $\{ S(t) \}_{t \geq 0}$ is a $(C_{0},1)$-semigroup on $\Psi$ if and only if  there exists a family $\Pi$ of semi-norms generating the topology on $\Psi$ such that for each $p \in \Pi$ there exist $M_{p} \geq 1$ and  $\theta_{p} \geq 0$ such that 
$$ p(S(t)\psi) \leq M_{p} e^{\theta_{p} t} p(\psi), \quad \mbox{for all } t \geq 0, \, \psi \in \Psi,$$ 
(with $M_{p}=1$, $\theta_{p} = 0$ if $\{ S(t) \}_{t \geq 0}$ is equicontinuous). In that case, for each $p \in \Pi$, there exists a $C_{0}$-semigroup $\{ S_{p}(t) \}_{t \geq 0}$ ($C_{0}$-semigroup of contractions if $\{ S(t) \}_{t \geq 0}$ is equicontinuous) on the Banach space $\Psi_{p}$ such that
\begin{equation} \label{defExtenSemiGroupToBanach}
 S_{p}(t) i_{p} \psi= i_{p} S(t) \psi, \quad \forall \, \psi \in \Psi, \, t \geq 0.
 \end{equation}
\end{theo}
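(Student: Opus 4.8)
The plan is to prove the two directions of the equivalence separately, and then extract the Banach-space semigroups from the family $\Pi$ furnished by the forward direction. The ``if'' direction is immediate: if $\Pi$ generates the topology and each $p \in \Pi$ satisfies $p(S(t)\psi) \leq M_p e^{\theta_p t} p(\psi)$, then for an arbitrary continuous semi-norm $r$ on $\Psi$ we can find finitely many $p_1,\dots,p_k \in \Pi$ and a constant $C>0$ with $r \leq C \max_i p_i$; setting $q = C\max_i p_i$ (a continuous semi-norm) and $\vartheta_r = \max_i \theta_{p_i}$, we get $r(S(t)\psi) \leq C \max_i p_i(S(t)\psi) \leq C \max_i M_{p_i} e^{\theta_{p_i}t} p_i(\psi) \leq \bigl(\max_i M_{p_i}\bigr) e^{\vartheta_r t} q(\psi)$, which is the $(C_0,1)$ condition (and with $M_{p_i}=1$, $\theta_{p_i}=0$ in the equicontinuous case this gives $\vartheta_r=0$).

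For the ``only if'' direction, I would start from a generating family $\mathcal{P}$ of semi-norms for the topology of $\Psi$ and, given $p \in \mathcal{P}$, use the $(C_0,1)$ hypothesis to obtain $\vartheta_p \geq 0$ and a continuous semi-norm $q$ with $p(S(t)\psi) \leq e^{\vartheta_p t} q(\psi)$ for all $t \geq 0$. The key construction is to define a new semi-norm that absorbs the semigroup orbit: set
\begin{equation*}
\tilde{p}(\psi) = \sup_{t \geq 0} e^{-\vartheta_p t} p(S(t)\psi), \quad \psi \in \Psi.
\end{equation*}
This supremum is finite because $e^{-\vartheta_p t} p(S(t)\psi) \leq q(\psi)$ uniformly in $t$; hence $\tilde{p} \leq q$, so $\tilde{p}$ is a continuous semi-norm. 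It dominates $p$ (take $t=0$), so $\tilde{p}$ is a continuous semi-norm sitting between $p$ and $q$. Its defining property gives, for $s \geq 0$,
\begin{equation*}
\tilde{p}(S(s)\psi) = \sup_{t \geq 0} e^{-\vartheta_p t} p(S(t+s)\psi) = e^{\vartheta_p s} \sup_{u \geq s} e^{-\vartheta_p u} p(S(u)\psi) \leq e^{\vartheta_p s} \tilde{p}(\psi),
\end{equation*}
so $\tilde{p}$ satisfies the desired estimate with $M_{\tilde{p}} = 1$ and $\theta_{\tilde{p}} = \vartheta_p$; in the equicontinuous case one may take $\vartheta_p = 0$, which yields $\tilde{p}(S(s)\psi) \leq \tilde{p}(\psi)$. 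Letting $\Pi = \{\tilde{p} : p \in \mathcal{P}\}$, the collection $\Pi$ generates the topology of $\Psi$ (each $\tilde{p}$ is continuous, hence bounded by a finite max of members of $\mathcal{P}$, and each $\tilde{p} \geq p$, so $\Pi$ and $\mathcal{P}$ define the same topology), and each member satisfies the required inequality. This establishes the stated equivalence.

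Finally, for the extension to Banach spaces, fix $p \in \Pi$. The estimate $p(S(t)\psi) \leq M_p e^{\theta_p t} p(\psi)$ shows that $S(t)$ maps $\ker(p)$ into $\ker(p)$ and descends to a bounded operator on $\Psi/\ker(p)$ of norm at most $M_p e^{\theta_p t}$; by continuity and density it extends uniquely to a bounded operator $S_p(t) \in \mathcal{L}(\Psi_p, \Psi_p)$ satisfying $S_p(t) i_p \psi = i_p S(t)\psi$, which is exactly \eqref{defExtenSemiGroupToBanach}, with $\norm{S_p(t)}_{\mathcal{L}(\Psi_p,\Psi_p)} \leq M_p e^{\theta_p t}$ (contractions when $M_p = 1$, $\theta_p = 0$). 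The semigroup identities $S_p(0) = I$ and $S_p(t)S_p(s) = S_p(t+s)$ follow from those of $\{S(t)\}$ by checking them on the dense set $i_p(\Psi)$ and extending by continuity. For strong continuity, fix $x \in \Psi_p$ and $\epsilon > 0$; choose $\psi \in \Psi$ with $p(x - i_p\psi) < \epsilon$, and use $\norm{S_p(t)x - x}_{\Psi_p} \leq \norm{S_p(t)(x - i_p\psi)}_{\Psi_p} + \norm{i_p(S(t)\psi - \psi)}_{\Psi_p} + \norm{i_p\psi - x}_{\Psi_p}$, where the first and third terms are controlled by $(M_p e^{\theta_p t} + 1)\epsilon$ uniformly for $t$ in a bounded interval, and the middle term $p(S(t)\psi - \psi) \to 0$ as $t \downarrow 0$ by the $C_0$-property of $\{S(t)\}$ on $\Psi$; a standard $\epsilon$-argument then gives $\lim_{t \downarrow 0} S_p(t)x = x$, and strong continuity at general $s \geq 0$ follows from the semigroup property together with the uniform bound on $\norm{S_p(t)}$ over compact $t$-intervals. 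The main obstacle is really just the construction of $\tilde{p}$ and the verification that the modified family $\Pi$ still generates the topology; everything after that is a routine transfer of the semigroup axioms across the quotient-completion map $i_p$, and we can invoke the properties of $i_p$, $\Phi_p$ (here $\Psi_p$) recorded in Section \ref{subsectionNuclSpace} without further comment.
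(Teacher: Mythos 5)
The paper offers no proof of this theorem: it is imported from \cite{Babalola:1974} (Theorems 2.3 and 2.6), so there is nothing in-paper to compare your argument against. Your reconstruction is correct and follows what is essentially the classical route: the ``if'' direction is a direct verification, and the ``only if'' direction rests on the renorming $\tilde{p}(\psi)=\sup_{t\geq 0}e^{-\vartheta_{p}t}p(S(t)\psi)$, the locally convex analogue of the standard renorming that makes a Banach-space $C_{0}$-semigroup quasi-contractive; the properties $p\leq\tilde{p}\leq q$ (hence continuity of $\tilde p$ and the fact that $\Pi=\{\tilde{p}:p\in\mathcal{P}\}$ regenerates the topology) and the orbit estimate $\tilde{p}(S(s)\psi)\leq e^{\vartheta_{p}s}\tilde{p}(\psi)$ all check out, and the descent/extension to $\Psi_{p}$ (invariance of $\ker(p)$, the semigroup law on the dense image $i_{p}(\Psi)$, and the $3\varepsilon$ strong-continuity argument using local boundedness of $\norm{S_{p}(t)}_{\mathcal{L}(\Psi_{p},\Psi_{p})}$) is routine and correctly executed. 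Two cosmetic points: in the ``if'' direction your final bound $(\max_{i}M_{p_{i}})e^{\vartheta_{r}t}q(\psi)$ should have the constant $\max_{i}M_{p_{i}}$ absorbed into the semi-norm $q$, since the paper's definition of a $(C_{0},1)$-semigroup carries no separate multiplicative constant; and observe that your $\tilde{p}$ is in general not Hilbertian even when $p$ is (a supremum of Hilbertian semi-norms need not be Hilbertian), which is consistent with, and indeed illuminates, Remark \ref{remaHilbSemNormExtSemiGroupToHilbert}.
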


The following result will be important for the existence and uniqueness of solutions to stochastic evolution equations (see Section \ref{sectionEUWMS}). 

\begin{prop} \label{propExistFamiHilbSeminormExtenSemiGroup} 
Let $\{ S(t) \}_{t \geq 0}$ be a $(C_{0},1)$-semigroup on $\Psi$ and let $\Pi$ be a family of continuous semi-norms on $\Psi$ satisfying the conditions in Theorem \ref{theoExtensC01SemigroupsToBanachSpaces}. Assume also that for each $p \in \Pi$ the Banach space $\Psi_{p}$ is separable. Then, for each $p \in \Pi$ there exists a continuous Hilbertian semi-norm $q$ on $\Psi$, $q \leq p$ and a $C_{0}$-semigroup $\{ S_{q}(t)\}_{t \geq 0}$ on the Hilbert space $\Psi_{q}$, such that   
\begin{equation} \label{propSemiGropExtToHilbSpacePsi1}
 S_{p}(t) i_{q,p} \psi= i_{q,p} S_{q}(t) \psi, \quad \forall \, \psi \in \Psi_{p}, \, t \geq 0, 
\end{equation}
and 
\begin{equation} \label{propSemiGropExtToHilbSpacePsi2}
S_{q}(t) i_{q} \psi= i_{q} S(t) \psi, \quad \forall \, \psi \in \Psi, \, t \geq 0.
\end{equation}
\end{prop}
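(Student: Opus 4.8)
The plan is to build the Hilbert semi-norm $q$ out of $p$ together with a single fixed Hilbert–Schmidt inclusion, and then to transport the semigroup $\{S_p(t)\}_{t\ge 0}$ from $\Psi_p$ down to $\Psi_q$. First I would use nuclearity of $\Psi$: given $p\in\Pi$ we may assume (enlarging $p$ slightly inside $\Pi$ if necessary, and using that $\Pi$ generates the topology) that there is a continuous Hilbertian semi-norm $q$ on $\Psi$ with $q\le p$ and with the canonical map $i_{q,p}:\Psi_p\to\Psi_q$ Hilbert–Schmidt; here I am thinking of $p$ as (dominated by) a Hilbertian semi-norm, replacing it with a Hilbertian majorant from the defining family if needed. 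Since $\Psi_p$ is separable by hypothesis, $\Psi_q$ is separable as well. The point of choosing $i_{q,p}$ Hilbert–Schmidt — hence in particular compact and injective-enough after quotienting — is that it lets one regularize the semigroup, but for the bare statement one only needs that $q\le p$ so that $i_{q,p}$ is well defined and continuous with dense range.

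The key step is then to define, for $\psi\in\Psi_q$ in the dense subspace $i_q(\Psi)=i_{q,p}(i_p(\Psi))$, the operator $S_q(t)$ by the formula forced by \eqref{propSemiGropExtToHilbSpacePsi2}, namely $S_q(t)\,i_q\varphi \defeq i_q S(t)\varphi$ for $\varphi\in\Psi$, and to check this is well defined and $q$-bounded. Well-definedness: if $i_q\varphi=i_q\varphi'$ then $q(S(t)(\varphi-\varphi'))=q(\varphi-\varphi'')$ issue — more carefully, one must show $q(\varphi-\varphi')=0$ implies $q(S(t)\varphi-S(t)\varphi')=0$. This follows from Theorem \ref{theoExtensC01SemigroupsToBanachSpaces} applied with the semi-norm $q$: since $q\in\Pi$ may be arranged (or since $q\le p$ and the estimate for $p$ together with $q\le p$ gives $q(S(t)\chi)\le p(S(t)\chi)\le M_p e^{\theta_p t}p(\chi)$), we get a bound $q(S(t)\chi)\le C(t)\,p(\chi)$ for all $\chi\in\Psi$. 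Hence $S(t)$ maps $\mathrm{ker}(p)$, and a fortiori controls, the relevant kernels; combined with density this produces a continuous linear extension $S_q(t):\Psi_q\to\Psi_q$ satisfying \eqref{propSemiGropExtToHilbSpacePsi2}. The semigroup identities $S_q(0)=I$, $S_q(t)S_q(s)=S_q(t+s)$ pass to the closure from the corresponding identities on the dense set $i_q(\Psi)$, and strong continuity $t\mapsto S_q(t)\xi$ follows first on $i_q(\Psi)$ from continuity of $t\mapsto i_q S(t)\varphi$ (using $\lim_{t\to s}S(t)\varphi=S(s)\varphi$ in $\Psi$ hence in $\Psi_q$) and then on all of $\Psi_q$ by the uniform bound $q(S_q(t)\cdot)\le C(t)q(\cdot)$ with $C$ locally bounded, via a standard $\varepsilon/3$ argument. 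This gives the $C_0$-semigroup $\{S_q(t)\}_{t\ge 0}$ and \eqref{propSemiGropExtToHilbSpacePsi2}.

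For \eqref{propSemiGropExtToHilbSpacePsi1} I would argue by density again: for $\varphi\in\Psi$,
\begin{equation*}
S_p(t)\,i_{q,p}\,i_p\varphi = S_p(t)\,i_p\varphi = i_p S(t)\varphi = i_{q,p}\,i_q S(t)\varphi = i_{q,p}\,S_q(t)\,i_q\varphi = i_{q,p}\,S_q(t)\,i_{q,p}\,i_p\varphi,
\end{equation*}
using $i_q=i_{q,p}\circ i_p$, \eqref{defExtenSemiGroupToBanach}, and \eqref{propSemiGropExtToHilbSpacePsi2}. Thus $S_p(t)\circ i_{q,p}$ and $i_{q,p}\circ S_q(t)$ agree on the dense subspace $i_p(\Psi)$ of $\Psi_p$, and since both sides are continuous (as compositions of the bounded operators $S_p(t)$, $i_{q,p}$, $S_q(t)$) they agree on all of $\Psi_p$, which is \eqref{propSemiGropExtToHilbSpacePsi1}. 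The main obstacle I anticipate is the bookkeeping around whether $q$ itself lies in the family $\Pi$ for which Theorem \ref{theoExtensC01SemigroupsToBanachSpaces} furnishes the contraction-type estimate; the clean way around it is never to need an estimate for $q$ in isolation but only the inequality $q(S(t)\varphi)\le p(S(t)\varphi)\le M_p e^{\theta_p t} p(\varphi)$ coming from $q\le p$, which is enough both for well-definedness of $S_q(t)$ and for the local boundedness needed in the strong-continuity argument. A secondary subtlety is ensuring $q$ can be taken Hilbertian and $\le p$ with $i_{q,p}$ Hilbert–Schmidt simultaneously; this is exactly the defining property of a nuclear space applied to (a Hilbertian majorant of) $p$, so no real work is required beyond citing the definition.
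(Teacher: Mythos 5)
There is a genuine gap, and it sits at the very heart of the statement. First, in Section \ref{sectionSLOLCS} the space $\Psi$ is only assumed to be a quasi-complete locally convex space, \emph{not} nuclear, and the semi-norms $p\in\Pi$ are not assumed Hilbertian (cf.\ Remark \ref{remaHilbSemNormExtSemiGroupToHilbert}); so your opening move --- invoking nuclearity to produce a continuous Hilbertian semi-norm $q\le p$ with $i_{q,p}$ Hilbert--Schmidt --- uses a hypothesis that is not available. Even if $\Psi$ were nuclear, the defining property of nuclearity produces, for a given Hilbertian semi-norm, a \emph{larger} one with Hilbert--Schmidt inclusion; it does not supply a Hilbertian semi-norm \emph{below} a given (non-Hilbertian) $p$, which is what you need. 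The existence of a suitable Hilbertian $q\le p$ is essentially the content of the proposition, not a routine preliminary.

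Second, even granting such a $q$, your construction of $S_q(t)$ does not go through. To define $S_q(t)$ on $\Psi_q$ by $S_q(t)i_q\varphi \defeq i_qS(t)\varphi$ and extend by density, you need an estimate of the form $q(S(t)\varphi)\le C(t)\,q(\varphi)$; what you actually have is only $q(S(t)\varphi)\le p(S(t)\varphi)\le M_pe^{\theta_p t}p(\varphi)$, which bounds $q(S(t)\varphi)$ by $p(\varphi)$, not by $q(\varphi)$. Since $q\le p$ gives $\mathrm{ker}(p)\subseteq\mathrm{ker}(q)$ and not the reverse, this estimate does not show that $q(\varphi-\varphi')=0$ implies $q(S(t)\varphi-S(t)\varphi')=0$, so $S_q(t)$ is not even well defined on $\Psi/\mathrm{ker}(q)$, let alone $q$-bounded; the estimate only yields a continuous map $\Psi_p\to\Psi_q$. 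The missing ingredient is precisely the one the paper imports: van Neerven's sandwiching theorem \cite{vanNeerven:1999}, which, using the separability of the Banach space $\Psi_p$, produces \emph{simultaneously} a separable Hilbert space $H$, a continuous dense embedding $j_{H,p}:\Psi_p\to H$, and a $C_0$-semigroup $T_H(t)$ on $H$ intertwining with $S_p(t)$. One then sets $q(\psi)=\norm{j_{H,p}i_p\psi}_H$, so that the required bound $q(S(t)\psi)=\norm{T_H(t)j_H\psi}_H\le C(t)q(\psi)$ comes for free from $T_H$ being a $C_0$-semigroup on $H$, and $S_q(t)$ is obtained by restricting $T_H(t)$ to the closure of $j_H\Psi$. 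Your density arguments for the semigroup identities, strong continuity, and for \eqref{propSemiGropExtToHilbSpacePsi1} are fine once this is in place, but without it the proof does not start.
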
 
\begin{prf}
Let $p \in \Pi$ and let $\{ S_{p}(t)\}_{t \geq 0}$ be the $C_{0}$-semigroup on $\Psi_{p}$ satisfying \eqref{defExtenSemiGroupToBanach}. Because $\Psi_{p}$ is a separable Banach space, then by Theorem 1.3 in \cite{vanNeerven:1999} there exist a separable Hilbert space $(H, \norm{\cdot}_{H})$, a continuous dense embedding $j_{H,p}: \Psi_{p} \rightarrow H$, and a $C_{0}$-semigroup $\{ T_{H}(t)\}_{t \geq 0}$ on $H$ such that 
\begin{equation} \label{defExtC0SemigrHilbSpaH1}
T_{H}(t)  j_{H,p} \psi = j_{H,p}  S_{p} \psi, \quad \forall \, t \geq 0, \, \psi \in \Psi. 
\end{equation}  
Therefore, $j_{H}: \Psi \rightarrow H$ given by $j_{H} = j_{H,p} i_{p}$ is a continuous dense embedding. Moreover, by \eqref{defExtenSemiGroupToBanach} and \eqref{defExtC0SemigrHilbSpaH1} we have that 
\begin{equation} \label{defExtC0SemigrHilbSpaH2}
T_{H}(t)  j_{H} \psi=j_{H,p}  S_{p}(t)  i_{p} \psi  = j_{H}  S(t) \psi, \quad \forall \, t \geq 0, \, \psi \in \Psi. 
\end{equation}
Let $B_{H}$ denote the unit ball in $H$. The continuity of $j_{H}$ implies that $j_{H}^{-1}(B_{H})$ is a neighborhood of zero of $\Psi$. Therefore, $q: \Psi \rightarrow \R$ given by $q(\psi)= \norm{j_{H} \psi}_{H}$ $\forall \, \psi \in \Psi$ is a continuous Hilbertian semi-norm on $\Psi$. Hence, the map $j_{H}$ defines an isometric isomorphism between the pre-Hilbert spaces $(\Psi / \mbox{ker}(q), q)$ and $(j_{H} \Psi, \norm{ \cdot }_{H})$. 

Let $t \geq 0$. Note that from \eqref{defExtC0SemigrHilbSpaH2} we have that $T_{H}(t)(j_{H} \Psi) \subseteq j_{H} \Psi$. Therefore, $T_{H}(t)$ restricts to a continuous and linear operator on $(j_{H} \Psi, \norm{ \cdot }_{H})$ and hence in $(\Psi / \mbox{ker}(q), q)$ because these spaces are isometrically isomorphic. We denote by  $S_{q}(t)$ the continuous and linear extension to $\Psi_{q}$ of the restriction of $T_{H}(t)$ to $(\Psi / \mbox{ker}(q), q)$. Then, $\{ S_{q}(t)\}_{t \geq 0}$ is a $C_{0}$-semigroup on $\Psi$ and from \eqref{defExtC0SemigrHilbSpaH1} and \eqref{defExtC0SemigrHilbSpaH2} it satisfies \eqref{propSemiGropExtToHilbSpacePsi1} and \eqref{propSemiGropExtToHilbSpacePsi2}. 
\end{prf}

\begin{rema} \label{remaHilbSemNormExtSemiGroupToHilbert}
Even in the case where $\Psi$ is nuclear, we do not know if it is possible to choose the family $\Pi$ of semi-norms on $\Psi$ given in Theorem \ref{theoExtensC01SemigroupsToBanachSpaces} to be such that each $p \in \Pi$ is Hilbertian. This is assumed for example in \cite{Ding:1999} as part of the definition of $(C_{0},1)$-semigroup.  A partial result in this direction is given in Proposition \ref{propExistFamiHilbSeminormExtenSemiGroup} where it has been shown that there exists a non-empty family of continuous Hilbertian semi-norms on $\Psi$ for which \eqref{propSemiGropExtToHilbSpacePsi2} is satisfied. However, this family does not necessarily generate the topology on $\Psi$. 
\end{rema}

Let $\{ S(t) \}_{t \geq 0}$ be a $C_{0}$-semigroup on $\Psi$ with generator $A$. If the space $\Psi$ is reflexive, then the family $\{ S(t)' \}_{t \geq 0}$ of  dual operators is a $C_{0}$-semigroup on $\Psi'_{\beta}$ with generator $A'$, that we call the \emph{dual semigroup} and the \emph{dual generator} respectively. Moreover, if $\{ S(t) \}_{t \geq 0}$ is equicontinuous then $\{ S(t)' \}_{t \geq 0}$ is also equicontinuous (see \cite{Komura:1968}, Theorem 1 and its Corollary).    
However, even when $\Psi$ is reflexive it is not true in general that the dual semigroup of a $(C_{0},1)$-semigroup is a $(C_{0},1)$-semigroup on $\Psi'_{\beta}$ (see  \cite{Babalola:1974}, Section 6).

\subsection{Stochastic Evolution Equations: The General Setting} \label{sectionGSSEE}

In this section we will introduce the general model of stochastic evolution equations in the dual of a nuclear space driven by a nuclear cylindrical martingale-valued measure. 

Let $\Phi$ be a locally convex space and $\Psi$ be a quasi-complete, bornological, nuclear space, both defined over $\R$. Let $U$ be a topological space. We are concerned with the following class of stochastic evolution equations
\begin{equation}\label{generalFormSEE}
d X_{t}= (A'X_{t}+ B (t,X_{t})) dt+\int_{U} F(t,u,X_{t}) M (dt,du), \quad \mbox{for }t \geq 0,
\end{equation}
where we will assume the following: 

\begin{assu} \label{assumptionsCoefficients} \hfill 

\textbf{(A1)} $A$ is the infinitesimal generator of a  $(C_{0},1)$-semi-group $\{S(t)\}_{t\geq 0}$ on $\Psi$. 

\textbf{(A2)} $M$ is a nuclear cylindrical martingale-valued measure on $\R_{+} \times \mathcal{R}$, where $\mathcal{R}$ is a ring $\mathcal{R}\subseteq \mathcal{B}(U)$ that generates the Borel $\sigma$-algebra $\mathcal{B}(U)$ of the topological space $U$, and the covariance of $M$ is determined by the measure $\lambda=\mbox{Leb}$ on $\R_{+}$, a $\sigma$-finite Borel measure $\mu$ on $U$, and the semi-norms $\{q_{r,u}: r \in \R_{+}, u \in U\}$; all satisfying the conditions in Definition \ref{nuclearMartingaleValMeasDualSpace} and Assumption \ref{assumpDenseSubsetSemiNormsMartValuedMeasures}.

\textbf{(A3)} $B:\R_{+} \times \Psi' \rightarrow \Psi'$ is such that the map $(r,g) \mapsto B(r,g)[\psi]$ is $ \mathcal{B}(\R_{+}) \otimes \mathcal{B} (\Psi'_{\beta})$-measurable, for every $\psi \in \Psi$.

\textbf{(A4)} $F= \{F(r,u,g): r \in \R_{+}, u \in U, g \in \Psi'\}$ is such that
\begin{enumerate}[label=(\alph*)]
\item $ F(r,u,g) \in \mathcal{L} (\Phi'_{q_{r,u}}, \Psi'_{\beta})$, $\forall r \geq 0$, $u \in U$, $g \in \Psi'$. 
\item The mapping $(r,u,g)\mapsto q_{r,u}(F(r,u,g)'\phi,\psi)$ is $ \mathcal{B}(\R_{+}) \otimes \mathcal{B}(U) \otimes \mathcal{B}(\Psi'_{\beta})$-measurable, for every $\phi \in \Phi$, $\psi \in \Psi$.
\end{enumerate}
\end{assu}

Note that $\Psi$ being reflexive, assumption (A1) implies that $A'$ is the infinitesimal generator of the dual semi-group $\{S(t)'\}_{t\geq 0}$ and this last is a $C_{0}$-semigroup on $\Psi'_{\beta}$.

We are interested in to studying weak and mild solutions to \eqref{generalFormSEE}. The precise formulation of these types of solutions is given below. 

\begin{defi} \label{defiWeakSolution}
A $\Psi'_{\beta}$-valued regular and predictable process $X=\{X_{t}\}_{t \geq 0}$ is called a \emph{weak solution} to \eqref{generalFormSEE} if
\begin{enumerate}[label=(\alph*)]
\item For every $t >0$, $X$, $B$ and $F$ satisfy  the following conditions:
\begin{gather*}
\Prob \left( \omega \in \Omega: \int^{t}_{0} \abs{X_{r}(\omega)[\psi]}dr < \infty\right) = 1, \quad \forall \, \psi \in \Psi. \\
\Prob \left( \omega \in \Omega: \int^{t}_{0} \abs{B (r,X_{r}(\omega))[\psi]}dr < \infty\right) = 1, \quad \forall  \, \psi \in \Psi. \\
\Prob \left( \omega \in \Omega: \int^{t}_{0} \int_{U} q_{r,u} (F(r,u,X_{r}(\omega))'\psi)^{2} \mu (du) dr < \infty \right) = 1, \quad \forall \,  \psi \in \Psi. 
\end{gather*}
\item For every $\psi \in \mbox{Dom}(A)$ and every $t \geq 0$, $\Prob$-a.e.
\begin{eqnarray} 
X_{t}[\psi] & = & X_{0}[\psi]+ \int^{t}_{0}(X_{r}[A \psi]+ B (r,X_{r})[\psi])dr  \label{equationWeakSolution}\\
& + & \int^{t}_{0} \int_{U} F(r,u,X_{r})'\psi M(dr,du), \nonumber
\end{eqnarray}
where the first integral in the right-hand side of \eqref{equationWeakSolution} is a Lebesgue integral that is defined for each $\psi \in \Psi$ for $\Prob$-a.e. $\omega \in \Omega$. The second integral in the right-hand side of \eqref{equationWeakSolution} is the weak stochastic integral of $F'\psi= \{ F(r,u,X_{r}(\omega))'\psi:  r \in [0,t], \omega \in \Omega, u \in U \} \in \Lambda^{2,loc}_{w}(t)$, and is well-defined for all $\psi \in \Psi$.
\end{enumerate}
\end{defi}

The proof of the following result can be carried out from standard arguments. We leave the details to the reader.

\begin{prop} \label{remaIntegralsWeakSoluWellDefined}
The assumptions (A1)-(A4) together with the conditions (a) of Definition \ref{defiWeakSolution}
are sufficient to guarantee the existence of all the integrals in \eqref{equationWeakSolution}. 
\end{prop}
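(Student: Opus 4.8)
The plan is to verify that each of the three integrals appearing on the right-hand side of \eqref{equationWeakSolution} is well-defined under assumptions (A1)--(A4) together with the integrability conditions (a) of Definition \ref{defiWeakSolution}. First I would treat the two Lebesgue integrals. Fix $\psi \in \mathrm{Dom}(A)$ (so in particular $\psi \in \Psi$ and $A\psi \in \Psi$). By (A3) the map $(r,g) \mapsto B(r,g)[\psi]$ is $\mathcal{B}(\R_{+}) \otimes \mathcal{B}(\Psi'_{\beta})$-measurable; since $X=\{X_{t}\}_{t\geq 0}$ is a $\Psi'_{\beta}$-valued predictable process, the composition $(r,\omega) \mapsto B(r,X_{r}(\omega))[\psi]$ is predictable, hence measurable in $(r,\omega)$. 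The same argument, using that $X$ is predictable and $\psi, A\psi \in \Psi$, shows that $(r,\omega)\mapsto X_{r}(\omega)[\psi]$ and $(r,\omega) \mapsto X_{r}(\omega)[A\psi]$ are measurable. The first and second conditions in Definition \ref{defiWeakSolution}(a) then guarantee that $\int_{0}^{t}|X_{r}(\omega)[A\psi]|\,dr < \infty$ and $\int_{0}^{t}|B(r,X_{r}(\omega))[\psi]|\,dr < \infty$ for $\Prob$-a.e.\ $\omega$, so the Lebesgue integral $\int_{0}^{t}(X_{r}[A\psi]+B(r,X_{r})[\psi])\,dr$ is $\Prob$-a.e.\ well-defined for each such $\psi$.

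Next I would handle the stochastic integral. The key point is to check that the family
\[
F'\psi \defeq \{ F(r,u,X_{r}(\omega))'\psi : r \in [0,t],\ \omega \in \Omega,\ u \in U \}
\]
belongs to $\Lambda^{2,loc}_{w}(t)$, i.e.\ satisfies the three conditions of Definition \ref{integrandsWeakIntegAlmostSureSquareMoments}. For condition (1): by (A4)(a), $F(r,u,g) \in \mathcal{L}(\Phi'_{q_{r,u}},\Psi'_{\beta})$, and since $\Psi$ is reflexive its dual operator satisfies $F(r,u,g)' \in \mathcal{L}(\Psi,\Phi_{q_{r,u}})$ (as in Remark \ref{remaStrongIntegransSecondMomentsWellDefined}); hence $F(r,u,X_{r}(\omega))'\psi \in \Phi_{q_{r,u}}$ for all $r,\omega,u$. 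For condition (2), $q_{r,u}$-predictability: fix $\phi \in \Phi$; by (A4)(b) the map $(r,u,g) \mapsto q_{r,u}(F(r,u,g)'\psi,\phi)$ is $\mathcal{B}(\R_{+}) \otimes \mathcal{B}(U) \otimes \mathcal{B}(\Psi'_{\beta})$-measurable, and composing with the predictable map $(r,\omega) \mapsto X_{r}(\omega)$ (and using that $\mathcal{P}_{t} \otimes \mathcal{B}(U)$ is the appropriate product $\sigma$-algebra) shows that $(r,\omega,u) \mapsto q_{r,u}(F(r,u,X_{r}(\omega))'\psi,\phi)$ is $\mathcal{P}_{t}\otimes \mathcal{B}(U)$-measurable. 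Condition (3), the almost-sure second moment bound
\[
\Prob\left( \omega \in \Omega : \int_{0}^{t}\int_{U} q_{r,u}(F(r,u,X_{r}(\omega))'\psi)^{2}\,\mu(du)\,dr < \infty \right) = 1,
\]
is precisely the third condition in Definition \ref{defiWeakSolution}(a). Therefore $F'\psi \in \Lambda^{2,loc}_{w}(t)$ and, by Theorem \ref{existenceExtendedStochasticIntegralWeakCase}, its weak stochastic integral $\hat{I}^{w}(F'\psi)$ is well-defined; this gives meaning to the term $\int_{0}^{t}\int_{U} F(r,u,X_{r})'\psi\, M(dr,du)$ for every $\psi \in \Psi$.

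The argument is essentially a bookkeeping exercise in measurability and integrability, so I do not expect any serious obstacle. The one place requiring a little care is condition (2) above: one must check that composition of the jointly-measurable map from (A4)(b) with the predictable process $X$ lands in the correct $\sigma$-algebra $\mathcal{P}_{t} \otimes \mathcal{B}(U)$ rather than merely in $\mathcal{B}([0,t]) \otimes \mathcal{F} \otimes \mathcal{B}(U)$. This follows because predictability of $X$ means $(r,\omega) \mapsto X_{r}(\omega)$ is $\mathcal{P}_{t}/\mathcal{B}(\Psi'_{\beta})$-measurable, so the composed map $(r,\omega,u) \mapsto (r, X_{r}(\omega), u)$ is $\mathcal{P}_{t} \otimes \mathcal{B}(U)$-measurable into $\R_{+} \times \Psi'_{\beta} \times U$, and then post-composing with the Borel function from (A4)(b) preserves this measurability. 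Since the proof is routine I would, as the statement suggests, simply record this outline and leave the remaining verifications to the reader.
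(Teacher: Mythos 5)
Your proposal is correct and follows exactly the route the paper intends: the paper omits this proof as ``standard arguments,'' and what you write — predictability of the Lebesgue integrands by composing (A3) and the predictable map $(r,\omega)\mapsto X_{r}(\omega)$, plus verification that $F'\psi$ satisfies the three conditions of Definition \ref{integrandsWeakIntegAlmostSureSquareMoments} so that Theorem \ref{existenceExtendedStochasticIntegralWeakCase} applies — is precisely the verification carried out explicitly for the mild case in Proposition \ref{remaIntegralsMildSoluWellDefined}. Your care about landing in $\mathcal{P}_{t}\otimes\mathcal{B}(U)$ rather than a larger product $\sigma$-algebra is the right point to flag, and your resolution of it is sound.
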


\begin{defi} \label{defiMildSolution}
A $\Psi'_{\beta}$-valued regular and predictable process $X=\{X_{t}\}_{t \geq 0}$ is called a \emph{mild solution} to \eqref{generalFormSEE} if
\begin{enumerate}[label=(\alph*)]
\item For every $t \geq 0$, for all $\psi \in \Psi$, 
\begin{gather*}
\Prob \left( \omega \in \Omega: \int^{t}_{0} \abs{S(t-r)'B(r,X_{r}(\omega))[\psi]}dr < \infty \right)= 1. \\
\Prob \left( \omega \in \Omega: \int^{t}_{0} \int_{U} q_{r,u}(F(r,u,X_{r}(\omega))' S(t-r)\psi )^{2} \mu(du) dr < \infty \right) = 1.
\end{gather*}
\item For every $ t \geq 0$, $\Prob$-a.e.
\begin{equation} \label{equationMildSolution}
X_{t} = S(t)'X_{0}+ \int^{t}_{0} S(t-r)' B(r,X_{r})dr  +  \int^{t}_{0} \int_{U} S(t-r)' F(r,u,X_{r}) M(dr,du), 
\end{equation}
where the first integral at the right-hand side of \eqref{equationMildSolution} is a $\Psi'_{\beta}$-valued regular, $\{ \mathcal{F}_{t} \}$-adapted process $\left\{ \int^{t}_{0} S(t-r)'B(r,X_{r})dr: t \geq 0 \right\}$ such that for all $t \geq 0$ and $\psi \in \Psi$, for $\Prob$-a.e. $\omega \in \Omega$,
\begin{equation} \label{defNonRandomConvolutionIntegral}
\left( \int^{t}_{0} S(t-r)'B(r,X_{r}(\omega))dr \right)[\psi]= \int^{t}_{0} S(t-r)' B(r,X_{r}(\omega))[\psi] dr,
\end{equation}
where for each $t \geq 0$, $\psi \in \Psi$, the integral on the right-hand side of \eqref{defNonRandomConvolutionIntegral} is the Lebesgue integral of the function $\ind{[0,t]}{\cdot}S(t-\cdot)'B(\cdot,X_{\cdot}(\omega))[\psi]$ defined on $[0,t]$ for $\Prob$-a.e. $\omega \in \Omega$.
The second integral at the right-hand side of \eqref{equationMildSolution} is the strong stochastic integral of 
$\{\ind{[0,t]}{r} S(t-r)'F(r,u,X_{r}(\omega)): r \in [0,t], \omega \in \Omega, u \in U \}$.
\end{enumerate}
\end{defi}

For the proof of the next proposition we will need to recall some properties of absolutely continuous functions. For $t>0$, let $A C_{t}$ denotes the linear space of all absolutely continuous functions on $[0,t]$ which are zero at $0$.
It is well-known (see Theorem 5.3.6 in \cite{BogachevMT}, p.339) that $G \in A C_{t}$ if and only if there exists an integrable function $g$ defined on $[0,t]$ such that:
\begin{equation} \label{defDensityAbsContFunct}
G(s)= \int^{s}_{0} g(r) dr, \quad \forall s \in [0,t].
\end{equation}
The space $A C_{t}$ is a Banach space equipped with the norm $\norm{\cdot}_{A C_{t}}$ given by $\norm{G}_{A C_{t}}= \int^{t}_{0} \abs{g(r)} dr$,  for $G \in A C_{t}$ with $g$ satisfying \eqref{defDensityAbsContFunct}.

\begin{prop} \label{remaIntegralsMildSoluWellDefined}
The assumptions (A1)-(A4) together with the conditions (a) of Definition \ref{defiMildSolution}
are sufficient to guarantee the existence of all the integrals in \eqref{equationMildSolution}. 
\end{prop}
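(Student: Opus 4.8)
The plan is to verify the three integrals appearing on the right-hand side of \eqref{equationMildSolution}: the deterministic convolution $\int_0^t S(t-r)'B(r,X_r)\,dr$, the strong stochastic integral $\int_0^t \int_U S(t-r)'F(r,u,X_r)\,M(dr,du)$, and (implicitly) the well-definedness of the right-hand side as a $\Psi'_\beta$-valued regular process. I would handle them in that order.

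First I would treat the deterministic convolution term. Fix $t\geq 0$ and $\omega$ in the probability-one set on which condition (a) of Definition \ref{defiMildSolution} holds. For each $\psi\in\Psi$ the integrability hypothesis $\int_0^t \abs{S(t-r)'B(r,X_r(\omega))[\psi]}\,dr<\infty$ says precisely that the scalar function $r\mapsto \ind{[0,t]}{r}S(t-r)'B(r,X_r(\omega))[\psi]$ is Lebesgue integrable on $[0,t]$, so the right-hand side of \eqref{defNonRandomConvolutionIntegral} makes sense. The point is then to check that the functional $\psi\mapsto \int_0^t S(t-r)'B(r,X_r(\omega))[\psi]\,dr$ is continuous on $\Psi$ and regular. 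Continuity: using the $(C_0,1)$ property of $\{S(t)'\}$ (Theorem \ref{theoExtensC01SemigroupsToBanachSpaces} applied to the dual semigroup, which is $C_0$ on $\Psi'_\beta$ since $\Psi$ is reflexive), one dominates $\abs{S(t-r)'B(r,X_r(\omega))[\psi]}$ by a bound involving a continuous seminorm on $\Psi$ together with the $\mathcal{P}_\infty$-integrable quantity $r\mapsto \abs{B(r,X_r(\omega))[\psi']}$; an appeal to the closed graph theorem (as $\Psi$ is bornological/ultrabornological and the target, $L^1$-type, is a Banach space via the absolutely-continuous-functions framework $AC_t$ introduced just before the statement) upgrades pointwise boundedness into continuity. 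Regularity of the resulting element of $\Psi'$ follows because $X$ is regular and $B$ takes values in $\Psi'$; one can factor through a suitable Hilbert space $\Psi_p$, producing a $\Psi'_p$-valued Bochner integral. Measurability in $(t,\omega)$ and $\{\mathcal{F}_t\}$-adaptedness follow from (A3) and the predictability of $X$ together with Fubini's theorem.

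Next I would treat the strong stochastic integral term. The family $R=\{\ind{[0,t]}{r}S(t-r)'F(r,u,X_r(\omega)):r\in[0,t],\omega\in\Omega,u\in U\}$ must be shown to belong to $\Lambda_s(\Psi,M;t)$ (Definition \ref{integrandsExtStrongIntegAlmostSureSquareMoments}). Property (1) holds because $F(r,u,g)\in\mathcal{L}(\Phi'_{q_{r,u}},\Psi'_\beta)$ by (A4)(a) and composition with $S(t-r)'\in\mathcal{L}(\Psi'_\beta,\Psi'_\beta)$ stays in that class. Property (2), the $q_{r,u}$-predictability, follows from (A4)(b), the identity $(S(t-r)'F(r,u,X_r))'\psi = F(r,u,X_r)'S(t-r)\psi$, the fact that $S(t-r)\psi\in\Psi$, Proposition \ref{compatibilityPredictableMapsAndSeminorms}, and predictability of $X$. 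Property (3), the almost-sure finiteness $\int_0^t\int_U q_{r,u}(F(r,u,X_r(\omega))'S(t-r)\psi)^2\mu(du)\,dr<\infty$ for each $\psi$, is exactly the second integrability condition in Definition \ref{defiMildSolution}(a). Hence $R\in\Lambda_s(t)$ and Theorem \ref{existenceStrongStochIntegForAlmostSureSecondMomentIntegrands} gives the strong stochastic integral as an element of $\mathcal{M}^{2,loc}_t(\Psi'_\beta)$, in particular a $\Psi'_\beta$-valued regular càdlàg (hence predictable-versioned) process; measurability of $R$ in $(r,\omega,u)$ for fixed $t$ and joint measurability in $t$ follow from strong continuity of $S'$ and standard approximation.

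Finally I would assemble the pieces: $S(t)'X_0$ is a $\Psi'_\beta$-valued regular random variable since $X_0$ is (being a value of the regular process $X$) and $S(t)'$ is continuous on $\Psi'_\beta$; adding the two integral terms, each regular and $\{\mathcal{F}_t\}$-adapted, yields a well-defined $\Psi'_\beta$-valued regular process, so \eqref{equationMildSolution} is a meaningful identity. \textbf{The main obstacle} I expect is the deterministic convolution term: one must carefully show that $\psi\mapsto\int_0^t S(t-r)'B(r,X_r(\omega))[\psi]\,dr$ really defines a continuous linear functional on $\Psi$ (not merely a pointwise-defined map) and that it is regular, since $B$ is only assumed to be $\mathcal{B}(\Psi'_\beta)$-measurable with no continuity in $g$ and no a priori Hilbert-space bound; this is where the $(C_0,1)$-seminorm estimates from Theorem \ref{theoExtensC01SemigroupsToBanachSpaces}, the $AC_t$-Banach-space machinery, and a closed graph / factorization argument through some $\Psi_p$ must be combined. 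The stochastic integral term, by contrast, reduces cleanly to checking membership in $\Lambda_s(t)$ and invoking Theorem \ref{existenceStrongStochIntegForAlmostSureSecondMomentIntegrands}.
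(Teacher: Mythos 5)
Your proposal follows essentially the same route as the paper: the stochastic integral term is handled exactly as in the paper (verify membership of $\{S(t-r)'F(r,u,X_r)\}$ in $\Lambda_{s}(t)$ via (A4), predictability of $X$, strong continuity of $r\mapsto S(t-r)\psi$, and condition (a), then invoke Theorem \ref{existenceStrongStochIntegForAlmostSureSecondMomentIntegrands}), and for the drift term you correctly identify the $AC_{t}$ Banach-space framework and the closed graph theorem as the key tools.

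One point of your write-up is imprecise in a way worth flagging. You state the goal for the drift term as showing that, for fixed $\omega$, the functional $\psi\mapsto\int_{0}^{t}S(t-r)'B(r,X_{r}(\omega))[\psi]\,dr$ is continuous on $\Psi$, and you attribute regularity to the facts that $X$ is regular and $B$ is $\Psi'$-valued. Neither is quite the right mechanism: for fixed $\omega$ the map is only defined up to a $\psi$-dependent null set, so pathwise continuity is not what one proves. What the paper actually does is define $J_{t}:\Psi\to L^{0}(\Omega,\mathcal{F},\Prob;AC_{t})$ by $J_{t}(\psi)(\omega)(s)=\int_{0}^{s}S(s-r)'B(r,X_{r}(\omega))[\psi]\,dr$, show $J_{t}$ is sequentially closed and hence continuous by the closed graph theorem ($\Psi$ ultrabornological, $AC_{t}$ Banach), conclude that $J_{t}$ is a \emph{continuous cylindrical random variable}, and then apply the regularization theorem (\cite{Ito}, Theorem 2.3.2) to obtain a $\Psi'_{\beta}$-valued \emph{regular} random variable satisfying \eqref{defNonRandomConvolutionIntegral}; $\mathcal{F}_{t}$-measurability then follows because each $J_{t}(\psi)$ is $\mathcal{F}_{t}$-measurable and the version is regular. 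The regularization theorem is the step that produces a single $\Psi'_{\beta}$-valued object realizing all the scalar integrals simultaneously, and it is exactly the ingredient your sketch leaves implicit at the place you yourself identified as the main obstacle. With that step made explicit, your argument coincides with the paper's.
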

\begin{prf}
We start with the existence of the process $\left\{ \int^{t}_{0} S(t-r)'B(r,X_{r})dr: t \geq 0 \right\}$. Fix $t \geq 0$. 
From the predictability of $X$ and (A3) it follows that  $(r,\omega) \mapsto B(r,X_{r}(\omega))[\psi]$ is $\mathcal{P}_{\infty}$-measurable, for every $\psi \in \Psi$. Then, for any $s \in [0,t]$, the continuity of  $r \mapsto \ind{[0,s]}{r} S(s-r)\psi$, implies that 
$$ (r,\omega) \mapsto  \ind{[0,s]}{r} B(r,X_{r}(\omega))[ S(s-r)\psi ]= \ind{[0,s]}{r} S(s-r)'B(r,X_{r}(\omega))[\psi],$$
is $\mathcal{P}_{s}$-measurable, for all $\psi \in \Psi$.

Now, for every $\psi \in \Psi$, let $\Omega_{t,\psi}=\{ \omega \in \Omega: \int^{t}_{0} \abs{S(t-r)'B(r,X_{r}(\omega))[\psi]}dr < \infty \}$. Note that from Definition \ref{defiMildSolution}(a) it follows that $\Prob \left( \Omega_{t,\psi} \right)=1$. 
Let $J_{t}: \Psi \mapsto L^{0}(\Omega, \mathcal{F}, \Prob; A C_{t})$ given for every $\psi \in \Psi$ by
\begin{equation} \label{defMapJtProofRegulTheoNonRandomInteg}
J_{t}(\psi)(\omega)(s)= 
\begin{cases}
\int^{s}_{0} S(s-r)'B(r,X_{r}(\omega))[\psi]dr , & \mbox{for } \omega \in \Omega_{t,\psi}, \, s \in ]0,t], \\
0, & \mbox{elsewhere.}
\end{cases}
\end{equation}
It is clear from arguments on the previous paragraphs that $J_{t}$ is well-defined and linear. Moreover, by using similar ideas to those in the Step 1 of the proof of Theorem \ref{strongIntegrandsAsOperatorsToWeakIntegrans}, it can be show that the map $J_{t}$ is sequentially closed. Then, as $\Psi$ is ultrabornological and $AC_{t}$ is a Banach space, the closed graph theorem  shows that $J_{t}$ is continuous. Hence, $J_{t}$ defines a cylindrical random variable such that $J_{t}: \Psi \mapsto L^{0}\ProbSpace$ is continuous. Then, the regularization theorem (see \cite{Ito}, Theorem 2.3.2) shows that there exists a $\Phi'_{\beta}$-valued regular random variable $\int^{t}_{0} S(t-r)'B(r,X_{r})dr$ that is a version of $J_{t}$, i.e. such that \eqref{defNonRandomConvolutionIntegral} is satisfied. Moreover, because for each $\psi \in \Psi$, $J_{t}(\psi)$ and hence $\int^{s}_{0} S(s-r)'B(r,X_{r}(\omega))[\psi]dr$ is $\mathcal{F}_{t}$-measurable, then the fact that $\int^{t}_{0} S(t-r)'B(r,X_{r})dr$ is a regular random variable implies that it is also $\mathcal{F}_{t}$-measurable. 

Now, for the stochastic integral $ \int^{t}_{0} \int_{U} S(t-r)' F(r,u,X_{r}) M(dr,du)$ to be well-defined, we have to check that for each $t \geq 0$, the integrand is an element of $\Lambda_{s}(t)$ (Definition \ref{integrandsExtStrongIntegAlmostSureSquareMoments}). 

Let $R=\{ R(r,\omega,u)\}$ be given by   
$$ R(r,\omega,u) = S(t-r)' F(r,u,X_{r}(\omega)), \quad \forall \, r \in [0,t], \, \omega \in \Omega, \, u \in U. $$
It is clear that $ R(r,\omega,u) \in \mathcal{L}(\Phi'_{q_{r,u}},\Psi'_{\beta})$, for each $r \in [0,t]$, $\omega \in \Omega$, $u \in U$. Now, because $X$ is predictable together with (A4) it follows that the map $(r,\omega,u) \mapsto q_{r,u}(F(r,u,X_{r})'\varphi, \phi)$ is $\mathcal{P}_{\infty} \otimes \mathcal{B}(U)$-measurable, for every $\phi \in \Phi$, $\varphi \in \Psi$. Then, by the continuity of the map $r \mapsto S(t-r)\psi$ for $r \in [0,t]$ and fixed $\psi \in \Psi$, it follows that the map 
$$ (r,\omega,u) \mapsto q_{r,u}(R(r,\omega,u)'\psi, \phi)= q_{r,u}(F(r,u,X_{r}(\omega))'S(t-r)\psi, \phi),$$
defined on $[0,t] \times \Omega \times U$ is $\mathcal{P}_{t} \otimes \mathcal{B}(U)$-measurable for each $\psi \in \Psi$. Finally, Definition \ref{defiMildSolution}(a) implies that $R$ satisfies \eqref{almostSureSecondMomentExtIntegrandsStrongIntg}. Therefore, $R \in \Lambda_{s}(t)$ and hence Theorem \ref{existenceStrongStochIntegForAlmostSureSecondMomentIntegrands} shows the existence of the stochastic integral $\int^{t}_{0} \int_{U} S(t-r)' F(r,u,X_{r}) M(dr,du)$.  Moreover, from \eqref{weakStrongCompatibilityAlmostSureIntegrands} the following holds for all $\psi \in \Psi$, $t \in [0,T]$, $\Prob$-a.e.  
\begin{equation}\label{weakStrongCompaStochasticConvolution}
\int^{t}_{0} \int_{U} S(t-r)' F(r,u,X_{r}) M(dr,du) [\psi] = \int^{t}_{0} \int_{U} F(r,u,X_{r})'S(t-r) \psi M(dr,du). 
\end{equation}
\end{prf}

\subsection{Equivalence Between Mild and Weak Solutions}\label{subSectionEBWMS}

In this section we provide sufficient conditions for the equivalence between mild and weak solutions. The main result of this section is the following:

\begin{theo} \label{theoEquiWeakMild}
Let $X=\{ X_{t} \}_{t \geq 0}$ be a $\Psi'_{\beta}$-valued regular and predictable process and assume that for every $T>0$, $X$, $B$ and $F$ satisfy:
\begin{equation} \label{assumpXMomentTheoWeakMildSol}
\Exp \int^{T}_{0} \abs{X_{r}[\psi]} dr < \infty, \quad \forall \, \psi \in \Psi.
\end{equation} 
\begin{equation} \label{assumpBMomentTheoWeakMildSol}
 \Exp \int^{T}_{0} \abs{B(r,X_{r}) [\psi]} dr < \infty, \quad \forall \psi \in \Psi.
\end{equation}
\begin{equation} \label{assumpFMomentTheoWeakMildSol}
\Exp \int^{T}_{0} \int_{U} q_{r,u} (F (r,u, X_{r})'\psi)^{2} \mu (du) dr < \infty, \quad \forall \psi \in \Psi. 
\end{equation}
Then, $X$ is a weak solution to \eqref{generalFormSEE} if and only if it is a mild solution to \eqref{generalFormSEE}.
\end{theo}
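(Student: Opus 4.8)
The plan is to prove the equivalence in both directions by the classical semigroup argument, adapted to the dual-of-nuclear-space setting, testing everything against $\psi \in \mathrm{Dom}(A)$ and then using a density/regularity argument to pass to general $\psi$. The key technical tool will be the stochastic Fubini theorem (Theorem \ref{stochasticFubiniTheorem}) applied to the family $F(r,u,X_r)'S(t-r)\psi$ together with the weak-strong compatibility \eqref{weakStrongCompaStochasticConvolution}, and the deterministic counterpart of Fubini for the Lebesgue (Bochner) integral involving $B$. The moment assumptions \eqref{assumpXMomentTheoWeakMildSol}--\eqref{assumpFMomentTheoWeakMildSol} are there precisely so that the relevant integrands land in $\Lambda^{2}_{w}(t)$ and $\Xi^{1,2}_{w}(t,E)$ with $E=[0,t]$ and $\varrho=\mathrm{Leb}$, rather than merely in the ``local'' classes, so that Fubini applies.

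First I would prove ``weak $\Rightarrow$ mild''. Suppose $X$ is a weak solution. Fix $t\geq 0$ and $\psi\in\mathrm{Dom}(A)$. For $s\in[0,t]$ apply \eqref{equationWeakSolution} with the time variable $s$ and the test function $S(t-s)\psi$; here one must check $S(t-s)\psi\in\mathrm{Dom}(A)$ (true since $S$ leaves $\mathrm{Dom}(A)$ invariant and commutes with $A$ there) and that $\frac{d}{ds}S(t-s)\psi=-AS(t-s)\psi=-S(t-s)A\psi$. Then I would integrate the resulting identity in $s$ over $[0,t]$, interchange the order of the $ds$ integral with the inner $dr$ Lebesgue integrals (deterministic Fubini, justified by \eqref{assumpXMomentTheoWeakMildSol}--\eqref{assumpBMomentTheoWeakMildSol}) and with the weak stochastic integral (stochastic Fubini, Theorem \ref{stochasticFubiniTheorem}, justified by \eqref{assumpFMomentTheoWeakMildSol}), and recognize the telescoping: the term $\int_0^t X_s[A\psi]\,ds$ coming from the generator cancels against the contribution produced by the $s$-derivative of $S(t-s)\psi$. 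What remains is exactly $X_t[\psi]=S(t)'X_0[\psi]+\big(\int_0^t S(t-r)'B(r,X_r)\,dr\big)[\psi]+\big(\int_0^t\int_U S(t-r)'F(r,u,X_r)M(dr,du)\big)[\psi]$, using \eqref{defNonRandomConvolutionIntegral} and \eqref{weakStrongCompaStochasticConvolution}. This holds $\Prob$-a.e.\ for each fixed $\psi\in\mathrm{Dom}(A)$; since $\mathrm{Dom}(A)$ is dense in $\Psi$ and all three $\Psi'_\beta$-valued processes appearing are regular, Proposition \ref{propCondiIndistingProcess} upgrades this to equality of the $\Psi'_\beta$-valued processes, i.e.\ \eqref{equationMildSolution}.

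For ``mild $\Rightarrow$ weak'' I would run essentially the same computation in reverse. Starting from \eqref{equationMildSolution} tested against $\psi\in\mathrm{Dom}(A)$, I would integrate $r\mapsto X_r[A\psi]+B(r,X_r)[\psi]$ over $[0,t]$, substitute the mild formula for $X_r$ inside, and use the semigroup identity $S(r)'g[\psi]-g[\psi]=\int_0^r S(v)'g[A\psi]\,dv=\int_0^r S(v)'A'g[\psi]\,dv$ valid for $g\in\Psi'$ (the dual semigroup identity, legitimate because $A'$ generates $\{S(t)'\}$), together again with the two Fubini theorems, to collapse the double integrals and recover $X_t[\psi]-X_0[\psi]$. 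The only genuinely new point compared with the forward direction is verifying that the iterated integrals are absolutely convergent so that the interchanges are licit; this is where \eqref{assumpXMomentTheoWeakMildSol}--\eqref{assumpFMomentTheoWeakMildSol} and the $(C_0,1)$-bound $p(S(t)\psi)\le M_p e^{\theta_p t}p(\psi)$ from Theorem \ref{theoExtensC01SemigroupsToBanachSpaces} are used, the latter to bound $q_{r,u}(F(r,u,X_r)'S(t-r)\psi)$ by a constant times $q_{r,u}(F(r,u,X_r)'\psi')$ uniformly in $r$ for a suitable continuous Hilbertian semi-norm. Again one concludes first $\Prob$-a.e.\ for fixed $\psi\in\mathrm{Dom}(A)$, then invokes density of $\mathrm{Dom}(A)$ and regularity via Proposition \ref{propCondiIndistingProcess}.

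The main obstacle I expect is the bookkeeping around applying the stochastic Fubini theorem: one must exhibit the relevant family $\{X(r,\omega,u,e)\}=\{F(r,u,X_r(\omega))'S(e-r)\psi\,\ind{[0,e]}{r}\}$ (or a reparametrization thereof) as an element of $\Xi^{1,2}_{w}(t,E)$ with $E=[0,t]$, checking the $q_{r,u}$-predictability jointly in the extra variable $e$ and the finiteness of $\abs{\norm{\cdot}}_{w,t,E}$, which requires the semigroup growth estimate and a uniform-in-$(r,u)$ measurability argument of the type in Proposition \ref{compatibilityPredictableMapsAndSeminorms}. A secondary, more routine, subtlety is ensuring the deterministic inner integrals (involving $A\psi$ and $B$) are finite $\Prob$-a.e.\ so that the classical Fubini theorem applies path-by-path; this follows from \eqref{assumpXMomentTheoWeakMildSol}--\eqref{assumpBMomentTheoWeakMildSol} and Proposition \ref{remaIntegralsMildSoluWellDefined}. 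Once these measurability/integrability hypotheses are in place, the cancellation of the generator terms is a formal computation identical to the Hilbert-space case (cf.\ \cite{DaPratoZabczyk}, \cite{PeszatZabczyk}).
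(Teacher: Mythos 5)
Your overall strategy is the one the paper follows: test against semigroup-shifted elements of $\mbox{Dom}(A)$, integrate in $s$ over $[0,t]$, interchange via the deterministic and stochastic Fubini theorems, telescope, and finish with density of $\mbox{Dom}(A)$ plus Proposition \ref{propCondiIndistingProcess}. However, the pivotal computation is described in a way that would not go through as written. In the direction ``weak $\Rightarrow$ mild'' you propose to apply \eqref{equationWeakSolution} at time $s$ with test function $S(t-s)\psi$ and then integrate over $s\in[0,t]$; but the left-hand side then becomes $\int_0^t X_s[S(t-s)\psi]\,ds$, which does not telescope to $X_t[\psi]$, and the cancellation you invoke (between the generator term and ``the $s$-derivative of $S(t-s)\psi$'') is the heuristic product rule for $s\mapsto X_s[S(t-s)\psi]$, which integration alone does not justify. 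The correct choice --- the one the paper makes in Lemmas \ref{lemmEquaFubiniEquivWeakMild} and \ref{lemmaFubiniEquaNonRandomIntegralsWeakMildSol} and in \eqref{equali2WeakIsMildSol} --- is to test against $S(t-s)A\psi$; after Fubini one uses $\int_0^t \ind{[0,s]}{r}\,S(t-s)A\psi\,ds = S(t-r)\psi-\psi$ under each of the three integrals, and it is precisely this identity that produces the telescoping. With $S(t-s)\psi$ in place of $S(t-s)A\psi$ no such identity is available, so the step fails as stated (the same remark applies to the reverse direction, where the paper substitutes the mild formula for $X_s[A\psi]$, not for $X_s[\psi]$).

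A secondary gap: to justify the deterministic Fubini interchange for $\int_0^t\int_0^s X_r[AS(t-s)A\psi]\,dr\,ds$ you need integrability of $\abs{X_r[S(t-s)A\psi]}$ uniformly over the family $\{S(t-s)A\psi : s\in[0,t]\}$, whereas \eqref{assumpXMomentTheoWeakMildSol} only gives $\Exp\int_0^T\abs{X_r[\psi]}\,dr<\infty$ for each fixed $\psi$. The paper bridges this with Lemma \ref{lemmHilbertMomentsXAndBWeakMild}, which uses a closed-graph argument and Theorem \ref{theoExistenceCadlagContVersionHilbertSpaceUniformBoundedMoments} to upgrade the pointwise hypothesis to $\Exp\int_0^T\varrho'(X_r)\,dr<\infty$ for a continuous Hilbertian semi-norm $\varrho$, after which the $(C_0,1)$ bound on $\{S(t)\}$ gives the required uniform domination; you invoke the $(C_0,1)$ bound only for the $F$-term, but the same regularization is needed for the $X$- and $B$-terms and does not follow from \eqref{assumpXMomentTheoWeakMildSol}--\eqref{assumpBMomentTheoWeakMildSol} alone. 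The remaining ingredients of your plan --- membership of $\ind{[0,s]}{r}\,F(r,u,X_r)'S(t-s)A\psi$ in $\Xi^{1,2}_{w}(t,[0,t])$, the weak--strong compatibility \eqref{weakStrongCompaStochasticConvolution}, and the density/regularity conclusion --- do match the paper.
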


For our proof we benefit from ideas taken from Peszat and Zabczyk \cite{PeszatZabczyk} and Gorajski \cite{Gorajski:2014} on the proof of equivalence between weak and mild solutions on separable Hilbert spaces and in UMD Banach spaces respectively.   

To prove Theorem \ref{theoEquiWeakMild} we will need to make some technical preparations that for the convenience of the reader we wil present in the following three lemmas.  

\begin{lemm} \label{lemmEquaFubiniEquivWeakMild} Let $X=\{ X_{t} \}_{t \geq 0}$ be a $\Psi'_{\beta}$-valued regular and predictable process and assume that $F$ satisfies \eqref{assumpFMomentTheoWeakMildSol}. For every $\psi \in \mbox{Dom}(A)$ and $t >0$, the following identities holds $\Prob$-a.e.
\begin{flalign} \label{equaFubiniEquivWeakMild1}
& \int^{t}_{0} \left( \int^{s}_{0} \int_{U} S(s-r)'F (r,u, X_{r})M(dr,du) [A \psi]\right) ds \nonumber \\
& = \int^{t}_{0} \left( \int^{s}_{0} \int_{U} F (r,u,X_{r})' S(t-s) A \psi M (dr,du)\right) ds \nonumber  \\
& = \int^{t}_{0} \int_{U} F (r,u, X_{r})'S(t-r) \psi M (dr,du)- \int^{t}_{0} \int_{U} F(r,u, X_{r})'\psi M(dr,du)   
\end{flalign}
\end{lemm}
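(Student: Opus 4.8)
The plan is to prove the chain of two equalities in \eqref{equaFubiniEquivWeakMild1} by applying the stochastic Fubini theorem (Theorem \ref{stochasticFubiniTheorem}) to suitably chosen families of operator/vector-valued integrands, and then recognizing the resulting iterated integrals as ordinary deterministic integrals of semigroup-composed stochastic integrals. Throughout, fix $\psi \in \mathrm{Dom}(A)$ and $t > 0$.

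\textbf{Step 1: The first equality.} First I would rewrite the inner stochastic integral using the weak--strong compatibility \eqref{weakStrongCompaStochasticConvolution}, namely
$$\left(\int^{s}_{0} \int_{U} S(s-r)'F(r,u,X_{r})\,M(dr,du)\right)[A\psi] = \int^{s}_{0} \int_{U} F(r,u,X_{r})'S(s-r)A\psi \,M(dr,du),$$
which holds $\Prob$-a.e.\ for each fixed $s$, since $S(t-s)A\psi = S(t-s)A\psi$ — wait, more carefully, the right-hand side of the first equality in \eqref{equaFubiniEquivWeakMild1} has $S(t-s)A\psi$, not $S(s-r)A\psi$; so actually the first equality is really asserting a \emph{change of the order of the two deterministic $ds$ and $dr$ integrations combined with the semigroup property} $S(s-r)A\psi$ vs.\ ... let me instead treat the first equality as the statement obtained by writing the outer $ds$-integral and the inner stochastic $dr$-integral and swapping them. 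I would set, for $(r,\omega,u,s) \in [0,t]\times\Omega\times U\times[0,t]$,
$$Y(r,\omega,u,s) = \mathbbm{1}_{[0,s]}(r)\, F(r,u,X_{r}(\omega))'S(s-r)A\psi \in \Phi_{q_{r,u}},$$
taking the measure space $(E,\mathcal{E},\varrho) = ([0,t],\mathcal{B}([0,t]),\mathrm{Leb})$ with variable $s$. I must check $Y \in \Xi^{1,2}_{w}(t,[0,t])$: measurability follows from (A4), predictability of $X$, and continuity of $r\mapsto S(s-r)A\psi$; the norm bound $\abs{\norm{Y}}_{w,t,[0,t]} = \int_0^t \norm{Y(\cdot,\cdot,\cdot,s)}_{w,t}\,\varrho(ds) < \infty$ follows from the $(C_0,1)$ estimate of Theorem \ref{theoExtensC01SemigroupsToBanachSpaces} applied through the seminorm compatibility in Proposition \ref{compatibilityPredictableMapsAndSeminorms}, together with assumption \eqref{assumpFMomentTheoWeakMildSol}. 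Then Theorem \ref{stochasticFubiniTheorem}(3) gives $\Prob$-a.e.\
$$I^{w}_{t}\!\left(\int_{[0,t]} Y(\cdot,\cdot,\cdot,s)\,ds\right) = \int_{[0,t]} I^{w}_{t}(Y(\cdot,\cdot,\cdot,s))\,ds,$$
and the right-hand side is $\int_0^t \left(\int_0^s \int_U F(r,u,X_r)'S(s-r)A\psi\,M(dr,du)\right)ds$ by Proposition \ref{propWeakIntegralInSubintervalAndRandomSubset} (to handle the $\mathbbm{1}_{[0,s]}$ truncation). Identifying the two sides with the two sides of the first equality in \eqref{equaFubiniEquivWeakMild1} — using once more \eqref{weakStrongCompaStochasticConvolution} to pass between the operator form and the $[A\psi]$-form — completes Step 1. (If the paper genuinely intends $S(t-s)A\psi$ in the middle expression, then one additionally invokes $S(t-r)\psi = S(t-s)S(s-r)\psi$ and the fact that $\psi \in \mathrm{Dom}(A)$ makes $A$ and $S(t-s)$ commute, $S(t-s)A\psi = A S(t-s)\psi$; I would state this explicitly.)

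\textbf{Step 2: The second equality.} This is the semigroup-convolution identity
$$\int_0^t \int_U F(r,u,X_r)'S(t-s)A\psi\,M(dr,du)\Big|_{\text{iterated in }s} = \int_0^t\int_U F(r,u,X_r)'S(t-r)\psi\,M(dr,du) - \int_0^t\int_U F(r,u,X_r)'\psi\,M(dr,du),$$
which after swapping the $ds$ and $dM(dr)$ order (again via Theorem \ref{stochasticFubiniTheorem}, this time applied to $\tilde Y(r,\omega,u,s) = \mathbbm{1}_{[r,t]}(s)F(r,u,X_r(\omega))'S(t-s)A\psi$) reduces to the pointwise-in-$\Phi_{q_{r,u}}$ identity
$$\int_r^t S(t-s)A\psi\,ds = S(t-r)\psi - \psi,$$
which is the fundamental theorem of calculus for $C_0$-semigroups: $\frac{d}{ds}S(t-s)\psi = -S(t-s)A\psi$ for $\psi \in \mathrm{Dom}(A)$. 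I would justify pulling this deterministic identity inside the stochastic integral by the linearity and continuity of $I^w$ on $\Lambda^2_w(t)$ (Theorem \ref{propertiesWeakIntegralSquareIntegIntegrands}) — more precisely, approximate $\int_r^t S(t-s)A\psi\,ds$ by Riemann sums in $\Phi_{q_{r,u}}$ and use the Bochner-integral assertions in Theorem \ref{stochasticFubiniTheorem}(1) to identify the Bochner integral $\int_{[r,t]}\tilde Y(\cdot,\cdot,\cdot,s)\,ds$ with the family whose value at $(r,\omega,u)$ is $F(r,u,X_r(\omega))'(S(t-r)\psi-\psi)$. One must check $\tilde Y \in \Xi^{1,2}_w(t,[0,t])$ exactly as in Step 1, and check that $r\mapsto F(r,u,X_r)'(S(t-r)\psi-\psi)$ defines an element of $\Lambda^{2,loc}_w(t)$ (or $\Lambda^2_w(t)$ under \eqref{assumpFMomentTheoWeakMildSol}), which again follows from the $(C_0,1)$ bound.

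\textbf{Main obstacle.} The technical heart — and the step most likely to need care — is verifying the integrability/finiteness hypotheses of the stochastic Fubini theorem, i.e.\ that the composed families $Y$ and $\tilde Y$ genuinely lie in $\Xi^{1,2}_w(t,[0,t])$. This requires controlling $q_{r,u}(F(r,u,X_r)'S(t-s)A\psi)$ uniformly enough in $s$ to integrate over $[0,t]$: one writes $q_{r,u}(F(r,u,X_r)'S(t-s)A\psi) = q_{r,u}(F(r,u,X_r)'\,i_p\,i_p^{-1}S(t-s)A\psi)$ for a suitable continuous Hilbertian seminorm $p$ on $\Psi$ dominating the $(C_0,1)$ action (Theorem \ref{theoExtensC01SemigroupsToBanachSpaces}), bounds $q_{r,u}(F(r,u,X_r)'i_p\eta) \le C\, q_{r,u}(\text{``}F(r,u,X_r)'\text{''})\,p(\eta)$-type estimates, and then uses $\sup_{s\in[0,t]}p(S(t-s)A\psi) \le M_p e^{\theta_p t}p(A\psi) < \infty$ to pull the $s$-dependence out before invoking \eqref{assumpFMomentTheoWeakMildSol}. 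Once these measurability-and-boundedness bookkeeping facts are in place, the rest is a bounded-convergence/Riemann-sum argument plus repeated use of the weak--strong compatibility \eqref{weakStrongCompaStochasticConvolution} and Proposition \ref{propWeakIntegralInSubintervalAndRandomSubset}.
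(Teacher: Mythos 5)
Your proposal follows essentially the same route as the paper: introduce the two families $Y_{1}(r,\omega,u,s)=\ind{[0,s]}{r}\,F(r,u,X_{r})'S(t-s)A\psi$ and $Y_{2}(r,\omega,u,s)=\ind{[0,s]}{r}\,F(r,u,X_{r})'S(s-r)A\psi$, verify membership in $\Xi^{1,2}_{w}(t,[0,t])$ via the factorization $F(r,u,X_{r})=i'_{p}\tilde{F}_{X}(r,\omega,u)$ together with the $(C_{0},1)$ bound of Theorem \ref{theoExtensC01SemigroupsToBanachSpaces}, and then combine the stochastic Fubini theorem with the semigroup identity $\int_{r}^{t}S(t-s)A\psi\,ds=\int_{r}^{t}S(s-r)A\psi\,ds=S(t-r)\psi-\psi$. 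One correction: the bridge between the first and second expressions is \emph{not} the commutation $S(t-s)A\psi=AS(t-s)\psi$ suggested in your parenthetical --- for fixed $s$ the two inner stochastic integrals have genuinely different integrands and are different random variables --- but rather the reflection substitution $s\mapsto t+r-s$ in the inner $ds$-integral after Fubini (the paper's \eqref{identRiemannIntCoeffFSemigroup}). Equivalently, the machinery you have already set up closes the chain by routing both the first and the middle expressions to the common value $I^{w}_{t}\bigl(F(\cdot,\cdot,X_{\cdot})'(S(t-\cdot)\psi-\psi)\bigr)$, so you should delete the commutation remark and make that routing explicit.
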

\begin{prf}
Fix $\psi \in \mbox{Dom}(A)$ and $t >0$. Consider the following families of Hilbert-space valued maps:
\begin{gather*}
Y_{1}(r,\omega,u,s)= \ind{[0,s]}{r} F (r,u, X_{r}(\omega))'S(t-s) A \psi, \\
Y_{2}(r,\omega,u,s)= \ind{[0,s]}{r} F (r,u, X_{r}(\omega))'S(s-r) A \psi.
\end{gather*}
for $r \in [0,t]$, $\omega \in \Omega$, $u \in U$ and $s \in [0,t]$.

Our first task is to verify that both $Y_{1}$ and $Y_{2}$ belong to $\Xi^{1,2}_{w} (t, [0,t])$ (see Definition \ref{defiClassIntegrandsStochFubiniTheo}) for $E=[0,t]$, $ \mathcal{E}= \mathcal{B}([0,t])$, $\varrho= \mbox{Leb}$. In that case, the Stochastic Fubini's Theorem (Theorem \ref{stochasticFubiniTheorem}) show that all the integrals in \eqref{equaFubiniEquivWeakMild1} exist. 

We start by proving that $Y_{1}$ satisfies the conditions of Definition \ref{defiClassIntegrandsStochFubiniTheo}. First, as $S(t-s) A  \psi \in \Psi$, $\forall \, s \in [0,t]$, by (A4)(a) it follows that $Y_{1}(r,\omega,u,s) \in \Phi_{q_{r,u}}$ for $(r,\omega,u,s) \in [0,t] \times \Omega \times U \times [0,t]$.

Now, let $\phi \in \Phi$. From the strong continuity of the semigroup $\{S(t)\}_{t \geq 0}$ it follows that the map $[0,t] \backin s \mapsto S(t-s) A\psi \in \Psi$ is continuous and therefore Borel measurable. This fact together with (A4)(b) and the predictability of $X$ implies that the mapping 
$$(r,\omega,u,s) \mapsto \ind{[0,s]}{r} q_{r,u}(F(r,u,X_{r}(\omega))'S(t-s)A \psi, \phi),$$ 
is $\mathcal{P}_{t} \otimes \mathcal{B}(U)\otimes \mathcal{B}([0,t])$-measurable. Finally, note that (A4), the predictability of $X$ and \eqref{assumpFMomentTheoWeakMildSol} implies that $\{ F(r,u,X_{r}(\omega)):  r \in [0,t], \omega \in \Omega, u \in U \} \in \Lambda^{2}_{s}(t)$. Then, 
from Theorem \ref{existenceStrongStochasticIntegSquareMoments} there exists a continuous Hilbertian semi-norm $p$ on $\Psi$ and $\tilde{F}_{X} \in \Lambda^{2}_{s}(p,t)$ such that $F(r,u,X_{r}(\omega))= i'_{p} \tilde{F}_{X}(r,\omega,u)$, for $\mbox{Leb} \otimes \Prob \otimes \mu$-a.e. $(r,\omega,u) \in [0,t] \times \Omega \times U$ satisfying \eqref{itoIsometryStrongIntegHilbSchmiIntegrands}.   

Now, as $\{S(t)\}_{t\geq 0}$ is a $(C_{0},1)$-semigroup on $\Psi$ and $p$ is a continuous semi-norm on $\Psi$, from Theorem \ref{theoExtensC01SemigroupsToBanachSpaces} there exists a continuous semi-norm $q$ on $\Psi$, $p \leq q$ and there exists a $C_{0}$-semigroup $\{ S_{q}(t) \}_{t \geq 0}$ on the Banach space $\Psi_{q}$ such that 
\begin{equation} \label{extendedSemigroupQWeakMild}
S_{q}(t) i_{q} \varphi = i_{q} S(t) \varphi, \quad \forall \, \varphi \in \Psi, \, t \geq 0.
\end{equation} 
Moreover, there exist $M_{q} \geq 1$, $\theta_{q} \geq 0$ such that 
\begin{equation} \label{exponContSemigroupQWeakMildSol}
q(S_{q}(t) i_{q}  \varphi) \leq M_{q} e^{\theta_{q} t} q (i_{q} \varphi), \quad \quad \forall \, \varphi \in \Psi, \, t \geq 0.
\end{equation}
Now, from the definition of $\tilde{F}_{X}$ and  \eqref{extendedSemigroupQWeakMild}, it follows that for $\mbox{Leb} \otimes \Prob \otimes \mu \otimes \mbox{Leb}$-a.e. $(r,\omega,u,s)$,  
\begin{equation*}
Y_{1}(r,\omega,u,s)  =  \ind{[0,s]}{r} F(r,u,X_{r}(\omega))' S(t-s) A \psi 
 =  \ind{[0,s]}{r} \tilde{F}_{X}(r,\omega , u)' i_{p,q} S_{q}(t-s) i_{q} A \psi.
\end{equation*}
Therefore it follows that for $\mbox{Leb} \otimes \Prob \otimes \mu \otimes \mbox{Leb}$-a.e. $(r,\omega,u,s)$,
\begin{equation*}
q_{r,u}(Y_{1}(r,\omega,u,s))^{2}  \leq \ind{[0,s]}{r} \norm{\tilde{F}_{X}(r,\omega , u)'}^{2}_{\mathcal{L}_{2}(\Psi_{p},\Phi_{q_{r,u}})} \norm{i_{p,q}}^{2}_{\mathcal{L}(\Psi_{q},\Psi_{p})} M_{q}^{2} e^{2 \theta_{q} (t-s)} q (i_{q} A \psi)^{2}.
\end{equation*}  
From this last inequality and because $\tilde{F}_{X} \in \Lambda^{2}_{s}(p,t)$, it follows that
\begin{eqnarray*}
\int^{t}_{0} \norm{ Y_{1}(\cdot,\cdot,\cdot,s)}_{w,t} ds 
& = & \int^{t}_{0} \left( \Exp \int^{t}_{0} \int_{U} q_{r,u} (Y_{1}(r,\omega ,u,s))^{2} \mu (du) dr \right)^{1/2} ds \\
& \leq & M_{q} e^{ \theta_{q} T} q (i_{q} A \psi) \norm{i_{p,q}}_{\mathcal{L}(\Psi_{q},\Psi_{p})} \norm{\tilde{F}_{X}}_{s,p,t} < \infty.
\end{eqnarray*}
Therefore, $Y_{1}$ satisfies all the conditions of Definition \ref{defiClassIntegrandsStochFubiniTheo} and hence $Y_{1} \in \Xi^{1,2}_{w}(t, [0,t])$. By similar reasoning we find that $Y_{2}$ also satisfies all the conditions of Definition \ref{defiClassIntegrandsStochFubiniTheo} and hence we have $Y_{2} \in \Xi^{1,2}_{w}(t, [0,t])$.

We now prove \eqref{equaFubiniEquivWeakMild1}. First, note that for all $r \in [0,t]$, $u \in U$, from standard properties of $C_{0}$ semigroups (see \cite{Komura:1968}) the following identity holds $\Prob$-a.e.
\begin{eqnarray} \label{identRiemannIntCoeffFSemigroup}
\int^{t}_{0} \ind{[0,s]}{r} F(r,u,X_{r})'S(s-r) A \psi ds 
& = & \int^{t}_{0} \ind{[0,s]}{r} F(r,u,X_{r})'S(t-s) A \psi ds  \nonumber \\
& = & F(r,u,X_{r})'(S(t-r) \psi - \psi).  
\end{eqnarray}
Then, from \eqref{weakStrongCompaStochasticConvolution} applied to $Y_{2}$ (where $\psi$ is there replaced by $A \psi$), \eqref{identRiemannIntCoeffFSemigroup}, stochastic Fubini's theorem applied to $Y_{1}$, and the linearity of the weak stochastic integral, we have $\Prob$-a.e.   
\begin{flalign*}
& \int^{t}_{0} \left( \int^{s}_{0} \int_{U} S(s-r)'F (r,u, X_{r})M(dr,du) [A \psi]\right) ds \\
& = \int^{t}_{0} \left( \int^{s}_{0}\int_{U} F (r,u, X_{r})'S(s-r) A \, \psi M (dr,du)\right) ds \\
& =\int^{t}_{0} \left( \int^{s}_{0}\int_{U} F (r,u, X_{r})'S(t-s) A \, \psi M (dr,du)\right) ds \\
& = \int^{t}_{0} \int_{U}\left(\int^{t}_{0} \ind{[0,s]}{r} F (r,u, X_{r})'S (t-s) A \, \psi ds\right) M (dr,du)  \\
& =\int^{t}_{0} \int_{U}F (r,u, X_{r})'S (t-r) \psi M (dr,du) - \int^{t}_{0} \int_{U}F (r,u, X_{r})'\psi M (dr,du). 
\end{flalign*} 
\end{prf}

\begin{lemm} \label{lemmHilbertMomentsXAndBWeakMild} Let $X$ be a $\Psi'_{\beta}$-valued regular and predictable process and assume that $X$ and $B$ satisfy \eqref{assumpXMomentTheoWeakMildSol} and \eqref{assumpBMomentTheoWeakMildSol}. Then, for each $T>0$ there exists a continuous Hilbertian semi-norm $\varrho$ on $\Psi$ such that 
\begin{equation} \label{hilbertMomentXWeakMild}
\Exp \int_{0}^{T} \varrho'(X_{r}) dr < \infty,
\end{equation}
\begin{equation} \label{hilbertMomentBWeakMild}
\Exp \int_{0}^{T} \varrho'(B(r,X_{r})) dr < \infty. 
\end{equation}
\end{lemm}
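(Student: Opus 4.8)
The plan is to realize the cylindrical random variable defined by integrating $X$ (respectively $B(\cdot,X_\cdot)$) in time as a continuous linear map into a suitable Banach space, and then invoke nuclearity of $\Psi$ to absorb the resulting continuous semi-norm into a single continuous Hilbertian semi-norm. Fix $T>0$. First I would define the map $\kappa_T\colon\Psi\to L^0\ProbSpace$ by $\kappa_T(\psi)=\int_0^T\abs{X_r[\psi]}\,dr$; more precisely, consider $\psi\mapsto\left(\omega\mapsto\int_0^T X_r(\omega)[\psi]\,dr\right)$ as a map into $L^1\ProbSpace$ — by \eqref{assumpXMomentTheoWeakMildSol} this is well-defined, and it is clearly linear. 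The key point is to show this map is \emph{continuous} from $\Psi$ into $L^1\ProbSpace$. As in Step 1 of the proof of Theorem \ref{strongIntegrandsAsOperatorsToWeakIntegrans} and in the proof of Proposition \ref{remaIntegralsMildSoluWellDefined}, I would verify that the map is sequentially closed: if $\psi_n\to\psi$ in $\Psi$ and $\int_0^T X_\cdot[\psi_n]\,dr\to Y$ in $L^1\ProbSpace$, then since $X_r(\omega)\in\Psi'$ we have $X_r(\omega)[\psi_n]\to X_r(\omega)[\psi]$ pointwise, and a subsequence argument together with the closedness of $L^1$-limits identifies $Y$ with $\int_0^T X_\cdot[\psi]\,dr$. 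Since $\Psi$ is ultrabornological (being quasi-complete, bornological — hence barrelled — and, by the standing assumptions, nuclear) and $L^1\ProbSpace$ is a Banach space, the closed graph theorem (\cite{NariciBeckenstein}, Theorem 14.7.3) yields continuity. Hence there is a continuous semi-norm $p_1$ on $\Psi$ with $\Exp\int_0^T\abs{X_r[\psi]}\,dr\le p_1(\psi)$ for all $\psi\in\Psi$.

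Next I would upgrade $p_1$ to a continuous Hilbertian semi-norm. Since $\Psi$ is nuclear, its topology is generated by Hilbertian semi-norms, so there is a continuous Hilbertian semi-norm $\tilde p_1\ge p_1$, and by nuclearity again a continuous Hilbertian semi-norm $\varrho_1\ge\tilde p_1$ with $i_{\tilde p_1,\varrho_1}$ Hilbert–Schmidt; in particular $\varrho_1$ is Hilbertian and $\Exp\int_0^T\abs{X_r[\psi]}\,dr\le\varrho_1(\psi)$ for all $\psi\in\Psi$. Now the estimate $\abs{X_r(\omega)[\psi]}\le\varrho_1(\psi)$ holds only after integration, not pointwise, so I cannot directly conclude $X_r(\omega)\in\Psi'_{\varrho_1}$. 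Instead I would argue at the level of the dual norm: for each $r,\omega$, the dual norm is $\varrho_1'(X_r(\omega))=\sup\{\abs{X_r(\omega)[\psi]}:\psi\in\Psi,\ \varrho_1(\psi)\le1\}$, which by Assumption \ref{assumpDenseSubsetSemiNormsMartValuedMeasures}-type separability of $\Psi_{\varrho_1}$ (it is separable since $\Psi$ is nuclear) can be computed as a countable supremum over a dense sequence $\{\psi_k\}$ in the unit ball of $\Psi_{\varrho_1}$ intersected with $i_{\varrho_1}(\Psi)$; hence $(r,\omega)\mapsto\varrho_1'(X_r(\omega))$ is measurable (allowing the value $+\infty$). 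Then by the monotone convergence theorem, Tonelli, and the bound $\Exp\int_0^T\abs{X_r[\psi_k]}\,dr\le\varrho_1(\psi_k)\le1$ for each $k$, I would obtain
\begin{equation*}
\Exp\int_0^T\varrho_1'(X_r)\,dr=\Exp\int_0^T\sup_k\abs{X_r[\psi_k]}\,dr,
\end{equation*}
and I would need to pass the supremum outside; the clean way is to instead apply the above reasoning to the \emph{linear functional} $\psi\mapsto\Exp\int_0^T\abs{X_r[\psi]}\,dr$ directly — since this is a continuous semi-norm dominated by $\varrho_1$, and since $\Psi_{\varrho_1}$ is a separable Hilbert space, the regularization-type argument (compare Theorem \ref{theoExistenceCadlagContVersionHilbertSpaceUniformBoundedMoments} and \cite{Ito}, Theorem 2.3.2) shows that $\Exp\int_0^T\varrho_1'(X_r)\,dr<\infty$ after possibly enlarging $\varrho_1$ once more so that $i_{\varrho_1,\text{new}}$ is Hilbert–Schmidt relative to the old one (the Hilbert–Schmidt enlargement converts the weak bound $\Exp\int_0^T\abs{X_r[\psi]}\,dr\le\varrho(\psi)$ into $\Exp\int_0^T\varrho'(X_r)\,dr\le\norm{i}_{\mathcal L_2}<\infty$, exactly the mechanism used at the end of Step 2 of the proof of Theorem \ref{strongIntegrandsAsOperatorsToWeakIntegrans} with the identity $\sum_j q(i_p\psi_j^p)^2=\norm{i_{q,p}}^2_{\mathcal L_2}$). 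This gives \eqref{hilbertMomentXWeakMild} for a continuous Hilbertian semi-norm $\varrho_1$.

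Repeating the identical argument with $X_r$ replaced by $B(r,X_r)$ and using \eqref{assumpBMomentTheoWeakMildSol} (together with assumption (A3) and the predictability of $X$ to ensure $(r,\omega)\mapsto B(r,X_r(\omega))[\psi]$ is $\mathcal P_\infty$-measurable, so all integrals are well-defined) produces a continuous Hilbertian semi-norm $\varrho_2$ with $\Exp\int_0^T\varrho_2'(B(r,X_r))\,dr<\infty$. Finally I would set $\varrho:=\varrho_1+\varrho_2$ (or a continuous Hilbertian semi-norm dominating both, which exists by nuclearity); then $\varrho'\le\varrho_1'$ and $\varrho'\le\varrho_2'$ on the respective dual spaces, so both \eqref{hilbertMomentXWeakMild} and \eqref{hilbertMomentBWeakMild} hold simultaneously for this single $\varrho$. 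The main obstacle is the second paragraph: converting the \emph{weak} moment bound $\Exp\int_0^T\abs{X_r[\psi]}\,dr\le\varrho(\psi)$ into a \emph{strong} (dual-norm) moment bound $\Exp\int_0^T\varrho'(X_r)\,dr<\infty$. This is precisely where nuclearity is essential — one enlarges $\varrho$ along a Hilbert–Schmidt inclusion and uses Parseval's identity over an orthonormal basis of $\Psi_\varrho$, exactly as in the surjectivity step of Theorem \ref{strongIntegrandsAsOperatorsToWeakIntegrans}; the measurability of $r\mapsto\varrho'(X_r)$ must be handled carefully by reducing to a countable dense set, which is available because $\Psi_\varrho$ is separable.
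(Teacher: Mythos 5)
Your overall strategy -- closed graph theorem to extract a continuous semi-norm dominating the weak first moments, then a nuclearity/regularization step to upgrade this to a dual-norm moment bound -- is the same as the paper's, but your execution has one concrete defect and one place where you are working harder than necessary. The defect: you send $\psi$ to $\omega\mapsto\int_0^T X_r(\omega)[\psi]\,dr$ in $L^{1}\ProbSpace$, i.e.\ you integrate out $r$ before taking the $L^1$-norm. Continuity of that map only yields $\Exp\bigl|\int_0^T X_r[\psi]\,dr\bigr|\le p_1(\psi)$, with the absolute value \emph{outside} the time integral, which is strictly weaker than the bound $\Exp\int_0^T\abs{X_r[\psi]}\,dr\le p_1(\psi)$ you go on to use. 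The paper's device fixes this cleanly: normalize Lebesgue measure to $\sigma=\mbox{Leb}/T$ and regard $Y^{T}(\psi)(r,\omega)=X_r(\omega)[\psi]$ as a cylindrical random variable on the product probability space $([0,T]\times\Omega,\mathcal{P}_T,\sigma\otimes\Prob)$; the $L^1$-norm on that space \emph{is} $\frac{1}{T}\Exp\int_0^T\abs{X_r[\psi]}\,dr$, and your sequential-closedness/Fatou argument goes through verbatim there.

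The second paragraph is where you correctly identify the real obstacle (weak bound $\Rightarrow$ dual-norm bound) but meander: the countable-supremum route you start with cannot be pushed past the expectation, as you notice, and the single Hilbert--Schmidt enlargement with Parseval that you fall back on is an $L^2$ mechanism -- it controls $\sum_j\varrho(\psi_j)^2$, whereas the first-moment estimate $\Exp\int_0^T\varrho'(X_r)\,dr\le\sum_j\Exp\int_0^T\abs{X_r[\psi_j]}\,dr\le\sum_j\varrho(\psi_j)$ needs the $\ell^1$ bound, i.e.\ a nuclear (two-fold Hilbert--Schmidt) enlargement. The paper avoids rederiving any of this: once $Y^T$ is a continuous cylindrical random variable on the product space with $\Exp\abs{Y^T(\psi)}\le\nu(\psi)$, it simply applies the already-established regularization theorem (Theorem \ref{theoExistenceCadlagContVersionHilbertSpaceUniformBoundedMoments} with $n=1$) to obtain a Hilbertian $p$ with $\int p'(Y^T)\,d(\sigma\otimes\Prob)<\infty$, which unwinds to \eqref{hilbertMomentXWeakMild}. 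Your final step (a single $\varrho$ dominating both semi-norms, with the inclusion operator norms) matches the paper. So: right idea, but as written the first inequality you claim does not follow from the map you defined, and the upgrade step should be routed through the product-space formulation and the cited regularization theorem rather than a hand-rolled Parseval argument.
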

\begin{prf} Let $T>0$. We start by showing the existence of the semi-norm for $X$.  

Let $\sigma(\cdot)=\mbox{Leb}(\cdot)/T$, where $\mbox{Leb}$ denotes the Lebesgue measure on $[0,T]$. Then $( [0,T] \times \Omega, \mathcal{P}_{T}, \sigma \otimes \Prob)$ is a complete probability space. The predictability of $X$ implies that $Y^{T}: \Psi \rightarrow L^{1} ( [0,T] \times \Omega, \mathcal{P}_{T}, \sigma \otimes \Prob)$, given by 
\begin{equation} \label{defiVersionXTMomentWeakMild} 
Y^{T}(\psi)(r,\omega)=X_{r}(\omega)[\psi], \quad \forall \psi \in \Psi, \, (r,\omega) \in [0,T] \times \Omega,
\end{equation} 
defines a cylindrical random variable. Moreover, an application of Fatou's lemma shows that $Y^{T}$ is sequentially closed. Then, because $\Psi$ is ultrabornological and $ L^{1} ( [0,T] \times \Omega, \mathcal{P}_{T}, \sigma \otimes \Prob)$ is a Banach space,  by the closed graph theorem it follows that $Y^{T}$ is continuous. Therefore, the semi-norm  $\nu$ on $\Psi$ defined by $\nu(\psi)=   \int_{[0,T] \times \Omega} \abs{Y^{T}(r,\omega)[\psi]} (\sigma \otimes \Prob)(d(r,\omega))$ for all $\psi \in \Psi$, is continuous. Hence, if we apply Theorem \ref{theoExistenceCadlagContVersionHilbertSpaceUniformBoundedMoments} to the cylindrical random variable $Y^{T}$ and from \eqref{defiVersionXTMomentWeakMild}, there exists a continuous Hilbertian semi-norm $p$ on $\Psi$ such that 
$$ \Exp \int_{0}^{T} p'(X_{r}) dr = \int_{[0,T] \times \Omega} p'(Y^{T}(r,\omega)) (\sigma \otimes \Prob)(d(r,\omega)) < \infty.$$
Now, following the same arguments as above we show that there exists some continuous Hilbertian semi-norm $q$ on $\Psi$ such that $\Exp \int_{0}^{T} q'(B(r,X_{r})) dr  < \infty$.  

Finally, choose $\varrho$ such that $p \leq \varrho$ and $q \leq \varrho$. Then, we have that
$$ \Exp \int_{0}^{T} \varrho'(X_{r}) dr 
\leq \norm{ i'_{p,\varrho}}_{\mathcal{L}(\Psi'_{p}, \Psi'_{\rho})} \Exp \int_{0}^{T} p'(X_{r}) dr  < \infty, $$
and 
$$ \Exp \int_{0}^{T} \varrho'(B(r,X_{r})) dr
\leq  \norm{ i'_{q,\varrho}}_{\mathcal{L}(\Psi'_{q}, \Psi'_{\rho})} \Exp \int_{0}^{T} q'(B(r,X_{r})) dr < \infty.$$
\end{prf}

\begin{lemm} \label{lemmaFubiniEquaNonRandomIntegralsWeakMildSol}
Let $X$ be a $\Psi'_{\beta}$-valued regular and predictable process and assume that $X$ and $B$ satisfy \eqref{assumpXMomentTheoWeakMildSol} and \eqref{assumpBMomentTheoWeakMildSol}. For each $\psi \in \mbox{Dom} (A)$ and $t>0$, the following identities holds $\Prob$-a.e.
\begin{equation} \label{equalFubiNonRandomIntegralsXSA}
 \int^{t}_{0} \left( \int^{s}_{0} X_{r}[A S(t-s) A \, \psi] dr\right) ds 
 = \int^{t}_{0} X_{r}[S(t-r) A  \, \psi] dr - \int^{t}_{0} X_{r}[A \, \psi] dr,
\end{equation}
\begin{flalign}
& \int^{t}_{0}\left( \int^{s}_{0}S(s-r)'B(r,X_{r}) dr [A \, \psi] \right) ds  \nonumber \\
& =  \int^{t}_{0}\left( \int^{s}_{0} B(r,X_{r})[S(t-s) A  \, \psi] dr\right) ds \nonumber \\
& =  \int^{t}_{0} B(r,X_{r})[S(t-r) \psi] dr - \int^{t}_{0} B(r,X_{r})[\psi] dr. \label{equalFubiNonRandomIntegralsXSB1}
\end{flalign}
\end{lemm}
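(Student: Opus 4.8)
The plan is to prove both identities pathwise: they are equalities between (iterated) Lebesgue integrals of the trajectories $r\mapsto X_r(\omega)$ and $r\mapsto B(r,X_r(\omega))$, so I would fix $\omega$ in a $\Prob$-full set and argue along the corresponding path, combining three ingredients: (i) the defining relation \eqref{defNonRandomConvolutionIntegral} for the non-random convolution, which unfolds $\big(\int_0^s S(s-r)'B(r,X_r)\,dr\big)[A\psi]$ into the scalar iterated integral $\int_0^s B(r,X_r)[S(s-r)A\psi]\,dr$; (ii) the classical Fubini--Tonelli theorem on the triangle $\{(r,s):0\le r\le s\le t\}$; and (iii) the fundamental theorem of calculus for the $C_0$-semigroup $\{S(t)\}_{t\ge0}$, i.e.\ $\int_r^t S(s-r)A\phi\,ds=S(t-r)\phi-\phi$ and, by the substitution $u=t-s$, $\int_r^t AS(t-s)\phi\,ds=S(t-r)\phi-\phi$ for $\phi$ in the relevant domain, both consequences of $\frac{d}{du}S(u)\phi=S(u)A\phi=AS(u)\phi$. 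Since for fixed $\omega$ the $X_r$ and $B(r,X_r)$ are continuous linear functionals on $\Psi$ and the inner integrands ($s\mapsto S(t-s)A\psi$, $s\mapsto S(s-r)A\psi$, etc.) are continuous, hence Bochner-integrable, $\Psi$-valued maps on the relevant compact intervals ($\Psi$ being quasi-complete), these functionals commute with the inner integrals and the identities in (iii) transfer to identities for $X_r[\cdot]$ and $B(r,X_r)[\cdot]$.

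First I would secure the integrability needed for Fubini using Lemma \ref{lemmHilbertMomentsXAndBWeakMild}: pick the continuous Hilbertian semi-norm $\varrho$ provided there, so that \eqref{hilbertMomentXWeakMild} and \eqref{hilbertMomentBWeakMild} give $\int_0^T\varrho'(X_r)\,dr<\infty$ and $\int_0^T\varrho'(B(r,X_r))\,dr<\infty$ $\Prob$-a.e. For fixed $\psi$, the scalar kernels are then dominated, e.g.\ $\abs{X_r[AS(t-s)A\psi]}\le\varrho'(X_r)\,\varrho(AS(t-s)A\psi)$ and $\abs{B(r,X_r)[S(t-s)A\psi]}\le\varrho'(B(r,X_r))\,\varrho(S(t-s)A\psi)$, and the semi-norm factors are continuous in $s$ on $[0,t]$, hence bounded (by strong continuity of $\{S(t)\}$, or the exponential estimate of Theorem \ref{theoExtensC01SemigroupsToBanachSpaces}); so the double integrals of the absolute values are finite $\Prob$-a.e. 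The same estimates show, via \eqref{defNonRandomConvolutionIntegral}, that all objects appearing in the statement are well defined.

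For \eqref{equalFubiNonRandomIntegralsXSA} I would apply Fubini to rewrite the left-hand side as $\int_0^t X_r\big[\int_r^t AS(t-s)A\psi\,ds\big]\,dr$, evaluate the inner integral by (iii) with $\phi=A\psi$ to obtain $S(t-r)A\psi-A\psi$, and split, which yields the right-hand side. For \eqref{equalFubiNonRandomIntegralsXSB1} I would first use \eqref{defNonRandomConvolutionIntegral} to turn the leftmost term into $\int_0^t\big(\int_0^s B(r,X_r)[S(s-r)A\psi]\,dr\big)\,ds$, then apply Fubini both to it and to the middle term $\int_0^t\big(\int_0^s B(r,X_r)[S(t-s)A\psi]\,dr\big)\,ds$, reducing them to $\int_0^t B(r,X_r)\big[\int_r^t S(s-r)A\psi\,ds\big]\,dr$ and $\int_0^t B(r,X_r)\big[\int_r^t S(t-s)A\psi\,ds\big]\,dr$; since both inner integrals equal $S(t-r)\psi-\psi$ (the first by (iii) with $\phi=\psi$ after $u=s-r$, the second after $u=t-s$), the three terms coincide and the common value is $\int_0^t B(r,X_r)[S(t-r)\psi]\,dr-\int_0^t B(r,X_r)[\psi]\,dr$.

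The main technical point is the semigroup bookkeeping in (iii): the identity $\int_r^t AS(t-s)\phi\,ds=S(t-r)\phi-\phi$ used for \eqref{equalFubiNonRandomIntegralsXSA} is invoked with $\phi=A\psi$, so one is effectively working with $S(\cdot)A\psi$ living in $\mbox{Dom}(A)$ and with the commutation $S(u)A\psi=AS(u)\psi$ on $\mbox{Dom}(A)$ — this is the place where the regularity of $\psi$ must be tracked carefully — whereas \eqref{equalFubiNonRandomIntegralsXSB1} only needs (iii) for $\phi=\psi\in\mbox{Dom}(A)$. Everything else (the a.e.\ Fubini justification and the interchange of the continuous functionals $X_r$, $B(r,X_r)$ with the inner Bochner integrals) is routine once the domination from Lemma \ref{lemmHilbertMomentsXAndBWeakMild} is available.
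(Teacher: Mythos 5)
Your proposal is correct and follows essentially the same route as the paper: integrability via Lemma \ref{lemmHilbertMomentsXAndBWeakMild} combined with the $(C_{0},1)$-exponential bound, Fubini on the triangle $\{0\le r\le s\le t\}$, and the semigroup fundamental-theorem identity to evaluate the inner integral. The only difference is cosmetic — you keep the semigroup acting on the test function and commute $X_{r}$, $B(r,X_{r})$ with a $\Psi$-valued Bochner integral, whereas the paper passes to the dual semigroup $\{S(t)'\}_{t\ge0}$ and its generator $A'$ and evaluates $\int_{0}^{t-r}S(s)'A'X_{r}\,ds=S(t-r)'X_{r}-X_{r}$ on the dual side; these are the same computation.
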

\begin{prf} Fix $\psi \in \mbox{Dom} (A)$ and $t \geq 0$. We start by showing \eqref{equalFubiNonRandomIntegralsXSA}. First, we need to prove that the integrals exist. Note that the predictability of $X$ and the strong continuity of the semigroup $\{ S(t) \}_{t \geq 0}$ implies that all the integrands in \eqref{equalFubiNonRandomIntegralsXSA} are $\mathcal{P}_{t}$-measurable.

Now, let $\varrho$ be a continuous Hilbertian semi-norm on $\Psi$ satisfying the conditions in Lemma \ref{lemmHilbertMomentsXAndBWeakMild} (with $T=t$). As in the proof of Lemma \ref{lemmEquaFubiniEquivWeakMild}, because $\{S(t)\}_{t\geq 0}$ is a $(C_{0},1)$-semigroup on $\Psi$ and $\varrho$ is a continuous semi-norm on $\Psi$, there exists a continuous semi-norm $q$ on $\Psi$, $\varrho \leq q$, a $C_{0}$-semigroup $\{ S_{q}(t) \}_{t \geq 0}$ on the Banach space $\Psi_{q}$ satisfying \eqref{extendedSemigroupQWeakMild}, and there exist $M_{q} \geq 1$, $\theta_{q} \geq 0$ such that $\{ S_{q}(t) \}_{t \geq 0}$ satisfies \eqref{exponContSemigroupQWeakMildSol}. Then, from \eqref{extendedSemigroupQWeakMild}, \eqref{exponContSemigroupQWeakMildSol} and \eqref{hilbertMomentXWeakMild}, it follows that
\begin{eqnarray}
 \Exp \int^{t}_{0} \abs{X_{r}[S(t-r) A \psi]}dr 
& \leq &  \Exp \int^{t}_{0} \varrho'(X_{r})\varrho(i_{\varrho} S(t-r) A \psi)dr \nonumber \\
& \leq & \Exp \int^{t}_{0} \varrho'(X_{r})\norm{i_{\varrho,q}}_{\mathcal{L}(\Psi_{q},\Psi_{\varrho})} q( S_{q}(t-r) i_{q} A \psi )dr \nonumber \\
& \leq & M_{q} e^{\theta_{q} t} q(i_{q} A \psi) \norm{i_{\varrho,q}}_{\mathcal{L}(\Psi_{q},\Psi_{\varrho})} \Exp \int_{0}^{t} \varrho'(X_{r}) dr. \label{integCond1FubiNonRandomIntegralsXSA}
\end{eqnarray}

Let $\varphi=A \psi$. In a similar way to \eqref{integCond1FubiNonRandomIntegralsXSA}, we get that
\begin{equation} \label{integCond2FubiNonRandomIntegralsXSA}
 \Exp \int^{t}_{0}\left( \int^{s}_{0} \abs{X_{r}[A S(t-s) A \psi]} dr\right) ds 
 \leq  t M_{q} e^{\theta_{q}t} q(i_{q} A \varphi) \norm{i_{\varrho,q}}_{\mathcal{L}(\Psi_{q},\Psi_{\varrho})} \Exp \int_{0}^{t} \varrho'(X_{r}) dr. 
\end{equation}
Then, from \eqref{hilbertMomentXWeakMild}, \eqref{integCond1FubiNonRandomIntegralsXSA} and \eqref{integCond2FubiNonRandomIntegralsXSA} it follows that all the integrals in \eqref{equalFubiNonRandomIntegralsXSA} exist for $\Prob$-a.e. $\omega \in \Omega$. Moreover, from Fubini's theorem, and standard properties of the dual semi-group $\{ S(t)'\}_{t \geq 0}$ and its generator $A'$, we have for 
$\Prob$-a.e. $\omega \in \Omega$,
\begin{eqnarray} 
\int^{t}_{0}\left( \int^{s}_{0}  X_{r}(\omega)[A S(t-s) A \psi] dr\right) ds  
& = & \int^{t}_{0}\left( \int^{t-r}_{0}X_{r}(\omega)[A S(s) A \psi] ds\right) dr \nonumber \\
& = & \int^{t}_{0}\left( \int^{t-r}_{0} S(s)'A' X_{r}(\omega)[A \psi]ds \right) dr \nonumber \\
& = & \int^{t}_{0} \left( \int^{t-r}_{0} S(s)'A' X_{r}(\omega)ds \right) [A \psi] dr \nonumber \\
& = & \int^{t}_{0} \left(S(t-r)' X_{r}(\omega)-X_{r}(\omega)\right) [A \psi] dr \label{equalityFubiNonRandomIntegralsXSA}.
\end{eqnarray}

We can prove \eqref{equalFubiNonRandomIntegralsXSB1} by using similar arguments to those used to show \eqref{equalFubiNonRandomIntegralsXSA}. First, the predictability of $X$, the measurability  properties of $\mbox{B}$ in Assumption (A3) and the strong continuity of the semi-group $\{S(t)\}_{t \geq 0}$ implies that all the integrands in \eqref{equalFubiNonRandomIntegralsXSB1} are $\mathcal{P}_{t}$-measurable (see the proof of Proposition \ref{remaIntegralsMildSoluWellDefined}). Second, by following  similar arguments to those used in \eqref{integCond1FubiNonRandomIntegralsXSA}, from  \eqref{hilbertMomentBWeakMild} and Proposition \ref{remaIntegralsMildSoluWellDefined}, we can show that all the integrals in \eqref{equalFubiNonRandomIntegralsXSB1} exists. Finally, we can show that \eqref{equalFubiNonRandomIntegralsXSB1} holds $\Prob$-a.e. by employing Fubini's theorem and standard properties of the dual semigroup $\{ S(t)'\}_{t \geq 0}$ and its generator $A'$ as we did in 
\eqref{equalityFubiNonRandomIntegralsXSA}. 
\end{prf}

\begin{proof}[Proof of Theorem \ref{theoEquiWeakMild}]
Assume $X$ is a weak solution to \eqref{generalFormSEE}. Fix $t \geq 0$. We start by showing that for all $ \psi \, \in \mbox{Dom}(A)$, the following holds $\Prob$-a.e.
\begin{flalign} \label{equali1WeakIsMildSol}
& \int^{t}_{0}\left( \int^{s}_{0}\int_{U} F(r,u,X_{r})'S(t-s) A \psi M (dr,du)\right) ds \\
& =X_{0}[\psi]-S(t)'X_{0}[\psi]+\int^{t}_{0} X_{r}[A \psi]ds  -\int^{t}_{0} B(r,X_{r})[S(t-r) \psi]dr+ \int^{t}_{0} B(r,X_{r})[\psi]dr. \nonumber
\end{flalign}
First, note that for fixed $s \in [0,t]$ and $\psi \in \mbox{Dom}(A)$, $S(t-s) A \psi \in \mbox{Dom}(A)$, hence from the definition of weak solution to \eqref{generalFormSEE} (where $\psi$ is there replaced by $S(t-s) A \psi$), we have $\Prob$-a.e.
\begin{flalign} \label{equali2WeakIsMildSol}
& \int^{s}_{0}\int_{U} F(r,u,X_{r})'S(t-s) A \psi M (dr,du) \\
&= (X_{s}-X_{0})[S(t-s)A\psi]- \int^{s}_{0}(X_{r}[A S(t-s) A \psi]+ B(r,X_{r})[S(t-s) A \psi])dr. \nonumber
\end{flalign}
Now, integrating both sides of \eqref{equali2WeakIsMildSol} on $[0,t]$ with respect to the Lebesgue measure, and then using \eqref{equalFubiNonRandomIntegralsXSA} and \eqref{equalFubiNonRandomIntegralsXSB1}, we have $\Prob$-a.e.
\begin{flalign}
& \int^{t}_{0}\left( \int^{s}_{0}\int_{U} F(r,u,X_{r})'S(t-s) A \psi M (dr,du)\right) ds \nonumber \\
& =\int^{t}_{0} X_{s}[S(t-s) A \psi]ds- \int^{t}_{0}X_{0}[S(t-s)A\psi]ds \nonumber \\
& \hspace{10pt} -\int^{t}_{0}\left( \int^{s}_{0}X_{r}[A S(t-s) A \psi]dr\right) ds- \int^{t}_{0}\left( \int^{s}_{0}B(r,X_{r})[S(t-s) A \psi]dr\right) ds \nonumber \\
& = -\int^{t}_{0} X_{0}[S(t-s) A \psi]ds+\int^{t}_{0} X_{r}[A\psi]dr \nonumber \\
& \hspace{10pt} -\int^{t}_{0}B(r,X_{r})[S(t-s)\psi]dr +\int^{t}_{0}B(r,X_{r})[\psi]dr. \label{equali3WeakIsMildSol}
\end{flalign}
Now, similar calculations to those used in \eqref{equalityFubiNonRandomIntegralsXSA} (for $r=0$ and for $\psi$ instead of $A \psi$), shows that $\Prob$-a.e.
\begin{equation} \label{equali4WeakIsMildSol}
\int^{t}_{0} X_{0}[S(t-s) A \psi]ds=\int^{t}_{0} X_{0}[S(s) A \psi]ds=(S(t)'X_{0}-X_{0})[\psi].
\end{equation} 
And hence from \eqref{equali3WeakIsMildSol} and \eqref{equali4WeakIsMildSol} we obtain \eqref{equali1WeakIsMildSol}. 

Substituting \eqref{equaFubiniEquivWeakMild1} into the definition of weak solution \eqref{equationWeakSolution}, and then using \eqref{equali1WeakIsMildSol}, we get that $\Prob$-a.e.
\begin{flalign}
& X_{t}[\psi] \label{equali5WeakIsMildSol} \\ 
& =  X_{0}[\psi]+ \int^{t}_{0} (X_{r}[A \psi]+ B(r,X_{r})[\psi])dr  + \int^{t}_{0}\int_{U}F(r,u,X_{r})'\psi M (dr,du) \nonumber \\
& =  X_{0}[\psi]+ \int^{t}_{0} (X_{r}[A \psi]+ B(r,X_{r})[\psi])dr  +\int^{t}_{0}\int_{U}F(r,u,X_{r})'S(t-r) \psi M (dr,du)  \nonumber \\  
& \hspace{10pt} - \int^{t}_{0}\left( \int^{s}_{0}\int_{U} F(r,u,X_{r})'S(t-s) A \psi M (dr,du)\right) ds \nonumber \\
& =  X_{0}[\psi]+ \int^{t}_{0} (X_{r}[A \psi]+ B(r,X_{r})[\psi])dr  +\int^{t}_{0}\int_{U}F(r,u,X_{r})'S(t-r) \psi M (dr,du) \nonumber \\
& \hspace{10pt} - X_{0}[\psi] + S(t)'X_{0}[\psi] - \int^{t}_{0} X_{r}[A \psi]dr  + \int^{t}_{0} B(r,X_{r})[S(t-r) \psi]dr -\int^{t}_{0} B(r,X_{r})[\psi] dr  \nonumber \\
& =  S(t)'X_{0}[\psi]+ \int^{t}_{0} B(r,X_{r})[S(t-r) \psi]dr+ \int^{t}_{0} \int_{U} F(r,u,X_{r})'S(t-r) \psi M (dr,du). \nonumber
\end{flalign}
Now, substituting \eqref{defNonRandomConvolutionIntegral} and \eqref{weakStrongCompaStochasticConvolution} in \eqref{equali5WeakIsMildSol}, we get that $\Prob$-a.e.
\begin{multline} \label{equali6WeakIsMildSol}
X_{t}[\psi]
=\Biggl( S(t)'X_{0}+\int^{t}_{0} S(t-r)'B(r,X_{r})dr
+\int^{t}_{0} \int_{U} S(t-r)'F(r,u,X_{r}) M (dr,du)\Biggr) [\psi].
\end{multline}
As \eqref{equali6WeakIsMildSol} is valid for all $\psi \in \mbox{Dom}(A)$ and $\mbox{Dom}(A)$ is dense in $\Psi$ (see \cite{Komura:1968}), then we have $\Prob$-a.e.
$$X_{t}=S(t)'X_{0}+\int^{t}_{0} S(t-r)'B(r,X_{r})dr+\int^{t}_{0} \int_{U} S(t-r)'F(r,u,X_{r}) M (dr,du)$$
and therefore $X$ is a mild solution to \eqref{generalFormSEE}.

Conversely, assume $X$ is a mild solution to \eqref{generalFormSEE}. Fix $\psi \in \mbox{Dom}(A)$ and $t \geq 0$. For $s \in [0,T]$, from the definition of mild solution \eqref{equationMildSolution}, where $\psi$ is there replaced by $A \psi$ and $t$ is replaced by $s$, we have $\Prob$-a.e.   
\begin{eqnarray}
X_{s}[A\psi]  & = & S(s)'X_{0}[A\psi]+ \int^{s}_{0} S(s-r)'B(r,X_{r})dr[A\psi] \label{equali1MildIsWeakSol} \\ 
& { } & + \int^{s}_{0} \int_{U} S(t-r)'F(r,u,X_{r}) M (dr,du)[A \psi]. \nonumber
\end{eqnarray}
Then, integrating both sides of \eqref{equali1MildIsWeakSol} on $[0,t]$ with respect to the Lebesgue measure, then using   
\eqref{equaFubiniEquivWeakMild1},  \eqref{equalFubiNonRandomIntegralsXSB1} and \eqref{equali4WeakIsMildSol}, regrouping terms and finally by using \eqref{equali5WeakIsMildSol} (that from the arguments above is equivalent to the definition of mild solution), we have $\Prob$-a.e.
\begin{flalign*}
& \int^{t}_{0} X_{s}[A \psi]ds  \\
& =  \int^{t}_{0} S(s)'X_{0}[A \psi]ds+ \int^{t}_{0}\left( \int^{s}_{0}S(s-r)'B(r,X_{r})dr[A \psi]\right) ds \\
& \hspace{10pt} + \int^{t}_{0}\left( \int^{s}_{0}\int_{U}S(s-r)'F(r,u,X_{r}) M (dr,du)[A \psi]\right) ds \\
& = S(t)'X_{0}[\psi]-X_{0}[\psi]+\int^{t}_{0}B(r,X_{r})[S(t-r)\psi]dr-\int^{t}_{0} B(r,X_{r})[\psi]dr \\
& \hspace{10pt} +  \int^{t}_{0}\int_{U} F(r,u,X_{r})'S(t-r) \psi M (dr,du)-\int^{t}_{0}\int_{U} F(r,u,X_{r})'\psi M (dr,du) \\
&  =  S(t)'X_{0}[\psi]+\int^{t}_{0}B(r,X_{r})[S(t-r)\psi]dr +\int^{t}_{0}\int_{U} F(r,u,X_{r})'S(t-r) \psi M (dr,du)  \\
& \hspace{10pt} -X_{0}[\psi]-\int^{t}_{0}B(r,X_{r})[\psi]dr-\int^{t}_{0}\int_{U} F(r,u,X_{r})'\psi M (dr,du) \\
& =X_{t}[\psi]-X_{0}[\psi]-\int^{t}_{0}B(r,X_{r})[\psi]dr-\int^{t}_{0}\int_{U} F(r,u,X_{r})'\psi M (dr,du)
\end{flalign*}
Therefore, we have $\Prob$-a.e.
$$X_{t}[\psi]=X_{0}[\psi]+\int^{t}_{0}(X_{r}[A \psi]+B(r,X_{r})[\psi])dr+\int^{t}_{0}\int_{U} F(r,u,X_{r})'\psi M (dr,du),$$
and hence $X$ is a weak solution to \eqref{generalFormSEE}.
\end{proof}

\subsection{Regularity of the Stochastic Convolution} \label{sectionRPSC}

In this section our main interest is to study the regularity of the stochastic convolution process $\left\{ \int^{t}_{0} \int_{U} S(t-r)' R(r,u) M(dr,du): t \in [0,T] \right\}$ for $R \in \Lambda^{2}_{s}(T)$. This will play an important role in the study of existence and uniqueness of mild solutions in Section \ref{sectionEUWMS}. Before we present our main result, we will introduce some notation:

\begin{nota} \label{notationStochaConvolution}
Sometimes, we will denote by $S'\ast R = \{(S'\ast R)_{t}\}_{t\geq 0}$ the stochastic convolution process 
$\left\{ \int^{t}_{0} \int_{U} S (t-r)' R(r,u) M (dr,du): t \in [0,T]\right\}$, for a given fixed $M$.  
\end{nota}

\begin{theo} \label{theoExistHilbPredicCadlagVersionStochConvol}
Let $R \in \Lambda^{2}_{s}(T)$. There exists a continuous Hilbertian semi-norm $\varrho$ on $\Psi$ such that the process $S'\ast R$ has a $\Psi'_{\varrho}$-valued, mean-square continuous, predictable version $\widetilde{S'\ast R} = \{(\widetilde{S'\ast R})_{t}\}_{t\geq 0}$ satisfying 
\begin{equation} \label{boundedSecondMomeVersionStochConv}
\sup_{t \in [0,T]} \Exp \left[  \varrho' \left( (\widetilde{S'\ast R})_{t} \right)^{2} \right]< \infty.
\end{equation}
\end{theo}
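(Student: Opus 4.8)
The plan is to reduce the statement to an application of Theorem \ref{theoExistenceCadlagContVersionHilbertSpaceUniformBoundedMoments} (actually its continuous analogue, or rather the version producing a mean-square continuous process, which we obtain by combining the uniform second-moment bound with the stochastic continuity already available) applied to the cylindrical process $\psi \mapsto (S'\ast R)_{\cdot}[\psi]$. First I would use Corollary \ref{strongIntegrandsAsUnionIntegrandsInHilbertSpaces} to fix a continuous Hilbertian semi-norm $p$ on $\Psi$ and $\tilde{R} \in \Lambda^{2}_{s}(p,T)$ with $R(r,\omega,u) = i'_{p}\tilde{R}(r,\omega,u)$ for $\lambda \otimes \Prob \otimes \mu$-a.e.\ $(r,\omega,u)$. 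Since $\{S(t)\}_{t\geq 0}$ is a $(C_{0},1)$-semigroup, Theorem \ref{theoExtensC01SemigroupsToBanachSpaces} gives a continuous semi-norm $q$ on $\Psi$ with $p \leq q$, a $C_{0}$-semigroup $\{S_{q}(t)\}_{t\geq 0}$ on $\Psi_{q}$ satisfying $S_{q}(t)i_{q} = i_{q}S(t)$, and constants $M_{q}\geq 1$, $\theta_{q}\geq 0$ with $q(S_{q}(t)i_{q}\psi) \leq M_{q}e^{\theta_{q}t}q(i_{q}\psi)$. By Proposition \ref{conditionsAssumpDenseSubsetSemiNormsMVM} and the nuclearity of $\Psi$ the relevant Hilbert spaces are separable, so I may and do assume $q$ is Hilbertian (enlarging it if necessary, using that $\Psi$ is nuclear to dominate $q$ by a Hilbertian semi-norm; the semigroup estimate is inherited up to changing $M_{q},\theta_{q}$).

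Next I would verify the hypotheses of the regularization theorem for the cylindrical process $Y = \{Y_{t}\}_{t\in[0,T]}$ defined by $Y_{t}(\psi) = (S'\ast R)_{t}[\psi] = I^{w}_{t}(R'S(t-\cdot)\psi)$, where I use the weak-strong compatibility \eqref{weakStrongCompatibilityAlmostSureIntegrands} and \eqref{weakStrongCompaStochasticConvolution}. For condition (1), each $Y(\psi)$ is a weak stochastic integral, hence by Theorem \ref{propertiesWeakIntegralSquareIntegIntegrands} has a c\`adl\`ag (indeed, by Proposition \ref{propWeakStochIntegContIfCylinMartIsCont} when $M$ is continuous, a continuous) version; in general mean-square continuity of $t\mapsto Y_{t}(\psi)$ follows from the same proposition together with the continuity of $s\mapsto S(s)\psi$. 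For condition (2), the key computation is the uniform bound: using Doob's inequality \eqref{weakIntegralMapIsContinuousInequ}, the It\^o isometry \eqref{secondMomentWeakStochasticIntegral}, the factorisation $R'S(t-r)\psi = \tilde{R}(r,u)'i_{p,q}S_{q}(t-r)i_{q}\psi$, the Hilbert-Schmidt property of $\tilde{R}$ in $\Lambda^{2}_{s}(p,T)$, and the semigroup estimate, I would obtain for all $\psi \in \Psi$
\begin{align*}
\Exp\left( \sup_{t\in[0,T]} \abs{Y_{t}(\psi)}^{2} \right)
& \leq 4T \sup_{t\in[0,T]} \Exp \int_{0}^{T}\int_{U} q_{r,u}\big(\tilde{R}(r,u)'i_{p,q}S_{q}(t-r)i_{q}\psi\big)^{2} \mu(du)\lambda(dr) \\
& \leq 4T\, M_{q}^{2} e^{2\theta_{q}T}\, \norm{i_{p,q}}^{2}_{\mathcal{L}(\Psi_{q},\Psi_{p})}\, \norm{\tilde{R}}^{2}_{s,p,T}\, q(\psi)^{2} = C(T)\, q(\psi)^{2},
\end{align*}
which is exactly \eqref{uniformBoundMomentsByHilbertSeminorm} with $n=2$ and $\varrho = q$.

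With both hypotheses verified, Theorem \ref{theoExistenceCadlagContVersionHilbertSpaceUniformBoundedMoments} produces a continuous Hilbertian semi-norm $\varrho$ on $\Psi$ with $q \leq \varrho$, $i_{q,\varrho}$ Hilbert-Schmidt, and a $\Psi'_{\varrho}$-valued c\`adl\`ag regular version $Z = \{Z_{t}\}_{t\in[0,T]}$ of $Y$ with $\Exp(\sup_{t\in[0,T]} \varrho'(Z_{t})^{2}) < \infty$; in particular \eqref{boundedSecondMomeVersionStochConv} holds, and since $Z$ is $\{\mathcal{F}_{t}\}$-adapted and, by the mean-square continuity of each $Y(\psi)$ together with the uniform bound, stochastically continuous, it has a predictable version by Proposition 3.21 of \cite{PeszatZabczyk}; I also need to check mean-square continuity of $Z$ itself, which follows from $\Exp(\varrho'(Z_{t}-Z_{s})^{2}) = \sup_{\psi}\{\dots\}$ controlled by $\Exp\int_{s}^{t}\int_{U}\norm{\cdot}^{2}\mu\lambda + $ a term measuring $\norm{S(t-r)-S(s-r)}$, handled by dominated convergence. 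Finally, Proposition \ref{propCondiIndistingProcess} identifies $Z$ with a version of the $\Psi'_{\beta}$-valued process $S'\ast R$. The main obstacle I anticipate is the reduction to a \emph{Hilbertian} semi-norm $q$ controlling the semigroup: Remark \ref{remaHilbSemNormExtSemiGroupToHilbert} warns that the family $\Pi$ in Theorem \ref{theoExtensC01SemigroupsToBanachSpaces} cannot in general be taken Hilbertian, so I cannot simply invoke that theorem for a Hilbertian $p$; instead I must work with a (possibly non-Hilbertian) $q\geq p$ from Theorem \ref{theoExtensC01SemigroupsToBanachSpaces}, then dominate $q$ by a Hilbertian continuous semi-norm $\hat{q}$ on $\Psi$ (possible since $\Psi$ is nuclear, hence locally convex with a generating family of Hilbertian semi-norms cofinal among all continuous semi-norms), and verify that the estimate $q(S(t)\psi) \leq M_{q}e^{\theta_{q}t}q(\psi)$ still yields a usable bound $\hat{q}(i_{\hat q}S(t)\psi)^{2} \leq (\text{const})\, e^{2\theta_{q}t}\, \hat{q}(\psi)^{2}$ after possibly enlarging $\hat q$ once more — this bookkeeping, together with carefully tracking which semi-norm the final $\varrho$ dominates, is where the argument requires care rather than new ideas.
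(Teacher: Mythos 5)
Your reduction to Theorem \ref{theoExistenceCadlagContVersionHilbertSpaceUniformBoundedMoments} does not work, because the stochastic convolution $Y_{t}(\psi)=(S'\ast R)_{t}[\psi]=I^{w}_{t}\bigl(\ind{[0,t]}{\cdot}R'S(t-\cdot)\psi\bigr)$ is \emph{not} the weak stochastic integral of a fixed integrand: the integrand depends on $t$, so $t\mapsto Y_{t}(\psi)$ is not a martingale and is not covered by Theorem \ref{propertiesWeakIntegralSquareIntegIntegrands}. Consequently (i) your key estimate $\Exp\bigl(\sup_{t\in[0,T]}\abs{Y_{t}(\psi)}^{2}\bigr)\leq 4T\sup_{t}\Exp\int\!\!\int q_{r,u}(\cdot)^{2}\,\mu\,\lambda$ is unjustified — Doob's inequality \eqref{weakIntegralMapIsContinuousInequ} controls $\sup_{s}\abs{I^{w}_{s}(X)}$ for a single $X\in\Lambda^{2}_{w}(T)$, not $\sup_{t}\abs{I^{w}_{t}(X^{(t)})}$ with a $t$-dependent family of integrands (maximal inequalities for stochastic convolutions are a genuinely harder matter and do not follow from Doob); and (ii) hypothesis (1) of the regularization theorem requires a c\`adl\`ag version of each $Y(\psi)$, which again is exactly what is \emph{not} available for a convolution — note the theorem you are proving only claims mean-square continuity, not path regularity, precisely for this reason. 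Mean-square continuity of $Y(\psi)$ is no substitute for hypothesis (1), and $\sup_{t}\Exp$ is no substitute for $\Exp\sup_{t}$ in hypothesis (2).

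The paper's proof avoids the regularization theorem altogether. After the same two preliminary steps (Corollary \ref{strongIntegrandsAsUnionIntegrandsInHilbertSpaces} giving $p$ and $\tilde{R}\in\Lambda^{2}_{s}(p,T)$, and Theorem \ref{theoExtensC01SemigroupsToBanachSpaces} giving $q\geq p$ and $\{S_{q}(t)\}$ on $\Psi_{q}$), it picks a continuous \emph{Hilbertian} semi-norm $\eta\geq q$ (your instinct to dominate $q$ by a Hilbertian semi-norm is correct and is exactly what is done) and shows that for each fixed $t$ the factorized integrand $\{\ind{[0,t]}{r}\,i'_{q,\eta}S_{q}(t-r)'i'_{p,q}\tilde{R}(r,\omega,u)\}$ already belongs to $\Lambda^{2}_{s}(\eta,t)$, i.e.\ is Hilbert--Schmidt into $\Psi'_{\eta}$ with finite second moment. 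Theorem \ref{existenceStrongStochasticIntegSquareMoments} then yields, for a single $\varrho\geq\eta$, a $\Psi'_{\varrho}$-valued $\mathcal{F}_{t}$-measurable version of $(S'\ast R)_{t}$ for every $t$, with the uniform bound \eqref{boundedSecondMomeVersionStochConv} coming from the It\^{o} isometry \eqref{itoIsometryStrongIntegHilbSchmiIntegrands} rather than from any maximal inequality. Mean-square continuity is then proved directly by splitting $(S'\ast R)_{t}-(S'\ast R)_{s}$ into an integral over $[s,t]$ and an integral of $\bigl(S_{q}(t-r)'-S_{q}(s-r)'\bigr)i'_{p,q}\tilde{R}$, the first controlled by the isometry and the second by Parseval's identity, strong continuity of $S_{q}$ and dominated convergence; predictability then follows from adaptedness and stochastic continuity. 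You would need to replace your argument by something of this form.
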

\begin{prf} 
First, it is important to remark that the fact that $R \in \Lambda^{2}_{s}(T)$ and by using similar arguments to those in the proof of Proposition \ref{remaIntegralsMildSoluWellDefined} it follows that the stochastic convolution $S'\ast R$ is well-defined. 

Now we prove the existence of a Hilbert space-valued predictable version of the stochastic convolution process. First, as $R \in \Lambda^{2}_{s}(T)$, from Corollary \ref{strongIntegrandsAsUnionIntegrandsInHilbertSpaces} there exists a continuous Hilbertian semi-norm $p$ on $\Psi$ and $\tilde{R} \in \Lambda^{2}_{s}(p,T)$ such that $R(r,\omega,u)= i'_{p} \tilde{R}(r,\omega,u)$, for $\mbox{Leb} \otimes \Prob \otimes \mu$-a.e. $(r,\omega,u) \in [0,T] \times \Omega \times U$. 

Now, as in the proof of Lemma \ref{lemmEquaFubiniEquivWeakMild}, because $\{S(t)\}_{t\geq 0}$ is a $(C_{0},1)$-semigroup on $\Psi$ and $p$ is a continuous semi-norm on $\Psi$, there exists a continuous semi-norm $q$ on $\Psi$, $p \leq q$, and there exists a $C_{0}$-semigroup $\{ S_{q}(t) \}_{t \geq 0}$ on the Banach space $\Psi_{q}$ such that \eqref{extendedSemigroupQWeakMild} holds. Moreover, there exist $M_{q} \geq 1$, $\theta_{q} \geq 0$ such that \eqref{exponContSemigroupQWeakMildSol} holds. 

Let $\eta$ be a continuous Hilbertian semi-norm on $\Psi$ such that $q \leq \eta$. Then, for fixed $t \in [0,T]$ it follows from the above properties that for $\mbox{Leb} \otimes \Prob \otimes \mu$-a.e. $(r,\omega,u)$, 
\begin{equation} \label{decompEquivaVersionHilbSpaceStochConvol}
\ind{[0,t]}{r} S(t-r)' R(r,\omega,u)  = \ind{[0,t]}{r} i'_{\eta} \, i'_{q,\eta} \, S_{q}(t-r)' \, i'_{p,q} \,  \tilde{R}(r,\omega,u). 
\end{equation}
Our objective is to prove that $\{ \ind{[0,t]}{r} i'_{q,\eta} \, S_{q}(t-r)' \, i'_{p,q} \,  \tilde{R}(r,\omega,u) \} \in \Lambda^{2}_{s}(\eta,t)$ for each $t \in [0,T]$.  

First, for every $(r,\omega, u) \in [0,T] \times \Omega \times U$, because $\tilde{R}(r,\omega,u) \in \mathcal{L}_{2}(\Phi'_{q_{r,u}},\Psi'_{p})$, $i'_{p,q} \in \mathcal{L}(\Psi'_{p},\Psi'_{q})$, $S_{q}(t-r)' \in \mathcal{L}(\Psi'_{q},\Psi'_{q})$ and $i'_{q,\eta} \in \mathcal{L}(\Psi'_{q},\Psi'_{\eta})$, then it follows that $i'_{q,\eta} \, S_{q}(t-r)' \, i'_{p,q} \,  \tilde{R}(r,\omega,u) \in \mathcal{L}_{2}(\Psi'_{\eta},\Phi'_{q_{r,u}})$.

Now, fix $\psi \in \Psi$ and $\phi \in \Phi$. Because the map $(r,\omega,u) \mapsto \ind{[0,t]}{r} q_{r,u}(\tilde{R}(r,\omega,u)'S(t-r)\psi, \phi)$
 is $\mathcal{P}_{t} \otimes \mathcal{B}(U)$-measurable (see Proposition \ref{remaIntegralsMildSoluWellDefined}) and from \eqref{decompEquivaVersionHilbSpaceStochConvol} it follows that the map 
$$(r,\omega,u) \mapsto \ind{[0,t]}{r} q_{r,u}(\tilde{R}(r,\omega,u)'\, i_{p,q} \, S_{q}(t-r) \, i_{q,\eta} i_{\eta} \psi, \phi),$$ 
is also $\mathcal{P}_{t} \otimes \mathcal{B}(U)$-measurable. Finally, from \eqref{exponContSemigroupQWeakMildSol}, we have
\begin{multline} \label{finiteHSMomentEquiVersionHilbSpacStochConv}
\Exp \int_{0}^{t} \int_{U} \norm{ i'_{q,\eta} \, S_{q}(t-r)' \, i'_{p,q} \, \tilde{R}(r,u) }^{2}_{\mathcal{L}_{2}(\Psi_{\eta},\Phi_{q_{r,u}})} \mu(du) \lambda (dr)  \\
\leq M_{q}^{2} e^{2\theta_{q} t} \norm{i_{p,q}}^{2}_{\mathcal{L}(\Psi_{q},\Psi_{p})} \norm{i_{q,\varrho}}^{2}_{\mathcal{L}(\Psi_{\eta},\Psi_{q})} \norm{\tilde{R}}^{2}_{s,p,t} < \infty.  
\end{multline}
Then, $\{ \ind{[0,t]}{r} i'_{q,\eta} \, S_{q}(t-r)' \, i'_{p,q} \,  \tilde{R}(r,\omega,u)\}$ satisfies the conditions of Definition \ref{integrandsStrongIntegSquareMomentsInHilbertSpace} and hence belongs to $\Lambda^{2}_{s}(\eta,t)$. Therefore, from Proposition \ref{spaceStrongIntegrandsHilbertSpaceAreOrdered}, Theorem \ref{existenceStrongStochasticIntegSquareMoments} and  \eqref{decompEquivaVersionHilbSpaceStochConvol}, there exists a continuous Hilbertian seminorm $\varrho$ on $\Psi$, $\eta \leq \varrho$, such that for each $t \in [0,T]$ we have 
$$ \ind{[0,t]}{r} S(t-r)' R(r,\omega,u)  = \ind{[0,t]}{r} i'_{\varrho} \, i'_{q,\varrho} \, S_{q}(t-r)' \, i'_{p,q} \,  \tilde{R}(r,\omega,u), \quad \mbox{Leb} \otimes \Prob \otimes \mu \mbox{-a.e.}, $$ 
$\{ \ind{[0,t]}{r} i'_{q,\varrho} \, S_{q}(t-r)' \, i'_{p,q} \,  \tilde{R}(r,\omega,u)\} \in \Lambda^{2}_{s}(\varrho,T)$ and 
$\int_{0}^{t} \int_{U} i'_{q,\varrho} \, S_{q}(t-r)' \, i'_{p,q} \,  \tilde{R}(r,u) M(dr,du)$ is a $\Psi'_{\rho}$-valued $\mathcal{F}_{t}$-measurable version of $\int_{0}^{t} \int_{U} S(t-r)' R(r,u) M(dr,du)$.

Our next objective is to prove that the $\Psi'_{\varrho}$-valued process 
$$\left\{ \int_{0}^{t} \int_{U} i'_{q,\varrho} \, S_{q}(t-r)' \, i'_{p,q} \, \tilde{R}(r,u) M(dr,du): t \in [0,T] \right\},$$
is mean square continuous. We will prove the left continuity as the right continuity follows from similar arguments. Let $0 < t \leq T$. Then, from the linearity of the strong stochastic integral and Proposition \ref{propStrongIntegralInSubintervalAndRandomSubset}, for any $0 \leq s <t$ we have 
\begin{flalign}
& \Exp \Biggl[ \varrho \Biggl( \int_{0}^{t} \int_{U} i'_{q,\varrho} \, S_{q}(t-r)' \, i'_{p,q} \, \tilde{R}(r,u) M(dr,du)  - \int_{0}^{s} \int_{U} i'_{q,\varrho} \, S_{q}(s-r)' \, i'_{p,q} \,  \tilde{R}(r,u) M(dr,du) \Biggl)^{2} \, \Biggl] \nonumber \\
& \leq 2 \, \Exp  \Biggl[ \varrho \left( \int_{0}^{t} \int_{U} \ind{[s,t]}{r} i'_{q,\varrho} \, S_{q}(t-r)' \, i'_{p,q} \,  \tilde{R}(r,u) M(dr,du) \right)^{2} \, \Biggl] \nonumber \\
& \hspace{15pt} + 2 \, \Exp \Biggl[ \varrho \left( \int_{0}^{s} \int_{U} i'_{q,\varrho} \left(  S_{q}(t-r)' -  S_{q}(s-r)' \right) i'_{p,q} \,  \tilde{R}(r,u) M(dr,du) \right)^{2} \, \Biggl] \label{meanSquareContVersionStochConvol} 
\end{flalign}

Now, we start with the first term in the right-hand side of the inequality in \eqref{meanSquareContVersionStochConvol}.  
From \eqref{itoIsometryStrongIntegHilbSchmiIntegrands} and arguing in a similar way to the derivation of \eqref{finiteHSMomentEquiVersionHilbSpacStochConv} we have for any $0 \leq s <t$ that
\begin{flalign}
& \Exp  \Biggl[ \varrho \left( \int_{0}^{t} \int_{U} \ind{[s,t]}{r} i'_{q,\varrho} \, S_{q}(t-r)' \, i'_{p,q} \, \tilde{R}(r,u) M(dr,du) \right)^{2} \, \Biggl] \label{inequaPart1MeanSquareContVersStochConv} \\ 
& = \Exp \int_{s}^{t} \int_{U} \norm{ i'_{q,\varrho} \, S_{q}(t-r)' \, i'_{p,q} \, \tilde{R}(r,u) }^{2}_{\mathcal{L}_{2}(\Phi'_{q_{r,u}}, \Psi'_{\varrho})} \mu(du) dr  \nonumber \\
& \leq M_{q}^{2} e^{2 \theta_{q} (t-s)} \norm{i_{p,q}}^{2}_{\mathcal{L}(\Psi_{q},\Psi_{p})} \norm{i_{q,\varrho}}^{2}_{\mathcal{L}(\Psi_{\varrho},\Psi_{q})} \norm{\tilde{R}}^{2}_{s,p,T},\nonumber
\end{flalign}
Then, from \eqref{inequaPart1MeanSquareContVersStochConv} we have 
\begin{equation} \label{part1MeanSquareContVersStochConv}
\lim_{s \rightarrow t-} \Exp  \Biggl[ \varrho \left( \int_{0}^{t} \int_{U} \ind{[s,t]}{r} i'_{q,\varrho} \, S_{q}(t-r)' \, i'_{p,q} \,  \tilde{R}(r,u) M(dr,du) \right)^{2} \, \Biggl] = 0.
\end{equation}
For the second term in the right-hand side of the inequality in \eqref{meanSquareContVersionStochConvol}, proceeding as in \eqref{inequaPart1MeanSquareContVersStochConv}, we can prove that for any $0 \leq s <t$,
\begin{flalign}
& \Exp \Biggl[ \varrho \left( \int_{0}^{s} \int_{U} i'_{q,\varrho} \left(  S_{q}(t-r)' -  S_{q}(s-r)' \right) i'_{p,q} \,   \tilde{R}(r,u) M(dr,du) \right)^{2} \, \Biggl] \label{inequaPart2MeanSquareContVersStochConv} \\ 
& = \Exp \int_{0}^{s} \int_{U} \norm{ \tilde{R}(r,u)'\, i_{p,q} \left(  S_{q}(t-r) -  S_{q}(s-r) \right)  i_{q,\varrho} }^{2}_{\mathcal{L}_{2}(\Psi_{\varrho},\Phi_{q_{r,u}})} \mu(du) dr  \nonumber \\
& \leq M_{q}^{2} e^{2 \theta_{q} T} \norm{i_{p,q}}^{2}_{\mathcal{L}(\Psi_{q},\Psi_{p})} \norm{i_{q,\varrho}}^{2}_{\mathcal{L}(\Psi_{\varrho},\Psi_{q})} \norm{\tilde{R}}^{2}_{s,p,T} < \infty. \nonumber
\end{flalign}
Now, let $\{ \psi_{j}^{\varrho} \}_{j \in \N} \subseteq \Psi$ be a complete orthonormal system in $\Psi_{\varrho}$. For each  $j \in \N$, the strong continuity of the semigroup $\{ S_{q}(t) \}_{ t \geq 0}$, the continuity of the maps $i_{p,q}$ and of $R(r,\omega,u)'$ (for fixed $(r,\omega,u)$), and the dominated convergence theorem imply that 
\begin{equation} \label{meanSquareContInOrthBasisVersionStochConvo}
\lim_{s \rightarrow t-} \Exp \int_{0}^{T} \int_{U}\ind{[0,s]}{r} q_{r,u} \left( \tilde{R}(r,u)'\, i_{p,q} \left(  S_{q}(t-r) -  S_{q}(s-r) \right)  i_{q,\varrho} \psi_{j}^{\varrho} \right)^{2} \mu(du) dr =0.  
\end{equation}
By Fubini's theorem and Parseval's identity we have
\begin{flalign}
& \Exp \int_{0}^{s} \int_{U} \norm{ \tilde{R}(r,u)'\, i_{p,q} \left(  S_{q}(t-r) -  S_{q}(s-r) \right)  i_{q,\varrho} }^{2}_{\mathcal{L}_{2}(\Psi_{\varrho},\Phi_{q_{r,u}})} \mu(du) dr  \nonumber \\
& = \sum_{j =1}^{\infty} \Exp \int_{0}^{T} \int_{U}\ind{[0,s]}{r} q_{r,u} \left( \tilde{R}(r,u)'\, i_{p,q} \left(  S_{q}(t-r) -  S_{q}(s-r) \right)  i_{q,\varrho} \psi_{j}^{\varrho} \right)^{2} \mu(du) dr. \nonumber
\end{flalign}
Hence, from \eqref{inequaPart2MeanSquareContVersStochConv}, \eqref{meanSquareContInOrthBasisVersionStochConvo} and the dominated convergence theorem it follows that 
\begin{equation}  \label{part2MeanSquareContVersStochConv} 
\lim_{s \rightarrow t-} \Exp \Biggl[ \varrho \left( \int_{0}^{s} \int_{U} i'_{q,\varrho} \left(  S_{q}(t-r)' -  S_{q}(s-r)' \right) i'_{p,q} \,  \tilde{R}(r,u) M(dr,du) \right)^{2} \, \Biggl] = 0. 
\end{equation} 
Finally, from \eqref{meanSquareContVersionStochConvol}, \eqref{part1MeanSquareContVersStochConv} and \eqref{part2MeanSquareContVersStochConv}, it follows that $\widetilde{S'\ast R} = \{(\widetilde{S'\ast R})_{t}\}_{t\geq 0}$ given by 
$$ (\widetilde{S'\ast R})_{t}= \int_{0}^{t} \int_{U} i'_{q,\varrho} \, S_{q}(t-r)' \, i'_{p,q} \,  \tilde{R}(r,u) M(dr,du), \quad \forall \, t \in [0,T],$$
is mean square continuous. Furthermore, as it is also $\{ \mathcal{F}_{t} \}$-adapted and $\Psi'_{\varrho}$ is a separable Hilbert space, then it has a predictable version (see \cite{PeszatZabczyk}, Proposition 3.21, p.27). Moreover, from \eqref{inequaPart1MeanSquareContVersStochConv} (taking $s=0$) we have 
\eqref{boundedSecondMomeVersionStochConv}. 
\end{prf}

\subsection{Existence and Uniqueness of Weak and Mild Solutions} \label{sectionEUWMS}

In this section we prove the existence and uniqueness of weak and mild solutions to \eqref{generalFormSEE} under some Lipschitz and growth conditions on the coefficients $B$ and $F$.  
We will need the following additional assumptions on the dual space $\Psi'_{\beta}$ and the dual semigroup $\{ S(t)' \}_{t \geq 0}$ in this section. 

\begin{assu} \label{assumpForDualSpaceAndDualSemigroup} \hfill 
\begin{enumerate}
\item Every continuous semi-norm on $\Psi'_{\beta}$ is separable. 
\item The dual semigroup $\{ S(t)' \}_{t \geq 0}$ is a $(C_{0},1)$ semigroup on $\Psi'_{\beta}$. 
\end{enumerate}
\end{assu}

\begin{rema}
Assumption \ref{assumpForDualSpaceAndDualSemigroup}(1) is satisfied if $\Psi'_{\beta}$ is either separable or nuclear (e.g if $\Psi'_{\beta}$ is the space of distributions $\mathscr{D}'$ or the space of tempered distributions $\mathscr{S}'$ on $\R^{d}$; see \cite{Treves}, Chapter 51). Assumption \ref{assumpForDualSpaceAndDualSemigroup}(2) is satisfied if $\{ S(t) \}_{t \geq 0}$ is equicontinuous because in that case $\{ S(t)' \}_{t \geq 0}$ is also equicontinuous (see Section \ref{sectionSLOLCS}).  
\end{rema}

Recall from Section \ref{subsectionNuclSpace} that for each $K \subseteq \Psi$ bounded, $\eta_{K}: \Psi' \rightarrow \R_{+}$ given by 
$$\eta_{K}(f) \defeq  p_{K^{0}}(f)=   \sup_{ \psi \in K} \abs{ f[\psi]}, \quad \forall \, f \in \Psi', $$ 
is a continuous semi-norm on $\Psi'_{\beta}$, where $p_{K^{0}}$ is the Minkowski functional of $K^{0}$. Moreover, the family $\{ \eta_{K}: K \subseteq \Psi, \, K \mbox{ is bounded} \}$ generates the topology on $\Psi'_{\beta}$. 

For each $K \subseteq \Psi$ bounded we denote by $\Psi'_{K}$ the Banach space that corresponds to the completion of $(\Psi'/\mbox{ker}(\eta_{K}), \tilde{\eta_{K}})$ where $\tilde{\eta_{K}}(f+\mbox{ker}(\eta_{K}))=\eta_{K}(f)$. The canonical inclusion from $\Psi'_{\beta}$ into $\Psi'_{K}$ will be denoted by $j_{K}$. If $K$, $D$ are any bounded subsets of $\Psi$ such that $K \subseteq D$, then we have $\eta_{K} \leq \eta_{D}$ and we denote by $j_{K,D}$ the canonical inclusion from $\Psi'_{D}$ into $\Psi'_{K}$. If for $K \subseteq \Psi$ bounded we have that $\Psi'_{K}$ is a Hilbert space, then we say that $K$ is a \emph{Hilbertian set}.      

The following key property of the dual semigroup $\{ S(t)' \}_{t \geq 0}$ will be of great importance for our proof of existence and uniqueness of solutions to \eqref{generalFormSEE}. 

\begin{lemm} \label{existenceBoundedHilbertFamilyAndSemigroup}
There exists a non-empty family $\mathcal{K}_{H}(\Psi)$ of bounded subsets of $\Psi$, such that for all $K \in \mathcal{K}_{H}(\Psi)$, $\Psi'_{K}$ is a separable Hilbert space and there exists a $C_{0}$-semigroup $\{ S_{K}(t) \}_{t \geq 0}$ on $\Psi'_{K}$ such that 
\begin{equation} \label{compatiDualSemigroupWithHilbertExtension}
S_{K}(t) j_{K} f = j_{K} S(t)' f, \quad \forall \, t \geq 0, \, f \in \Psi'.
\end{equation}
\end{lemm}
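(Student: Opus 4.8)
The plan is to transport Theorem~\ref{theoExtensC01SemigroupsToBanachSpaces} and Proposition~\ref{propExistFamiHilbSeminormExtenSemiGroup} to the space $\Psi'_{\beta}$, and then to recognise the Hilbertian semi-norms they produce as semi-norms of the form $\eta_{K}$. First I would use Assumption~\ref{assumpForDualSpaceAndDualSemigroup}(2): since $\{S(t)'\}_{t\geq 0}$ is a $(C_{0},1)$-semigroup on $\Psi'_{\beta}$, Theorem~\ref{theoExtensC01SemigroupsToBanachSpaces} (applied with the role of $\Psi$ there played by $\Psi'_{\beta}$) yields a family $\Pi$ of continuous semi-norms generating the topology of $\Psi'_{\beta}$ such that for each $p\in\Pi$ there are $M_{p}\geq 1$, $\theta_{p}\geq 0$ with $p(S(t)'f)\leq M_{p}e^{\theta_{p}t}p(f)$ for all $f\in\Psi'$, $t\geq 0$. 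By Assumption~\ref{assumpForDualSpaceAndDualSemigroup}(1) every $p\in\Pi$ is separable, i.e.\ each Banach space $(\Psi'_{\beta})_{p}$ is separable, so the hypotheses of Proposition~\ref{propExistFamiHilbSeminormExtenSemiGroup} are met (again with $\Psi'_{\beta}$ in place of $\Psi$). Hence for each $p\in\Pi$ there is a continuous Hilbertian semi-norm $\eta=\eta_{p}$ on $\Psi'_{\beta}$ and a $C_{0}$-semigroup $\{S_{\eta}(t)\}_{t\geq 0}$ on the Hilbert space associated with $\eta$ (separable, by Assumption~\ref{assumpForDualSpaceAndDualSemigroup}(1)) such that
\[
S_{\eta}(t)\, i_{\eta} f = i_{\eta}\, S(t)' f, \qquad \forall\, f\in\Psi',\ t\geq 0 .
\]

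Next I would identify each $\eta$ with a semi-norm of the type used in this section. Since $\Psi$ is reflexive, $\langle\Psi'_{\beta},\Psi\rangle$ is a dual pair with $(\Psi'_{\beta})'_{\beta}=\Psi$; set $K:=B_{\eta}(1)^{0}\subseteq\Psi$, the polar of the unit ball of $\eta$ taken in $\Psi$. As $B_{\eta}(1)$ is a neighbourhood of zero in $\Psi'_{\beta}$, the set $K$ is equicontinuous, hence a bounded subset of $\Psi$. Because $\eta$ is continuous, $B_{\eta}(1)$ is a closed, absolutely convex neighbourhood of zero in $\Psi'_{\beta}$, and, reflexivity of $\Psi$ making $\beta(\Psi',\Psi)$ compatible with the pair, a closed convex subset of $\Psi'_{\beta}$ is $\sigma(\Psi',\Psi)$-closed; the bipolar theorem then gives $K^{0}=B_{\eta}(1)^{00}=B_{\eta}(1)$, so that
\[
\eta_{K}(f)=p_{K^{0}}(f)=p_{B_{\eta}(1)}(f)=\eta(f), \qquad \forall\, f\in\Psi' .
\]
Consequently $\Psi'_{K}$ is exactly the separable Hilbert space associated with $\eta$, so $K$ is a Hilbertian set, and the canonical inclusion $j_{K}:\Psi'_{\beta}\to\Psi'_{K}$ coincides with $i_{\eta}$. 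Writing $S_{K}(t):=S_{\eta}(t)$, the displayed identity of the first paragraph becomes $S_{K}(t)\,j_{K}f=j_{K}\,S(t)'f$ for all $f\in\Psi'$, $t\geq 0$, which is precisely \eqref{compatiDualSemigroupWithHilbertExtension}.

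Finally I would take $\mathcal{K}_{H}(\Psi):=\{\,B_{\eta_{p}}(1)^{0}: p\in\Pi\,\}$. This family is non-empty because $\Pi$ is non-empty (it generates the Hausdorff topology of $\Psi'_{\beta}$), and by the previous two paragraphs every $K\in\mathcal{K}_{H}(\Psi)$ is a bounded subset of $\Psi$ for which $\Psi'_{K}$ is a separable Hilbert space carrying a $C_{0}$-semigroup $\{S_{K}(t)\}_{t\geq 0}$ satisfying $S_{K}(t)j_{K}f=j_{K}S(t)'f$.

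The only step that is not pure bookkeeping is the polar identification $K^{0}=B_{\eta}(1)$, which is where the reflexivity of $\Psi$ (a standing hypothesis) enters, guaranteeing that a $\beta$-closed convex subset of $\Psi'$ is already $\sigma(\Psi',\Psi)$-closed so that the bipolar theorem applies, together with the elementary observation that the Minkowski functional of the unit ball of a continuous semi-norm is that semi-norm. I expect this to be the main, though minor, obstacle; the remainder is a direct application of Theorem~\ref{theoExtensC01SemigroupsToBanachSpaces} and Proposition~\ref{propExistFamiHilbSeminormExtenSemiGroup}.
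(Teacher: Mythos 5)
Your proposal is correct and follows essentially the same route as the paper: the paper's (one-sentence) proof cites exactly the same three ingredients — Proposition \ref{propExistFamiHilbSeminormExtenSemiGroup} applied to the $(C_{0},1)$-semigroup $\{S(t)'\}_{t\geq 0}$ on $\Psi'_{\beta}$ (Assumption \ref{assumpForDualSpaceAndDualSemigroup}), the separability hypothesis, and the reflexivity fact that every continuous semi-norm $\nu$ on $\Psi'_{\beta}$ equals $\eta_{K}$ with $K=B_{\nu}(1)^{0}$ bounded. Your bipolar-theorem argument is precisely the detail the paper delegates to Schaefer, Theorems IV.5.2 and IV.5.6.
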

\begin{prf} The result follows from Proposition \ref{propExistFamiHilbSeminormExtenSemiGroup}, Assumption \ref{assumpForDualSpaceAndDualSemigroup} and the fact that because $\Psi$ is reflexive, then every continuous semi-norm $\nu$ on $\Psi'_{\beta}$ satisfies $\nu=\eta_{K}$ where $K=B_{\nu}(1)^{0} \subseteq \Psi$ is bounded (see \cite{Schaefer}, Theorems IV.5.2 and IV.5.6).
\end{prf}

Now, for the proof of existence and uniqueness of solutions to \eqref{generalFormSEE} we will follow a fixed point theorem argument and to do this we will need a class of $\Psi'_{\beta}$-process where the solution will lie. Adapting the ideas used in  \cite{DaPratoZabczyk} p.188 to our context, we define this class as follows:

\begin{defi}
Let $T>0$. We denote by $\mathcal{H}^{2}(T, \Psi'_{\beta})$ the vector space of all the (equivalence classes of) $\Psi'_{\beta}$-valued, regular, predictable processes $X=\{ X_{t} \}_{t \in [0,T]}$ such that $\forall \, K \in \mathcal{K}_{H}(\Psi)$, 
\begin{equation*}
\sup_{t \in [0,T]} \Exp \, \eta_{K}(j_{K} X_{t})^{2} < \infty. 
\end{equation*}
\end{defi}

Now, we need to equip the linear space $\mathcal{H}^{2}(T, \Psi'_{\beta})$ with a locally convex topology. To do this, will need the following family of Banach spaces. 

\begin{defi}
Let $T>0$ and $K \in \mathcal{K}_{H}(\Psi)$. Denote by $\mathcal{H}^{2}(T,\Psi'_{K})$ the vector space of all (equivalence classes of) $\Psi'_{K}$-valued predictable process $X= \{X_{t}\}_{t \in [0,T]}$ such that 
$$\norm{X}_{K,T} \defeq \sup_{t \in [0,T]} \left( \Exp \,   \eta_{K}(X_{t})^{2}  \right)^{1/2} < \infty.$$ 
\end{defi}
The space $\mathcal{H}^{2}(T,\Psi'_{K})$ is a Banach space when equipped with the topology defined by the norm $\norm{\cdot}_{K,T}$ . 
\begin{rema} \label{equivNormBanSpacExisUniq} If for $\upsilon \geq 0$ we define $\norm{ \cdot }_{\upsilon,K,T}$ by (see \cite{PeszatZabczyk}, p. 164) 
\begin{equation*} 
\norm{ X }_{\upsilon,K,T} = \sup_{t \in [0,T]} \left( e^{-\upsilon t} \Exp \left( \eta_{K}(X_{t})^{2} \right) \right)^{1/2}, \quad  \forall \, X \in \mathcal{H}^{2}(T, \Psi'_{\beta}),
\end{equation*}
then $\norm{\cdot}_{K,T}= \norm{\cdot}_{0,K,T}$ and it is clear that the norms $\abs{\norm{ \cdot }}_{\upsilon,K,T}$, $\upsilon \geq 0$ are equivalent. 
\end{rema}

The following result is an immediate consequence of the definition of the spaces $\mathcal{H}^{2}(T, \Psi'_{\beta})$ and $\mathcal{H}^{2}(T,\Psi'_{K})$, and the continuity and the linearity of the map $j_{K}$. 

\begin{lemm} \label{mapsJKDefineProjectiveSystem}
For each $K \in \mathcal{K}_{H}(\Psi)$, the map $j_{K}: \mathcal{H}^{2}(T, \Psi'_{\beta}) \rightarrow \mathcal{H}^{2}(T,\Psi'_{K})$ given by $X \mapsto j_{K} X$ is continuous, linear and injective. 
\end{lemm}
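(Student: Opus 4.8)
The plan is to verify the three asserted properties directly from the definitions of $\mathcal{H}^{2}(T,\Psi'_{\beta})$ and $\mathcal{H}^{2}(T,\Psi'_{K})$, the only genuinely non‑formal input being the identification of the $\mathcal{H}^{2}(T,\Psi'_{K})$‑norm of $j_{K}X$ with the quantity controlled in the definition of $\mathcal{H}^{2}(T,\Psi'_{\beta})$, together with a remark explaining why each individual $j_{K}$ is injective (and not merely the family $\{j_{K}\}_{K}$ jointly separating).

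First I would check well‑definedness: that $j_{K}X\in\mathcal{H}^{2}(T,\Psi'_{K})$ whenever $X\in\mathcal{H}^{2}(T,\Psi'_{\beta})$. Since $j_{K}\in\mathcal{L}(\Psi'_{\beta},\Psi'_{K})$, for each $t\in[0,T]$ the map $j_{K}X_{t}$ is a $\Psi'_{K}$‑valued random variable, and as $j_{K}$ is continuous (hence Borel) and linear, the process $\{j_{K}X_{t}\}_{t\in[0,T]}$ inherits predictability from $X$. For the moment bound, recall that $j_{K}$ is the canonical inclusion of $\Psi'_{\beta}$ into the completion $\Psi'_{K}$ of $(\Psi'/\mathrm{ker}(\eta_{K}),\tilde{\eta}_{K})$, so $\tilde{\eta}_{K}(j_{K}f)=\eta_{K}(f)$ for all $f\in\Psi'$; consequently
\[
\norm{j_{K}X}_{K,T}^{2}=\sup_{t\in[0,T]}\Exp\,\eta_{K}(j_{K}X_{t})^{2}=\sup_{t\in[0,T]}\Exp\,\eta_{K}(X_{t})^{2}<\infty ,
\]
the finiteness being exactly the defining condition of $\mathcal{H}^{2}(T,\Psi'_{\beta})$ for this $K$. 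Thus $j_{K}X\in\mathcal{H}^{2}(T,\Psi'_{K})$ and $\norm{j_{K}\cdot}_{K,T}$ coincides on $\mathcal{H}^{2}(T,\Psi'_{\beta})$ with the seminorm attached to $K$. Linearity of $X\mapsto j_{K}X$ is then immediate from linearity of $j_{K}$, and continuity is immediate once $\mathcal{H}^{2}(T,\Psi'_{\beta})$ carries the locally convex topology generated by the seminorms $X\mapsto\norm{j_{K}X}_{K,T}$ (introduced just below), since the above display exhibits the norm of the image as one of those defining seminorms.

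It remains to establish injectivity. The key observation is that for $K\in\mathcal{K}_{H}(\Psi)$ the seminorm $\eta_{K}$ is in fact a \emph{norm} on $\Psi'_{\beta}$: by Lemma \ref{existenceBoundedHilbertFamilyAndSemigroup} such a $K$ has the form $B_{\nu}(1)^{0}$ where $\nu$ is the continuous Hilbertian norm $f\mapsto\norm{j_{H}f}_{H}$ produced by Proposition \ref{propExistFamiHilbSeminormExtenSemiGroup} applied to $\Psi'_{\beta}$ — here $j_{H}$ is a continuous \emph{dense embedding}, hence injective, so $\mathrm{ker}(\nu)=\{0\}$ — and $\eta_{B_{\nu}(1)^{0}}=\nu$ because $\Psi$ is reflexive and $\nu$ is continuous (bipolar theorem). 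Hence $j_{K}\colon\Psi'_{\beta}\to\Psi'_{K}$ is injective. If now $X,Y\in\mathcal{H}^{2}(T,\Psi'_{\beta})$ satisfy $j_{K}X=j_{K}Y$ in $\mathcal{H}^{2}(T,\Psi'_{K})$, then $j_{K}X_{t}=j_{K}Y_{t}$ $\Prob$‑a.e. for each $t\in[0,T]$, so by injectivity $X_{t}=Y_{t}$ $\Prob$‑a.e. for each $t$; in particular $X[\psi]$ is a version of $Y[\psi]$ for every $\psi\in\Psi$, and since $X$ and $Y$ are regular, Proposition \ref{propCondiIndistingProcess} shows that $X$ is a version of $Y$, i.e. $X=Y$ as elements of $\mathcal{H}^{2}(T,\Psi'_{\beta})$.

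The argument is essentially bookkeeping; the only step requiring a moment's care is recalling why $\eta_{K}$ is a norm rather than merely a seminorm for $K\in\mathcal{K}_{H}(\Psi)$, which is precisely what upgrades "the family $\{j_{K}\}_{K}$ separates points of $\Psi'_{\beta}$" to "each $j_{K}$ is individually injective."
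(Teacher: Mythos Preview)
Your proposal is correct and aligns with the paper's own treatment, which simply declares the lemma ``an immediate consequence of the definition of the spaces $\mathcal{H}^{2}(T, \Psi'_{\beta})$ and $\mathcal{H}^{2}(T,\Psi'_{K})$, and the continuity and the linearity of the map $j_{K}$'' without further detail. You supply precisely the natural filling-in of that sketch---including the forward reference for continuity (inevitable given that the topology on $\mathcal{H}^{2}(T,\Psi'_{\beta})$ is introduced only after this lemma) and the observation, via the construction in Proposition~\ref{propExistFamiHilbSeminormExtenSemiGroup}, that $\eta_{K}$ is actually a norm for $K\in\mathcal{K}_{H}(\Psi)$, which is what makes each $j_{K}$ individually injective rather than merely jointly separating.
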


Now, from Lemma \ref{mapsJKDefineProjectiveSystem} it follows that  we can equip the space $\mathcal{H}^{2}(T, \Psi'_{\beta})$ with the projective topology with respect to the family $\{ (\mathcal{H}^{2}(T,\Psi'_{K}),j_{K}): K \in \mathcal{K}_{H}(\Psi)\}$. Then, equipped with this topology the space $\mathcal{H}^{2}(T, \Psi'_{\beta})$ is a complete, Hausdorff, locally convex space (this is a consequence of the fact that each $\mathcal{H}^{2}(T,\Psi'_{K})$ satisfies these properties; see \cite{Schaefer}, Theorems II.5.1 and II.5.3, p.51-2). Moreover, the topology on $\mathcal{H}^{2}(T, \Psi'_{\beta})$  is generated by the family of semi-norms $\{ \abs{\norm{ \cdot }}_{K,T}: K \in \mathcal{K}_{H}(\Psi) \}$, given for each $K \in \mathcal{K}_{H}(\Psi)$ by  
\begin{equation} \label{defiSemiNormsTopologyContraction}
\abs{\norm{ X }}_{K,T} \defeq \norm{j_{K} X}_{K,T} = \sup_{t \in [0,T]} \left( \Exp \left( \eta_{K}(j_{K} X_{t})^{2} \right) \right)^{1/2}, \quad  \forall \, X \in \mathcal{H}^{2}(T, \Psi'_{\beta}). 
\end{equation}

The Lipschitz and growth conditions that we assume for our coefficients $B$ and $F$ are the following:

\begin{assu}\label{lipschitzGrowthConditions} There exist two functions $a, b: \Psi \times \R_{+} \rightarrow \R_{+}$ satisfying:
\begin{enumerate}
\item For each $T>0$ and $K \subseteq \Psi$ bounded, 
\begin{equation*}
 \int_{0}^{T} \sup_{\psi \in K} ( a(\psi,r)^{2} +  b(\psi,r)^{2}) dr < \infty. 
\end{equation*}
\item (Growth conditions) For all $r \in \R_{+}$, $g \in \Psi'$,   
\begin{align*}
\abs{B(r,g)[\psi]} & \leq  a(\psi,r)(1+\abs{g[\psi]}), \\
\int_{U} q_{r,u}(F(r,u,g)'\psi)^{2} \mu(du) & \leq b(\psi,r)^{2}(1+\abs{g[\psi]})^{2}. 
\end{align*} 
\item (Lipschitz conditions) For all $r \in \R_{+}$, $g_{1}, g_{2} \in \Psi'$,  
\begin{align*}
\abs{B(r,g_{1})[\psi]-B(r,g_{2})[\psi]} & \leq  a(\psi,r) \abs{g_{1}[\psi]-g_{2}[\psi]}, \\
\int_{U} q_{r,u}(F(r,u,g_{1})'\psi-F(r,u,g_{2})'\psi)^{2} \mu(du)  & \leq b(\psi,r)\abs{g_{1}[\psi]-g_{2}[\psi]}^{2}. 
\end{align*}
\end{enumerate}
\end{assu}

We are ready for the main result of this section. 

\begin{theo} [Existence and uniqueness] \label{theoExistenceAndUniquenessMildSolutions}  Let $Z_{0}$ be a $\Psi'_{\beta}$-valued, regular, $\mathcal{F}_{0}$-measurable, square integrable random variable. Then, there exists a unique (up to modification) mild solution $X= \{ X_{t} \}_{t \geq 0}$ to \eqref{generalFormSEE} with initial condition $X_{0}=Z_{0}$. Moreover, for every $T>0$ there exists a continuous Hilbertian semi-norm $\rho=\rho(T)$ on $\Psi$ such that
$X=\{ X_{t} \}_{t \in [0,T]}$ has a $\Psi'_{\rho}$-valued predictable version $\tilde{X}=\{ \tilde{X}_{t} \}_{t \in [0,T]}$ satisfying $\sup_{t \in [0,T]} \Exp \left( \rho'(\tilde{X}_{t})^{2} \right)< \infty$. Furthermore, $X$ is also a weak solution to \eqref{generalFormSEE}. 
\end{theo}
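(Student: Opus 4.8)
The plan is to obtain the solution as a fixed point of the \emph{mild-solution map} on the locally convex space $\mathcal{H}^{2}(T,\Psi'_{\beta})$. Fix $T>0$ and, for $X\in\mathcal{H}^{2}(T,\Psi'_{\beta})$, set
\[
(\mathcal{T}X)_{t}=S(t)'Z_{0}+\int_{0}^{t}S(t-r)'B(r,X_{r})\,dr+\int_{0}^{t}\int_{U}S(t-r)'F(r,u,X_{r})\,M(dr,du),\qquad t\in[0,T],
\]
the three terms being read as in Definition \ref{defiMildSolution}. The first task is to show that, under Assumptions \ref{assumptionsCoefficients}, \ref{assumpForDualSpaceAndDualSemigroup} and \ref{lipschitzGrowthConditions}, $\mathcal{T}$ is well defined and maps $\mathcal{H}^{2}(T,\Psi'_{\beta})$ into itself. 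Well-definedness of the integrals is Proposition \ref{remaIntegralsMildSoluWellDefined}: the growth condition on $F$ shows $\{S(t-r)'F(r,u,X_{r})\}\in\Lambda_{s}(t)$. To control $\sup_{t}\Exp\,\eta_{K}(j_{K}(\mathcal{T}X)_{t})^{2}$ for each $K\in\mathcal{K}_{H}(\Psi)$, one estimates $\eta_{K}$ of each summand: for $S(t)'Z_{0}$ and the drift term one uses that $\{S(t)'\}_{t\geq 0}$ is a $(C_{0},1)$-semigroup on $\Psi'_{\beta}$ (Assumption \ref{assumpForDualSpaceAndDualSemigroup}(2) and Theorem \ref{theoExtensC01SemigroupsToBanachSpaces}), so $\eta_{K}(S(s)'g)\leq M_{K}e^{\theta_{K}T}\eta_{K}(g)$, together with the regularity and square integrability of $Z_{0}$ and the growth bound on $B$; for the stochastic convolution one invokes Theorem \ref{theoExistHilbPredicCadlagVersionStochConvol} and the bound \eqref{boundedSecondMomeVersionStochConv}, then dominates $\eta_{K}$ by the dual norm of the Hilbertian semi-norm produced there (every continuous semi-norm on $\Psi'_{\beta}$ is dominated by a multiple of such a dual norm).

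The core of the argument is a contraction estimate. Fix $K\in\mathcal{K}_{H}(\Psi)$. Since $\Psi$ is reflexive and $\Psi'_{K}$ is a separable Hilbert space, $\eta_{K}$ admits the expansion $\eta_{K}(g)^{2}=\sum_{j}\abs{g[\psi_{j}]}^{2}$ where $\{\psi_{j}\}_{j}$, the images under $j_{K}'$ of an orthonormal basis of $\Psi'_{K}$, form a sequence inside a bounded set $D_{K}\subseteq\Psi$. For fixed $\psi\in\Psi$ and $t$, the weak--strong compatibility \eqref{weakStrongCompaStochasticConvolution} gives $(\mathcal{T}X)_{t}[\psi]-(\mathcal{T}Y)_{t}[\psi]$ as a Lebesgue integral of $(B(r,X_{r})-B(r,Y_{r}))[S(t-r)\psi]$ plus a weak stochastic integral of $(F(r,u,X_{r})-F(r,u,Y_{r}))'S(t-r)\psi$. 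Applying the Lipschitz conditions of Assumption \ref{lipschitzGrowthConditions}(3), the It\^o isometry \eqref{secondMomentWeakStochasticIntegral}, and the semigroup bound $q(S(s)\psi)\leq M_{q}e^{\theta_{q}T}q(\psi)$ (which keeps $\{S(s)\psi_{j}:s\in[0,T],\,j\}$ inside a fixed bounded set $\widehat{D}_{K}$, so that $\sup_{\psi\in\widehat{D}_{K}}a(\psi,\cdot),\sup_{\psi\in\widehat{D}_{K}}b(\psi,\cdot)\in L^{2}[0,T]$ by Assumption \ref{lipschitzGrowthConditions}(1)), then summing over $j$ and using $\sum_{j}\abs{g[S(t-r)\psi_{j}]}^{2}=\eta_{K}(S(t-r)'g)^{2}\leq M_{K}^{2}e^{2\theta_{K}T}\eta_{K}(g)^{2}$, yields
\[
\Exp\,\eta_{K}\!\bigl((\mathcal{T}X)_{t}-(\mathcal{T}Y)_{t}\bigr)^{2}\;\leq\;C_{K}\int_{0}^{t}\bigl(1+\beta_{K}(r)\bigr)\,\Exp\,\eta_{K}(X_{r}-Y_{r})^{2}\,dr,\qquad \beta_{K}\in L^{2}[0,T].
\]
Passing to the equivalent weighted semi-norms $\abs{\norm{\cdot}}_{\upsilon,K,T}$ of Remark \ref{equivNormBanSpacExisUniq}, this becomes $\abs{\norm{\mathcal{T}X-\mathcal{T}Y}}_{\upsilon,K,T}^{2}\leq c_{K}(\upsilon)\abs{\norm{X-Y}}_{\upsilon,K,T}^{2}$ with $c_{K}(\upsilon)\to 0$ as $\upsilon\to\infty$.

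Existence and uniqueness then follow from a Picard scheme. Put $X^{(0)}_{t}=S(t)'Z_{0}$ (which lies in $\mathcal{H}^{2}(T,\Psi'_{\beta})$ by the first paragraph) and $X^{(n+1)}=\mathcal{T}X^{(n)}$. For each $K$ choose $\upsilon_{K}$ with $c_{K}(\upsilon_{K})<1$; then $\{X^{(n)}\}$ is Cauchy for $\abs{\norm{\cdot}}_{\upsilon_{K},K,T}$, hence for every defining semi-norm of $\mathcal{H}^{2}(T,\Psi'_{\beta})$, and by completeness of that space it converges to some $X$; the same Lipschitz bounds make $\mathcal{T}$ continuous, so $X=\mathcal{T}X$, i.e. $X$ is a mild solution with $X_{0}=Z_{0}$. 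If $Y$ is another mild solution in $\mathcal{H}^{2}(T,\Psi'_{\beta})$ the contraction estimate forces $\abs{\norm{X-Y}}_{\upsilon_{K},K,T}=0$ for all $K$, so $X=Y$ up to modification (the case of a general mild solution is reduced to this by a stopping-time localisation); patching over increasing $T$ by uniqueness gives a solution on $\R_{+}$.

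Finally, for the Hilbert-space regularity and the weak-solution claim: as $X\in\mathcal{H}^{2}(T,\Psi'_{\beta})$ and $F$ obeys the growth condition, $\{F(r,u,X_{r})\}\in\Lambda^{2}_{s}(T)$, so Theorem \ref{theoExistHilbPredicCadlagVersionStochConvol} furnishes a continuous Hilbertian semi-norm $\varrho_{1}$ under which the stochastic convolution has a $\Psi'_{\varrho_{1}}$-valued version with $\sup_{t\in[0,T]}\Exp\,\varrho_{1}'(\cdot)^{2}<\infty$; Theorem \ref{theoExistenceCadlagContVersionHilbertSpaceUniformBoundedMoments}, applied as in the proof of Proposition \ref{remaIntegralsMildSoluWellDefined}, does the same for the drift term and, via the $(C_{0},1)$ bound and the regularity of $Z_{0}$, for $\{S(t)'Z_{0}\}$. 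Taking $\rho=\rho(T)$ a continuous Hilbertian semi-norm dominating all three gives a $\Psi'_{\rho}$-valued predictable version $\tilde{X}$ of $X$ with $\sup_{t\in[0,T]}\Exp\,\rho'(\tilde{X}_{t})^{2}<\infty$. This bound, the growth conditions and Assumption \ref{lipschitzGrowthConditions}(1) give \eqref{assumpXMomentTheoWeakMildSol}, \eqref{assumpBMomentTheoWeakMildSol}, \eqref{assumpFMomentTheoWeakMildSol}, so Theorem \ref{theoEquiWeakMild} applies and $X$ is also a weak solution. I expect the main obstacle to be precisely the second and third steps: organising the semi-norm and moment bookkeeping so that the \emph{cylindrical} Lipschitz conditions — which control only pairings with single test vectors $\psi$ — close into a genuine contraction on a single Hilbertian semi-norm of $\mathcal{H}^{2}(T,\Psi'_{\beta})$, which is what the $\eta_{K}$-expansion over a bounded family $\{\psi_{j}\}$ together with the $(C_{0},1)$ semigroup estimates is designed to deliver.
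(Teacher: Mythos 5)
Your proposal is correct and follows essentially the same route as the paper: the map $\mathcal{T}$ is the paper's contraction $\mathbb{A}=\mathbb{A}_{0}+\mathbb{A}_{1}+\mathbb{A}_{2}$ on $\mathcal{H}^{2}(T,\Psi'_{\beta})$, the Parseval expansion of $\eta_{K}$ over an orthonormal basis of $\Psi_{K}$ combined with the Hilbert-space extension $S_{K}$ of the dual semigroup is exactly how Lemma \ref{contractionWellDefined} closes the cylindrical Lipschitz conditions into an estimate in $\abs{\norm{\cdot}}_{\upsilon,K,T}$, and the concluding regularity and weak-solution steps (Theorems \ref{theoExistHilbPredicCadlagVersionStochConvol} and \ref{theoEquiWeakMild}) are identical. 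The only cosmetic difference is that you run the Picard iteration by hand where the paper invokes the Cain--Nashed fixed point theorem in locally convex spaces, which amounts to the same argument.
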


Let $T>0$. Consider the operator $\mathbb{A}: \mathcal{H}^{2}(T,\Psi'_{\beta}) \rightarrow \mathcal{H}^{2}(T,\Psi'_{\beta})$  defined by 
$$ \mathbb{A}(X) = \mathbb{A}_{0}(X) +\mathbb{A}_{1}(X)+ \mathbb{A}_{2}(X), \quad \forall \, X \in \mathcal{H}^{2}(T,\Psi'_{\beta}), $$
where for each $t \in [0,T]$,
\begin{align}
\mathbb{A}_{0}(X)_{t} & \defeq S(t)' Z_{0}, \nonumber \\
\mathbb{A}_{1}(X)_{t} & \defeq \int^{t}_{0} S(t-r)'B(r,X_{r})dr, \nonumber \\
\mathbb{A}_{2}(X)_{t} & \defeq \int^{t}_{0} \int_{U} S(t-r)'F(r,u,X_{r}) M(dr,du). \nonumber
\end{align}
Our objective is to show that the map $\mathbb{A}$ is a contraction on $\mathcal{H}^{2}(T, \Psi'_{\beta})$. Then, we have to show that for every $K \in \mathcal{K}_{H}(\Psi)$ there exists $0 < C_{K,T} <1$ such that 
$$  \abs{\norm{ \mathbb{A}X- \mathbb{A}Y  }}_{K,T} \leq C_{K,T} \abs{\norm{ X-Y }}_{K,T}, \quad \forall \, X,Y \in \mathcal{H}^{2}(T,\Psi'_{\beta}). $$
However, by Remark \ref{equivNormBanSpacExisUniq} it is equivalent to show that for each $K \in \mathcal{K}_{H}(\Psi)$ there exists $\upsilon \geq 0$ and a constant $0 <C_{\upsilon,K,T} <1$ such that
\begin{equation} \label{inequalityContractExistMildSol}
 \abs{\norm{ \mathbb{A}X- \mathbb{A}Y  }}_{\upsilon,K,T} \leq C_{\upsilon,K,T} \abs{\norm{ X-Y }}_{\upsilon,K,T}, \quad \forall \, X,Y \in \mathcal{H}^{2}(T,\Psi'_{\beta}). 
\end{equation}
where the semi-norm $\abs{\norm{ \cdot }}_{\upsilon,K,T}$ is given  by  
 \begin{equation} \label{defiEquivalentSemiNormsTopologyContraction}
\abs{\norm{ X }}_{\upsilon,K,T} \defeq \norm{ j_{K} X }_{\upsilon,K,T} = \sup_{t \in [0,T]} \left( e^{-\upsilon t} \Exp \left( \eta_{K}(j_{K} X_{t})^{2} \right) \right)^{1/2}, \quad  \forall \, X \in \mathcal{H}^{2}(T, \Psi'_{\beta}).
 \end{equation}

In the next result we show that $\mathbb{A}$ is well-defined and that it is a contraction on $\mathcal{H}^{2}(T, \Psi'_{\beta})$.

\begin{lemm} \label{contractionWellDefined}
The operator $\mathbb{A}$ is a contraction on $\mathcal{H}^{2}(T, \Psi'_{\beta})$. Moreover, for every $X=\{ X_{t} \}_{t \in [0,T]} \in  \mathcal{H}^{2}(T,\Psi'_{\beta})$ there exists a continuous Hilbertian semi-norm $\rho$ on $\Psi$ such that
$\mathbb{A}(X)$ has a $\Psi'_{\rho}$-valued predictable version $\widetilde{\mathbb{A}(X)}=\{ \widetilde{\mathbb{A}(X)}_{t} \}_{t \in [0,T]}$ satisfying 
\begin{equation} \label{boundedHilSpacMomenMildSol}
\sup_{t \in [0,T]} \Exp \left( \rho'\left(\widetilde{\mathbb{A}(X)}_{t} \right)^{2} \right)< \infty.
\end{equation}
\end{lemm}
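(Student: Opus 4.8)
The statement to prove is Lemma \ref{contractionWellDefined}: that $\mathbb{A}$ maps $\mathcal{H}^{2}(T,\Psi'_{\beta})$ into itself, is a contraction with respect to a suitable equivalent family of semi-norms, and that $\mathbb{A}(X)$ admits a Hilbert-space-valued predictable version with uniformly bounded second moments. The plan is to treat the three summands $\mathbb{A}_{0}$, $\mathbb{A}_{1}$, $\mathbb{A}_{2}$ separately, first establishing that each lands in $\mathcal{H}^{2}(T,\Psi'_{\beta})$ (with a Hilbertian version as in Theorem \ref{theoExistHilbPredicCadlagVersionStochConvol}), and then estimating the Lipschitz increments $\mathbb{A}_{i}(X)-\mathbb{A}_{i}(Y)$ in the weighted semi-norms $\abs{\norm{\cdot}}_{\upsilon,K,T}$, choosing $\upsilon$ large at the end to force the contraction constant below $1$.

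First I would fix $K \in \mathcal{K}_{H}(\Psi)$ and use Lemma \ref{existenceBoundedHilbertFamilyAndSemigroup} to obtain the $C_{0}$-semigroup $\{S_{K}(t)\}_{t\geq 0}$ on the separable Hilbert space $\Psi'_{K}$ with $S_{K}(t)j_{K}f = j_{K}S(t)'f$; since $\{S(t)'\}_{t\geq 0}$ is a $(C_{0},1)$-semigroup on $\Psi'_{\beta}$ (Assumption \ref{assumpForDualSpaceAndDualSemigroup}(2)), after possibly enlarging $K$ within $\mathcal{K}_{H}(\Psi)$ we get constants $M_{K}\geq 1$, $\theta_{K}\geq 0$ with $\eta_{K}(S_{K}(t)j_{K}f) \leq M_{K}e^{\theta_{K}t}\eta_{K}(j_{K}f)$. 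The term $\mathbb{A}_{0}$ is then immediate: $\Exp\,\eta_{K}(j_{K}S(t)'Z_{0})^{2} \leq M_{K}^{2}e^{2\theta_{K}T}\Exp\,\eta_{K}(j_{K}Z_{0})^{2} < \infty$ by square-integrability of $Z_{0}$, and $\mathbb{A}_{0}(X)-\mathbb{A}_{0}(Y)=0$ so it contributes nothing to the Lipschitz estimate. For $\mathbb{A}_{1}$, I would rewrite $\eta_{K}(j_{K}\mathbb{A}_{1}(X)_{t}) \leq \int_{0}^{t}\eta_{K}(S_{K}(t-r)j_{K}B(r,X_{r}))\,dr$ using \eqref{defNonRandomConvolutionIntegral}, apply the exponential bound on $S_{K}$, then the growth bound in Assumption \ref{lipschitzGrowthConditions}(2) for $B$ (tested against $\psi$ ranging over $K$, using $\eta_{K}(j_{K}g)=\sup_{\psi\in K}\abs{g[\psi]}$), followed by Cauchy–Schwarz in $r$ and the integrability of $\sup_{\psi\in K}a(\psi,r)^{2}$ from Assumption \ref{lipschitzGrowthConditions}(1). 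For the Lipschitz increment I would do the same but with the Lipschitz bound on $B$, obtaining $e^{-\upsilon t}\Exp\,\eta_{K}(j_{K}(\mathbb{A}_{1}(X)_{t}-\mathbb{A}_{1}(Y)_{t}))^{2} \leq C \int_{0}^{t}e^{-\upsilon(t-r)}(\sup_{\psi\in K}a(\psi,r)^{2})\,dr \cdot \abs{\norm{X-Y}}_{\upsilon,K,T}^{2}$, and the prefactor $\to 0$ as $\upsilon\to\infty$ by dominated convergence.

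The term $\mathbb{A}_{2}$ is the stochastic convolution, and here I would invoke Theorem \ref{theoExistHilbPredicCadlagVersionStochConvol} directly: for each fixed $t$, the integrand $\{\ind{[0,t]}{r}S(t-r)'F(r,u,X_{r})\}$ lies in $\Lambda^{2}_{s}(T)$ (this was already checked in the proof of Proposition \ref{remaIntegralsMildSoluWellDefined}, using (A4), predictability of $X$, and the growth bound), so the theorem gives a $\Psi'_{\varrho}$-valued mean-square-continuous predictable version with $\sup_{t}\Exp\,\varrho'((\widetilde{S'\ast R})_{t})^{2}<\infty$; that yields \eqref{boundedHilSpacMomenMildSol} and the fact that $\mathbb{A}_{2}(X)\in\mathcal{H}^{2}(T,\Psi'_{\beta})$ after comparing $\eta_{K}$ with $\varrho'$ via a continuous-inclusion constant. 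For the contraction estimate I would use the Itô isometry \eqref{itoIsometryStrongIntegHilbSchmiIntegrands} applied to the difference $F(r,u,X_{r})-F(r,u,Y_{r})$, pass through the Hilbert–Schmidt norm as in the derivation of \eqref{finiteHSMomentEquiVersionHilbSpacStochConv}, bound $\norm{\cdot}^{2}_{\mathcal{L}_{2}}$ by a sum over an orthonormal basis of $\Psi_{K}$ of $q_{r,u}(\cdot)^{2}$ terms, and then apply the $F$-Lipschitz bound of Assumption \ref{lipschitzGrowthConditions}(3); this produces again a factor $\int_{0}^{t}e^{-\upsilon(t-r)}(\sup_{\psi\in K}b(\psi,r))\,dr$ that vanishes as $\upsilon\to\infty$. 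Combining the three pieces, choosing $\upsilon=\upsilon(K,T)$ so that the total constant is $<1$, gives \eqref{inequalityContractExistMildSol}, and then finally I would sum over $K$ (equivalently, work semi-norm by semi-norm) to conclude $\mathbb{A}$ is a contraction on the locally convex space $\mathcal{H}^{2}(T,\Psi'_{\beta})$ with its projective topology.

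The main obstacle I anticipate is the $\mathbb{A}_{2}$ Lipschitz estimate in the $\eta_{K}$–semi-norm: one must move cleanly between the strong stochastic integral evaluated in $\Psi'_{\varrho}$ (where the Itô isometry lives) and the a priori weaker semi-norm $\eta_{K}$ controlling $\mathcal{H}^{2}(T,\Psi'_{\beta})$, while simultaneously expressing the Hilbert–Schmidt norm of $S_{K}(t-r)' \circ (F(r,u,X_{r})'-F(r,u,Y_{r})')'$ in terms of the $q_{r,u}$–seminorms so that Assumption \ref{lipschitzGrowthConditions}(3) can be applied. The matching of these Hilbert structures — essentially re-running the bookkeeping of Theorem \ref{theoExistHilbPredicCadlagVersionStochConvol}'s proof with $X_{r}$ replaced by $X_{r}-Y_{r}$ — together with ensuring the resulting version is predictable (Proposition 3.21 of \cite{PeszatZabczyk}) and regular, is the delicate part; the $\mathbb{A}_{0}$ and $\mathbb{A}_{1}$ parts are routine by comparison.
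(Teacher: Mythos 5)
Your proposal follows essentially the same route as the paper: the splitting into $\mathbb{A}_{0}$, $\mathbb{A}_{1}$, $\mathbb{A}_{2}$, the use of Lemma \ref{existenceBoundedHilbertFamilyAndSemigroup} to transport everything to the Hilbert spaces $\Psi'_{K}$, the invocation of Theorem \ref{theoExistHilbPredicCadlagVersionStochConvol} for the stochastic convolution, the Parseval/Hilbert--Schmidt bookkeeping that converts the $\mathcal{L}_{2}(\Phi'_{q_{r,u}},\Psi'_{K})$-norm of $F(r,u,X_{r})-F(r,u,Y_{r})$ into the cylindrical Lipschitz bound of Assumption \ref{lipschitzGrowthConditions}(3), and the weighted norms $\abs{\norm{\cdot}}_{\upsilon,K,T}$ with $\upsilon$ chosen large at the end. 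The $\mathbb{A}_{1}$ and $\mathbb{A}_{2}$ estimates and the final assembly are exactly the paper's Steps 2--4.

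The one step that does not work as you wrote it is the $\mathbb{A}_{0}$ bound. You assert $\Exp\,\eta_{K}(j_{K}S(t)'Z_{0})^{2}\leq M_{K}^{2}e^{2\theta_{K}T}\,\Exp\,\eta_{K}(j_{K}Z_{0})^{2}<\infty$ ``by square-integrability of $Z_{0}$''. But square integrability of $Z_{0}$ is a cylindrical hypothesis: it gives $\Exp\abs{Z_{0}[\psi]}^{2}<\infty$ for each fixed $\psi$, not $\Exp\sup_{\psi\in K}\abs{Z_{0}[\psi]}^{2}<\infty$, and the latter is what $\Exp\,\eta_{K}(j_{K}Z_{0})^{2}<\infty$ means. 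To close this you need the regularization step the paper uses: since $\psi\mapsto\Exp\abs{Z_{0}[\psi]}^{2}$ is a continuous Hilbertian semi-norm, Theorem \ref{theoExistenceCadlagContVersionHilbertSpaceUniformBoundedMoments} produces a continuous Hilbertian semi-norm $q$ and a $\Psi'_{q}$-valued version $\tilde{Z}_{0}$ with $\Exp\,q'(\tilde{Z}_{0})^{2}<\infty$; only then can you factor $S(t)'Z_{0}$ through the Banach/Hilbert space extension $S_{\varrho_{0}}(t)'$ of the dual semigroup and push forward via the continuous map $j_{K}\circ i'_{\varrho_{0}}$ to get the uniform $\eta_{K}$-moment (and, as a by-product, the $\Psi'_{\varrho_{0}}$-valued continuous predictable version needed for \eqref{boundedHilSpacMomenMildSol}). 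This is the same device you already use for $B(r,X_{r})$ in the $\mathbb{A}_{1}$ step, so the repair is routine, but as stated the $\mathbb{A}_{0}$ estimate is unsupported. Note also that since $\mathbb{A}_{0}$, $\mathbb{A}_{1}$, $\mathbb{A}_{2}$ land in three a priori different Hilbert spaces $\Psi'_{\varrho_{0}}$, $\Psi'_{\varrho_{1}}$, $\Psi'_{\varrho_{2}}$, the final claim \eqref{boundedHilSpacMomenMildSol} requires choosing a single $\rho$ dominating all three and composing with the inclusions $i'_{\varrho_{i},\rho}$; your write-up leaves this implicit.
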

\begin{prf} We proceed in four steps. 

\textbf{Step 1:} \emph{Estimating $\mathbb{A}_{0}$.} First, as $Z_{0}$ is square integrable and regular, then $\psi \mapsto \Exp \abs{Z_{0}[\psi]}^{2}$ defines a continuous Hilbertian semi-norm on $\Psi$. Then,
Theorem \ref{theoExistenceCadlagContVersionHilbertSpaceUniformBoundedMoments}  shows that there exists a continuous Hilbertian semi-norm $q$ on $\Psi$, such that $Z_{0}$ possesses a $\Psi'_{q}$-valued, $\mathcal{F}_{0}$-measurable version $\tilde{Z}_{0}$ satisfying $\Exp q'(\tilde{Z}_{0})^{2} < \infty$. 

Now, as in the proof of Lemma \ref{lemmEquaFubiniEquivWeakMild}, because $\{S(t)\}_{t\geq 0}$ is a $(C_{0},1)$-semigroup on $\Psi$ and $q$ is a continuous semi-norm on $\Psi$, there exists a continuous semi-norm $\varrho_{0}$ on $\Psi$, $q \leq \varrho_{0}$, and there exists a $C_{0}$-semigroup $\{ S_{\varrho_{0}}(t) \}_{t \geq 0}$ on the Banach space $\Psi_{\varrho_{0}}$ such that \eqref{extendedSemigroupQWeakMild} holds. Moreover, there exist $M_{\varrho_{0}} \geq 1$, $\theta_{\varrho_{0}} \geq 0$ such that \eqref{exponContSemigroupQWeakMildSol} holds. 

Hence, from the above we have that for each $t \in [0,T]$, $\Prob$-a.e. 
\begin{equation*}
S(t)'Z_{0}= i'_{\varrho_{0}} S_{\varrho_{0}}(t)' i'_{q,\varrho_{0}}\tilde{Z}_{0}.  
\end{equation*}
Then, $\{ S_{\varrho_{0}}(t)' i'_{q,\varrho_{0}}\tilde{Z}_{0} \}_{t \geq 0}$ is a $\Psi'_{\varrho_{0}}$-valued version of $\{ S(t)'Z_{0} \}_{t \geq 0}$. Moreover, $\{ S_{\varrho_{0}}(t)' i'_{q,\varrho_{0}}\tilde{Z}_{0} \}_{t \geq 0}$ is a $\Psi'_{\varrho_{0}}$-valued, $\{ \mathcal{F}_{t} \}$-adapted, continuous process and $\Psi'_{\varrho_{0}}$ is a separable Banach space, then it has a predictable version (see  \cite{PeszatZabczyk}, Proposition 3.21, p.27). Furthermore, we have that  
\begin{equation} \label{finiteMomeHilbLinearTermMildSol}
\Exp \sup_{t \in [0,T]} \varrho'_{0} \left( S_{\varrho_{0}}(t)' i'_{q,\varrho_{0}}\tilde{Z}_{0} \right)^{2} \leq M_{\varrho_{0}}^{2} \, e^{2 \theta_{\varrho_{0}} T }   \norm{i_{q,\varrho_{0}}}^{2}_{\mathcal{L}(\Psi_{\varrho_{0}},\Psi_{q})}  \Exp p'(\tilde{Z}_{0})^{2} < \infty.
\end{equation}
From the corresponding properties of $\{ S_{\varrho_{0}}(t)' i'_{q,\varrho_{0}}\tilde{Z}_{0} \}_{t \geq 0}$ we conclude that 
$\{ S(t)'Z_{0} \}_{t \geq 0}$ is a $\Psi'_{\beta}$-valued continuous, regular process which has a predictable version. 
Moreover, if $K \in \mathcal{K}_{H}(\Psi)$ the map $j_{K} \circ i'_{\varrho_{0}}: \Psi'_{\varrho_{0}} \rightarrow \Psi'_{K}$ is linear and continuous. Therefore, from the arguments in the above paragraphs we have 
\begin{equation} \label{finiteMomentLinearTermMildSol}
\Exp \left( \sup_{t \in [0,T]} \eta_{K}(j_{K} S(t)'Z_{0})^{2} \right)    \leq  \norm{j_{K} i'_{\varrho_{0}}}^{2}_{\mathcal{L}(\Psi'_{\varrho_{0}},\Psi'_{K})} \Exp \sup_{t \in [0,T]} \varrho'_{0} \left( S_{\varrho_{0}}(t)' i'_{q,\varrho_{0}}\tilde{Z}_{0}  \right)^{2}  < \infty. 
\end{equation}
Hence, $\{ S(t)'Z_{0} \}_{t \geq 0}$ is an element of $\mathcal{H}^{2}(T,\Psi'_{\beta})$ and consequently $\mathbb{A}_{0}$ is well-defined. 

\textbf{Step 2:} \emph{Estimating $\mathbb{A}_{1}$.} We proceed in two steps. 
  
(a) We will show that $\mathbb{A}_{1}$ is well-defined. Let $X \in \mathcal{H}^{2}(T,\Psi'_{\beta})$.  First, from Assumption \ref{lipschitzGrowthConditions} we have that for every $\psi \in \Psi$, 
\begin{equation}
\Exp \int_{0}^{T} \abs{B(r,X_{r})[\psi]}^{2} dr  \leq  2 \left( 1+ \sup_{t \in [0,T]} \Exp  \abs{X_{t}[\psi]}^{2} \right) \int_{0}^{T} a(\psi ,r)^{2} dr < \infty. 
\end{equation}
Hence, from similar arguments to those used in Lemma \ref{lemmHilbertMomentsXAndBWeakMild} it follows that there exists a continuous Hilbertian semi-norm $p$ on $\Psi$ and a $\Psi'_{p}$-valued $\{ \mathcal{F}_{t} \}$-adapted version $\{ \tilde{B}(t,X_{t}) \}_{t \in [0,T]}$ of $\{ B(t,X_{t}) \}_{t \in [0,T]}$ such that $\Exp \int_{0}^{T} p'(\tilde{B}(r,X_{r}))^{2} dr < \infty$. 

Now, as in the proof of Lemma \ref{lemmEquaFubiniEquivWeakMild}, because $\{S(t)\}_{t\geq 0}$ is a $(C_{0},1)$-semigroup on $\Psi$ and $p$ is a continuous semi-norm on $\Psi$, there exists a continuous semi-norm $\varrho_{1}$ on $\Psi$, $p \leq \varrho_{1}$, and there exists a $C_{0}$-semigroup $\{ S_{\varrho_{1}}(t) \}_{t \geq 0}$ on the Banach space $\Psi_{\varrho_{1}}$ such that \eqref{extendedSemigroupQWeakMild} holds. Moreover, there exist $M_{\varrho_{1}} \geq 1$, $\theta_{\varrho_{1}} \geq 0$ such that $\{ S_{\varrho_{1}}(t) \}_{t \geq 0}$ satisfies \eqref{exponContSemigroupQWeakMildSol}. Then, for every $t \geq 0$ we have 
\begin{flalign} 
& \Exp \int^{t}_{0} \varrho_{1}' \left(S_{\varrho_{1}}(t-r)' i_{p,\varrho_{1}}' \tilde{B}(r,X_{r}) \right)^{2} dr  \nonumber \\
& \leq  M^{2}_{\varrho_{1}} e^{2\theta_{\varrho_{1}} t} \norm{i'_{p,\varrho_{1}}}^{2}_{\mathcal{L}(\Psi'_{p},\Psi'_{\varrho_{1}})} \Exp \int_{0}^{T} p'(\tilde{B}(r,X_{r}))^{2} dr  < \infty.  \label{secondMomeDeterConvoExisUniq}
\end{flalign} 
Thus, for every $t \geq 0$ the Bochner integral $\int^{t}_{0} S_{\varrho_{1}}(t-r)' i_{p,\varrho_{1}}' \tilde{B}(r,X_{r})  dr$ is defined $\Prob$-a.e. and hence $\left\{ \int^{t}_{0} S_{\varrho_{1}}(t-r)' i_{p,\varrho_{1}}' \tilde{B}(r,X_{r})  dr: t \in [0,T] \right\}$ is a $\Psi'_{\varrho_{1}}$-valued $\{ \mathcal{F}_{t} \}$-adapted square integrable process. Moreover, because for $0 \leq s \leq t \leq T$, we have $\Prob$-a.e.
\begin{flalign*}
& \varrho_{1}' \left( \int^{s}_{0} S_{\varrho_{1}}(s-r)' i_{p,\varrho_{1}}' \tilde{B}(r,X_{r}) dr -\int^{t}_{0} S_{\varrho_{1}}(t-r)' i_{p,\varrho_{1}}' \tilde{B}(r,X_{r}) dr  \right)  \\
& \leq \int^{T}_{0} \ind{[0,s]}{r} \varrho_{1}' \left((S_{\varrho_{1}}(s-r)'-S_{\varrho_{1}}(t-r)') i_{p,\varrho_{1}}' \tilde{B}(r,X_{r}) \right) dr \\
& \hspace{10pt} + \int^{T}_{0} \ind{[s,t]}{r} \varrho_{1}' \left(S_{\varrho_{1}}(t-r)' i_{p,\varrho_{1}}' \tilde{B}(r,X_{r}) \right) dr. 
\end{flalign*}
Then, by \eqref{secondMomeDeterConvoExisUniq} and following similar arguments to those used in the proof of 
Theorem \ref{theoExistHilbPredicCadlagVersionStochConvol}, we can show that $\left\{ \int^{t}_{0} S_{\varrho_{1}}(t-r)' i_{p,\varrho_{1}}' \tilde{B}(r,X_{r})  dr: t \in [0,T] \right\}$ is continuous $\Prob$-a.e. As it is also $\{ \mathcal{F}_{t} \}$-adapted then it has a predictable version (see \cite{PeszatZabczyk}, Proposition 3.21, p.27). 

Now, for every $t \in [0,T]$ we define 
\begin{equation} \label{defiDetermConvoProfExistUniq}
\int^{t}_{0} S(t-r)' B(r,X_{r})  dr \defeq  i'_{\varrho_{1}} \int^{t}_{0} S_{\varrho_{1}}(t-r)' i_{p,\varrho_{1}}' \tilde{B}(r,X_{r})  dr. 
\end{equation}
Then, we have that  $\left\{ \int^{t}_{0} S(t-r)' B(r,X_{r})  dr: t \in [0,T] \right\}$ is a $\Psi'_{\beta}$-valued, regular, square integrable predictable process. Moreover, if $K \in \mathcal{K}_{H}(\Psi)$ it follows from \eqref{secondMomeDeterConvoExisUniq} and \eqref{defiDetermConvoProfExistUniq} that
\begin{flalign} 
& \sup_{t \in [0,T]} \Exp \left( \eta_{K}\left( j_{K} \int^{t}_{0} S(t-r)' B(r,X_{r})  dr \right)^{2}  \right)   \nonumber \\
& \leq  \norm{j_{K} i'_{\varrho_{1}}}^{2}_{\mathcal{L}(\Psi'_{\varrho_{1}},\Psi'_{K})} \sup_{t \in [0,T]} \Exp   \int^{t}_{0} \varrho_{1}' \left(S_{\varrho_{1}}(t-r)' i_{p,\varrho_{1}}' \tilde{B}(r,X_{r}) \right)^{2} dr < \infty.  \label{finiteNormDeteConvExisUniq}
\end{flalign}
Hence, $\left\{ \int^{t}_{0} S(t-r)' B(r,X_{r})  dr: t \in [0,T] \right\} \in \mathcal{H}^{2}(T,\Psi'_{\beta})$ and therefore $\mathbb{A}_{1}$ is well-defined. Furthermore, note that for every $t \in [0,T]$ and $\psi \in \Psi$ we have $\Prob$-a.e. 
\begin{eqnarray*}
\left( \int^{t}_{0} S_{\varrho_{1}}(t-r)' i_{p,\varrho_{1}}' \tilde{B}(r,X_{r})  dr \right) [i_{\varrho_{1}} \psi] 
& = &  \int^{t}_{0} S_{\varrho_{1}}(t-r)' i_{p,\varrho_{1}}'\tilde{B}(r,X_{r}) [i_{\varrho_{1}} \psi]  dr \\
& = &  \int^{t}_{0} i'_{\varrho_{1}}  S_{\varrho_{1}}(t-r)' i_{p,\varrho_{1}}' \tilde{B}(r,X_{r}) [\psi]  dr \\
& = &  \int^{t}_{0} S(t-r)' B(r,X_{r}) [\psi]  dr.
\end{eqnarray*}
Thus, our definition of $\int^{t}_{0} S(t-r)' B(r,X_{r})  dr$ in \eqref{defiDetermConvoProfExistUniq} coincides with that given in  \eqref{defNonRandomConvolutionIntegral}. 

(b) Our next objective is to show that $\mathbb{A}_{1}$ is a contraction. Let $K \in \mathcal{K}_{H}(\Psi)$. It follows from Proposition  \ref{existenceBoundedHilbertFamilyAndSemigroup} that there exists a $C_{0}$-semigroup $\{ S_{K}(t)' \}_{t \geq 0}$ on the Hilbert space $\Psi'_{K}$ satisfying \eqref{compatiDualSemigroupWithHilbertExtension}. Moreover, there exist $M_{K} \geq 1$ and $\theta_{K} \geq 0$ such that $\{ S_{K}(t)' \}_{t \geq 0}$ satisfies 
\begin{equation} \label{expBoundSemigroupPsiK}
\eta_{K}(S_{K}(t)f) \leq M_{K} e^{\theta_{K}t} \eta_{K} (f), \quad \forall f \in \Psi'_{K}.
\end{equation}
Now, observe that for every $t \in [0,T]$, $\Prob$-a.e. the Bochner integral $\int_{0}^{t} S_{K}(t-r)'j_{K} B(r,X_{r})dr$ exists in $\Psi'_{K}$. Indeed, because $j_{K} i'_{\varrho_{1}} \in \mathcal{L}(\Psi'_{\varrho_{1}},\Psi'_{K})$, then from \eqref{extendedSemigroupQWeakMild}, \eqref{compatiDualSemigroupWithHilbertExtension} and \eqref{defiDetermConvoProfExistUniq} we have $\Prob$-a.e. 
\begin{eqnarray}
 \int_{0}^{t} S_{K}(t-r)'j_{K} B(r,X_{r})dr 
& = & \int_{0}^{t} j_{K} S(t-r)' i_{p,\varrho_{1}}' \tilde{B}(r,X_{r}) dr \nonumber \\
& = & \int_{0}^{t} j_{K} i'_{\varrho_{1}} S_{\varrho_{1}}(t-r)' i_{p,\varrho_{1}}' \tilde{B}(r,X_{r}) dr \nonumber \\ 
& = & j_{K} i'_{\varrho_{1}} \int_{0}^{t} S_{\varrho_{1}}(t-r)' i_{p,\varrho_{1}}' \tilde{B}(r,X_{r}) dr \nonumber \\
& = & j_{K}  \int^{t}_{0} S(t-r)' B(r,X_{r})  dr.  \label{defDeterConvOnPsiKExistUniqX}
\end{eqnarray}
Then, for every $\upsilon \geq 0$ and $X, Y \in \mathcal{H}^{2}(T,\Psi'_{\beta})$, from Assumption \ref{lipschitzGrowthConditions}, \eqref{expBoundSemigroupPsiK} and   \eqref{defDeterConvOnPsiKExistUniqX},  we have (recall the definition of $\abs{\norm{ \cdot }}_{\upsilon,K,T}$ in \eqref{defiEquivalentSemiNormsTopologyContraction})
\begin{flalign*}
& \abs{\norm{ \mathbb{A}_{1}X- \mathbb{A}_{1}Y }}^{2}_{\upsilon,K,T} \\
& = \sup_{t \in [0,T]} e^{-\upsilon t} \Exp \left( \eta_{K} \left( j_{K} \int_{0}^{t} S(t-r)'B(r,X_{r}) dr -j_{K} \int_{0}^{t} S(t-r)'B(r,Y_{r}) dr   \right)^{2} \right) \\
& = \sup_{t \in [0,T]} e^{-\upsilon t} \Exp \left( \eta_{K} \left( \int_{0}^{t} S_{K}(t-r)'j_{K} (B(r,X_{r})- B(r,Y_{r})) dr \right) \right)^{2} \\
& \leq M^{2}_{K} e^{2\theta_{K} T} \sup_{t \in [0,T]} e^{-\upsilon t} \Exp \left( \int_{0}^{t} \eta_{K} \left( j_{K} (B(r,X_{r})- B(r,Y_{r})) \right) dr \right)^{2} \\
& \leq M^{2}_{K} e^{2\theta_{K} T} \sup_{t \in [0,T]} e^{-\upsilon t} \Exp \left( \int_{0}^{t} \sup_{\psi \in K} a(\psi, r) \,  \eta_{K} \left( j_{K} (X_{r}-Y_{r}) \right) dr \right)^{2} 
\end{flalign*}
Now, by the Cauchy-Schwarz inequality,
$$  \int_{0}^{t} \sup_{\psi \in K} a(\psi, r) \,  \eta_{K} \left( j_{K} (X_{r}-Y_{r}) \right) dr  \leq \left( \int_{0}^{t} \sup_{\psi \in K} a(\psi, r)^{2} dr\right)^{\frac{1}{2}} \left( \int_{0}^{t} \eta_{K} \left( j_{K} (X_{r}-Y_{r}) \right)^{2} dr \right)^{\frac{1}{2}}. $$ 
Then, 
\begin{flalign*}
& \abs{\norm{ \mathbb{A}_{1}X- \mathbb{A}_{1}Y }}^{2}_{\upsilon,K,T} \\
& \leq M^{2}_{K} e^{2\theta_{K} T} \left( \int_{0}^{T} \sup_{\psi \in K} a(\psi, r)^{2} dr \right) \sup_{t \in [0,T]} \int_{0}^{t} e^{-\upsilon (t-r)} e^{-\upsilon r} \Exp \left(  \eta_{K} \left( j_{K} (X_{r}-Y_{r}) \right)^{2} \right) dr \\
& \leq M^{2}_{K} e^{2\theta_{K} T} \left( \int_{0}^{T} \sup_{\psi \in K} a(\psi, r)^{2} dr \right) \left( \int_{0}^{T} e^{-\upsilon r} dr \right) \abs{\norm{ X- Y }}^{2}_{\upsilon,K,T}. 
\end{flalign*}
Therefore, we have 
\begin{equation} \label{inequalityContract1ExistMildSol}
\abs{\norm{ \mathbb{A}_{1}X- \mathbb{A}_{1}Y }}^{2}_{\upsilon,K,T} \leq C^{(1)}_{\upsilon,K,T} \abs{\norm{ X- Y }}^{2}_{\upsilon,K,T},  
\end{equation}
where 
\begin{equation} \label{constantContra1ExistMildSol}
C^{(1)}_{\upsilon,K,T} = M^{2}_{K} e^{2\theta_{K} T}  \left( \int_{0}^{T} \sup_{\psi \in K} a(\psi, r)^{2} dr \right) \left( \int_{0}^{T} e^{-\upsilon r} dr \right).
\end{equation}

  
\textbf{Step 3:} \emph{Estimating $\mathbb{A}_{2}$.} Again we proceed in two steps. 
  
(a) We will show that $\mathbb{A}_{2}$ is well-defined. Let $X \in \mathcal{H}^{2}(T,\Psi'_{\beta})$. First, we will show that $F_{X}= \{ F(r,u,X_{r}(\omega)):  r \in [0,T], \omega \in \Omega, u \in U \} \in \Lambda^{2}_{s}(T)$. In effect, for every $\psi \in \Psi$, we have from Assumption \ref{lipschitzGrowthConditions} that  
\begin{eqnarray*}
\Exp \int_{0}^{T} \int_{U} q_{r,u}(F(r,u,X_{r})'\psi)^{2}  \mu(du) dr 
& \leq & \Exp \int_{0}^{T}  b(\psi, r)^{2} \left(1+\abs{X_{r}[\psi]} \right)^{2}  dr \\
& \leq & 2 \left(1+ \sup_{t \in [0,T]} \Exp \abs{X_{t}[\psi]}^{2} \right) \int_{0}^{T}  b(\psi, r)^{2}  dr < \infty.
\end{eqnarray*} 
Hence, $F_{X} \in \Lambda^{2}_{s}(T)$.  Then, by Theorem \ref{theoExistHilbPredicCadlagVersionStochConvol} there exists a continuous Hilbertian semi-norm $\varrho_{2}$ on $\Psi$ such that the stochastic convolution process $S'\ast F_{X} = \{(S'\ast F_{X})_{t}\}_{t\geq 0}$ has a $\Psi'_{\varrho_{2}}$-valued, mean-square continous, predictable version $\widetilde{S'\ast F_{X}}= \{(\widetilde{S'\ast F_{X}})_{t}\}_{t\geq 0}$  satisfying 
\begin{equation} \label{finiteMomentConvoFX}
\sup_{t \in [0,T]} \Exp  \varrho_{2}' \left( (\widetilde{S'\ast F_{X}})_{t} \right)^{2} < \infty.
\end{equation}
Therefore, because $(S'\ast F_{X})_{t}=i'_{\varrho} \widetilde{(S'\ast F_{X})}_{t}$, for all $t \in [0,T]$ $\Prob$-a.e., then $S'\ast F_{X}$ is a $\Psi'_{\beta}$-valued, regular, square integrable process that has a predictable version. 
Moreover, given any $K \in \mathcal{K}_{H}(\Psi)$ we have from \eqref{finiteMomentConvoFX} that 
\begin{equation*}
\sup_{t \in [0,T]} \Exp \left[ \eta_{K}(j_{K} (S'\ast F_{X})_{t})^{2} \right] 
 \leq \norm{j_{K} i'_{\varrho_{2}}}^{2}_{\mathcal{L}(\Psi'_{\varrho_{2}},\Psi'_{K})}  
\sup_{t \in [0,T]} \Exp \varrho_{2}'\left( (\widetilde{S'\ast F_{X}})_{t} \right)^{2} < \infty.
\end{equation*}
Thus, $S'\ast F_{X} \in \mathcal{H}^{2}(T,\Psi'_{\beta})$ and hence $\mathbb{A}_{2}$ is well-defined.  

(b) Now we will show that $\mathbb{A}_{2}$ is a contraction. 

Let $K \in \mathcal{K}_{H}(\Psi)$,  $\upsilon \geq 0$ and $X, Y \in \mathcal{H}^{2}(T,\Psi'_{\beta})$. 
Denote by $\Psi_{K}$ the dual of the Hilbert space $\Psi'_{K}$. Let $j'_{K}$ be the dual operator of $j_{K}$. Then, $j'_{K}$ corresponds to the canonical inclusion from $\Psi_{K}$ into $\Psi$. Let $\{ \psi_{j} \}_{j \in \N}$ be a complete orthonormal system in $\Psi_{K}$. Then, from Parseval's identity, Fubini's theorem, \eqref{secondMomentWeakStochasticIntegral}, \eqref{weakStrongCompatibilityStochIntegral}, \eqref{compatiDualSemigroupWithHilbertExtension} and \eqref{expBoundSemigroupPsiK}, for every $t \in [0,T]$ we have 
\begin{flalign}
& \Exp \left(  \eta_{K} \left( j_{K} \int_{0}^{t} \int_{U} S(t-r)' \left(  F(r,u,X_{r}) -F(r,u,Y_{r}) \right) M(dr,du) \right)^{2} \right) \nonumber \\
& =  \sum^{\infty}_{j=1} \Exp \int^{t}_{0} \int_{U} q_{r,u}( \left(  F(r,u,X_{r})' -F(r,u,Y_{r})' \right)S(t-r)j'_{K} \psi_{j})^{2} \mu (du) dr \nonumber \\
& = \Exp \int^{t}_{0} \int_{U} \norm{\left(  F(r,u,X_{r})' -F(r,u,Y_{r})' \right)S(t-r)j'_{K}}^{2}_{\mathcal{L}_{2}(\Psi_{K},\Phi_{q_{r,u}})} \mu (du) dr \nonumber \\
& = \Exp \int^{t}_{0} \int_{U} \norm{S_{K}(t-r)' j_{K} \left(  F(r,u,X_{r}) -F(r,u,Y_{r}) \right)}^{2}_{\mathcal{L}_{2}(\Phi'_{q_{r,u}},\Psi'_{K})}\mu (du) dr \nonumber \\ 
& \leq M^{2}_{K} e^{2\theta_{K}t} \, \Exp \int^{t}_{0} \int_{U} \norm{j_{K} \, \left(  F(r,u,X_{r}) -F(r,u,Y_{r}) \right)}^{2}_{\mathcal{L}_{2}(\Phi_{q_{r,u}},\Psi'_{K})}\mu(du) dr
\label{ineqSecMomOnPsiKStochConv}
\end{flalign}
On the other hand, for every $t \in [0,T]$ we have from Parseval's identity, Fubini's theorem and Assumption \ref{lipschitzGrowthConditions} that 
\begin{flalign}
&\Exp \int^{t}_{0} \int_{U} \norm{j_{K} \, \left(  F(r,u,X_{r}) -F(r,u,Y_{r}) \right)}^{2}_{\mathcal{L}_{2}(\Phi_{q_{r,u}},\Psi'_{K})}\mu(du) dr \nonumber \\
& =  \sum^{\infty}_{j=1} \Exp \int^{t}_{0} \int_{U} q_{r,u}( \left(  F(r,u,X_{r})' -F(r,u,Y_{r})' \right) j'_{K} \psi_{j})^{2} \mu (du) dr \nonumber \\
& \leq \Exp \int^{t}_{0} \sup_{\psi \in K} b(\psi, r) \, \eta_{K} \left( j_{K} (X_{r}-Y_{r}) \right)^{2}  dr \label{inequLipschitzStochInteg}
\end{flalign}
Then, from \eqref{ineqSecMomOnPsiKStochConv}, \eqref{inequLipschitzStochInteg} and the Cauchy-Schwarz inequality we have
\begin{flalign*}
& \abs{\norm{ \mathbb{A}_{2} X- \mathbb{A}_{2} Y }}^{2}_{\upsilon,K,T} \\
& = \sup_{t \in [0,T]} e^{-\upsilon t} \Exp \left( \eta_{K} \left( j_{K} \int_{0}^{t} \int_{U} S(t-r)' \left(  F(r,u,X_{r}) -F(r,u,Y_{r}) \right) M(dr,du) \right)^{2} \right) \\
& \leq M^{2}_{K} e^{2\theta_{K}T} \sup_{t \in [0,T]} e^{-\upsilon t} \, \Exp \int^{t}_{0} \sup_{\psi \in K} b(\psi, r) \, \eta_{K} \left( j_{K} (X_{r}-Y_{r}) \right)^{2}  dr\\
& \leq M^{2}_{K} e^{2\theta_{K}T} \abs{\norm{ X- Y }}^{2}_{\upsilon,K,T} \sup_{t \in [0,T]}  \int^{t}_{0}  \sup_{\psi \in K} b(\psi, r) e^{-\upsilon (t-r)}  dr\\
& \leq M^{2}_{K} e^{2\theta_{K}T} \left( \int^{T}_{0} \sup_{\psi \in K} b(\psi, r)^{2} dr \right)^{1/2} \left( \int_{0}^{T} e^{-2\upsilon r} dr \right)^{1/2} \abs{\norm{ X- Y }}^{2}_{\upsilon,K,T}. 
\end{flalign*}
Therefore, 
\begin{equation} \label{inequalityContract2ExistMildSol}
\abs{\norm{ \mathbb{A}_{2}X- \mathbb{A}_{2}Y }}^{2}_{\upsilon,K,T} \leq C^{(2)}_{\upsilon,K,T} \abs{\norm{ X- Y }}^{2}_{\upsilon,K,T},  
\end{equation}
where 
\begin{equation} \label{constantContra2ExistMildSol}
C^{(2)}_{\upsilon,K,T} = M^{2}_{K} e^{2\theta_{K}T} \left( \int^{T}_{0} \sup_{\psi \in K} b(\psi, r)^{2} dr \right)^{1/2} \left( \int_{0}^{T} e^{-2\upsilon r} dr \right)^{1/2}.
\end{equation}

\textbf{Step 4} \emph{Collecting the estimates for $\mathbb{A}$}. 

It follows from Steps 1 to 3 that $\mathbb{A}$ is well-defined. Moreover, note that for every $K \in \mathcal{K}_{H}(\Psi)$, $\upsilon \geq 0$ and $X,Y \in \mathcal{H}^{2}(T,\Psi'_{\beta})$,  we have from the definition of the map $\mathbb{A}$ that
\begin{equation} \label{decompInequalityContractExistMildSol}
\abs{\norm{ \mathbb{A}X- \mathbb{A}Y  }}_{\upsilon,K,T}^{2} \leq 2 \abs{\norm{ \mathbb{A}_{1}X- \mathbb{A}_{1}Y  }}^{2}_{\upsilon,K,T}+ 2 \abs{\norm{ \mathbb{A}_{2}X- \mathbb{A}_{2}Y  }}^{2}_{\upsilon,K,T}. 
\end{equation}
Then, it follows from \eqref{inequalityContract1ExistMildSol}, \eqref{constantContra1ExistMildSol},  \eqref{inequalityContract2ExistMildSol},  \eqref{constantContra2ExistMildSol} and \eqref{decompInequalityContractExistMildSol} that \eqref{inequalityContractExistMildSol} is satisfied for $C_{\upsilon,K,T}=(2C^{(1)}_{\upsilon,K,T}+2C^{(2)}_{\upsilon,K,T})^{1/2}$ 
and then we can take $\upsilon$ sufficiently large such that $C_{\upsilon,K,T}<1$ and consequently $\mathbb{A}$ is a contraction on $\mathcal{H}^{2}(T,\Psi'_{\beta})$. 

Now let $X \in \mathcal{H}^{2}(T,\Psi'_{\beta})$. From Steps 1 to 3, there exist continuous semi-norms $\varrho_{0}$, $\varrho_{1}$ and $\varrho_{2}$ on $\Psi$ such that  $\widetilde{\mathbb{A}_{0}X} \defeq \left\{  S_{\varrho_{0}}(t)' i'_{q,\varrho_{0}}\tilde{Z}_{0}: t \in [0,T]  \right\}$ is a $\Psi'_{\varrho_{0}}$-valued, continuous, predictable version of $\mathbb{A}_{0}X$ that satisfies \eqref{finiteMomeHilbLinearTermMildSol}, $\widetilde{\mathbb{A}_{1}X} \defeq \left\{ \int^{t}_{0} S_{\varrho_{1}}(t-r)' i_{p,\varrho_{1}}' \tilde{B}(r,X_{r})  dr: t \in [0,T] \right\}$ is a 
$\Psi'_{\varrho_{1}}$-valued, continuous, predictable version of $\mathbb{A}_{1}X$ that satisfies \eqref{secondMomeDeterConvoExisUniq}, and $\widetilde{\mathbb{A}_{2}X} \defeq \widetilde{S'\ast F_{X}}= \{(\widetilde{S'\ast F_{X}})_{t}\}_{t\geq 0}$ is a $\Psi'_{\varrho_{2}}$-valued, mean-square continous, predictable version of $\mathbb{A}_{2}X$ that satisfies \eqref{finiteMomentConvoFX}. 

Let $\rho$ be a continuous Hilbertian semi-norm on $\Psi$ such that $\varrho_{i} \leq \rho$, for $i=0,1,2$. Then, the inclusions $i_{\varrho_{i},\rho}:\Psi_{\rho} \rightarrow \Psi_{\varrho_{i}}$, $i=0,1,2$ are linear and continuous. Hence, if we take 
$$\widetilde{\mathbb{A}X}= i'_{\varrho_{0},\rho} \, \widetilde{\mathbb{A}_{0}X}+ i'_{\varrho_{1},\rho} \,  \widetilde{\mathbb{A}_{2}X}+ i'_{\varrho_{2},\rho} \,  \widetilde{\mathbb{A}_{2}X},$$
then $\widetilde{\mathbb{A}X}$ is a $\Psi'_{\rho}$-valued predictable version of $\mathbb{A}X$ and from \eqref{finiteMomeHilbLinearTermMildSol}, \eqref{secondMomeDeterConvoExisUniq}, \eqref{finiteMomentConvoFX} it follows that $\widetilde{\mathbb{A}X}$ satisfies \eqref{boundedHilSpacMomenMildSol}. 
\end{prf}
  
We are ready to show that there exists a unique (up to modification) mild solution to \eqref{generalFormSEE} satisfying the statements of the theorem.

\begin{proof}[Proof of Theorem  \ref{theoExistenceAndUniquenessMildSolutions}]

For a fixed $T>0$, as the map $\mathbb{A}$ is a contraction on $\mathcal{H}^{2}(T,\Psi'_{\beta})$ (Lemma \ref{contractionWellDefined}) and this is a complete, Hausdorff, locally convex space,  it follows from the fixed point theorem on locally convex spaces (see \cite{CainNashed:1971}, Theorem 2.2) that $\mathbb{A}$ has a unique fixed point $X^{(T)}=\{ X^{(T)}_{t} \}_{t \in [0,T]}$ in $\mathcal{H}^{2}(T,\Psi'_{\beta})$. Therefore, $X^{(T)}$ satisfies \eqref{equationMildSolution} for all $t \in [0,T]$. Moreover, Lemma \ref{contractionWellDefined} shows that there exists a continuous Hilbertian semi-norm $\rho=\rho(T)$ on $\Psi$ such that $X^{(T)}$ has a $\Psi'_{\rho}$-valued predictable version $\tilde{X}^{(T)}=\{ \tilde{X}^{(T)}_{t} \}_{t \in [0,T]}$ satisfying $\sup_{t \in [0,T]} \Exp  \, \rho'\left(\tilde{X}^{(T)}_{t}\right)^{2} < \infty$. 

Let $\{ T_{n} \}_{n \in \N}$ any sequence of positive real numbers such that $\lim_{n \rightarrow \infty} T_{n}= \infty$ and 
for each $n \in \N$ let $X^{(T)}=\{ X^{(T_{n})}_{t} \}_{t \in [0,T]}$ as above. Let $X= \{ X_{t} \}_{t \geq 0}$ be given for each $t \geq 0$ by $ X_{t}= X^{(T_{n})}_{t}$ if $T_{n-1} \leq t < T_{n}$, where we take $T_{0}=0$. Then, is easy to see that $X$ is well defined and moreover that $X$ is a $\Psi'_{\beta}$-valued, regular, predictable process satisfying \eqref{equationMildSolution} for all $t \geq 0$. Therefore, $X$ is a mild solution to \eqref{generalFormSEE} and is unique up to indistinguishable versions. 

Finally, as $X$ is a mild solution to \eqref{generalFormSEE}, and from the arguments on the proof of Lemma \ref{contractionWellDefined} one can check that the conditions of Theorem \ref{theoEquiWeakMild} are satisfied, then it follows that $X$ is also a weak solution to \eqref{generalFormSEE}. 
\end{proof}

\section{Applications to Stochastic Evolution Equations Driven by  L\'{e}vy Noise} \label{subSectionALNC}

Let $\Phi$ be a barrelled nuclear space and $\Psi$ be a quasi-complete, bornological, nuclear space such that every continuous semi-norm on $\Psi'_{\beta}$ is separable. Let $L=\{ L_{t} \}_{t \geq 0}$ be a $\Phi'_{\beta}$-valued c\`{a}dl\`{a}g L\'{e}vy process with L\'{e}vy -It\^{o} decomposition \eqref{levyItoDecomposition}. 

In this section our objective is to study the existence of weak and mild solutions to the following \emph{L\'{e}vy-driven stochastic evolution equation}:
\begin{equation} \label{levyDrivenSEE}
d X_{t}= (A'X_{t}+ B (t,X_{t})) dt + \int_{\Psi'_{\beta}}F(t,u,X_{t}) L(dt,du),
\end{equation}
for all $t \geq 0$ with initial condition $X_{0}=Z_{0}$, where $Z_{0}$ is a $\Psi'_{\beta}$-valued, regular, $\mathcal{F}_{0}$-measurable, square integrable random variable. We do this by employing the L\'{e}vy -It\^{o} decomposition \eqref{levyItoDecomposition} to $L$ to write  \eqref{levyDrivenSEE} as: 
\begin{multline}\label{levyDrivenSEELID}
 d X_{t}= (A'X_{t}+ B (t,X_{t})) dt+ F(t,0,X_{t}) dW_{t} \\
+ \int_{B_{\rho'}(1)} F(t,u,X_{t}) \tilde{N}(dt,du)+ \int_{B_{\rho'}(1)^{c}} F(t,u,X_{t}) N(dt,df), 
\end{multline}
for all $t \geq 0$ with initial condition $X_{0}=Z_{0}$. We assume $A$, $\{S(t)\}_{t \geq 0}$, $B$ and $F$ satisfy Assumption \ref{assumptionsCoefficients} (A1), (A3), (A4), Assumption \ref{assumpForDualSpaceAndDualSemigroup} (2) and Assumption  \ref{lipschitzGrowthConditions} for $U= \Phi'_{\beta}$, $\mu=\nu$ where $\nu$ is the L\'{e}vy measure of $L$, and with the family of continuous Hilbertian semi-norms  $\{ q_{r,u}: r \in \R_{+}, u \in \Phi' \}$ given by 
\begin{equation}\label{defiSemiNormsSEELevyNoise}
q_{r,u}(\phi)= \begin{cases} \mathcal{Q}(\phi), & \mbox{if } u=0, \\  \abs{u[\phi]}, & \mbox{if } u \neq 0, \end{cases}
\end{equation}
where recall that $\mathcal{Q}$ denotes the covariance functional of the Wiener process $W$. 

However, note that in \eqref{levyDrivenSEELID} there is still a difficulty to overcome because we have not defined the stochastic integral with respect to the Poisson random measure $N$. We will provide a meaning to the solutions to \eqref{levyDrivenSEELID} by setting up the problem in a way that allow us to use our theory of stochastic evolution equations developed in Section \ref{sectionSEEDNS}. We will do this by using the properties of L\'{e}vy processes in duals of nuclear spaces (Section \ref{subsectionLPDNS}), and by generalizing to our more general context some arguments from Peszat and Zabczyk (\cite{PeszatZabczyk}, Section 9.7) used to show the existence of solutions to stochastic evolution equations driven by L\'{e}vy processes  in Hilbert spaces. 

First, from Theorem \ref{theoExistenceCadlagVersionLevyProc} there exists a weaker countably Hilbertian topology $\vartheta_{L}$ on $\Phi$ such that  $\Prob \left( L_{t} \in (\widetilde{\Phi_{\theta_{L}}})'_{\beta}, \, \forall \, t \geq 0 \right)=1$. Let $\{ \rho_{n} \}_{n \in \N}$ be an increasing sequence of continuous Hilbertian semi-norms on $\Phi$ that generates the topology $\theta_{L}$. Without loss of generality we can take $\rho_{1}=\rho$, where $\rho$ is a continuous Hilbertian semi-norm on $\Phi$ such that $\rho'$ satisfies  \eqref{integrabilityPropertyLevyMeasure}. Then, because $\{ \rho_{n} \}_{n \in \N}$ is increasing it follows that (see \cite{GelfandVilenkin}, Section IV.2.2)
$$  (\widetilde{\Phi_{\theta_{L}}})'_{\beta} = \bigcup_{n \in \N} \Phi'_{\rho_{n}}= \bigcup_{n \in \N} B_{\rho'_{n}}(n),$$
where we recall $B_{\rho'_{n}}(n)=\{ f \in \Phi: \rho_{n}'(f) \leq n  \}$. Observe that $\{ B_{\rho_{n}'}(n): n \in \N \}$ is an increasing sequence of  bounded, closed, convex, balanced subsets of $\Phi'_{\beta}$. Furthermore,  
\begin{equation} \label{levyAlmostSureCountUnionBallsInDual}
\Prob \left( L_{t} \in \bigcup_{n \in \N} B_{\rho'_{n}}(n), \, \forall \, t \geq 0 \right)=1. 
\end{equation} 

For each $n \in \N$ let $U_{n}=B_{\rho'_{n}}(n)$ and define $\tau_{n}$ by 
\begin{equation} \label{defiStoppingTimesTauNLevyNoise}
 \tau_{n}(\omega) \defeq \inf \{ t \geq 0: \Delta L_{t} (\omega) \notin U_{n} \}, \quad \forall \, \omega \in \Omega.
\end{equation}  
It is clear that $\tau_{n}$ is an $\{ \mathcal{F}_{t} \}$-stopping time. Moreover, from \eqref{levyAlmostSureCountUnionBallsInDual} it follows that $\tau_{n} \rightarrow \infty$ $\Prob$-a.e. as $n \rightarrow \infty$. Furthermore, observe that for each $n \in \N$, from the definition of $U_{n}$  we have 
\begin{equation} \label{intLevyMeasuOnUnIsBounded}
\int_{U_{n}} \, \abs{u[\phi]}^{2} \nu(du) \leq \rho_{1}(\phi)^{2} \int_{B_{\rho'_{1}}(1)} \rho'_{1}(u)^{2} \nu(du)+ n^{2} \rho_{n}(\phi)^{2} \nu (B_{\rho'_{1}}(1)^{c}) < \infty, \quad \forall \, \phi \in \Phi. 
\end{equation}

Let $\mathcal{R}= \mathcal{A} \cup \{0\}$. For every $n \in \N$, let $M_{n}=(M_{n}(t,A): r \geq 0, A \in \mathcal{R})$ be the L\'{e}vy martingale-valued measure given by
\begin{equation} \label{levyMartValuedMeas}
M(t,A) = W_{t} \delta_{0}(A) + \int_{U_{n} \cap (A \backslash \{0 \})} u \widetilde{N}(t,du), \quad \mbox{ for } \, t \geq 0, \, A \in \mathcal{R}. 
\end{equation}
Note that by \eqref{intLevyMeasuOnUnIsBounded} $M_{n}$ is well-defined and from Example \ref{examLevyMartValuedMeasure} it is a cylindrical martingale-valued measure. Moreover, for $m \leq n$, from the corresponding properties of the Poisson integral it is not difficult to check that $M_{n}-M_{m}$ is again a cylindrical martingale-valued measure on $\R_{+} \times \mathcal{R}$. Furthermore, observe that on the set $\{ t \leq \tau_{m}\}$  for every $0 \leq r \leq t$, $A \in \mathcal{R}$ and $\phi \in \Phi$
we have that
\begin{equation} \label{descripMnMinusMn}
M_{n}(r,A)(\phi)-M_{m}(r,A)(\phi)
=-r \int_{(U_{n} \setminus U_{1}) \cap (A \backslash \{0 \})} u[\phi] \nu(du) + r \int_{(U_{m} \setminus U_{1}) \cap (A \backslash \{0 \})} u[\phi] \nu(du). 
\end{equation}
Now, note that for each $r \geq 0$, $g \in \Psi'_{\beta}$ the growth and Lipschitz conditions on $F$ implies that $\int_{U_{n}\setminus U_{1}} F(r,u,g) u \, \nu(du)$ defined by 
$$ \left( \int_{U_{n}\setminus U_{1}} F(r,u,g) u \, \nu(du) \right)[\psi]
= \int_{U_{n}\setminus U_{1}} u[F(r,u,g)' \psi] u \, \nu(du), \quad \forall \, \psi \in \Psi, $$
is an element of $\Psi'_{\beta}$. Moreover, if we define $B_{n}$ by 
$$B_{n}(t,g)=B(t,g) + \int_{U_{n}\setminus U_{1}} F(r,u,g) u \, \nu(du),$$
we can check that $B_{n}$ also satisfies the growth and Lipschitz conditions. Therefore, for every $n \in \N$ the following abstract Cauchy problem
\begin{equation} \label{levyAbsCauchyProbUn}
\begin{cases}
d X^{(n)}_{t}= (A'X^{(n)}_{t}+ B_{n}(t,X^{(n)}_{t})) dt+\int_{U} F(t,u,X^{(n)}_{t}) M_{n} (dt,du), \quad \mbox{for }t \geq 0, \\
X^{(n)}_{0}=Z_{0}.
\end{cases}
\end{equation}
has a unique mild solution $X^{(n)}$, that is also a weak solution,  such that for each $T>0$ there exists a continuous Hilbertian semi-norm  $\eta_{n}=\eta_{n}(T)$ on $\Psi$ such that $\sup_{t \in [0,T]} \Exp \eta'_{n}(X^{(n)}_{t})^{2} < \infty $. 

\begin{theo} \label{theoCompatLevyAbsCauchyProbUn}
For every $t \in [0,T]$ and all $m \leq n$, $X^{(n)}_{t}=X^{(m)}_{t}$ $\Prob$-a.e. on $\{ t \leq \tau_{m}\}$. Moreover, the $\Psi'_{\beta}$-valued regular, predictable process $X$ defined by $X_{t}=X^{(m)}_{t}$ for $t \leq \tau_{m}$ is a mild and a weak solution to \eqref{levyDrivenSEELID}. 
\end{theo}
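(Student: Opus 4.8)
The plan is to prove the consistency relation $X^{(n)}_t = X^{(m)}_t$ on $\{t \le \tau_m\}$ by a localization argument, and then to verify directly that the patched process $X$ solves \eqref{levyDrivenSEELID}. First I would recall the key structural fact: on the set $\{t \le \tau_m\}$, the jumps of $L$ up to time $t$ all lie in $U_m \subseteq U_n$, so that the Poisson integrals against $N$ restricted to $U_n$ and to $U_m$ agree pathwise up to the (Lebesgue-absolutely continuous) compensator discrepancy recorded in \eqref{descripMnMinusMn}. Concretely, I would stop both equations at $\tau_m$: apply Proposition \ref{propStoppedIntegralWeakCase}/Proposition \ref{propStoppedIntegralStrongCase} (in the extended forms, Proposition \ref{propPropertWeakStochIntegExtenToIntegLocalSecoMomen} and Proposition \ref{propStochasticIntegralExtendsToIntegrandsLocalSecondMoments}) to write, for $\psi \in \mathrm{Dom}(A)$ and $t \le \tau_m$,
\begin{equation*}
X^{(n)}_{t}[\psi] = Z_0[\psi] + \int_0^t (X^{(n)}_r[A\psi] + B_n(r,X^{(n)}_r)[\psi])\,dr + \int_0^t \int_U F(r,u,X^{(n)}_r)'\psi\, M_n(dr,du),
\end{equation*}
and similarly with $m$ in place of $n$. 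Using \eqref{descripMnMinusMn} and the definitions of $B_n$, $B_m$, the difference of the drift terms exactly cancels the difference of the stochastic-integral terms on $\{r \le \tau_m\}$ — this is the crux of why the two problems are "the same" up to $\tau_m$. Hence $D_t := X^{(n)}_t - X^{(m)}_t$ satisfies, on $\{t \le \tau_m\}$, a linear equation driven only by $A'$ and by the Lipschitz increments $B_n(r,X^{(n)}_r) - B_n(r,X^{(m)}_r)$ and $F(r,u,X^{(n)}_r) - F(r,u,X^{(m)}_r)$ (here I use that $B_n$ and $F$ have the same Lipschitz constants $a,b$ on $U_n$, which was checked above).

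Next I would run a Gronwall-type estimate in the Hilbertian seminorms to conclude $D \equiv 0$ on $\{t \le \tau_m\}$. Passing to the mild formulation via Theorem \ref{theoEquiWeakMild} (both $X^{(n)}, X^{(m)}$ are weak, hence mild, solutions), one obtains on $\{t \le \tau_m\}$ an equation of the form $j_K D_t = \int_0^t S_K(t-r)' j_K(\text{Lipschitz increments})\,dr + \int_0^t\int_U S_K(t-r)'(\ldots)\,M_m(dr,du)$, where I localize using the stopping times $\tau_m \wedge \sigma_\ell$ with $\sigma_\ell$ reducing the local martingale; taking second moments, using \eqref{expBoundSemigroupPsiK}, the isometry \eqref{secondMomentWeakStochasticIntegral}–\eqref{itoIsometryStrongIntegHilbSchmiIntegrands}, Assumption \ref{lipschitzGrowthConditions}(3), and Cauchy–Schwarz exactly as in the contraction estimate of Lemma \ref{contractionWellDefined}, gives
\begin{equation*}
\Exp\left[\eta_K(j_K D_{t\wedge\tau_m})^2\right] \le C_{K,T} \int_0^t \Exp\left[\eta_K(j_K D_{r\wedge\tau_m})^2\right] dr
\end{equation*}
for every $K \in \mathcal{K}_H(\Psi)$; Gronwall then forces $\Exp\,\eta_K(j_K D_{t\wedge\tau_m})^2 = 0$ for all $K$, and since the $\eta_K$ generate the topology on $\Psi'_\beta$ and $D$ is regular, $D_{t\wedge\tau_m} = 0$ a.s., i.e. $X^{(n)}_t = X^{(m)}_t$ a.s. on $\{t\le\tau_m\}$. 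A subtlety to handle carefully is that second moments of $X^{(n)}, X^{(m)}$ are only finite with respect to a priori different seminorms $\eta_n(T), \eta_m(T)$; I would choose $K$ (equivalently a continuous Hilbertian seminorm on $\Psi$) dominating both, as in the last step of Lemma \ref{contractionWellDefined}, so that all the quantities above are finite.

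With consistency established, $X_t := X^{(m)}_t$ for $t \le \tau_m$ is well-defined ($\Prob$-a.e., using $\tau_m \to \infty$), and it is $\Psi'_\beta$-valued, regular and predictable because each $X^{(m)}$ is and the $\tau_m$ are stopping times (so $\{X_t \in \Gamma\} \cap \{t < \tau_m\} = \{X^{(m)}_t \in \Gamma\}\cap\{t<\tau_m\} \in \mathcal{P}_\infty$). To see that $X$ is a weak solution of \eqref{levyDrivenSEELID}, fix $T>0$, $\psi \in \mathrm{Dom}(A)$ and $t\ge 0$; on $\{t \le \tau_m\}$ we have $X_r = X^{(m)}_r$ for all $r \le t$, so the weak-solution identity for $X^{(m)}$ against $M_m$ holds, and I would re-expand $M_m$ and $B_m$ via \eqref{levyMartValuedMeas} and the definition of $B_n$ above to rewrite the right-hand side in terms of: the Wiener integral $\int_0^t F(r,0,X_r)\,dW_r$, the compensated small-jump integral $\int_0^t\int_{B_{\rho'}(1)} F(r,u,X_r)\,\widetilde N(dr,du)$, and the large-jump term, which on $\{t\le\tau_m\}$ is a finite sum $\sum_{0\le s\le t} F(s,\Delta L_s, X_{s-})\Delta L_s \mathbbm{1}_{B_{\rho'}(1)^c}(\Delta L_s)$ — this is precisely the (pathwise-defined) integral $\int_0^t\int_{B_{\rho'}(1)^c} F(r,u,X_r)\,N(dr,du)$ appearing in \eqref{levyDrivenSEELID}, and the discrepancy term $\int_{U_m\setminus U_1} F(r,u,X_r) u\,\nu(du)$ coming from $B_m$ cancels against the compensator of $\int_{U_m\setminus B_{\rho'}(1)} \cdots \widetilde N$. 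Letting $m\to\infty$ (so $\tau_m \to \infty$) shows the identity holds $\Prob$-a.e. for every $t$, i.e. $X$ is a weak solution; since $X$ inherits the moment bounds $\sup_{t\in[0,T]}\Exp\,\eta'_m(X^{(m)}_t)^2<\infty$ locally and satisfies conditions (a) of Definitions \ref{defiWeakSolution}, an application of Theorem \ref{theoEquiWeakMild} (again choosing a dominating seminorm and localizing by $\tau_m$) upgrades this to $X$ being a mild solution as well.

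The main obstacle I expect is the bookkeeping in the localization and patching: making precise that "stopping at $\tau_m$" commutes with all three integrals (Lebesgue, weak stochastic, strong stochastic — in their \emph{local} $L^2$ versions), that the compensator corrections in \eqref{descripMnMinusMn} line up exactly with the difference $B_n - B_m$, and that the limiting argument $m\to\infty$ is legitimate despite the seminorms $\eta_m(T)$ potentially growing with $m$. The analytic heart — the Gronwall estimate — is essentially a rerun of the contraction computation already carried out in Lemma \ref{contractionWellDefined}, so it should not present new difficulties beyond ensuring all moments are taken with respect to a common dominating Hilbertian seminorm.
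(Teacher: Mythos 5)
Your proposal follows essentially the same route as the paper: decompose $M_n = M_m + (M_n - M_m)$ via Proposition \ref{propDecompWeakIntegralSumIndepMartValMeasu}, use \eqref{descripMnMinusMn} on $\{t \leq \tau_m\}$ to cancel the compensator discrepancy against $B_n - B_m$, and close with a Gronwall estimate, then patch and identify $X$ with a solution of \eqref{levyDrivenSEELID}. The only (harmless) divergence is that you run Gronwall on the Hilbertian quantities $\Exp\,\eta_K(j_K D_t)^2$ with a common dominating seminorm, whereas the paper runs it scalar-wise on $Y(\psi,t)=\Exp\bigl(\abs{X^{(n)}_t[\psi]-X^{(m)}_t[\psi]}^2 \mathbbm{1}_{\{t\leq\tau_m\}}\bigr)$ for each fixed $\psi$ and then invokes regularity via Proposition \ref{propCondiIndistingProcess}, which sidesteps the dominating-seminorm bookkeeping you flag.
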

\begin{prf}
Let $t \in [0,T]$. First, note that for each $m \leq n$, from the properties of the Poisson integral it is easy to check that for all $A, B \in \mathcal{R}$ and all $\phi$, $\varphi \in \Phi$, the real valued processes $\{ M_{m}(t,A)(\phi) \}_{t \geq 0}$ and $\{ M_{n}(t,B)(\varphi)-M_{m}(t,B)(\varphi) \}_{t \geq 0}$ are independent. Then, from Proposition \ref{propDecompWeakIntegralSumIndepMartValMeasu}
and because $X^{(n)}$ is a mild solution to \eqref{levyAbsCauchyProbUn}, we have $\Prob$-a.e. 
\begin{eqnarray*}
(X^{(n)}_{t} - X^{(m)}_{t})[\psi] 
& = & \int_{0}^{t} (B(r,X^{(n)}_{r})-B(r,X^{(m)}_{r}))[S(t-r)\psi] dr \\
& {} & + \int_{0}^{t} \int_{U_{n}\setminus U_{1}} u[F(r,u,X^{(n)}_{r})'S(t-r)\psi] \nu (du) dr \\
& {} & - \int_{0}^{t} \int_{U_{m}\setminus U_{1}} u[F(r,u,X^{(m)}_{r})'S(t-r)\psi] \nu (du) dr \\
& {} & + \int_{0}^{t} \int_{U} (F(r,u,X^{(n)}_{r})'-F(r,u,X^{(m)}_{r})')S(t-r)\psi M_{m}(dr,du) \\
& {} & + \int_{0}^{t} \int_{U} F(r,u,X^{(n)}_{r})'S(t-r)\psi (M_{n}-M_{m})(dr,du) 
\end{eqnarray*}
Now, by \eqref{descripMnMinusMn} we have on the set $\{ t \leq \tau_{m}\}$  that 
\begin{flalign*}
& \int_{0}^{t} \int_{U} F(r,u,X^{(n)}_{r})'S(t-r)\psi (M_{n}-M_{m})(dr,du)  \\
& = - \int_{0}^{t} \int_{U_{n}\setminus U_{1}} u[F(r,u,X^{(n)}_{r})'S(t-r)\psi] \nu (du) dr \\
& + \int_{0}^{t} \int_{U_{m}\setminus U_{1}} u[F(r,u,X^{(n)}_{r})'S(t-r)\psi] \nu (du) dr .
\end{flalign*}
Therefore, it follows from the above calculation that 
\begin{flalign*}
& (X^{(n)}_{t} - X^{(m)}_{t})[\psi] \mathbbm{1}_{\{t \leq \tau_{m}\}} \\
& =  \int_{0}^{t} (B(r,X^{(n)}_{r})-B(r,X^{(m)}_{r}))[S(t-r)\psi] dr \mathbbm{1}_{\{t \leq \tau_{m}\}} \\
& + \int_{0}^{t} \int_{U_{m}\setminus U_{1}} u[(F(r,u,X^{(n)}_{r})'-F(r,u,X^{(m)}_{r})')S(t-r)\psi] \nu (du) dr \mathbbm{1}_{\{t \leq \tau_{m}\}} \\
& + \int_{0}^{t} \int_{U} (F(r,u,X^{(n)}_{r})'-F(r,u,X^{(m)}_{r})')S(t-r)\psi M_{m}(dr,du) \mathbbm{1}_{\{t \leq \tau_{m}\}}.
\end{flalign*}
Then, from \eqref{secondMomentWeakStochasticIntegral} and \eqref{defiSemiNormsSEELevyNoise} we have
\begin{flalign*}
& \Exp \left( \abs{X^{(n)}_{t}[\psi] - X^{(m)}_{t}[\psi]}^{2} \mathbbm{1}_{\{t \leq \tau_{m}\}} \right) \\
& \leq 4 \Exp  \int_{0}^{t} \abs{(B(r,X^{(n)}_{r})-B(r,X^{(m)}_{r}))[S(t-r)\psi]}^{2}  \mathbbm{1}_{\{t \leq \tau_{m}\}} dr \\
& + 4 \Exp \int_{0}^{t} \int_{U_{m}} q_{r,u}(F(r,u,X^{(n)}_{r})'-F(r,u,X^{(m)}_{r})')S(t-r)\psi)^{2}  \mathbbm{1}_{\{t \leq \tau_{m}\}} \nu (du) dr \\
& + 4 \Exp \int_{0}^{t} \int_{U} q_{r,u}(F(r,u,X^{(n)}_{r})'-F(r,u,X^{(m)}_{r})')S(t-r)\psi)^{2}  \mathbbm{1}_{\{t \leq \tau_{m}\}} \nu (du) dr. 
\end{flalign*}  
Let $Y(\psi,t)= \Exp \left( \abs{X^{(n)}_{t}[\psi] - X^{(m)}_{t}[\psi]}^{2} \mathbbm{1}_{\{t \leq \tau_{m}\}} \right) $. Observe that $\sup_{t \in [0,T]} Y(\psi,t)< \infty$. Moreover, from the Lipschitz conditions on $B$ and $F$ we obtain for $t \leq T$:
$$ Y(\psi,t) \leq 8 M_{\psi}^{2} e^{2 \theta_{\psi}^{2} t} \int_{0}^{t} (a(\psi,r)^{2}+b(\psi,r)^{2}) Y(\psi,r) dr. $$
Therefore, from Gronwall's inequality if follows that $Y(\psi,t)=0$. Hence, we have that $X^{(n)}_{t}[\psi] = X^{(m)}_{t}[\psi]$ $\Prob$-a.s. on $\{t \leq \tau_{m}\}$. But since this is true for every $\psi \in \Psi$, because the $\Psi'_{\beta}$-valued processes $X^{(n)}$ and $X^{(m)}$ are regular, it follows from Proposition \ref{propCondiIndistingProcess} that $X^{(n)}_{t}=X^{(m)}_{t}$ $\Prob$-a.e. on $\{ t \leq \tau_{m}\}$. Finally, because $X^{(n)}$ is a mild and a weak solution to \eqref{levyDrivenSEELID} on $\{ t \leq \tau_{m}\}$, then $X$ is also a mild and a weak solution to \eqref{levyDrivenSEELID}.

\end{prf}

\textbf{Acknowledgements} { The author would like thank David Applebaum for all his helpful comments and suggestions. 
Thanks also to the The University of Costa Rica for providing financial support through the grant 820-B6-202	``Ecuaciones diferenciales en derivadas parciales en espacios de dimensi\'{o}n infinita''. Some preliminary versions of part of this work were carried out at The University of Sheffield and the author wish to express its gratitude for the financial support.   
}

\end{document}